\documentclass[12pt, a4paper]{amsart}

\usepackage{tikz-cd}\usetikzlibrary{babel} 
\usepackage{nicematrix,tikz}
\usepackage{amsmath}
\usepackage{amssymb}
\usepackage{bm}
\usepackage{amscd} 
\usepackage{enumitem}
\usepackage{mathtools} 
\usepackage{cite}
\usepackage{hyperref} 
\usepackage[initials,msc-links,backrefs]{amsrefs}

\usepackage{aliascnt}

\usepackage[OT2, T1]{fontenc}

\usepackage{cleveref}

\theoremstyle{plain}
\newtheorem{theorem}[equation]{Theorem}
\newtheorem{lemma}[equation]{Lemma}
\newtheorem{corollary}[equation]{Corollary}
\newtheorem{proposition}[equation]{Proposition}

\theoremstyle{definition}
\newtheorem{definition}[equation]{Definition}
\newtheorem{remark}[equation]{Remark}
\newtheorem{example}[equation]{Example}

\providecommand{\ignore}[1]{}

\providecommand{\R}{\mathbb{R}}
\providecommand{\Q}{\mathbb{Q}}
\providecommand{\Z}{\mathbb{Z}}

\providecommand{\A}{\mathbb{A}}
\providecommand{\C}{\mathbb{C}}

\DeclareMathOperator{\im}{im}

\DeclareMathOperator{\Aut}{Aut}

\DeclareMathOperator{\Ad}{Ad}
\DeclareMathOperator{\Sym}{Sym}

\DeclareMathOperator{\GL}{GL}
\DeclareMathOperator{\SL}{SL}
\DeclareMathOperator{\PSL}{PSL}
\DeclareMathOperator{\SU}{SU}
\DeclareMathOperator{\PSU}{PSU}
\DeclareMathOperator{\Spin}{Spin}
\DeclareMathOperator{\SO}{SO}
\DeclareMathOperator{\PSO}{PSO}
\DeclareMathOperator{\Sp}{Sp}
\DeclareMathOperator{\PSp}{PSp}
\DeclareMathOperator{\Gal}{Gal}
\DeclareMathOperator{\Br}{Br}

\usepackage[skip=6pt]{caption}
\usepackage{multirow}

\title{Congruence rigidity of algebraic groups}

\author[A. Baumann]{Adrian Baumann}
\author[H. Kammeyer]{Holger Kammeyer}
 
\address{Heinrich Heine University D{\"u}sseldorf, Faculty of Mathematics and Natural Sciences, Mathematical Institute, Germany}
\email{adrian.baumann@hhu.de}
\email{holger.kammeyer@hhu.de}
 
\subjclass[2010]{11E72, 20E18, 22E40}
\keywords{arithmetic groups, profinite rigidity, Galois cohomology}

\begin{document}

\begin{abstract}
We identify the simple algebraic groups over number fields that are, in a suitable sense, determined by their finite adele points.  Assuming CSP and Grothendieck rigidity, our results essentially characterize higher rank arithmetic groups that are profinitely solitary: the profinite commensurability class determines the commensurability class among finitely generated residually finite groups.  This generalizes previous work of the second author with R.\,Spitler from split groups to arbitrary groups.
\end{abstract}

\maketitle
\tableofcontents

\section{Introduction}

Let \(\mathbf{G}\) be a simply connected absolutely almost simple linear algebraic group defined over a number field \(k\) (a \emph{\(k\)-group} for short).  The purpose of this article is to describe explicitly under what conditions \(\mathbf{G}\) enjoys the following rigidity property.

\begin{definition} \label{def:congruence-rigid}
A \(k\)-group \(\mathbf{G}\) is called \emph{congruence rigid} if every \(l\)-group~\(\mathbf{H}\) that is locally isomorphic to \(\mathbf{G}\) is also globally isomorphic to \(\mathbf{G}\).
\end{definition}

Here \(\mathbf{G}\) is \emph{locally isomorphic} to \(\mathbf{H}\) if there exists an \mbox{\(\A^f_\Q\)-isomorphism} \(\Phi \colon \A^f_l \rightarrow \A^f_k\) such that \(\mathbf{G} \cong_\Phi \mathbf{H}\).  We say that \(\mathbf{G}\) is \emph{globally isomorphic} to \(\mathbf{H}\) if there is an isomorphism \(\phi \colon l \rightarrow k\) such that \(\mathbf{G} \cong_\phi \mathbf{H}\).  The notation \(\A^f_k\) denotes the \emph{finite adele ring}: the \(k\)-subalgebra of \(\prod_{v \nmid \infty} k_v\) of elements with almost all coordinates in the ring of integers \(\mathcal{O}_v \subset k_v\).

\subsection{Motivation} Our interest in this concept stems from its implications for two rigidity properties of arithmetic subgroups of \(\mathbf{G}\): \emph{congruence rigidity} and \emph{profinite solitude}.  To explain this, recall that we obtain an \emph{arithmetic subgroup} \(\mathbf{G}(\mathcal{O}_k)\) by choosing a model of the \(k\)-group scheme \(\mathbf{G}\) over the ring of integers \(\mathcal{O}_k \subset k\).  The \emph{congruence topology} on \(\mathbf{G}(\mathcal{O}_k)\) is the initial topology for all maps \(\mathbf{G}(\mathcal{O}_k) \rightarrow \mathbf{G}(\mathcal{O}_k / \mathfrak{a})\) with nonzero \(\mathfrak{a} \trianglelefteq \mathcal{O}_k\).  The corresponding uniform structure on \(\mathbf{G}(\mathcal{O}_k)\) yields the \emph{congruence completion} \(\overline{\mathbf{G}(\mathcal{O}_k)}\).  Using the notation ``\(\approx\)'' for abstractly commensurable groups, it is natural to define that \(\mathbf{G}(\mathcal{O}_k)\) is \emph{congruence rigid} if for every \(l\)-group \(\mathbf{H}\), we have that \(\overline{\mathbf{G}(\mathcal{O}_k)} \approx \overline{\mathbf{H}(\mathcal{O}_l)}\) implies \(\mathbf{G}(\mathcal{O}_k) \approx \mathbf{H}(\mathcal{O}_l)\).

\begin{theorem} \label{thm:cong-cong}
Among higher rank groups, the \(k\)-group \(\mathbf{G}\) is congruence rigid if and only if the arithmetic subgroup \(\mathbf{G}(\mathcal{O}_k)\) is congruence rigid.
\end{theorem}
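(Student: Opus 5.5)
We will prove both directions by translating between the two notions via the congruence completion. The key link is the identification of the congruence completion with an open compact subgroup of the finite adele points. By strong approximation (available since \(\mathbf{G}\) is simply connected and absolutely almost simple, and higher rank guarantees \(\mathbf{G}(k_\infty)\) is noncompact), the closure of \(\mathbf{G}(\mathcal{O}_k)\) in \(\mathbf{G}(\A^f_k)\) is an open compact subgroup. Moreover, the congruence topology coincides with the subspace topology from \(\mathbf{G}(\A^f_k)\). Hence \(\overline{\mathbf{G}(\mathcal{O}_k)}\) is, up to commensurability, any open compact subgroup of \(\mathbf{G}(\A^f_k)\), and likewise for \(\mathbf{H}\).

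For the direction ``\(\mathbf{G}\) congruence rigid \(\Rightarrow\) \(\mathbf{G}(\mathcal{O}_k)\) congruence rigid'', suppose \(\overline{\mathbf{G}(\mathcal{O}_k)} \approx \overline{\mathbf{H}(\mathcal{O}_l)}\), with \(\mathbf{H}\) an \(l\)-group of higher rank.
\begin{enumerate}
\item Since the completions contain open compact subgroups of adelic groups, we get an isomorphism of open subgroups \(U \le \mathbf{G}(\A^f_k)\) and \(V \le \mathbf{H}(\A^f_l)\).
\item We use a rigidity theorem for isomorphisms between open compact subgroups of adelic (equivalently, products of \(p\)-adic analytic) simple groups: such an isomorphism is, up to finite index, induced by an algebraic isomorphism semilinear over an isomorphism \(\Phi\colon \A^f_l \to \A^f_k\). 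The route is Lazard/Pink-type: decompose along primes \(p\) into \(p\)-adic analytic pieces, pass to the \(\Q_p\)-Lie algebras, and use that an abstract isomorphism of compact open subgroups of simple \(p\)-adic groups comes from a field isomorphism together with an algebraic isomorphism (Pink's theorem on compact subgroups).
\item This shows that \(\mathbf{G}\) and \(\mathbf{H}\) are locally isomorphic.
\item By congruence rigidity of \(\mathbf{G}\) they are then globally isomorphic, via some \(\phi\colon l\to k\).
\item A global isomorphism carries \(\mathbf{H}(\mathcal{O}_l)\) to an arithmetic subgroup of \(\mathbf{G}\). All arithmetic subgroups are commensurable, so \(\mathbf{G}(\mathcal{O}_k)\approx\mathbf{H}(\mathcal{O}_l)\).
\end{enumerate}

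For the converse, suppose \(\mathbf{H}\) is locally isomorphic to \(\mathbf{G}\) via \(\Phi\).
\begin{enumerate}
\item The local isomorphism gives \(\mathbf{G}(\A^f_k)\cong\mathbf{H}(\A^f_l)\) as topological groups.
\item Open compact subgroups then correspond, so \(\overline{\mathbf{G}(\mathcal{O}_k)}\approx\overline{\mathbf{H}(\mathcal{O}_l)}\).
\item One must check that \(\mathbf{H}\) is also higher rank. I am not sure local data alone detects the archimedean rank, so this may be what the phrase ``among higher rank groups'' is meant to assume. Strong approximation for \(\mathbf{H}\) also needs noncompact \(\mathbf{H}(l_\infty)\), which would be included in that assumption.
\item Congruence rigidity of \(\mathbf{G}(\mathcal{O}_k)\) then gives \(\mathbf{G}(\mathcal{O}_k)\approx\mathbf{H}(\mathcal{O}_l)\).
\item Margulis superrigidity in higher rank extends an abstract commensurability of arithmetic lattices to an isomorphism of ambient algebraic groups, semilinear with respect to a field isomorphism \(l\to k\). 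Hence \(\mathbf{G}\cong_\phi\mathbf{H}\).
\end{enumerate}

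The main obstacle is step 2 of the forward direction: proving that an abstract isomorphism of open compact subgroups of finite adele groups is induced by a ring isomorphism of adeles together with an algebraic group isomorphism. The subtle points are:
\begin{itemize}
\item matching primes of \(k\) with primes of \(l\) while keeping the almost-everywhere integrality needed for an honest \(\A^f_\Q\)-isomorphism of adele rings;
\item handling the finitely many bad places;
\item handling the finite-index ambiguity.
\end{itemize}
I would first treat each rational prime \(p\) separately via Pink's results. I would then glue, using that at almost all \(p\) the groups are hyperspecial, so the isomorphisms respect integral structures.
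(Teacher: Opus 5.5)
Your proposal is correct and follows the same approach as the paper. For higher rank \(\mathbf{G}\) and \(\mathbf{H}\) (this is how ``among higher rank groups'' is meant, so your step~3 is part of the hypothesis), the statement is read off from two equivalences: \(\mathbf{G}(\mathcal{O}_k) \approx \mathbf{H}(\mathcal{O}_l)\) holds exactly when the groups are globally isomorphic, by Margulis superrigidity as in Kammeyer's Corollary~16; and \(\overline{\mathbf{G}(\mathcal{O}_k)} \approx \overline{\mathbf{H}(\mathcal{O}_l)}\) holds exactly when they are locally isomorphic. The step you flag as the main obstacle is precisely the adelic superrigidity theorem of Kammeyer--Kionke (Theorem~3.2), which the paper simply cites rather than re-deriving prime by prime via Lazard/Pink as you sketch.
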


This theorem explains the terminology in Definition~\ref{def:congruence-rigid}.  \emph{Higher rank} means \(\operatorname{rank}_\infty \mathbf{G} \coloneqq \sum_{v \mid \infty} \operatorname{rank}_{k_v} \mathbf{G} \ge 2\).  We recall in Section~\ref{section:cong-solitude} that Theorem~\ref{thm:cong-cong} is clear from \emph{Margulis superrigidity} and \emph{adelic superrigidity}.

\medskip
A conjecture of Serre says that higher rank groups have the \emph{congruence subgroup property (CSP)}, meaning \(\overline{\mathbf{G}(\mathcal{O}_k)} \approx \widehat{\mathbf{G}(\mathcal{O}_k)}\) holds for the \emph{profinite completion} \(\widehat{\mathbf{G}(\mathcal{O}_k)}\). We stress, however, that Theorem~\ref{thm:cong-cong} is not contingent on CSP.  On the other hand, if we do assume CSP, then congruence rigidity becomes the decisive condition for \emph{profinite solitude}, the second rigidity property of our interest, as we explain next.

\medskip
Recall that in general, a finitely generated residually finite group \(\Gamma\) is called \emph{(absolutely) profinitely solitary} if for every other such group \(\Delta\), we have that \(\widehat{\Gamma} \approx \widehat{\Delta}\) implies \(\Gamma \approx \Delta\).  Assuming CSP, Theorem~\ref{thm:cong-cong} thus reduces the question of profinite solitude for higher rank arithmetic groups among themselves to the question of congruence rigidity for the ambient algebraic group.  We want to leverage this result to absolute profinite solitude.  One obstacle is the longstanding problem whether arithmetic groups with CSP are \emph{Grothendieck rigid}~\cite{Platonov-Tavgen:grothendieck}*{Problem~2}.

\begin{definition}
  A group \(\Gamma\) is \emph{Grothendieck rigid} if no proper inclusion of a finitely generated subgroup \(H \lneq \Gamma\) induces an isomorphism \(\widehat{H} \cong \widehat{\Gamma}\).
\end{definition}

Though the problem was formulated decades ago, it seems that not for a single arithmetic group with CSP has it been determined whether it is Grothendieck rigid.  It is moreover not known if Grothendieck rigidity passes to finite index subgroups~\cite{Jaikin-Lubotzky:remarks}*{Question~4}.  Let us therefore say that a group is \emph{strongly Grothendieck rigid} if all finite index subgroups are Grothendieck rigid.  Note, however, that Grothendieck rigidity passes to finite index overgroups~\cite{Sun:3-manifold-groups}*{Lemma~2.1 and below} so that strong Grothendieck rigidity is a commensurability invariant.  On the bright side, we will explain that thanks to the work of R.\,Spitler~\cite{Spitler:profinite}, strong Grothendieck rigidity is the only impediment to promote congruence rigidity to absolute profinite solitude.

\begin{theorem} \label{thm:congruence-rigid-implies-solitary}
  Let \(\mathbf{G}\) be a congruence rigid \(k\)-group of higher rank with CSP and let \(\Gamma \le \mathbf{G}\) be a strongly Grothendieck rigid arithmetic \mbox{subgroup}.  Then \(\Gamma\) is profinitely solitary.
\end{theorem}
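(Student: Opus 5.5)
Let \(\Delta\) be a finitely generated residually finite group with \(\widehat{\Delta} \approx \widehat{\Gamma}\). The goal is \(\Delta \approx \Gamma\). The plan has three steps. First, use Spitler's work~\cite{Spitler:profinite} to show that \(\Delta\) is itself, up to finite index, a subgroup of some arithmetic group whose profinite completion it sees fully. Second, use congruence rigidity of \(\mathbf{G}\), through Theorem~\ref{thm:cong-cong}, to identify that arithmetic group with \(\Gamma\) up to commensurability. Third, use strong Grothendieck rigidity to close the remaining gap.

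For the first step, replace \(\Delta\) by a finite index subgroup so that \(\widehat{\Delta}\) is isomorphic to an open subgroup of \(\widehat{\Gamma}\). By CSP, \(\widehat{\Gamma}\) is commensurable with \(\overline{\Gamma}\), an open subgroup of \(\mathbf{G}(\A^f_k)\). I then expect Spitler's results to give the following: a finitely generated residually finite group whose profinite completion is commensurable with such a congruence completion of a higher rank arithmetic group embeds, after passing to finite index, into an arithmetic subgroup \(\Lambda \le \mathbf{H}(l)\) of some \(l\)-group \(\mathbf{H}\). The embedding \(\Delta \hookrightarrow \Lambda\) should induce an isomorphism \(\widehat{\Delta} \cong \widehat{\Lambda}\), and \(\Lambda\) is again of higher rank with CSP. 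The key inputs here are Margulis superrigidity applied to representations extracted from the profinite completion, and adelic superrigidity, which together produce \(\mathbf{H}\) and the local isomorphism. This step is the main obstacle: one must check that Spitler's hypotheses are met in the present generality (arbitrary, not necessarily split, \(\mathbf{G}\)) and that the induced map on completions really is an isomorphism rather than merely an open embedding. If only an open embedding is available, I would first pass to a finite index subgroup of \(\Lambda\) so that the map becomes an isomorphism.

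For the second step, note that now \(\widehat{\Lambda} \approx \widehat{\Gamma}\). By CSP for both groups this gives \(\overline{\Lambda} \approx \overline{\Gamma}\). Since \(\mathbf{G}\) is congruence rigid, Theorem~\ref{thm:cong-cong} shows that \(\Gamma\) is congruence rigid, and hence \(\Lambda \approx \Gamma\).

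For the third step, observe that \(\Gamma\) is strongly Grothendieck rigid and strong Grothendieck rigidity is a commensurability invariant by~\cite{Sun:3-manifold-groups}, so the finite index subgroup \(\Lambda\) we passed to is Grothendieck rigid. The finitely generated subgroup \(\Delta \le \Lambda\) induces an isomorphism \(\widehat{\Delta} \cong \widehat{\Lambda}\), so Grothendieck rigidity forces \(\Delta = \Lambda\). Therefore \(\Delta \approx \Lambda \approx \Gamma\), and \(\Gamma\) is profinitely solitary.
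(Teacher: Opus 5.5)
There is a genuine gap in the order of your argument. In Step~1 you have Spitler's theorem output more than it gives: not only $\mathbf{H}$, $\Lambda$ and the embedding, but also that $\Lambda$ is of higher rank with CSP and that $\Delta \hookrightarrow \Lambda$ induces $\widehat{\Delta} \cong \widehat{\Lambda}$. What Spitler's theorem actually produces is an $l$-group $\mathbf{H}$, an arithmetic subgroup $\Lambda \le \mathbf{H}$, an injection $g \colon \Delta_0 \to \Lambda$, and a local isomorphism $\mathbf{G} \cong_\Phi \mathbf{H}$. A priori $\mathbf{H}$ may have $\operatorname{rank}_\infty \mathbf{H} = 1$; this is exactly the ``remaining case'' discussed in Section~2. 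The dichotomy ``$g(\Delta_0) = \Lambda$, or $g(\Delta_0)$ is a proper subgroup whose inclusion induces an isomorphism of profinite completions'' is only available once $\mathbf{H}$ is known to have CSP. Your Step~2 relies on precisely these unproven properties:
\begin{itemize}
\item you need $\widehat{\Delta} \cong \widehat{\Lambda}$ to obtain $\widehat{\Lambda} \approx \widehat{\Gamma}$;
\item you need CSP for $\Lambda$ to pass from $\widehat{\Lambda} \approx \widehat{\Gamma}$ to $\overline{\Lambda} \approx \overline{\Gamma}$;
\item Theorem~\ref{thm:cong-cong} only applies when $\mathbf{H}$ is of higher rank.
\end{itemize}
So as written the argument is circular: the only available source for the rank and CSP of $\mathbf{H}$ is the global isomorphism with $\mathbf{G}$, which is what Step~2 is meant to deliver.

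The repair uses an ingredient you already mention, namely the local isomorphism. Apply the definition of congruence rigidity directly to $\mathbf{G} \cong_\Phi \mathbf{H}$. This gives $\phi \colon l \to k$ with $\mathbf{G} \cong_\phi \mathbf{H}$, hence $\Lambda \approx \Gamma$. It also shows that $\mathbf{H}$, being globally isomorphic to $\mathbf{G}$, is of higher rank and has CSP. Only then should you return to Spitler's theorem to obtain the dichotomy for $g(\Delta_0)$. Your Step~3 then finishes exactly as in the paper: strong Grothendieck rigidity passes from $\Gamma$ to $\Lambda$ by commensurability invariance, which forces $g(\Delta_0) = \Lambda$. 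The detour through completions and Theorem~\ref{thm:cong-cong} is unnecessary. As you anticipated, Spitler's extra hypotheses (no nontrivial homomorphisms to the center, and automorphisms induced by conjugation in some $\mathbf{GL_n}$) do need attention. The paper handles them by the arguments of \cite{Kammeyer-Spitler:chevalley}*{Section~4}.
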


Next, we discuss that congruence rigidity of \(\mathbf{G}\) is essentially also necessary for the profinite solitude of \(\Gamma \le \mathbf{G}\).  Firstly, we clearly have the following.

\begin{theorem} \label{thm:not-solitary}
  Let \(\mathbf{G}\) be a \(k\)-group of higher rank with CSP that is not congruence rigid among higher rank groups with CSP.  Then no arithmetic subgroup \(\Gamma \le \mathbf{G}\) is profinitely solitary.
\end{theorem}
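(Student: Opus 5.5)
The hypothesis supplies an \(l\)-group \(\mathbf{H}\) of higher rank with CSP that is locally isomorphic to \(\mathbf{G}\) but not globally isomorphic to it. My plan is to use \(\mathbf{H}\) to produce an arithmetic group \(\Delta \le \mathbf{H}\) which witnesses that a given arithmetic \(\Gamma \le \mathbf{G}\) is not profinitely solitary. Concretely, I need \(\widehat{\Gamma} \approx \widehat{\Delta}\) together with \(\Gamma \not\approx \Delta\). Since arithmetic subgroups of \(\mathbf{G}\) are all commensurable, and profinite commensurability and commensurability are both invariant under passing to commensurable groups, it suffices to treat \(\Gamma = \mathbf{G}(\mathcal{O}_k)\) and \(\Delta = \mathbf{H}(\mathcal{O}_l)\). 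Here \(\Delta\) is finitely generated, because higher rank arithmetic groups are, and it is residually finite, because it is linear. So \(\Delta\) is an admissible competitor in the definition of profinite solitude.

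The first step is \(\widehat{\Gamma} \approx \widehat{\Delta}\). By CSP for both groups, \(\widehat{\Gamma} \approx \overline{\mathbf{G}(\mathcal{O}_k)}\) and \(\widehat{\Delta} \approx \overline{\mathbf{H}(\mathcal{O}_l)}\). Now I use strong approximation, which applies because \(\mathbf{G}\) is simply connected and \(\operatorname{rank}_\infty \mathbf{G} \ge 2\) forces \(\mathbf{G}(k_\infty)\) to be noncompact. It identifies the congruence completion \(\overline{\mathbf{G}(\mathcal{O}_k)}\) with the open compact subgroup \(\prod_{v\nmid\infty}\mathbf{G}(\mathcal{O}_v)\) of \(\mathbf{G}(\A^f_k)\), and similarly for \(\mathbf{H}\). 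The local isomorphism \(\mathbf{G}\cong_\Phi\mathbf{H}\) induces a topological group isomorphism \(\mathbf{H}(\A^f_l)\cong\mathbf{G}(\A^f_k)\). This map carries the open compact subgroup \(\prod_{w}\mathbf{H}(\mathcal{O}_w)\) to some open compact subgroup of \(\mathbf{G}(\A^f_k)\). Any two open compact subgroups are commensurable, so the two congruence completions are commensurable, as required.

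The second step is \(\Gamma \not\approx \Delta\). Suppose instead that \(\Gamma \approx \Delta\). Then Margulis superrigidity, in the standard form for higher rank lattices, says that an abstract commensuration between the arithmetic groups \(\mathbf{G}(\mathcal{O}_k)\) and \(\mathbf{H}(\mathcal{O}_l)\) extends to a field isomorphism \(\phi\colon l \to k\) and an isomorphism \(\mathbf{G} \cong_\phi \mathbf{H}\). That would make \(\mathbf{G}\) and \(\mathbf{H}\) globally isomorphic, a contradiction. This is exactly the direction of Theorem~\ref{thm:cong-cong} in which commensurability of arithmetic groups yields global isomorphism, so I can cite that theorem or its superrigidity input directly.

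The step I expect to need the most care is the identification of the congruence completion with the open compact subgroup of the finite adele points. One issue is that this uses strong approximation. The other is that ``locally isomorphic'' is defined via an \(\A^f_\Q\)-isomorphism of adele rings, so I must check that the induced map on adelic points is a homeomorphism. Both facts are standard, and together with superrigidity they give the claim.
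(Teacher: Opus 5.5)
Your proposal is correct and follows the paper's proof. A local but non-global isomorphism gives commensurable congruence completions, CSP upgrades this to commensurable profinite completions, and Margulis superrigidity rules out \(\mathbf{G}(\mathcal{O}_k) \approx \mathbf{H}(\mathcal{O}_l)\). The only difference is that you prove the easy direction of the local-isomorphism step by hand via strong approximation, whereas the paper simply cites adelic superrigidity as in the proof of Theorem~\ref{thm:cong-cong}.
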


Assuming Serre's conjecture, it thus remains to consider the case that a given higher rank \(k\)-group \(\mathbf{G}\) is congruence rigid among higher rank groups, but there exists an \(l\)-group \(\mathbf{H}\) locally isomorphic to \(\mathbf{G}\) with \(\operatorname{rank}_\infty \mathbf{H} < 2\).  Clearly, if \(\operatorname{rank}_\infty \mathbf{H} = 0\), then \(\mathbf{H}(\mathcal{O}_l)\) is finite so that such an \(\mathbf{H}\) does not prevent \(\mathbf{G}(\mathcal{O}_k)\) from being profinitely solitary.  So the only \emph{remaining case} is that there exists such an \(\mathbf{H}\) with \(\operatorname{rank}_\infty \mathbf{H} = 1\).  This case will be discussed in Section~\ref{section:cong-solitude} below.  At this point, let us just say that the remaining case is rare: it cannot occur if \(\mathbf{G}\) has type \({}^1 A_n\) for \(n \ge 2\), \({}^1 D_{2n}\) for \(n \ge 2\), \({}^3 D_4\), \(E_6\), \(E_7\), \(E_8\), or \(G_2\) and in the other types, it can only occur if the absolute rank of \(\mathbf{G}\) is small.  Typically, a strongly Grothendieck rigid arithmetic group \(\mathbf{G}(\mathcal{O}_k)\) in the remaining case will still be profinitely solitary.  However, in type \(C_n\) and \(F_4\), the question depends on the well-known open problem whether lattices in \(\operatorname{Sp}(n,1)\) and \(F_{4(-20)}\) have CSP.

\subsection{Approach and methods} With Theorems~\ref{thm:congruence-rigid-implies-solitary} and~\ref{thm:not-solitary} as motivation in mind, let us now explain how to determine whether a given \(k\)-group \(\mathbf{G}\) is congruence rigid.  As a first observation, it already follows from the work of Linowitz--McReynolds--Miller~\cite{Linowitz-et-al:locally-equivalent}, that it is essentially necessary to assume that \(k\) is \emph{locally determined}, meaning every number field \(l\) with \(\A^f_k \cong \A^f_l\) is isomorphic to \(k\).  Note that all number fields of degree six or less over \(\Q\) are locally determined~\cite{Perlis:equation}*{Theorem~3}.  Assuming \(k\) is locally determined, congruence rigidity of \(\mathbf{G}\) asserts that every \(k\)-group \(\mathbf{H}\) with \(\mathbf{G} \cong_\Phi \mathbf{H}\) for some \(\Phi \in \Aut(\A^f_k)\) satisfies \(\mathbf{G} \cong_\phi \mathbf{H}\) for some \(\phi \in \Aut(k)\).  We can express this condition in Galois cohomology as follows.  Let \(\mathbf{G_0}\) be the unique \(\Q\)-split group of the same Cartan--Killing type as \(\mathbf{G}\).  Then \(\mathbf{G}\) corresponds to a unique class in \(H^1(k, \Aut \mathbf{G_0})\) and congruence rigidity of \(\mathbf{G}\) translates immediately to the statement that the fiber of the class of \(\mathbf{G}\) under the map
\begin{equation} \label{eq:goverline} \overline{g} \colon H^1(k, \Aut \mathbf{G_0})\ / \Aut(k) \longrightarrow \prod_{v \nmid \infty} H^1(k_v, \Aut \mathbf{G_0})\ \big/ \Aut(\A_k^f) \end{equation}
is a singleton.  Here \(\Aut(k)\) acts functorially on \(H^1(k, \Aut \mathbf{G_0})\).  Similarly, an automorphism of \(\A^f_k\) splits into a family of isomorphisms \(k_v \rightarrow k_{v'}\) for a bijection \(v \mapsto v'\) of the finite places of \(k\) so that \(\Aut(\A^f_k)\) acts functorially on \(\prod_{v \nmid \infty} H^1(k_v, \Aut \mathbf{G_0})\).  The diagonal map
\[ g \colon H^1(k, \Aut \mathbf{G_0}) \longrightarrow \prod_{v \nmid \infty} H^1(k_v, \Aut \mathbf{G_0}) \]
is equivariant with respect to the natural inclusion \(\Aut(k) \le \Aut(\A^f_k)\) which gives the orbit map~\(\overline{g}\).

As the functorial action on both sides is base point preserving, we see that \(\ker \overline{g}\) is trivial if and only if \(\ker g\) is trivial.  The complete description of when the latter happens, together with the corresponding conclusions on profinite solitude of arithmetic subgroups of \emph{split} groups was given in joint work of the second author with R.\,Spitler~\cite{Kammeyer-Spitler:chevalley}.  For context, let us briefly recall the strategy we employed there.

In this case, \(\mathbf{G} = \mathbf{G_0}\) and we have the split short exact sequence
\[ \begin{tikzcd} 1 \ar[r] & \Ad \mathbf{G_0} \ar[r, "i"] & \Aut \mathbf{G_0} \ar[r, "p"] & \Sym \Delta \ar[l, dashrightarrow, bend left=15, "s"] \ar[r] & 1 \end{tikzcd} \]
where \(\Ad \mathbf{G_0}\) is the adjoint form of inner automorphisms and \(\Sym \Delta\) is the constant group scheme of Dynkin diagram symmetries.  We obtain a corresponding split short exact sequence in first Galois cohomology.  Since a number theoretic consideration gives that the diagonal map
\[ h \colon H^1(k, \Sym \Delta) \longrightarrow \prod_{v \nmid \infty} H^1(k_v, \Sym \Delta) \]
has always trivial kernel, a five lemma type argument shows that \(\ker g\) is trivial if and only if
\[ f \colon H^1(k, \Ad \mathbf{G_0}) \longrightarrow \prod_{v \nmid \infty} H^1(k_v, \Ad \mathbf{G_0}) \]
has trivial kernel.  In turn, thanks to the short exact sequence
\[ 1 \longrightarrow Z(\mathbf{G_0}) \longrightarrow \mathbf{G_0} \longrightarrow \Ad \mathbf{G_0} \longrightarrow 1, \]
and the Hasse principle for simply connected groups, the question if \(\ker f\) is trivial can essentially be traced back to the question if the map
\[ d \colon H^2(k, Z(\mathbf{G_0})) \longrightarrow \bigoplus_{v \nmid \infty} H^2(k_v, Z(\mathbf{G_0})) \]
is injective.  This problem, finally, can be solved by means of the Albert--Brauer--Hasse--Noether theorem from global class field theory.

\medskip
For the general case of a possibly non-split \(k\)-group \(\mathbf{G}\), we have to find out whether other fibers of \(\overline{g}\) are singletons.  This is considerably more intricate as five more layers of difficulty arise in the above program:

\begin{enumerate}[label=(\roman*)]
\item \label{item:map-h} The actions of \(\Aut(k)\) and \(\Aut(\A^f_k)\) have to be taken into account throughout.  In particular, the map
  \[ \qquad \overline{h} \colon H^1(k, \Sym \Delta) / \Aut(k) \longrightarrow \prod_{v \nmid \infty} H^1(k_v, \Sym \Delta) / \Aut(\A^f_k) \]
  is in general, even outside type \(D_4\), no longer injective.

\item In order to make the passage from the automorphism group to the adjoint form, we have to replace \(\mathbf{G_0}\) with the unique \(k\)-quasi-split group \(\mathbf{G_1}\) of which \(\mathbf{G}\) is an inner \(k\)-twist.

\item While the map \(H^1(k, \Ad \mathbf{G_1}) \rightarrow H^1(k, \Aut \mathbf{G_1})\) still has trivial kernel, this does not mean it is injective.  In fact, the fibers of that map are the orbits of the \(\Sym \Delta (k)\)-action on \(H^1(k, \Ad \mathbf{G_1})\).  Correspondingly, we have to find the singleton fibers of the double quotient map
\begin{align*}
  \qquad \overline{f_1}\colon & \Sym \Delta(k)
    \setminus H^1(k, \Ad \mathbf{G_1}) / \Aut(k)_{\gamma_1} \\
  & \longrightarrow \prod_{v \nmid \infty} \left(\Sym \Delta (k_v)
    \setminus H^1(k_v, \Ad \mathbf{G_1})\right)\
    \big/ \Aut(\A_k^f)_{\gamma_1'}.
\end{align*}
Here, \(\gamma_1\) and \(\gamma_1'\) denote the classes determined by \(\mathbf{G}\) in the domain and codomain of \(h\) and we mod out their stabilizer subgroups from the right.

\item The center of a non-split quasi-split group \(\mathbf{G_1}\) is typically a norm one group of the form \(\mathbf{R}^{(1)}_{l/k}(\mu_n)\) for a quadratic field extension \(l/k\).  After passing from \(H^1(k, \Ad \mathbf{G_1})\) to \(H^2(k, Z(\mathbf{G_1}))\), we thus cannot apply the Albert--Hasse--Brauer--Noether theorem anymore.  Instead, we have to rely on the more general Poitou--Tate duality theorem for general finite abelian Galois modules to detect singleton fibers of the map
\begin{align*}
  \qquad \overline{d_1}\colon & \Sym \Delta(k)
    \setminus H^2(k, Z(\mathbf{G_1})) / \Aut(k)_{\gamma_1} \\
  & \longrightarrow \bigoplus_{v \nmid \infty} \left(\Sym \Delta (k_v)
    \setminus H^2(k_v, Z(\mathbf{G_1}))\right)\
    \big/ \Aut(\A_k^f)_{\gamma_1'}.
\end{align*}

  \item To identify the singleton fibers of \(\overline{f_1}\), we moreover need to understand the image of \(H^1(k_v, \mathbf{G_1}) \rightarrow H^1(k_v, \Ad \mathbf{G_1})\) at real places \(v\) of \(k\).  While this map, again, has trivial kernel, it is in general not injective.
\end{enumerate}

All these challenges will be addressed in this article, culminating in Theorems~\ref{thm:cong-rigid-no-symmetries}, \ref{thm:sym-rigidity-no-reals}, \ref{thm:cong-rigid-A-n}, \ref{thm:cong-rigid-D-n}, and \ref{thm:cong-rigid-E-6} below.  Outside type \(D_4\) and assuming \(k\) is locally determined, these theorems give the full description of when \(\mathbf{G}\) is congruence rigid.  It comes as no surprise that due to the above complications, the theorems become lengthy to state so that we refrain from reproducing them in this introduction.  Instead, we give a list of properties of \(k\) and \(\mathbf{G}\) that, depending on the Cartan--Killing type, impede congruence rigidity of \(\mathbf{G}\).  For further illustration, we then explain these properties in the special case of type \(A_n\).  Afterwards, we conclude with some interesting special cases of our main results.

\subsection{Criteria} It turns out that there are six different reasons that may prevent a \(k\)-group \(\mathbf{G}\) from being congruence rigid.  Four of them concern the field \(k\) only.
\begin{enumerate}[label=(\Roman*)]
\item \label{item:locally-determined} The field \(k\) is not locally determined.
\item \label{item:quad-ext-global} The field \(k\) has \(\Q\)-isomorphic quadratic extensions \(l_1/k\) and \(l_2/k\) but no isomorphism \(l_1 \cong l_2\) restricts to an automorphism of \(k\).
\item \label{item:quad-ext-local}The field \(k\) has quadratic extensions \(l_1/k\) and \(l_2/k\) with an isomorphism \(\A^f_{l_1} \cong \A^f_{l_2}\) that restricts to an automorphism of \(\A^f_k\).
\item \label{item:real-places} The field \(k\) has too many real places.
\item \label{item:uniform} The group \(\mathbf{G}\) reduces to non-isomorphic groups at too many pairs of finite places \(v, w\) with \(k_v \cong k_w\).
\item \label{item:inner-twins} The group \(\mathbf{G}\) has an \emph{inner twin} at too many finite places \(v\): a nontrivial inner \(k_v\)-twist that is \(k_v\)-isomorphic to \(\mathbf{G}\) (Definition~\ref{def:inner-twin}).
\end{enumerate}

The precise meaning of ``too many'' depends in each case on the Cartan--Killing type of \(\mathbf{G}\).  The essence of our main results is that in each Cartan--Killing type except \(D_4\), a subset of these properties, each with appropriate refinements, is necessary and sufficient for congruence rigidity.  We decided to omit the triality type \(D_4\) from our discussion, because already examining \(\overline{h}\) becomes an unwieldy task if \(\Sym \Delta \cong S_3\).

\subsection{Illustration in type \texorpdfstring{\(A_n\)}{A}} Let us now discuss the above properties in the special case that \(\mathbf{G}\) has type \(A_{n-1}\).  As was mentioned, the result of Linowitz--McReynolds--Miller~\cite{Linowitz-et-al:locally-equivalent}*{Theorem~6.2} shows that \(k\) has to be locally determined if \(\mathbf{G}\) has inner type.  The simplest instance of this is that for non-isomorphic locally isomorphic fields \(k\) and \(l\) (e.g.\,\(k = \Q(\sqrt[8]{7})\) and \(l = \Q(\sqrt[8]{112})\), as provided by Komatsu~\cite{Komatsu:adele-rings}), we have that \(\mathbf{SL_n}\), once considered as \(k\)-group and once as \(l\)-group, are locally isomorphic but not globally so.  This is what necessitates~\ref{item:locally-determined}.

On the other hand, let \(\mathbf{G}\) and \(\mathbf{H}\) be the quasi-split \(k\)-groups of outer type determined by two quadratic extensions \(l_1/k\) and \(l_2/k\) as in \ref{item:quad-ext-global} or \ref{item:quad-ext-local}, meaning \(\mathbf{G} = \mathbf{SU_n}(l_1, h_1)\) and \(\mathbf{H} = \mathbf{SU_n}(l_2, h_2)\) where \(h_i\) is a non-degenerate hermitian form of maximal index with respect to the nontrivial Galois transformation of \(l_i/k\).  Then \(\mathbf{G}\) and \(\mathbf{H}\) are locally but not globally isomorphic.  More generally, we obtain the same phenomenon starting with any outer form \(\mathbf{G}\) whose outer type is determined by \(l_1/k\) (Theorem~\ref{thm:inner-twists-of-nontrivial-h-bar}).  An example of fields as in \ref{item:quad-ext-global} are certain quadratic extensions \(l_1/k\) and \(l_2/k\), where \(k\) is the degree six field with polynomial
  \[ x^6 - x^5 + 14x^4 - 148x^3 + 1241x^2 - 3647x + 5023, \]
  as we explain in Section~\ref{section:quadratic}.  An example of fields as in \ref{item:quad-ext-local} are Komatsu's examples \(l_1 = \Q(\sqrt[8]{7})\) and \(l_2 = \Q(\sqrt[8]{112})\) viewed as quadratic extensions of \(k = \Q(\sqrt[4]{7})\).

  Property~\ref{item:real-places} is probably the most apparent obstacle to congruence rigidity, simply because the \(\A^f_k\)-type ``does not see'' the type of \(\mathbf{G}\) at real places.  For instance, if \(D\) denotes the unique quaternion algebra over \(\Q(\sqrt{2})\) that ramifies precisely at the two real places, then \(\mathbf{SL_1}(D)\) is locally isomorphic to the \(\Q(\sqrt{2})\)-group \(\mathbf{SL_2}\) but not globally so.  So in type \(A_1\), two real places is already ``too many''.  Note, however, that in type \({}^1 A_{2n}\), the field \(k\) may have arbitrary many real places whereas for higher rank \({}^1 A_{2n+1}\) and the outer types  \({}^2 A_n\), the exact condition, described in Theorem~\ref{thm:cong-rigid-A-n} is somewhat delicate.

  Condition~\ref{item:uniform} owes to the fact that given a \(k\)-group \(\mathbf{G}\), many \(\Aut(\A^f_k)\)-permutations of the infinite tuple of \(k_v\)-localizations of \(\mathbf{G}\) will again be globally realized.  But the group \(\Aut(k)\) is small, so \(\overline{g}\) will map many different \(\Aut(k)\)-orbits to the same \(\Aut(\A^f_k)\)-orbit.  As a simple example, note that the rational prime 31 is completely split in \(\Q(\sqrt[3]{2})\) and let \(D_i\) for \(i = 1,2,3\) be the three quaternion algebras over \(\Q(\sqrt[3]{2})\) which ramify precisely at two of the three primes over 31.  Then all three \(\Q(\sqrt[3]{2})\)-groups \(\mathbf{SL_1}(D_i)\) are locally isomorphic but no two of them are globally isomorphic because \(\Aut(\Q(\sqrt[3]{2}))\) is trivial.  To rule out such behavior, we will introduce the notion of \emph{weak uniformity} in Definition~\ref{def:weakly-uniform-no-sym} which is a generalization of the notion of \emph{local uniformity} introduced for quaternion algebras by Bridson--McReynolds--Reid--Spitler in~\cite{Bridson-et-al:absolute}*{Definition~4.10}.

  Finally, to illustrate why Condition~\ref{item:inner-twins} becomes necessary, consider the degree 3 division algebras \(D_1\) and \(D_2\) over \(\Q\) whose Brauer invariants in \(\Q/\Z\) at the primes 2, 3, 5, and 7 are given in Table~\ref{table:division-alg} 
\begin{table}[htb]
  \begin{tabular}{l|llll}
    & 2   & 3   & 5   & 7   \\ \hline
    \(D_1\) & 1/3 & 2/3 & 1/3 & 2/3 \\
    \(D_2\) & 1/3 & 2/3 & 2/3 & 1/3 \\
  \end{tabular}
  \caption{Brauer invariants of division algebras.}
  \label{table:division-alg}
\end{table}
and are trivial at all other primes including \(\infty\).  Then the two groups \(\mathbf{SL_1}(D_i)\) of type \(A_2\) are locally isomorphic, and in fact, even \(\A^f_k\)-isomorphic.  But they are not globally isomorphic because \(D_1\) and \(D_2\) are not inverses of one another in \(\operatorname{Br}(\Q)\).  To rule out such behavior, we will extend our notion of weak unformity in Definition~\ref{def:weakly-uniform-sym} to incorporate a condition on the action of the Dynkin diagram symmetries whenever such symmetries exist.

\subsection{Results} As mentioned above, outside type \(D_4\) and assuming \(k\) is locally determined, the full description of when \(\mathbf{G}\) is congruence rigid is given in Theorems~\ref{thm:cong-rigid-no-symmetries}, \ref{thm:sym-rigidity-no-reals}, \ref{thm:cong-rigid-A-n}, \ref{thm:cong-rigid-D-n}, and \ref{thm:cong-rigid-E-6} below.  In this section, we present some interesting special cases of our theorems.

\begin{theorem}[Corollary~\ref{cor:cong-rigid-split}] \label{thm:intro-quasi-split-galois}
    Let \(k\) be Galois over \(\Q\) and let \(\mathbf{G}\) be a \(k\)-group.  Assume furthermore that \(\mathbf{G}\) is quasi-split over \(k\) and not of type \(D_4\).  Then \(\mathbf{G}\) is congruence rigid if and only if  one of the following conditions is satisfied:
  \begin{enumerate}[label=(\roman*)]
    \item \(\mathbf{G}\) is of type \(A_{2n}\) for \(n \ge 1\) and splits at all infinite places of \(k\).
    \item \(\mathbf{G}\) is of type \(A_{2n+1}\) for \(n \ge 0\) or \(C_{n \ge 2}\), and either \(k = \Q\) and \(\mathbf{G}\) splits at the real place or \(k\) is totally imaginary.
    \item \(\mathbf{G}\) is of type \(B_{n \geq 3}\), \(D_{n \geq 5}\), \(E_6\), \(E_7\), \(E_8\), \(F_4\), or \(G_2\) and \(k\) is totally imaginary.
  \end{enumerate}
\end{theorem}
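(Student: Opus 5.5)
We first reduce to the congruence rigidity theorems of the paper. A Galois extension \(k/\Q\) is locally determined: \(\A^f_k \cong \A^f_l\) forces equal Dedekind zeta functions, and a Galois field is determined by its zeta function, since its splitting primes are exactly the primes splitting completely. Hence obstruction~\ref{item:locally-determined} never occurs, and congruence rigidity of \(\mathbf{G}\) means that the fiber of \(\overline{g}\) through the class of \(\mathbf{G}\) is a singleton. Since \(\mathbf{G}\) is quasi-split, it is the distinguished point \(\mathbf{G_1}\) of \(H^1(k,\Ad \mathbf{G_1})\). The problem therefore splits into two parts. The first concerns the outer layer: the fiber of \(\overline{h}\) through \(\gamma_1\) must be a singleton. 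The second concerns the inner layer: no nontrivial inner \(k\)-twist of \(\mathbf{G_1}\) may be locally isomorphic to it, even up to \(\Aut(\A^f_k)_{\gamma_1'}\) and diagram symmetries. The plan is to specialize Theorems~\ref{thm:cong-rigid-no-symmetries}, \ref{thm:sym-rigidity-no-reals}, \ref{thm:cong-rigid-A-n}, \ref{thm:cong-rigid-D-n} and \ref{thm:cong-rigid-E-6} to this situation, going through the Cartan--Killing types one at a time.

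For the outer layer we use that \(k\) is Galois. Then \(\Aut(k)=\Gal(k/\Q)\) acts transitively on the places above each rational prime. If \(l_1,l_2\) are quadratic extensions of \(k\) that are locally isomorphic compatibly with \(\A^f_k\), a Chebotarev argument applied to the Galois closure should show that they are conjugate under \(\Gal(k/\Q)\). Likewise, a \(\Q\)-isomorphism \(l_1\cong l_2\) automatically restricts to an automorphism of the normal field \(k\). This excludes \ref{item:quad-ext-global} and \ref{item:quad-ext-local}. The same transitivity disposes of the weak uniformity condition~\ref{item:uniform} at the distinguished point: the quasi-split group has the same localization at every place over a given \(p\), and it is fixed by \(\Gal(k/\Q)\). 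What remains to control are the inner twists \(\mathbf{H}\) of \(\mathbf{G_1}\) whose images in \(\prod_{v\nmid\infty}H^1(k_v,\Ad\mathbf{G_1})\) are trivial modulo symmetries and modulo \(\Aut(\A^f_k)\).

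The key step is to compute, for the quasi-split group, the kernel of \(f_1\) modulo symmetries. Such classes come from \(H^1\) classes that are trivial at all finite places. Via \(H^2(k,Z(\mathbf{G_1}))\) and Poitou--Tate, they are parametrized by real-place data: the image of \(H^1(k_v,\mathbf{G_1})\to H^1(k_v,\Ad\mathbf{G_1})\) at real \(v\), together with a global parity or reciprocity constraint. We then read off the answer type by type.

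\emph{Type \(A_{2n}\).} Here \(H^2\) of \(\mu_{2n+1}\) has no real contribution, so the inner twists detectable only at infinity come from unitary groups. This forces \(\mathbf{G}\) to be inner, that is split, at all infinite places.

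\emph{Types \(A_{2n+1}\) and \(C_n\).} The center has even order, and real places carry Brauer classes or hermitian signatures. Two real places, or one real place combined with a nonsplit real form, let us produce a nontrivial inner twist, as in the quaternion example over \(\Q(\sqrt2)\). A Galois field with a real place is totally real. So either \(k\) is totally imaginary, or \(k\) has at least two real places, which gives a counterexample, or \(k=\Q\). In the last case the Hasse principle together with Albert--Brauer--Hasse--Noether shows rigidity exactly when \(\mathbf{G}\) splits at \(\infty\).

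\emph{Remaining types.} Here the real-place part of \(H^1(k_v,\mathbf{G_1})\) is already nontrivial for the simply connected group. The Hasse principle for simply connected groups, or the \(H^1(k_v,\mathbf{G_1})\) contributions at real places, then yields a nontrivial twist trivial at all finite places whenever \(k\) has a real place, even when \(k=\Q\); for example, compact forms exist for every such type. Conversely, if \(k\) is totally imaginary, then the Hasse principle plus injectivity of \(d_1\) modulo symmetries gives rigidity.

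The main obstacle is the converse direction in types \(A_{2n+1}\) and \(C_n\) over \(k=\Q\), and for outer \(A\) with \(k\) totally imaginary. There one must verify, using the precise statements of Theorems~\ref{thm:cong-rigid-A-n} and \ref{thm:sym-rigidity-no-reals}, that no inner-twin phenomenon~\ref{item:inner-twins} arises at finite places for the quasi-split group. I would first check that, for quasi-split \(\mathbf{G}\), the local inner twin condition is vacuous because the trivial class has no nontrivial symmetric partner, and then read off the real-place counting conditions directly from those theorems.
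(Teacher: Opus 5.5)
Your skeleton matches the paper's. Galois implies locally determined, and a quasi-split \(\mathbf{G}\) sits at the base point, so \(\omega=0\). The case list then comes from Theorem~\ref{thm:cong-rigid-no-symmetries} and Corollary~\ref{cor:cong-rigid-split-sym-2}, using that a Galois field with a real place is totally real, so ``at most one real place'' means \(k=\Q\) or \(k\) totally imaginary.

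\textbf{The gap.} It sits in the one step where the Galois hypothesis does real work: the fiber of \(\overline{h}\) at \([\gamma]\) must be trivial. The ``if'' direction for the outer forms (\({}^2A_n\), \({}^2D_n\), \({}^2E_6\)) over any Galois \(k\neq\Q\) depends on it. You only assert that ``a Chebotarev argument should show'' that \(l_1\) and \(l_2\) are \(\Gal(k/\Q)\)-conjugate. But Chebotarev only turns the local isomorphism into almost conjugacy of \(U_i=\Gal(K/l_i)\) in \(G=\Gal(K/\Q)\), i.e.\ \(\operatorname{Ind}^G_{U_1}1=\operatorname{Ind}^G_{U_2}1\). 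Almost conjugate subgroups need not be conjugate: Komatsu's fields \(\Q(\sqrt[8]{7})\) and \(\Q(\sqrt[8]{112})\) over the non-Galois \(\Q(\sqrt[4]{7})\) are exactly such a failure.

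The missing idea is Proposition~\ref{prop:almost-conjugate}. Both \(U_i\) have index two in the normal subgroup \(N=\Gal(K/k)\), so Mackey's formula gives \(\operatorname{Res}^G_N\operatorname{Ind}^G_{U_i}1=[G:N]\,1_N+\sum_{Ng\in N\backslash G}\chi_{gU_ig^{-1}}\), where \(\chi_{gU_ig^{-1}}\) is the linear character of \(N\) with kernel \(gU_ig^{-1}\). Linear independence of irreducible characters then forces \(U_2=gU_1g^{-1}\) for some \(g\in G\). Since \(k\) is normal, \(g\) restricts to the required automorphism of \(k\) (Theorem~\ref{thm:h-bar-injective}).

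\textbf{Other incorrect justifications.} Several supporting arguments are wrong, although the conclusions they support are true.
\begin{enumerate}
\item Weak uniformity does not follow from transitivity. For an outer form, \(k(\sqrt{a})\) can split at some places above \(p\) and stay inert at others, and it need not be \(\Gal(k/\Q)\)-stable. The real reason is that \(\omega=0\) is fixed by \(S\), by every \(S_v\), by \(\Aut(k)_\gamma\) and by \(\Aut(\A^f_k)_{\gamma'}\). This also makes vacuous the inner-twin issue you flag as the ``main obstacle''.
\item At real places, nontriviality of \(H^1(k_v,\mathbf{G_1})\) is the wrong criterion. For \(\SL(n,\mathbb{H})\) this set has order two, yet \(Q_{\R}\) has trivial image. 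You need a class with nontrivial image under \(Q_{\R}\) modulo \(S_w\); Theorem~\ref{thm:map-real-groups} and Proposition~\ref{prop:map-real-groups} supply this for the quasi-split real forms in question.
\item Your compact-form example fails in general. For \(B_5\) over \(\Q\), Hasse--Witt reciprocity rules out a form that is split at every prime and compact at \(\infty\).
\item For \({}^2A_{2n+1}\) the center is \(\mathbf{R}^{(1)}_{l/k}(\mu_{2n+2})\). Injectivity of \(d_1\) therefore comes from Poitou--Tate (Table~\ref{tate-table}), not from Albert--Brauer--Hasse--Noether.
\end{enumerate}
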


In fact, the assumption that \(k/\Q\) is Galois both has the consequence that \(k\) is locally determined~\cite{Klingen:similarities}*{Theorem~III.1.4.b)} and that the map
\[ \overline{h} \colon \ k^\times/(k^\times)^2 \,/\, \Aut(k) \longrightarrow \prod_{v \nmid \infty} k_v^\times / (k_v^\times)^2 \,/\, \Aut(\A_k^f)  \]
that arises for \(\Sym \Delta \cong \Z / 2\) is injective.  We prove the latter in Theorem~\ref{thm:h-bar-injective} by a representation theoretic argument for the occuring Galois groups in Proposition~\ref{prop:almost-conjugate}.  It involves the Mackey restriction formula whose relevance in this context was pointed out to us by OpenAI GPT 5.  Theorem~\ref{thm:intro-quasi-split-galois} extends the main result of the second author with R.\,Spitler~\cite{Kammeyer-Spitler:chevalley} from split groups to quasi-split groups over Galois extensions.

For the special case that \(k = \Q\), we have the following general result where the additional condition appearing in \ref{item:a4n3} will be explained in Section~\ref{section:ade}.

\begin{theorem}[Corollary~\ref{cor:cong-rigid-Q}]
  Let \(\mathbf{G}\) be a \(\Q\)-group not of type \(D_4\).  Then \(\mathbf{G}\) is congruence rigid if and only if  one of the following conditions is satisfied:
    \begin{enumerate}[label=(\roman*)]
  \item \(\mathbf{G}\) is of type \(A_1\).
  \item \(\mathbf{G}\) is of type \({}^1 A_{2n}\) or \({}^1 A_{4n+1}\) for \(n \ge 1\) and weakly uniform.
    \item \(\mathbf{G}\) is of type \({}^2 A_{2n}\) for \(n \ge 1\), is not quasi-split at at most one finite place and splits over \(\R\).
   \item \label{item:a4n3} \(\mathbf{G}\) is of type \({}^1 A_{4n+3}\) for \(n \ge 0\), weakly uniform and no subset of coordinates of \(\omega\) adds up to \(\pm (n+1)\) in \(\Z/(4n+4) \cong H^0(k,Z')^*\).
    \item \(\mathbf{G}\) is of type \({}^2 A_{2n+1}\) for \(n \ge 1\), has an inner twin at at most one finite place, and over \(\R\) the group \(\mathbf{G}\) either becomes inner type or is isomorphic to \(\SU(3,1)\) if \(n=1\).
    \item \(\mathbf{G}\) is of type \(C_n\) for \(n \geq 2\) and over \(\R\) the group \(\mathbf{G}\) becomes isomorphic to \(\Sp(2n, \R)\).
    \item \(\mathbf{G}\) is of type \({}^1 D_{2n}\) for \(n \geq 3\), has an inner twin at exactly one finite place, and over \(\R\) the group \(\mathbf{G}\) becomes isomorphic to \(\Spin^*(4n)\).
    \item \(\mathbf{G}\) is of type \({}^2 D_{2n}\) for \(n \geq 3\), has no inner twins at finite places, and over \(\R\) the group \(\mathbf{G}\) becomes isomorphic to \(\Spin^*(4n)\).
    \item \(\mathbf{G}\) is of type \({}^1 D_5\), has no inner twins at finite places, and over \(\R\) the group \(\mathbf{G}\) becomes isomorphic to \(\Spin(7,3)\).
    \item \(\mathbf{G}\) is of type \({}^2 D_{2n+1}\) for \(n \geq 2\), has an inner twin at at most one finite place, and over \(\R\) the group \(\mathbf{G}\) becomes isomorphic to \(\Spin^*(4n+2)\) or to \(\Spin(7,3)\) if \(n=2\).
  \end{enumerate}
\end{theorem}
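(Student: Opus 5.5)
This corollary is the specialization to \(k = \Q\) of the general type-by-type theorems (Theorems~\ref{thm:cong-rigid-no-symmetries}, \ref{thm:sym-rigidity-no-reals}, \ref{thm:cong-rigid-A-n}, \ref{thm:cong-rigid-D-n}, and \ref{thm:cong-rigid-E-6}), and I would prove it by going through those theorems case by case. First I dispose of the field-theoretic obstructions \ref{item:locally-determined}--\ref{item:quad-ext-local}. The field \(\Q\) is locally determined, and \(\Aut(\Q)\) is trivial. \(\Aut(\A^f_\Q)\) is also trivial, because each \(\Q_p\) is rigid and an automorphism of \(\A^f_\Q\) must permute the places \(p\) while preserving residue characteristics. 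For \ref{item:quad-ext-global} and \ref{item:quad-ext-local}: two quadratic fields with isomorphic finite adele rings are equal, since for quadratic fields local determination follows from the splitting behaviour of primes (Chebotarev). Hence \(\overline{h}\) is injective and \(h\) has trivial kernel, so the double quotient maps \(\overline{f_1}\) and \(\overline{d_1}\) reduce to quotients by \(\Sym\Delta(\Q)\) and \(\Sym \Delta(\Q_p)\) only. Consequently the weak uniformity conditions of Definitions~\ref{def:weakly-uniform-no-sym} and~\ref{def:weakly-uniform-sym} lose their permutation part: no two distinct primes \(p \neq q\) have \(\Q_p \cong \Q_q\), so obstruction~\ref{item:uniform} is vacuous. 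For types with \(\Sym \Delta\) trivial it therefore imposes nothing, and in type \(A\) only the symmetry/inner twin part survives.

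Next, for each Cartan--Killing type I read off the remaining real-place condition~\ref{item:real-places} with exactly one real place. For the types without diagram symmetries (\(B_n\), \(C_n\), \(E_7\), \(E_8\), \(F_4\), \(G_2\)), Theorem~\ref{thm:cong-rigid-no-symmetries} should reduce congruence rigidity to the statement that the real form of \(\mathbf{G}\) is the unique form in its fiber of \(H^1(\Q,\Ad\mathbf{G_1})\to\prod_p\). By Poitou--Tate and the Hasse principle, this fiber is governed by the image of \(H^1(\R,\mathbf{G_1})\to H^1(\R,\Ad\mathbf{G_1})\) compatible with the global reciprocity constraint on \(H^2(\Q,Z)\). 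When \(Z = \mu_2\), the sum of local invariants vanishes, so changing the real component alone must be compensated at a finite place. Hence only real forms whose class lies in the image from \(H^1(\R,\mathbf{G_1})\) and is the unique such class remain rigid. For \(C_n\) this singles out \(\Sp(2n,\R)\), i.e.\ the split form. For \(B_n\), \(E_7\), \(E_8\), \(F_4\), \(G_2\), I expect to show that every form has a distinct real twin with the same finite data, which yields no congruence rigid cases; this matches their absence from the list. For \(A_1\) everything is rigid: a quaternion algebra over \(\Q\) is determined by its finite ramification by parity.

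The type \(A\), \(D\), \(E_6\) cases with symmetries are where the work lies. There I specialize Theorems~\ref{thm:sym-rigidity-no-reals}, \ref{thm:cong-rigid-A-n}, \ref{thm:cong-rigid-D-n}, and \ref{thm:cong-rigid-E-6}, counting inner twins at finite places and checking which real forms are fixed by the action of \(\Sym\Delta(\R)\) on \(H^1(\R,\Ad\mathbf{G_1})\). This gives the listed real forms \(\Spin^*(4n)\), \(\Spin(7,3)\), and \(\SU(3,1)\), together with the inner-twin counts (exactly one, at most one, none). The main obstacle is type \({}^1A_{4n+3}\). There \(H^0(\Q,Z')^*\cong\Z/(4n+4)\), and the Brauer class \(\omega\) of local invariants must be analysed: we must check that no sign flips of a subset of coordinates (induced by local inner twins via the symmetry) can be absorbed by the real component \(\pm(n+1)\). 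I would treat this by writing the global reciprocity condition \(\sum_v \mathrm{inv}_v = 0\) explicitly and solving for admissible real invariants in \(\{0, 2n+2\}\). This yields the stated subset-sum condition, while for \({}^1A_{2n}\) and \({}^1A_{4n+1}\) the real invariant is forced, so weak uniformity suffices. Finally, \(E_6\) and the excluded \(D\) cases should fail by exhibiting twins at the real place.
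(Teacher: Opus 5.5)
Your overall route is the paper's. The corollary is Theorems~\ref{thm:cong-rigid-no-symmetries}, \ref{thm:cong-rigid-A-n}, \ref{thm:cong-rigid-D-n} and~\ref{thm:cong-rigid-E-6} read off for \(k=\Q\). There \(\Q\) is locally determined, \(\Aut(\Q)\) and \(\Aut(\A^f_\Q)\) are trivial, and \(\overline{h}\) is injective. Hence weak uniformity is vacuous without diagram symmetries and reduces to \(|S^\omega.\omega|\le 2\) otherwise. Two of your arguments are correct as they stand:
\begin{itemize}
\item the quaternion-algebra argument for \(A_1\);
\item the parity analysis separating \({}^1A_{4n+1}\) from \({}^1A_{4n+3}\). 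Flipping the coordinates on \(U\) changes \(c\) by \(-2\sum_{v\in U}\omega_v\). The real invariant in \(\{0,2n+2\}\subset\Z/(4n+4)\) can absorb this exactly when \(\sum_{v\in U}\omega_v=\pm(n+1)\). It can never do so when the real invariant lies in \(\{0,2n+1\}\subset\Z/(4n+2)\).
\end{itemize}

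Where your sketch goes wrong is the real-place criterion. You state it in two forms, and neither is right.

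What actually enters, via Theorem~\ref{thm:f_1-decomposition}, is whether \(Q_\R\colon H^1(\R,\mathbf{G})\to H^1(\R,\Ad\mathbf{G})\) has trivial image for the real form of \(\mathbf{G}\) itself. Equivalently, the fiber of \(\delta_\R\) through the real class of \(\mathbf{G}\) must be a singleton; this is what Theorem~\ref{thm:map-real-groups} tabulates.

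In the symmetric types you propose instead to check which classes are fixed by \(\Sym\Delta(\R)\). That test fails in both directions:
\begin{itemize}
\item The symmetry swaps the two classes of \(\PSO^*(4n)\) (Proposition~\ref{prop:map-real-groups}). So your test would discard \(\Spin^*(4n)\), which is exactly the form required in types \(D_{2n}\).
\item In type \(A\) the symmetry acts trivially on \(H^1(\R,\Ad\mathbf{G_1})\). So your test would admit every \(\SU(r,s)\) in the outer types, for instance \(\SU(2,1)\) or \(\SU(2,2)\), contrary to the requirements in types \({}^2A_{2n}\) and \({}^2A_{2n+1}\).
\end{itemize}

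In the symmetry-free types, your condition \emph{lies in the image of \(H^1(\R,\mathbf{G_1})\)} uses the untwisted quasi-split group. That would rule out \(\SL(1,\mathbb{H})\), which contradicts your own \(A_1\) argument. It gives the right answer for \(C_n\) only because the fiber over the nontrivial real invariant contains several forms \(\Sp(p,q)\) with \(p+q=n\). Once you substitute the trivial-image criterion, the case-by-case specialization goes through as you outline.

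One further caution concerns the \(D\) cases. Theorem~\ref{thm:cong-rigid-D-n} is stated only as a sufficient condition. For the \emph{only if} direction in types \(D_{2n}\) and \(D_{2n+1}\) you need the necessity arguments inside its proof, not just its statement. An example is that \({}^1D_{2n}\) needs exactly one finite inner twin, forced by the Poitou--Tate sum being \((1,0)\).
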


In particular, the theorem says the following.
\begin{corollary}
  No \(\Q\)-group of type \(B_n\) for \(n \ge 3\), \({}^1 D_{2n+1}\) for \(n \ge 3\), \(E_6\), \(E_7\), \(E_8\), \(F_4\), or \(G_2\) is congruence rigid. 
\end{corollary}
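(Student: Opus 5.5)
The corollary follows by reading off the classification of congruence rigid \(\Q\)-groups in the preceding theorem (Corollary~\ref{cor:cong-rigid-Q}). I will check that none of the listed types occurs among the cases (i)--(x) there.

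That list of cases consists only of the following types:
\begin{itemize}
\item \(A_1\), \({}^1A_{2n}\), \({}^1A_{4n+1}\), \({}^2A_{2n}\), \({}^1A_{4n+3}\), \({}^2A_{2n+1}\);
\item \(C_n\);
\item \({}^1D_{2n}\) and \({}^2D_{2n}\) with \(n\ge 3\);
\item \({}^1D_5\);
\item \({}^2D_{2n+1}\) with \(n\ge2\).
\end{itemize}

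The types in the corollary are handled as follows.
\begin{itemize}
\item Types \(B_n\) (\(n\ge3\)), \(E_7\), \(E_8\), \(F_4\) and \(G_2\) never appear in the list. So a \(\Q\)-group of any of these types satisfies none of the conditions and is not congruence rigid.
\item Type \(E_6\) also does not appear in the list, whether of inner type \({}^1E_6\) or outer type \({}^2E_6\).
\item For type \(D_{2n+1}\), only the inner type \({}^1D_5\) (that is, \(n=2\)) and the outer types \({}^2D_{2n+1}\) appear. Hence \({}^1D_{2n+1}\) with \(n\ge3\) is excluded.
\end{itemize}
The hypothesis ``not of type \(D_4\)'' in Corollary~\ref{cor:cong-rigid-Q} does not interfere, since none of the types in the corollary is \(D_4\).

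Therefore, for every type listed, Corollary~\ref{cor:cong-rigid-Q} shows that no \(\Q\)-group of that type is congruence rigid. The only care needed is the bookkeeping of the index ranges, in particular confirming that \({}^1D_{2n+1}\) for \(n\ge3\) is genuinely absent from the list while \({}^1D_5\) is present.
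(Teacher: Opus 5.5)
Your proposal is correct and is exactly the paper's argument: the paper introduces the statement with ``In particular, the theorem says the following,'' i.e.\ it is read off directly from the list in Corollary~\ref{cor:cong-rigid-Q}. Your bookkeeping of the index ranges is accurate, in particular that \({}^1D_5\) is the only inner form of type \(D_{2n+1}\) appearing in that list.
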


One rough way to make Condition~\ref{item:real-places} from the above list precise is the following.

\begin{theorem} \label{thm:3-real-places-not-rigid}
  Suppose \(k\) has three or more real places and the \(k\)-group \(\mathbf{G}\) is neither of type \(A_{2n}\) nor \(D_4\).  Then \(\mathbf{G}\) is not congruence rigid.
\end{theorem}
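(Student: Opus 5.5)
We construct, for every such \(\mathbf{G}\), a \(k\)-group \(\mathbf{H}\) that is an inner \(k\)-twist of \(\mathbf{G}\), is \(k_v\)-isomorphic to \(\mathbf{G}\) at every finite place \(v\), but differs from \(\mathbf{G}\) at some real place in a way that no automorphism of \(k\) can repair. Since \(\Phi = \mathrm{id}\) then witnesses local isomorphism, \(\mathbf{G}\) is not congruence rigid. The construction uses the exact sequence \(1 \to Z(\mathbf{G}) \to \mathbf{G} \to \Ad \mathbf{G} \to 1\) and a twisting class \(\xi \in H^1(k, \Ad \mathbf{G})\) that is trivial at all finite places and nontrivial at prescribed real places.

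\textbf{Step 1: a real inner twist.} We first show that for every real place \(v\), and every type other than \(A_{2n}\) (and \(D_4\), which is excluded), the group \(\mathbf{G}_{k_v}\) has a nontrivial inner \(\R\)-twist \(\mathbf{G}'_v\) whose class lies in the image of \(H^1(\R, Z(\mathbf{G})) \to H^1(\R,\Ad \mathbf{G})\) or, more generally, of \(H^1(\R, \Ad\mathbf{G})\), and which is not \(\R\)-isomorphic to \(\mathbf{G}_{k_v}\).

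This should follow from the classification of real forms. In every type except \(A_{2n}\), each inner class of real forms contains at least two non-isomorphic forms. Type \(A_{2n}\) is exactly the exception: the outer class there contains only \(\SL_{2n+1}(\R)\), and the inner forms \(\SU(p,q)\) twist as inner forms of the compact one. This explains the hypothesis, and it should be checked case by case using Cartan's list, noting that \(B_n, C_n, D_n, E_n, F_4, G_2\) and \(A_{2n+1}\) all admit at least two forms in each inner class.

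\textbf{Step 2: global realization with local triviality at finite places.} We seek \(\xi \in H^1(k,\Ad\mathbf{G})\) whose localizations are trivial at every finite place, equal to the class of \(\mathbf{G}'_v\) at chosen real places, and arbitrary elsewhere. By the Hasse principle and the surjectivity \(H^1(k,\mathbf{G}) \to \prod_{v\mid\infty} H^1(k_v,\mathbf{G})\) (Borel--Harder/Kneser), the cleanest route is to lift to the simply connected group where possible. For finite places \(H^1(k_v,\mathbf{G})=1\), so any global class of \(\mathbf{G}\) with prescribed real components maps to a twist of \(\Ad\) that is trivial at all finite places. When the real twist \(\mathbf{G}'_v\) does not come from \(H^1(k_v,\mathbf{G})\), we instead use \(H^1(k,\Ad\mathbf{G})\to H^2(k,Z)\) together with Poitou--Tate. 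Here we require the obstruction classes in \(H^2(k_v,Z)\) at real places to satisfy the global reciprocity condition, which is arranged by choosing twists at an even number of real places, or by balancing against other real places. This parity constraint is exactly where three real places are needed: with three we can always choose the pattern to satisfy the constraint while still leaving the result asymmetric.

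\textbf{Step 3: ruling out a global isomorphism.} We must ensure \(\mathbf{H}\not\cong_\phi\mathbf{G}\) for all \(\phi\in\Aut(k)\). Because \(\Aut(k)\) permutes real places, any isomorphism would preserve the multiset of real types. With at least three real places, we choose \(\mathbf{H}\) so that the number of real places where \(\mathbf{H}_{k_v}\cong\mathbf{G}'_v\), rather than \(\mathbf{G}_{k_v}\), changes the count of each isomorphism type among the real localizations, for instance by twisting at exactly two of the three places, or at one place if parity allows. This makes the two multisets differ, so no \(\phi\) can match them.

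\textbf{Expected main obstacle.} The main obstacle is Step 2 in the cases where the desired real forms are not reachable from \(H^1(k_v,\mathbf{G})\), namely types \(A_{2n+1}\), \(C_n\), \(D_n\), \(E_7\) with center of even order. There we must verify, through the Poitou--Tate sequence for \(Z(\mathbf{G})\) (possibly the twisted center \(\mathbf{R}^{(1)}_{l/k}(\mu_n)\)), that a suitable pattern of real obstruction classes with trivial finite components lies in the image of the global \(H^2\). The plan is to take the real components so that their sum in the dual of \(H^0(k,Z')\) vanishes. Three real places give enough freedom to achieve this together with the asymmetry of Step 3.
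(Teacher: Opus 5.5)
Your architecture is the one behind the paper's proof, which deduces the statement from Theorems~\ref{thm:cong-rigid-no-symmetries}, \ref{thm:cong-rigid-A-n}, \ref{thm:cong-rigid-D-n} and~\ref{thm:cong-rigid-E-6}. The two mechanisms are the same as yours. If some real localization admits a non-isomorphic inner twist coming from \(H^1(k_w,\mathbf{G})\), one twists by a class of \(H^1(k,\mathbf{G})\) concentrated at that place (Theorem~\ref{thm:f_1-decomposition}). Otherwise one changes real coordinates of \(\underline{\omega}\) at two real places, subject to Poitou--Tate reciprocity, and lifts through \(\delta_1\) (Propositions~\ref{prop:type-2n+1-3-real-places}, \ref{prop:2-real-places-D-n}). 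Taking \(\Phi=\mathrm{id}\) and comparing multisets of \(\R\)-isomorphism types is a clean substitute for the paper's orbit counting.

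There is, however, a genuine gap exactly where the hypothesis of three real places enters. You only assert that three places leave ``enough freedom'', and the concrete recipe you give, twisting at exactly two of the three places, can fail. In the types whose real coordinates lie in \(H^2(k_w,Z)\cong\Z/2\), every \(c_w\) is injective with the same image. So with the finite coordinates fixed, you must flip an even number of real coordinates. If the two flipped places carry different invariants, the two types are merely exchanged and the multiset of real types is unchanged. Examples are an \(\SL(2,\R)\)-place and an \(\SU(2)\)-place in type \(A_1\), or \(\SL(2n+2,\R)\) and \(\SL(n+1,\mathbb{H})\) in type \({}^1A_{2n+1}\).

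The missing step is a pigeonhole choice. Among three real places, two carry the same coordinate in \(\Z/2\). Flipping exactly those two lowers by two the number of real places with that invariant. Since the class in \(H^2(k_w,Z)\) modulo \(S_w\) is an \(\R\)-isomorphism invariant, the multisets then differ. This is precisely the argument of Proposition~\ref{prop:type-2n+1-3-real-places} and of the \(A_1\) case of Theorem~\ref{thm:cong-rigid-no-symmetries}. For \(D_{2n}\), where the coordinates at real places of inner type lie in \((\Z/2)^2\), a different two-place replacement is needed, as in Proposition~\ref{prop:2-real-places-D-n}. You should also say explicitly why the lift of the Poitou--Tate class is trivial at every finite place: \(\delta\) is surjective over \(k\) and bijective over each finite \(k_v\) (Kneser).

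Two minor corrections. The map in Step~1 should be \(H^1(\R,\mathbf{G})\to H^1(\R,\Ad\mathbf{G})\). For \(A_{2n}\), the singleton inner class is that of \(\SL(2n+1,\R)\), not the class of the \(\SU(p,q)\).

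With these repairs your argument is complete, and it even avoids the Adams--Ta{\"\i}bi list behind Theorem~\ref{thm:map-real-groups}. Step~1 is then needed only at real places with \(H^2(k_w,Z)=0\): types \(E_6\), \(E_8\), \(F_4\), \(G_2\), and places where a \({}^2D_{2n}\)-group stays outer. At those places every inner twist lifts to \(H^1(k_w,\mathbf{G})\) by exactness.
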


As our final special case, the following theorems say that Condition~\ref{item:inner-twins} quickly spoils congruence rigidity as well.  Recall that Condition~\ref{item:inner-twins} is only relevant in types with Dynkin diagram symmetries so that these results exhaust all cases except type \(D_4\).

\begin{theorem}[Theorem~\ref{thm:many-inner-twins}]
  Let \(\mathbf{G}\) be a \(k\)-group of type \({}^1 A_{n \ge 2}\), \({}^1 D_{n \ge 5}\), or \({}^1 E_6\).  Let \(r\) be the number of finite places \(v\) where \(\mathbf{G}\) has an inner twin.  Let furthermore
  \begin{align*}m =
    \begin{cases}
      2n+1 & \text{if } \mathbf{G} \text{ has type } {}^1 A_{2n}, \\
      n+1 & \text{if } \mathbf{G} \text{ has type } {}^1 A_{2n+1}, \\
      2 & \text{if } \mathbf{G} \text{ has type } {}^1 D_{n \geq 5}, \\
      3 & \text{if } \mathbf{G} \text{ has type } {}^1 E_6.
    \end{cases}
  \end{align*}
  If \([k : \Q] < 2^{\left\lfloor\frac{r-1}{m}\right\rfloor}\), then \(\mathbf{G}\) is not congruence rigid.
\end{theorem}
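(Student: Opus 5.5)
Write \(\Sym\Delta=\{1,\sigma\}\) for the nontrivial diagram symmetry; it acts on \(Z(\mathbf{G})\) by inversion. The plan is a counting argument: produce many inner \(k\)-twists of \(\mathbf{G}\) that are locally isomorphic to \(\mathbf{G}\), and show that \(\Aut(k)\) is too small to make them all globally isomorphic to \(\mathbf{G}\).

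First I reduce to \(H^2\) of the center. Since \(\mathbf{G}\) has inner type, its inner \(k\)-twists are controlled by \(H^1(k, \Ad \mathbf{G})\). By the Hasse principle for simply connected groups and the connecting map to \(H^2(k, Z(\mathbf{G}))\), giving a twist amounts to giving a class \(x \in H^2(k,Z(\mathbf{G}))\) compatible with the real places. By Poitou--Tate (here just Albert--Brauer--Hasse--Noether, since \(Z\cong\mu_{n+1}\), \(\mu_2\times\mu_2\) or \(\mu_4\), or \(\mu_3\)), such an \(x\) is a family of local invariants \(x_v\) with sum zero. I take \(x_v = 0\) at all infinite places, so the twist agrees with \(\mathbf{G}\) at every real place.

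Next I build the locally isomorphic twists. Let \(S=\{v_1,\dots,v_r\}\) be the finite places where \(\mathbf{G}\) has an inner twin. At each \(v_i\), let \(c_i \ne 0\) be the local class of that twin. The twist of \(\mathbf{G}\) by \(c_i\) is then \(k_{v_i}\)-isomorphic to \(\mathbf{G}\), and this isomorphism is realized by the action of \(\sigma\) on the twisting class. For each subset \(T\subseteq S\) I want a global class \(x_T\) whose local component is \(c_i\) for \(v_i\in T\) and zero elsewhere. This requires the reciprocity condition \(\sum_{v_i\in T} c_i = 0\) in the relevant quotient of \(\Q/\Z\) or its \(H^0(k,Z')^*\) analogue. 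Each resulting group \(\mathbf{G}_T\) is \(\A_k^f\)-isomorphic to \(\mathbf{G}\) via \(\Phi=\mathrm{id}\), because locally it is either untwisted or is an inner twin.

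The crux is to count subsets \(T\) satisfying reciprocity. The \(c_i\) take values in a group in which every element has order dividing \(m\) or a divisor tied to \(m\). By pigeonhole, among any \(m\) consecutive \(c_i\) one can find a nonempty zero-sum block, since partial sums of a sequence of length \(m\) in a cyclic-type group of exponent at most \(m\) must repeat. This is where the specific values of \(m\) enter. For \({}^1A_{2n}\) the group is \(\Z/(2n+1)\). For \({}^1A_{2n+1}\) only the elements whose inner twin exists matter, and these force order dividing \(n+1\). In type \(D\), twins occur only with order-2 invariants, giving \(m=2\). In \(E_6\) the group is \(\Z/3\). Splitting \(S\) minus one place into \(\lfloor (r-1)/m\rfloor\) blocks of size \(m\), each block contains a disjoint nonempty zero-sum subset. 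This yields \(t=\lfloor (r-1)/m\rfloor\) independent zero-sum blocks \(B_j\), and each choice of a subfamily gives a valid \(x_T\). That produces \(2^t\) classes with distinct local invariant supports. I expect this step to be the main obstacle: pinning down exactly which local classes are inner twins in each type, so that the pigeonhole bound really gives \(m\), and handling the spare place so that the \(-1\) appears.

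Finally I count isomorphism classes. The groups \(\mathbf{G}_T\) have pairwise distinct sets of places where they differ from \(\mathbf{G}\) in the \(\Sym\Delta\setminus H^1(\Ad)\) sense, so they lie in distinct \(\Sym\Delta(k)\)-orbits. Global isomorphism \(\mathbf{G}_T\cong_\phi \mathbf{G}\) with \(\phi\in\Aut(k)\) must carry the support of \(\mathbf{G}_T\) to the support of \(\mathbf{G}\). So all \(2^t\) groups are globally isomorphic to \(\mathbf{G}\) only if \(\Aut(k)\) identifies \(2^t\) distinct orbits with one. Hence \(|\Aut(k)|\ge 2^t\), and since \(|\Aut(k)|\le [k:\Q] < 2^t\), some \(\mathbf{G}_T\) is locally but not globally isomorphic to \(\mathbf{G}\). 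Therefore \(\mathbf{G}\) is not congruence rigid.
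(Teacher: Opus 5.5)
Your argument is correct, and its engine is the paper's. Pigeonhole on partial sums yields \(t=\lfloor (r-1)/m\rfloor\) disjoint blocks of inner-twin places on which the local symmetries can be switched on simultaneously without breaking reciprocity. In the paper's notation \(c_i=\sigma_{v_i}.\omega_{v_i}-\omega_{v_i}\), and your condition \(\sum_{v_i\in T}c_i=0\) is exactly the paper's requirement that \(\sum_{v\in U}c_v(\omega_v)\) be \(S\)-fixed. The resulting \(2^t\) combinations are then played off against \(|\Aut(k)|\).

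The packaging differs. The paper keeps invariants relative to the split form and counts \(2^{t+1}\) elements of \(S^{\omega}.\omega\); the leftover inner-twin places form one more admissible block because the total sum is \(S\)-fixed. It compares this with \(|S.\omega.\Aut(k)_{\gamma}|\le 2|\Aut(k)_{\gamma}|\), and then uses the fiber analysis, via Theorem~\ref{thm:sym-rigidity-no-reals}, to turn a nontrivial fiber of \(\overline{d_1}\) into failure of congruence rigidity. You instead twist \(\mathbf{G}\) itself, write the groups \(\mathbf{G}_T\) down explicitly, and count \(k\)-isomorphism classes, i.e.\ \(S\)-orbits in \(H^1(k,\Ad\mathbf{G_0})\): \(2^t\) of them against at most \(|\Aut(k)|\).

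Your route needs only the easy direction of the classification. Since every \(\mathbf{G}_T\) agrees with \(\mathbf{G}\) at the infinite places, it also applies verbatim when \(k\) has real places, whereas the paper concludes through a theorem stated for totally imaginary \(k\). The paper's route, in exchange, exhibits the phenomenon as a failure of weak uniformity inside its general classification.

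Two justifications should be repaired.

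First, each \(\mathbf{G}_T\) is \(k_v\)-isomorphic to \(\mathbf{G}\) at every place, so the \(\mathbf{G}_T\) do not differ from \(\mathbf{G}\) in the \(\Sym\Delta\backslash H^1(k_v,\Ad\mathbf{G_0})\) sense anywhere. The correct comparison is in \(H^1(k,\Ad\mathbf{G_0})\) itself. Let \(V\) be the set of inner-twin places. The diagonal \(\sigma\) sends the class of \(\mathbf{G}_T\) to a class that differs from that of \(\mathbf{G}\), among finite places, exactly on \(V\setminus T\). Hence \(\mathbf{G}_T\cong_k\mathbf{G}_{T'}\) forces \(T'\in\{T,V\setminus T\}\). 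Your \(2^t\) classes are pairwise distinct precisely because the reserved spare place lies in no \(T\), so \(V\setminus T\) never belongs to your family. This is the actual role of the \(-1\).

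Second, the pigeonhole step needs all \(c_i\) to lie in one subgroup of order at most \(m\), not merely to have order dividing \(m\). In the cyclic cases \(c_i=-2\omega_{v_i}\). For \({}^1A_{2n+1}\) this lies in the subgroup \(2\Z/(2n+2)\) of order \(n+1\), and for \({}^1D_{2n+1}\) it equals \(2\in\Z/4\). For \({}^1D_{2n}\) the symmetry swaps the two factors rather than inverting (Remark~\ref{rmk:dynkin-actions}), and every \(c_i\) equals \((1,1)\). That is what makes \(m=2\) work, since exponent \(2\) alone would not suffice in \(\Z/2\times\Z/2\).
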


\begin{theorem}\label{thm:outer-types-cong-rigid}
  Let \(\mathbf{G}\) be a \(k\)-group of type \({}^2 A_{n \geq 2}\), \({}^2 D_{n \geq 5}\), or \({}^2 E_6\).  Suppose there are at least two finite places of \(k\) where \(\mathbf{G}\) has an inner twin.  Then \(\mathbf{G}\) is not congruence rigid.
\end{theorem}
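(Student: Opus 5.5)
We will construct, from \(\mathbf{G}\), a second \(k\)-group \(\mathbf{H}\) with \(\mathbf{H} \cong_{\mathrm{id}} \mathbf{G}\) over \(\A^f_k\) such that \(\mathbf{H} \not\cong_\phi \mathbf{G}\) for every \(\phi \in \Aut(k)\). This shows that the fiber of \(\overline{g}\) through the class of \(\mathbf{G}\) is not a singleton. We stay inside the inner class of \(\mathbf{G}\). Let \(\mathbf{G_1}\) be the quasi-split inner form, with center \(Z = Z(\mathbf{G_1})\); this is a norm-one torus-type group \(\mathbf{R}^{(1)}_{l/k}(\mu_m)\) for the quadratic extension \(l/k\) defining the outer type. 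Inner forms of \(\mathbf{G_1}\) that agree with \(\mathbf{G}\) at real places are governed, via the Hasse principle and the surjection \(H^1(k,\Ad\mathbf{G_1}) \to H^2(k,Z)\) (which is bijective at finite places), by classes in \(H^2(k,Z)\). Poitou--Tate gives the exact sequence
\[ H^2(k,Z) \to \bigoplus_v H^2(k_v,Z) \to H^0(k,Z')^* \to 0, \]
where \(Z'\) is the Cartier dual. In the outer case, \(H^0(k,Z')\) is very small: it is \(\Z/2\) or trivial for \({}^2A_n\), \({}^2D_n\), and \({}^2E_6\), since the nontrivial Galois element acts on \(Z'\) by inversion twisted by the field character.

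Let \(v_1, v_2\) be two finite places at which \(\mathbf{G}\) has an inner twin. There is then a nontrivial inner \(k_{v_i}\)-twist \(\mathbf{G}'_{v_i}\) with \(\mathbf{G}'_{v_i} \cong \mathbf{G}\) over \(k_{v_i}\). On the level of \(H^2(k_{v_i},Z)\), this amounts to a class \(\xi_i\) with \(\delta_{v_i}+\xi_i\) lying in the \(\Sym\Delta(k_{v_i})\)-orbit of \(\delta_{v_i}\), where \(\delta\) is the class of \(\mathbf{G}\). At a finite place the relevant group is cyclic, and the inner twin corresponds to the diagram symmetry sending \(\delta_{v_i} \mapsto -\delta_{v_i}\) (up to identification). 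Hence \(\xi_i = -2\delta_{v_i}\) is nonzero.

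Next consider the local family that changes \(\mathbf{G}\) exactly at \(v_1\) and \(v_2\), or only at one of them. The obstruction of \((\xi_1,\xi_2)\) in \(H^0(k,Z')^*\) takes values in a group of order at most \(2\). Among the three local modifications \(\xi_1\), \(\xi_2\), \((\xi_1,\xi_2)\), at least one therefore has trivial obstruction and lifts to a global class \(\eta \in H^2(k,Z)\) that is zero at all other places, including the real ones. The twisted class \(\delta+\eta\) defines a \(k\)-group \(\mathbf{H}\) that is an inner twist of \(\mathbf{G}\). At every finite place it is \(k_v\)-isomorphic to \(\mathbf{G}\), trivially off \(v_1,v_2\) and by the inner twin property at \(v_1,v_2\). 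At infinite places it equals \(\mathbf{G}\). Hence \(\mathbf{H}\) is locally isomorphic to \(\mathbf{G}\) via the identity of \(\A^f_k\).

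The main obstacle is showing \(\mathbf{H} \not\cong_\phi \mathbf{G}\) for every \(\phi \in \Aut(k)\). The plan is as follows. Since \(\Aut(k)\) is finite and \(\mathbf{H}\) agrees with \(\mathbf{G}\) over \(k_v\) for all \(v\), a global isomorphism \(\mathbf{G} \cong_\phi \mathbf{H}\) must preserve the inner class. It then forces the class of \(\mathbf{H}\) in \(H^2(k,Z)\) to be \(\pm\phi^*\delta\), the sign coming from \(\Sym\Delta(k)\). This is trivial for outer types, because the diagram symmetry is not \(k\)-rational unless \(l\) splits, so only \(+\) occurs. Compare this with \(\delta+\eta\) locally at every place. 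We have the freedom to choose which of the three modifications to use, and there are many place-pairs \(v_1,v_2\). Using this freedom, we pick \(\eta\) so that no \(\phi\) matches, for instance by choosing \(\eta\) supported at places whose \(\Aut(k)\)-orbits are disjoint from the support of \(\phi^*\delta-\delta\). If that fails we refine by invoking the stabilizer argument for \(\overline{d_1}\), where we mod out by \(\Aut(\A^f_k)_{\gamma_1'}\), together with the injectivity results on \(\overline{h}\) established earlier, to rule out \(\phi\) moving \(l\). I expect verifying this non-isomorphism uniformly across \({}^2A_n\), \({}^2D_n\), and \({}^2E_6\) to be the delicate step.
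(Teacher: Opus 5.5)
Your construction of the competitors is essentially the paper's: the four elements of $S^\omega.\omega$ obtained by flipping $\omega_{v_1}$ and $\omega_{v_2}$. It needs only small repairs.
\begin{itemize}
\item At an inner-twin place $v$ (necessarily split in $l$) the map $c_v$ is $S_v$-invariant. So every flip pattern already lies in $\im d_1$, and no pigeonhole is needed.
\item For ${}^2D_{2n}$ the local group is $\Z/2\times\Z/2$ with the swap. There $\xi_i$ is $\sigma_{v_i}.\omega_{v_i}-\omega_{v_i}$, not $-2\delta_{v_i}$.
\item A class in $H^2(k,Z)$ gives an inner form only after choosing a lift to $H^1(k,\Ad\mathbf{G_1})$ with the same real components as $\mathbf{G}$. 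The Hasse principle for the twisted simply connected group permits this choice.
\end{itemize}

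\textbf{The gap.} The genuine gap is the step you defer, showing $\mathbf{H}\not\cong_\phi\mathbf{G}$. Two claims in your sketch of it are wrong.
\begin{itemize}
\item $S=H^0(k,\Sym\Delta)\cong\Z/2$ also for outer forms, because the twisted sequence still splits for the quasi-split $\mathbf{G_1}$. It acts by $-1$ on $H^2(k,Z)$; in type ${}^2A_n$ this is $[D]\mapsto[D^{\mathrm{op}}]$. So both signs occur. If $\omega$ is supported exactly at $v_1,v_2$, your double flip returns $\mathbf{G}$ itself.
\item $\eta$ cannot be placed away from the support of $\phi^*\delta-\delta$. It must sit at inner-twin places, and these lie in the support of $\omega$.
\end{itemize}

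\textbf{How the paper handles this step.} It argues via Theorem~\ref{thm:outer-types-weakly-uniform} and Proposition~\ref{prop:weakly-uniform-necessary}, plus Theorem~\ref{thm:cong-rigid-D-n} for ${}^2D_{2n}$ with real places. The key tool is a counting invariant. The number of finite places with $\omega_v=x$ is taken to be $\Aut(k)_\gamma$-invariant, while $S$ turns it into the number with $\omega_v=\sigma.x$. The four flip patterns give at least three values of this count, whereas $S.\omega.\Aut(k)_\gamma$ realizes at most two.

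\textbf{Caveat: the invariance can fail.} It presupposes identifications of $H^2(k_v,Z)$ with the split model at places split in $l$ that the global action respects. Such an identification amounts to choosing a place of $l$ above $v$. A lift of $\phi\in\Aut(k)_\gamma$ to $l$ may preserve these choices above some places and reverse them above others. When that happens, your last step fails, and with it the statement. Here is an example.
\begin{itemize}
\item Take $k=\Q(\sqrt2)$ and $l=\Q(\sqrt{2+\sqrt2})$. Then $l/\Q$ is cyclic of degree $4$ with generator $\tilde\phi$, $\tilde\phi^2$ generates $\Gal(l/k)$, and $\tilde\phi|_k\neq\mathrm{id}$.
\item The prime $17$ splits completely in $l$, with places $w,\tilde\phi w,\tilde\phi^2w,\tilde\phi^3w$.
\item Let $D$ be the cubic division algebra over $l$ with invariants $\tfrac13,\tfrac13,\tfrac23,\tfrac23$ at these places and unramified elsewhere. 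Then $\mathrm{cor}_{l/k}[D]=0$, so $D$ carries an involution $\tau$ of the second kind.
\item The group $\mathbf{G}=\SU(D,\tau)$ has type ${}^2A_2$ and inner twins at both places of $k$ over $17$.
\item Since $l$ is totally real, $H^1(\R,\SL_3)=1$ forces $H^1(k,\Ad\mathbf{G_1})\to H^2(k,Z)$ to be bijective.
\item Since $k/\Q$ and $l/\Q$ are Galois, every group locally isomorphic to $\mathbf{G}$ may be taken to be $\SU(D',\tau')$. Here $D'$ has invariants $(a,b,-a,-b)$ with $a,b\neq0$ at these places and none elsewhere.
\item These four classes are exactly the rotations $\tilde\phi^j_*[D]$. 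Hence $\mathbf{G}$ is congruence rigid, and none of your three modifications can work.
\end{itemize}

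\textbf{What your construction does prove.} It gives a counting version in the spirit of Theorem~\ref{thm:many-inner-twins}. Let $r$ be the number of inner-twin places. The $2^r$ flip patterns give $2^r$ distinct classes in $H^2(k,Z)$ of groups locally isomorphic to $\mathbf{G}$. Twists of $\mathbf{G}$ by $\phi\in\Aut(k)_\gamma$ account for at most $2|\Aut(k)_\gamma|$ of these classes, namely the $\pm\tilde\phi_*\xi$. So $\mathbf{G}$ is not congruence rigid once $2^{r-1}>|\Aut(k)_\gamma|$, for instance if $r\ge2$ and $\Aut(k)_\gamma=1$.
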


So all in all, one might want to say that congruence rigidity is a fairly rare property among \(k\)-groups that nevertheless occurs throughout all Cartan--Killing types under rigidity assumptions on \(k\) and uniformity assumptions on \(\mathbf{G}\).

\subsection{Relation to previous work}  So far, profinite solitude was investigated in two different set-ups: for lattices in higher rank simple Lie groups~\cites{Kammeyer-Kionke:adelic-superrigidity, Kammeyer:absolute, Kammeyer-Kionke:rigidity} and for Chevalley groups, meaning arithmetic subgroups of split \(k\)-groups~\cite{Kammeyer-Spitler:chevalley}.  By Margulis arithmeticity, the former are precisely the arithmetic subgroups of \(k\)-groups \(\mathbf{G}\) over number fields \(k\) with at most one complex place and such that \(\mathbf{G}\) is anisotropic at all but one infinite place.  Chevalley groups are of the opposite nature: as arithmetic subgroups of globally split groups, they are maximally isotropic at all places.  This work finally treats the general case and as such encompasses and extensively generalizes the results from~\cites{Kammeyer:absolute, Kammeyer-Spitler:chevalley} dealing with absolute profinite solitude.  The results in~\cites{Kammeyer-Kionke:adelic-superrigidity, Kammeyer-Kionke:rigidity} about profinite solitude of lattices in the same Lie group, however, remain of a different nature.

\subsection{Outline} The outline of the article is as follows.  In Section~\ref{section:cong-solitude}, we quickly prove the motivating Theorems~\ref{thm:cong-cong}, \ref{thm:congruence-rigid-implies-solitary}, and \ref{thm:not-solitary}.  We also give some more remarks on what can be inferred on profinite solitude from congruence rigidity if \(\mathbf{G}\) is congruence rigid among higher rank groups but not among groups of rank one.  Section~\ref{section:exact-sequences} gives the general Galois cohomological method to determine the singleton fibers of \(\overline{g}\).  In Section~\ref{section:adelic-automorphisms}, we examine the \(\Aut(\A^f_k)\)-action on the codomain of \(\overline{g}\) and we show that \(\overline{h}\) having trivial fiber at the quasi-split form of \(\mathbf{G}\) is a necessary condition for congruence rigidity.  In Section~\ref{section:quadratic}, we give conditions under which fibers of \(\overline{h}\) are trivial and nontrivial, respectively.  Section~\ref{section:inner-forms} strengthens the result of Section~\ref{section:adelic-automorphisms} by showing that \(\mathbf{G}\) has singleton fiber under \(\overline{g}\) if and only if the corresponding fibers of \(\overline{h}\) and \(\overline{f_1}\) are singletons.  Section~\ref{section:real-galois} shows, in turn, that singleton fibers occur for \(\overline{f_1}\) if and only if they occur for \(\overline{d_1}\) and if at every real place, \(\mathbf{G}\) becomes one of a certain list of real forms that we spell out explicitly.  Section~\ref{section:abceefg} concludes the characterization of congruence rigidity when \(\mathbf{G}\) has no Dynkin diagram symmetries.  For groups with Dynkin diagram symmetries, the notion of weak uniformity has to be adapted.  This will be done in Section~\ref{section:weak-uniformity} and we examine this generalized property in some detail.  With this preparation, the characterization of congruence rigidity for \(\mathbf{G}\) with Dynkin diagram symmetries will be given in Section~\ref{section:ade}.  Finally, Section~\ref{section:special-cases} deduces the special cases that we highlighted above.

\subsection{Acknowledgements} Both authors were supported by the RTG 2240 ``Algebro-geometric methods in algebra, arithmetic and topology'' (DFG 284078965).  We are indebted to Daniel Echtler, Benjamin Klopsch, Otto Overkamp, Fabian Rodatz, and Giada Serafini for many helpful discussions.

\section{Congruence rigidity and profinite solitude}
\label{section:cong-solitude}

In this section, we quickly provide proofs for the motivating theorems in the introduction relating congruence rigidity to profinite solitude.  We then comment on the case that a higher rank \(k\)-group \(\mathbf{G}\) is congruence rigid among higher rank groups but there exists a rank one \(l\)-group \(\mathbf{H}\) locally but not globally isomorphic to \(\mathbf{G}\).

\begin{proof}[Proof of Theorem~\ref{thm:cong-cong}.]
      Let \(\mathbf{G}\) be a higher rank \(k\)-group and let \(\mathbf{H}\) be a higher rank \(l\)-group.  It is an immediate consequence of Margulis superrigidity that \(\mathbf{G}(\mathcal{O}_k) \approx \mathbf{H}(\mathcal{O}_l)\) if and only if \(\mathbf{G}\) is globally isomorphic to \(\mathbf{H}\).  The precise deduction can be found in~\cite{Kammeyer:profinite-commensurability}*{Corollary~16}.  Similarly, it is immediate from adelic superrigidity~\cite{Kammeyer-Kionke:adelic-superrigidity}*{Theorem~3.2} that \(\overline{\mathbf{G}(\mathcal{O}_k)} \approx \overline{\mathbf{H}(\mathcal{O}_l)}\) if and only if \(\mathbf{G}\) is locally isomorphic to \(\mathbf{H}\).  From these two equivalences, the statement is clear.
    \end{proof}

\begin{proof}[Proof of Theorem~\ref{thm:congruence-rigid-implies-solitary}.]
  Let \(\Delta\) be finitely generated and residually finite with \(\widehat{\Gamma} \approx \widehat{\Delta}\).  Then there are finite index subgroups \(\Gamma_0 \le \Gamma\) and \(\Delta_0 \le \Delta\) such that \(\widehat{\Gamma_0} \cong \widehat{\Delta_0}\), hence~\cite{Spitler:profinite}*{Theorem~7.1} provides us with a number field \(l\), an \(l\)-group \(\mathbf{H}\), an arithmetic subgroup \(\Lambda \le \mathbf{H}\), an injection \(g \colon \Delta_0 \rightarrow \Lambda\), and a local isomorphism \(\mathbf{G} \cong_\Phi \mathbf{H}\) over an \(\A^f_\Q\)-isomorphism \(\Phi \colon \A^f_l \rightarrow \A^f_k\).  In fact, the cited theorem due to Spitler assumes the two additional conditions that \(\Gamma\) has no nontrivial homomorphisms to the center of \(\mathbf{G}\) and that every automorphism of \(\mathbf{G}\) is induced by conjugation in some \(\mathbf{GL_n}\).  But the same arguments as in \cite{Kammeyer-Spitler:chevalley}*{Section~4} apply so that we still obtain the above conclusion.

  Since \(\mathbf{G}\) is congruence rigid, we have an isomorphism \(\phi \colon l \rightarrow k\) such that \(\mathbf{G} \cong_\phi \mathbf{H}\).  It follows that \(\Gamma \approx \Lambda\) and that \(\mathbf{H}\) has CSP so that Spitler's theorem gives additionally that either \(g(\Delta_0) = \Lambda\) or \(g(\Delta_0)\) is a proper finitely generated subgroup of \(\Lambda\) whose inclusion induces an isomorphism of profinite completions.  Since \(\Gamma\) is strongly Grothendieck rigid, so is \(\Lambda\).  We thus have \(g(\Delta_0) = \Lambda\), whence \(\Gamma \approx \Delta\).
\end{proof}

\begin{proof}[Proof of Theorem~\ref{thm:not-solitary}.]
By assumption, \(\mathbf{G}\) is not congruence rigid among higher rank groups with CSP.  So there exists a higher rank \(l\)-group \(\mathbf{H}\) with CSP locally but not globally isomorphic to \(\mathbf{G}\).  As in the proof of Theorem~\ref{thm:cong-cong}, this gives that \(\overline{\mathbf{G}(\mathcal{O}_k)} \approx \overline{\mathbf{H}(\mathcal{O}_l)}\) and since both groups have CSP, we also have \(\widehat{\mathbf{G}(\mathcal{O}_k)} \approx \widehat{\mathbf{H}(\mathcal{O}_l)}\). But \(\mathbf{G}(\mathcal{O}_k) \not\approx \mathbf{H}(\mathcal{O}_l)\) because \(\mathbf{G}\) is not globally isomorphic to \(\mathbf{H}\).
\end{proof}

Let us now discuss what we termed the remaining case in the introduction.  The remaining case occurs if \(\mathbf{G}\) is a higher rank \(k\)-group such that there exists an \(l\)-group \(\mathbf{H}\) with \(\operatorname{rank}_\infty \mathbf{H} = 1\) and such that \(\mathbf{G}\) is locally isomorphic to \(\mathbf{H}\).  Let us first collect when this cannot happen.

\begin{proposition}
  Let \(\mathbf{G}\) be a higher rank \(k\)-group.  Then in the following cases there exists no \(l\)-group \(\mathbf{H}\) with \(\operatorname{rank}_\infty \mathbf{H} = 1\) such that \(\mathbf{G}\) is locally isomorphic to \(\mathbf{H}\):
  \begin{enumerate}[label=(\roman*)]
  \item \label{item:two-real-places} \(k\) has two or more complex places.
  \item \label{item:various-types} \(\mathbf{G}\) has type \({}^1 A_{n \ge 2}\), \({}^1 D_{2n}\) for \(n \ge 2\), \({}^3 D_4\), \(E_6\), \(E_7\), \(E_8\), or \(G_2\).    
  \end{enumerate}
  \end{proposition}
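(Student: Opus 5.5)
The plan is to exploit two facts: a local isomorphism \(\mathbf{G} \cong_\Phi \mathbf{H}\) preserves both the signature of the base field and the ``inner versus outer'' nature of the group, and rank one real forms are rare. Throughout, suppose \(\mathbf{H}\) is an \(l\)-group with \(\operatorname{rank}_\infty \mathbf{H} = 1\) and \(\mathbf{G} \cong_\Phi \mathbf{H}\) over an \(\A^f_\Q\)-isomorphism \(\Phi \colon \A^f_l \to \A^f_k\). Then \(l\) and \(k\) are arithmetically equivalent. I will invoke the classical fact (Perlis) that arithmetically equivalent number fields have the same numbers \(r_1\) of real and \(r_2\) of complex places.

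\emph{Case~\ref{item:two-real-places}.} Over a complex place \(w\) of \(l\), the group \(\mathbf{H}\) splits, so \(\operatorname{rank}_{l_w} \mathbf{H}\) equals the absolute rank, which is at least \(1\). If \(k\), and hence \(l\), has two or more complex places, then \(\operatorname{rank}_\infty \mathbf{H} \ge 2\), a contradiction.

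\emph{Case~\ref{item:various-types}, first step.} I first show that \(\mathbf{H}\) has the same inner/outer type as \(\mathbf{G}\); I expect this to be routine. The class of \(\mathbf{H}\) in \(H^1(l, \Sym \Delta)\) is carried by \(\Phi\) to the localizations of the class of \(\mathbf{G}\). If \(\mathbf{G}\) has inner type, the localized classes are all trivial. Triviality of the kernel of the diagonal map \(h\), which is a Chebotarev density argument, then gives that \(\mathbf{H}\) is of inner type, and the same reasoning applies to \({}^3 D_4\). So every real form \(\mathbf{H}_{l_w}\) at a real place \(w\) is an inner real form of the split form.

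\emph{Case~\ref{item:various-types}, second step.} Next I go through Cartan's classification of real forms. Rank one forces exactly one infinite place of \(l\) with real rank \(1\) and all others of real rank \(0\); by Case~\ref{item:two-real-places} we may also assume at most one complex place, and a complex place already has rank \(\ge 2\) in these types.
\begin{itemize}
\item For \(E_7\), \(E_8\), \(G_2\), \({}^3D_4\) and the inner forms of \(D_{2n}\), namely \(\Spin(p,q)\) with \(p,q\) even and \(\Spin^*(4n)\) with rank \(n \ge 2\), no real form has rank exactly \(1\).
\item For \(E_6\), the rank one form \(E_{6(-26)}\) has real rank \(2\); in fact all \(E_6\) real forms have rank \(0\) or \(\ge 2\).
\item For \({}^1 A_{m-1}\) with \(m \ge 3\), the inner real forms are \(\SL_m(\R)\) and \(\SL_{m/2}(\mathbb{H})\). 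The compact form \(\SU(m)\) is outer, so no real place contributes rank \(0\), and the rank is \(\ge 1\) at every infinite place. Hence \(l\) must have a single infinite place, which is real, so \(l = \Q\); since \(k\) is arithmetically equivalent to \(l\), also \(k = \Q\). Moreover \(\mathbf{H}_\R \cong \SL_2(\mathbb{H})\), so \(m = 4\).
\end{itemize}

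\emph{The remaining subcase, which I expect to be the main obstacle.} This is \({}^1 A_3\) over \(\Q\), with \(\mathbf{G}_\R\) of rank \(\ge 2\), hence \(\SL_4(\R)\), and \(\mathbf{H}_\R \cong \SL_2(\mathbb{H})\). Here I plan to compare Tits algebras. The Tits class of \(\mathbf{H}\) in \(\Br(\Q)[4]\) has invariant \(1/2\) at \(\infty\), whereas that of \(\mathbf{G}\) has invariant \(0\) there. Local isomorphism at finite places identifies their finite invariants up to a possible global sign coming from the diagram symmetry. Negation fixes the element \(1/2\), so the sums of the finite invariants agree up to a sign that does not affect whether the sum is \(0\) or \(1/2\). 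Reciprocity then gives
\[
  0 = \sum_p \operatorname{inv}_p(\mathbf{G}) \equiv \pm \sum_p \operatorname{inv}_p(\mathbf{H}) = \mp \tfrac12 \ne 0,
\]
which is a contradiction and finishes the proof. The delicate point is to check carefully that the sign ambiguity really is a single global sign, so that the parity argument goes through.
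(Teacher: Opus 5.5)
Your proof of (i), and of (ii) for every type except \({}^1A_3\), is correct and follows the same lines as the paper's proof.

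\begin{itemize}
\item For (i) you use equality of signatures under arithmetic equivalence.
\item For (ii) you show \(\mathbf{H}\) is inner at all real places, and note that no inner real form of these types has real rank one. The paper gets inner-ness by citing Kammeyer--Serafini, while you use the trivial kernel of \(h\).
\item For \({}^3D_4\) your first step needs a small fix: the class of \(\mathbf{G}\) is not in the kernel of \(h\). Apply the trivial-kernel argument to the image under the sign map \(H^1(l,S_3)\to H^1(l,\Z/2)\) instead.
\end{itemize}

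You are right that \({}^1A_3\) is exceptional, because \(\SL_2(\mathbb{H})\cong\Spin(5,1)\) is an inner real form of rank one. The paper's proof overlooks this case. Its own list of rank one groups contains \(\Spin(5,1)\), to which it assigns type \({}^1D_3={}^1A_3\).

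The gap is in your treatment of this subcase, and it cannot be repaired. Local isomorphism does not identify the finite invariants up to a single global sign. At each prime \(p\) separately, \(\mathbf{SL_1}(A_p)\cong\mathbf{SL_1}(B_p)\) holds if and only if \(\operatorname{inv}_p B=\pm\operatorname{inv}_p A\), and the sign may be chosen independently at each \(p\). This is exactly the inner twin phenomenon of Definition~\ref{def:inner-twin}, illustrated by the algebras \(D_1, D_2\) in the introduction.

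With place-dependent signs, reciprocity only demands \(2\sum_{p\in U}\operatorname{inv}_p A=\tfrac{1}{2}\) in \(\Q/\Z\), where \(U\) is the set of primes at which the sign flips. This can happen. Let \(A\) and \(B\) be central simple \(\Q\)-algebras of degree \(4\) with the following invariants at \(2,3,5,\infty\), and \(0\) elsewhere:
\begin{itemize}
\item for \(A\): \(\tfrac{1}{4},\tfrac{1}{4},\tfrac{1}{2},0\);
\item for \(B\): \(\tfrac{1}{4},\tfrac{3}{4},\tfrac{1}{2},\tfrac{1}{2}\).
\end{itemize}
Both sums vanish, so \(A\) and \(B\) exist by Albert--Brauer--Hasse--Noether. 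Then:
\begin{itemize}
\item \(\mathbf{G}=\mathbf{SL_1}(A)\) has \(\mathbf{G}(\R)\cong\SL_4(\R)\), so \(\operatorname{rank}_\infty\mathbf{G}=3\);
\item \(\mathbf{H}=\mathbf{SL_1}(B)\) has \(\mathbf{H}(\R)\cong\SL_2(\mathbb{H})\), so \(\operatorname{rank}_\infty\mathbf{H}=1\);
\item \(\mathbf{G}\cong\mathbf{H}\) over every \(\Q_p\); at \(p=3\) use \(B_3\cong A_3^{\mathrm{op}}\) together with \(x\mapsto x^{-1}\).
\end{itemize}
Hence the statement is false in type \({}^1A_3\) and has to exclude it.

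Your own reduction, completed by this construction, shows when such an \(\mathbf{H}\) exists. This happens exactly when \(k=\Q\), \(\mathbf{G}(\R)\cong\SL_4(\R)\), and some subset of the finite invariants of \(\mathbf{G}\) sums to \(\pm\tfrac{1}{4}\). This is precisely the subset-sum condition in Corollary~\ref{cor:cong-rigid-Q}\,(iv) for \(n=0\).
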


  \begin{proof}
    Let \(\mathbf{H}\) be an \(l\)-group locally isomorphic to \(\mathbf{G}\).  If \(k\) has two complex places, then so does \(l\) because number fields with isomorphic adele rings are arithmetically equivalent, hence have the same signature~\cite{Klingen:similarities}*{Theorem~III.1.4.h)}.  Thus \(\operatorname{rank}_\infty \mathbf{H} \ge 2\) which gives~\ref{item:two-real-places}.

    Recall that the complete list of simply-connected absolutely simple \(\R\)-groups is given by
    \[ \operatorname{SU}(n,1) \text{ for } n \ge 1, \ \operatorname{Spin}(n,1) \text{ for } n \ge 3, \ \operatorname{Sp}(n,1) \text{ for } n \ge 2, \ F_{4(-20)}.\]
    The first family is of type \(A_1\) for \(n=1\) and \({}^2 A_n\) for \(n \ge 2\).  The second family is of type \(B_{n/2}\) for even \(n\), of type \({}^1 D_{(n+1)/2}\) for \(n \equiv 1 \text{ mod } 4\), and of type \({}^2 D_{(n+1)/2}\) for \(n \equiv 3 \text{ mod } 4\).  The third family is of type \(C_{n+1}\).  Of course \(F_{4(-20)}\) is of type \(F_4\).  It follows from~\cite{Kammeyer-Serafini:sign}*{Proposition~16} that if \(\mathbf{G}\) is globally of inner type (or of type \({}^3 D_4\)), then \(\mathbf{H}\) must reduce to an inner form of the same Cartan--Killing type at any real place.  Hence if \(\mathbf{G}\) has one of the types listed in~\ref{item:various-types}, then \(\mathbf{H}\) cannot become a rank one group at any real place of \(k\).  This proves~\ref{item:various-types}.
  \end{proof}

  By the proposition, we yet have to discuss the remaining case when \(\mathbf{G}\) has type \(A_1\), \({}^2 A_{n \ge 2}\), \(B_n\), \(D_n\) (not \( {}^1 D_{2n}\)), \(C_n\), or \(F_4\).  For type \(B_n\) and \(D_n\), it turns out that we still obtain the conclusion of Theorem~\ref{thm:congruence-rigid-implies-solitary}.
  
  \begin{theorem}
    Let \(\mathbf{G}\) be a higher rank \(k\)-group of type \(B_{n \ge 2}\) or \(D_{n \ge 5}\) that is congruence rigid among higher rank groups.  Then every strongly Grothendieck rigid arithmetic subgroup \(\Gamma \le \mathbf{G}\) is profinitely solitary.
  \end{theorem}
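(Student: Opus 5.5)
We follow the proof of Theorem~\ref{thm:congruence-rigid-implies-solitary}. The one change is that Spitler's theorem may now return an \(l\)-group \(\mathbf{H}\) with \(\operatorname{rank}_\infty \mathbf{H} \le 1\), and we have to exclude this.

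Let \(\Delta\) be finitely generated and residually finite with \(\widehat{\Gamma} \approx \widehat{\Delta}\). Pass to finite index subgroups \(\Gamma_0 \le \Gamma\) and \(\Delta_0 \le \Delta\) with \(\widehat{\Gamma_0} \cong \widehat{\Delta_0}\). As in that proof, \cite{Spitler:profinite}*{Theorem~7.1}, in the form adapted via \cite{Kammeyer-Spitler:chevalley}*{Section~4}, yields an \(l\)-group \(\mathbf{H}\) locally isomorphic to \(\mathbf{G}\), an arithmetic subgroup \(\Lambda \le \mathbf{H}\), and an injection \(\Delta_0 \to \Lambda\). If \(\operatorname{rank}_\infty \mathbf{H} \ge 2\), congruence rigidity among higher rank groups gives \(\mathbf{G} \cong_\phi \mathbf{H}\), and strong Grothendieck rigidity finishes the argument exactly as before. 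If \(\operatorname{rank}_\infty \mathbf{H} = 0\), then \(\Lambda\) is finite, so \(\Delta_0\) is finite. This is impossible, because \(\widehat{\Delta_0} \cong \widehat{\Gamma_0}\) is infinite.

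It remains to exclude \(\operatorname{rank}_\infty \mathbf{H} = 1\). In types \(B_n\) and \(D_n\) with \(n \ge 5\), \(\mathbf{H}\) then has a unique isotropic infinite place. By the list in the preceding proposition, \(\mathbf{H}\) is \(\operatorname{Spin}(m,1)\) at that place and compact at all other infinite places, and \(k\) has at most one complex place. So \(\Lambda\) is commensurable with a lattice in \(\operatorname{SO}(m,1)\), and these lattices have infinite congruence kernel. Concretely, I would use that arithmetic lattices of simplest type in \(\operatorname{SO}(m,1)\) have finite index subgroups surjecting onto \(\Z\) (Millson, and in general Bergeron--Haglund--Wise virtual specialness, or Lubotzky's result that they fail CSP). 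On the other side, \(\Gamma_0\) has CSP, so by Margulis normal subgroup theorem and property (T) or CSP, \(\widehat{\Gamma_0}\) has only finitely many open subgroups of each index. More to the point, every open subgroup of \(\widehat{\Gamma_0}\) has finite abelianization, since higher rank lattices and their finite index subgroups have finite abelianization.

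Now we compare \(\Delta_0\) and \(\Lambda\). Spitler's theorem gives more than an injection: the image of \(\Delta_0\) is a finitely generated subgroup of \(\Lambda\) whose closure in the congruence completion is open, and the induced map \(\widehat{\Delta_0} \to \overline{\Lambda}\) is compatible with the identification of \(\overline{\Gamma_0}\) with \(\overline{\Lambda}\) up to commensurability. The plan is to apply the CSP of \(\Gamma_0\) at this point. Since \(\widehat{\Delta_0} \cong \widehat{\Gamma_0}\) is commensurable with a congruence completion, which is virtually a product of \(p\)-adic analytic groups, the kernel of \(\widehat{\Lambda} \to \overline{\Lambda}\) must be detectable through \(\Delta_0\). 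More simply, \(\Delta_0\) and all its finite index subgroups have finite abelianization, because their profinite completions are those of finite index subgroups of \(\Gamma_0\). We then argue that \(\Delta_0\) is Zariski dense and of finite covolume in \(\mathbf{H}\); Spitler's construction makes its closure open in the adelic points. It would follow that \(\Delta_0\) is commensurable with \(\Lambda\), which contradicts the virtual infinite abelianization of \(\Lambda\).

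The main obstacle is this last step. Spitler's output does not by itself say that the image of \(\Delta_0\) has finite index in \(\Lambda\); that is exactly what Grothendieck rigidity would supply, and we only know it for \(\Gamma\). I would therefore look for an invariant that passes from \(\Delta_0\) to \(\mathbf{H}\) directly. My first attempt is the first \(L^2\)-Betti number, or better, the \emph{sign of the Euler characteristic} as in \cite{Kammeyer-Serafini:sign}, which is a profinite invariant for arithmetic groups and is determined by the real forms. The second attempt is to show that the local isomorphism \(\mathbf{G} \cong_\Phi \mathbf{H}\) with \(\mathbf{G}\) higher rank forces \(\mathbf{H}\) to be higher rank in types \(B_n\) and \(D_{n\ge5}\). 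This would again use \cite{Kammeyer-Serafini:sign}*{Proposition~16}-type constraints on which real forms are compatible with a fixed finite-adelic type. If \(\operatorname{rank}_\infty\mathbf{H} = 1\) is actually compatible, we fall back on ruling it out via the failure of CSP for \(\Lambda\) together with the finite virtual abelianization of \(\Delta_0\), as sketched above.
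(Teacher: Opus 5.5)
Your reduction is fine up to the rank one case. One omission there: CSP is not a hypothesis of this theorem, so you should justify it, as the paper does, by noting that Serre's conjecture is known in types \(B_n\) and \(D_{n \ge 5}\). It is needed for Spitler's dichotomy in the higher rank case. You also aim at the right contradiction. Every finite index subgroup of \(\Delta_0\) has finite abelianization, because its profinite completion is an open subgroup of \(\widehat{\Gamma_0}\) coming from a finite index subgroup of \(\Gamma_0\).

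The gap is exactly where you flag it, and none of your three routes closes it:
\begin{enumerate}
\item Passing virtual \(b_1>0\) from \(\Lambda\) to \(\Delta_0\) by commensurability needs \(\Delta_0\) to have finite index in \(\Lambda\). That is Grothendieck rigidity of \(\Lambda\), which is unavailable because \(\Lambda\) is not known to be commensurable with \(\Gamma\).
\item \(L^2\)-Betti numbers and the sign of the Euler characteristic are only profinite invariants within classes of lattices. Evaluating them on \(\Delta_0\) presupposes the same commensurability.
\item The real-form constraint of \cite{Kammeyer-Serafini:sign} cannot force \(\operatorname{rank}_\infty \mathbf{H} \ge 2\). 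In type \(B_n\) and in the \(D_n\) types other than \({}^1D_{2n}\), a \(\operatorname{Spin}(m,1)\) real form is not excluded. This is why the preceding proposition only rules out \({}^1 D_{2n}\) among these types.
\end{enumerate}

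The missing idea is that Zariski density suffices, and finite index is not needed. You must combine it with a stronger consequence of the virtual specialness you cite: not merely \(b_1>0\) for \(\Lambda\), but that geometrically finite subgroups are virtual retracts (the input taken from \cite{Kammeyer-Spitler:chevalley}).

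If \(\operatorname{rank}_\infty \mathbf{H}=1\), then \(l\) (not \(k\)) is totally real, since a complex place alone would contribute rank \(n\ge 2\). Moreover \(\mathbf{H}\) is anisotropic at all real places but one, where it is isogenous to \(\operatorname{SO}(m,1)\). Spitler's construction makes \(\Delta_0\) Zariski dense in \(\mathbf{H}\), so \(\Delta_0\) contains an element \(\gamma\) acting loxodromically on \(\mathbb{H}^m\).

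The group \(\langle\gamma\rangle\) is geometrically finite, so there is a finite index subgroup \(\Lambda_1 \le \Lambda\) containing \(\gamma\) and a retraction \(r\colon \Lambda_1 \to \langle \gamma\rangle \cong \Z\). Its restriction to the finite index subgroup \(\Delta_0 \cap \Lambda_1\) of \(\Delta_0\) is onto, because \(\gamma\) lies there and \(r(\gamma)=\gamma\). Hence a finite index subgroup of \(\Delta_0\) has infinite abelianization. By profinite invariance of abelianization, so does a finite index subgroup of \(\Gamma\), contradicting CSP for \(\Gamma\).

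This replaces your entire last paragraph; no comparison of \(\Delta_0\) with \(\Lambda\) up to commensurability is required.
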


  \begin{proof}
    Note that \(\mathbf{G}\) has CSP because Serre's conjecture is known in the given types~\cite{Platonov-Rapinchuk:algebraic-groups}*{Theorem~9.23, p.\,568}.  Let \(\Delta\) be a finitely generated residually finite group profinitely commensurable with \(\Gamma\).  As in the proof of Theorem~\ref{thm:congruence-rigid-implies-solitary}, Spitler's theorem gives an \(l\)-group \(\mathbf{H}\) locally isomorphic to \(\mathbf{G}\) such that \(\Delta\) embeds as subgroup of an arithmetic subgroup \(\Lambda \le \mathbf{H}\) and the embedding \(\Delta \le \mathbf{H}\) is Zariski dense by construction.  Since \(\mathbf{G}\) is congruence rigid among higher rank groups, we only have to exclude the possibility that \(\operatorname{rank}_\infty \mathbf{H} = 1\).  To do so, we argue similarly as in~\cite{Kammeyer-Spitler:chevalley}*{Section~4}.  Indeed, if \(\operatorname{rank}_\infty \mathbf{H} = 1\), then \(l\) is totally real and \(\mathbf{H}\) is anisotropic at all but one real place where \(\mathbf{H}\) is isogenous to some \(\operatorname{SO}(m,1)(\mathbb{R})\).  So a finite index subgroup \(\Delta_0\) embeds Zariski densely into \(\operatorname{SO}(m,1)(\mathbb{R})\).  Thus some element in \(\Delta_0\) acts loxodromically on \(\mathbb{H}^m\), hence it generates a geometrically finite infinite cyclic subgroup of \(\mathbf{H}(\mathcal{O}_l) \le \operatorname{SO}(m,1)(\mathbb{R})\) that is a virtual retract.  This shows that \(\Delta_0\) has a finite index subgroup with positive first Betti number.  By profinite invariance of the abelianization, so does a finite index subgroup of \(\mathbf{G}(\mathcal{O}_k)\) contradicting both CSP and Kazhdan's Property~(T).
  \end{proof}
    
We remark that if \(\mathbf{G}\) has type \({}^2 A_{n \ge 2}\) or \(C_{n \ge 3}\), the remaining case can only occur for small values of \(n\) and totally real number fields \(k\) of small degree.  Indeed, the group \(\mathbf{H}\) would have to be isomorphic to \(\operatorname{SU}(n,1)\) or \(\operatorname{Sp}(n,1)\) at one real place and anisotropic at all the others.  But if \(n\) is large and/or if there are many real places, then the description of real Galois cohomology in Table~\ref{table:real-cohomology} below gives that the real forms of \(\mathbf{G}\) occuring at real places of \(k\) can be replaced with other groups of type \(\operatorname{SU}(p,q)\) or \(\operatorname{Sp}(p,q)\) to produce several higher rank groups locally isomorphic to \(\mathbf{G}\).

So the question of profinite solitude in the remaining case stays unclear only in some special forms of type \(A_n\) and \(C_n\) for small \(n\), and in type \(D_4\) and \(F_4\).  Incidentally, these are also the cases in which the incomplete satus of CSP prevents us from saying anything definitive on profinite solitude anyway: CSP is open for \(\mathbf{SL_n}(D)\) for \(k\)-division algebras \(D\), triality forms of type \(D_4\), and \(\Q\)-groups that are \(\R\)-isomorphic to \(\operatorname{Sp}(n,1)\) or \(F_{4(-20)}\). 

\section{Exact sequences in Galois cohomology}
\label{section:exact-sequences}

In this section, we give the general outline of how to identify the singleton fibers of the map \(\overline{g}\) given in~\eqref{eq:goverline}.  Three different exact sequences in Galois cohomology will be of help for this purpose: the ones associated with the short exact sequences~\eqref{eq:ad-aut} and~\eqref{eq:center} below, as well as the Poitou--Tate sequence in~\eqref{eq:poitou-tate} associated with the center of \(\mathbf{G}\).  An important calculation for the case by case considerations in the subsequent chapter will be the explicit determination of the Poitou--Tate sequence for the various occurring centers in Table~\ref{tate-table}.

Let \(\mathbf{G_0}\) be a \(\Q\)-split simply connected absolutely almost simple linear algebraic \(\Q\)-group and let~\(k\) be a locally determined number field.  Our goal is to find the singleton fibers of the map
\[ \overline{g} \colon H^1(k, \Aut \mathbf{G_0})\ / \Aut(k)
  \longrightarrow \prod_{v \nmid \infty} H^1(k_v, \Aut \mathbf{G_0})\
  \big/ \Aut(\A_k^f). \]
  
If \(\mathbf{G_0}\) is of type \(A_n\) for \(n \geq 2\), \(D_n\) for \(n \geq 4\) or \(E_6\), \(\Aut \mathbf{G_0}\) has elements that do not arise as an inner automorphism as a result of exisiting nontrivial Dynkin diagram symmetries.  As announced in the introduction, our reults will assume \(\Sym \Delta\) to be either trivial or isomorphic to \(\Z/2\), which holds in all cases except \(D_4\).  However, until explicitly stated all subsequent statements hold for algebraic groups of arbitrary type.  We have a split short exact sequence~\cite{Platonov-Rapinchuk:algebraic-groups}*{(2.13), p.\,77}
\begin{equation} \label{eq:ad-aut} \begin{tikzcd} 1 \ar[r] & \Ad \mathbf{G_0} \ar[r, "i"] & \Aut \mathbf{G_0} \ar[r, "p"] & \Sym \Delta \ar[l, dashrightarrow, bend left=15, "s"] \ar[r] & 1 \end{tikzcd} \end{equation}
and the splitting is \(\Gal(k)\)-equivariant.  This induces a short exact sequence of cohomology sets
\[1 \rightarrow H^1(K, \Ad \mathbf{G_0}) \xrightarrow{i} H^1(K, \Aut \mathbf{G_0}) \xrightarrow{p} H^1(K, \Sym \Delta) \rightarrow 1\]
where \(K\) can either be \(k\) or one of its local completions at some place \(v\) and \(i\) and \(p\) denote the maps induced by the pushforward along the original \(i\) and \(p\), respectively.  Note that even though the kernel of \(i\) is trivial, the map \(i\) is in general not injective anymore.

Let \(\beta \in H^1(K, \Aut \mathbf{G_0})\) be the class of a cocycle \(b\) corresponding to some arbitrary \(K\)-form of \(\mathbf{G_0}\).  The class of the cocycle \(b_1 = s \circ  p \circ b\) in \(H^1(K, \Aut \mathbf{G_0})\) then classifies the unique \(K\)-quasi-split group \(\mathbf{G_1}\) that is an inner \(K\)-twist of \(\mathbf{G}\).  A quick calculation shows that the twisted Galois module \({}_{b_1}(\Aut \mathbf{G_0})\) is \(\Gal(K)\)-isomorphic to \(\Aut \mathbf{G_1}\).  So right multiplication with \(b_1\) defines a bijection
\[ \tau_{b_1} \colon H^1(K, \Aut \mathbf{G_1}) \xrightarrow{\ \cong \ } H^1(K, \Aut \mathbf{G_0}). \]
Let for the moment \(\widehat{\beta} \in H^1(K, \Aut \mathbf{G_1})\) be the preimage of \(\beta\) under the bijection \(\tau_{b_1}\).  Since twisting is functorial, we have a commutative square
\[ \begin{tikzcd}
    H^1(K, \Aut \mathbf{G_0}) \ar[r, "p"] & H^1(K, \Sym \Delta) \\
    H^1(K, \Aut \mathbf{G_1}) \ar[u, "\tau_{b_1}", "\cong"'] \ar[r, "p_1"] & H^1(K, {}_{p \circ b_1}(\Sym \Delta)) \ar[u, "\tau_{p \circ b_1}", "\cong"'].
  \end{tikzcd}
\]
Applying this twisting process both locally and globally, we obtain the map \(g_1\), which is a twist of \(g\), in the following diagram.
\begin{equation} \begin{tikzcd} \label{diagram:g-1-h-1-square}
    \prod\limits_{v \nmid \infty} H^1(k_v, \Aut \mathbf{G_1}) \ar[r, "P_1"] & \prod\limits_{v \nmid \infty} H^1(k_v, {}_{p \circ b_1}(\Sym \Delta)) \\
    H^1(k, \Aut \mathbf{G_1}) \ar[u, "g_1"] \ar[r, "p_1"] & H^1(k, {}_{p \circ b_1}(\Sym \Delta)) \ar[u, "h_1"]
  \end{tikzcd} \end{equation}
As \(p \circ b = p \circ b_1\), we have \(p_1(\widehat{\beta}) = 1\) which gives
\[\widehat{\beta} \in \im\left(i_1 \colon H^1(K, \Ad \mathbf{G_1}) \rightarrow H^1(K, \Aut \mathbf{G_1})\right) \]
by exactness.  Thus we can extend the diagram in ~\eqref{diagram:g-1-h-1-square} on the left by
\begin{equation} \label{diagram:f-1-g-1-square} \begin{tikzcd}
  \prod\limits_{v \nmid \infty} H^1(k_v, \Ad \mathbf{G_1}) \ar[r, "I_1"]
    & \prod\limits_{v \nmid \infty} H^1(k_v, \Aut \mathbf{G_1}) \\
  H^1(k, \Ad \mathbf{G_1}) \ar[r, "i_1"] \ar[u, "f_1"]
    & H^1(k, \Aut \mathbf{G_1}). \ar[u, "g_1"]
\end{tikzcd} \end{equation}
  From now on, we will write \(\beta\) for both \(\beta\) and \(\widehat{\beta}\) to simplify notation.  The fibers of \(i_1\) are the orbits of the action of \(H^0(K, {}_{p \circ b_1}(\Sym \Delta))\) on \(H^1(k, \Ad \mathbf{G_1})\) by \cite{Serre:galois-cohomology}*{I.5.5~Cor.\,2}.  If \(\Sym \Delta \cong \Z/2\), then it does not admit a nontrivial group action by automorphisms, so
\[H^0(K, {}_{p \circ b_1}(\Sym \Delta)) \cong \Z/2
  \cong H^0(K, \Sym \Delta).\]
We denote \(H^0(K, {}_{p \circ b_1}(\Sym \Delta))\) by \(S\) if \(K = k\) and \(S_v\) if \(K = k_v\).  By the previous remark, \(S\) and \(S_v\) do not depend on the choice of \(\mathbf{G_1}\) except in type \(D_4\).

As an approach for understanding
\[f_1\colon H^1(k, \Ad \mathbf{G_1})\to
  \prod_{v \nmid \infty} H^1(k_v, \Ad \mathbf{G_1}),\]
we now consider the short exact sequence
\begin{equation} \label{eq:center} 1 \longrightarrow Z(\mathbf{G_1}) \longrightarrow \mathbf{G_1} \longrightarrow \Ad \mathbf{G_1} \longrightarrow 1. \end{equation}
The map \(f_1\) thus also appears in the commutative square
\begin{equation} \label{diagram:boundary-map} \begin{tikzcd}
    \prod\limits_{v \nmid \infty} H^1(k_v, \Ad \mathbf{G_1}) \ar[r, "\prod (\delta_1)_v"]
      & \prod\limits_{v \nmid \infty} H^2(k_v, Z(\mathbf{G_1})) \\
    H^1(k, \Ad \mathbf{G_1}) \ar[r, "\delta_1"] \ar[u, "f_1"]
      & H^2(k, Z(\mathbf{G_1})) .\ar[u, "d_1"]
  \end{tikzcd} \end{equation}
We observe that \(S = H^0(K, \Sym \Delta)\) acts on \(Z(\mathbf{G_1})\), hence functorially on \(H^2(K, Z(\mathbf{G_1}))\) for both local and number fields \(K\) and the boundary map \(\delta_1\) is equivariant with respect to this action, so it induces orbit maps
\[\widetilde{\delta_1}\colon H^1(K, \Ad \mathbf{G_1})/H^0(K, \Sym \Delta)\to
  H^2(K, Z(\mathbf{G_1}))/H^0(K, \Sym \Delta).\]
As we were unable to find a proof for this fact, we provide one here.

\begin{proposition} \label{prop:s-equivariant}
  The differential \(\delta_1 \colon H^1(K, \Ad \mathbf{G_1}) \rightarrow H^2(K, Z(\mathbf{G_1}))\) is \(H^0(K, \Sym \Delta)\)-equivariant.
\end{proposition}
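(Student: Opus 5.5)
We prove the statement by realizing the action of \(S = H^0(K, {}_{p\circ b_1}(\Sym \Delta))\) through a single \(\Gal(K)\)-equivariant automorphism of the whole short exact sequence~\eqref{eq:center}. Once we have that, the claim follows from the naturality of the connecting map in nonabelian cohomology. Concretely, let \(\sigma \in S\) be nontrivial; the case \(\sigma = 1\) is trivial. Since \(\mathbf{G_1}\) is quasi-split over \(K\), we fix a \(K\)-defined pinning of \(\mathbf{G_1}\), that is, a Borel pair \((\mathbf{B}, \mathbf{T})\) together with a Galois-stable choice of root vectors. This gives a \(\Gal(K)\)-equivariant splitting \(s_1 \colon {}_{p\circ b_1}(\Sym \Delta) \to \Aut \mathbf{G_1}\), the twisted analogue of the splitting \(s\) in~\eqref{eq:ad-aut}. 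Because \(\sigma\) is Galois-fixed, \(\theta := s_1(\sigma)\) is a \(K\)-automorphism of \(\mathbf{G_1}\).

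The key steps are as follows.

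\textbf{Step 1.} The \(K\)-automorphism \(\theta\) preserves the center, so it restricts to a \(K\)-automorphism \(\theta|_Z\) of \(Z(\mathbf{G_1})\). It also descends to a \(K\)-automorphism \(\overline{\theta} = \operatorname{Int}\) \(\circ\) conjugation by \(\theta\) on \(\Ad \mathbf{G_1}\), namely \(\overline\theta(\operatorname{Int} g) = \operatorname{Int}\theta(g)\). These three maps form a morphism of the short exact sequence~\eqref{eq:center} to itself, compatible with the Galois action.

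\textbf{Step 2.} We identify the actions. The action of \(S\) on \(Z(\mathbf{G_1})\) used in the paper is exactly \(\theta|_Z\): automorphisms of \(\mathbf{G_1}\) act on the center through their image in \(\Sym \Delta\), since inner automorphisms act trivially on \(Z\). The action of \(S\) on \(H^1(K, \Ad \mathbf{G_1})\) whose orbits are the fibers of \(i_1\) is the one from \cite{Serre:galois-cohomology}*{I.5.5}. Concretely, it is \([a] \mapsto [\theta a \theta^{-1}]\), regarding \(a_\tau \in \Ad \mathbf{G_1}\) as an element of \(\Aut\mathbf{G_1}\). This conjugation action agrees with the pushforward \(\overline{\theta}_*\): one checks \(\theta\operatorname{Int}(g)\theta^{-1} = \operatorname{Int}(\theta(g))\), and since \(\theta\) is \(K\)-rational the twisted-cocycle bookkeeping introduces no extra coboundary. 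This comparison is the step we expect to need the most care, because Serre's action is defined by right multiplication by a lift of \(\sigma\) in the twisted group. We would verify explicitly, at the level of cocycles, that with the \(\Gal(K)\)-fixed lift \(\theta\) it reduces to conjugation.

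\textbf{Step 3.} We apply naturality of \(\delta_1\). Let \(a\) be a cocycle in \(\Ad\mathbf{G_1}\) and lift it to a cochain \(\tilde a\) in \(\mathbf{G_1}(\overline K)\). Then \(\delta_1[a]\) is the class of \(c_{\rho,\tau} = \tilde a_\rho\, {}^\rho\tilde a_\tau\, \tilde a_{\rho\tau}^{-1}\). Applying \(\theta\), which commutes with the Galois action, shows that \(\theta\circ\tilde a\) lifts \(\overline\theta\circ a\), and its coboundary is \(\theta\circ c\). Hence
\[ \delta_1(\overline\theta_*[a]) = (\theta|_Z)_*\,\delta_1([a]), \]
which is exactly the claimed equivariance. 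The independence of the class from the choice of lift is standard, so nothing more is needed there. The only real work is therefore Step 2, matching the abstractly defined action of \(S\) with the action induced by a \(K\)-rational pinned automorphism.
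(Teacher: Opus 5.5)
Your proposal is correct and is essentially the paper's argument. The paper also picks a \(K\)-rational lift \(\chi \in (\Aut \mathbf{G_1})(K)\) of the element of \(S\), obtained from the Galois-equivariant splitting of the twisted sequence~\eqref{eq:ad-aut}, which is exactly what your \(K\)-pinning provides. It then notes that the action on \(H^1(K, \Ad \mathbf{G_1})\) becomes \([c_{a_\sigma}] \mapsto [c_{\chi(a_\sigma)}]\) because \(\chi^\sigma = \chi\), and computes the coboundary \(\chi(a_\sigma)\, {}^\sigma\chi(a_\tau)\, \chi(a_{\sigma\tau})^{-1} = \chi(a_\sigma\, {}^\sigma a_\tau\, a_{\sigma\tau}^{-1})\), just as in your Step~3; your framing via naturality of the connecting map for an automorphism of~\eqref{eq:center} is only a repackaging of that computation.
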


\begin{proof}
  We let \(H^0(K, \Sym \Delta) \eqqcolon S\) regardless of whether \(K\) is a number field or a local field.  The short exact sequence~\eqref{eq:ad-aut} still splits after twisting with the cocycle \(b_1\) because \(b_1\) is a lift of a cocycle in the quotient group \(\Sym \Delta\).  Therefore, we have
  \[S = (\Aut \mathbf{G_1})(K) \,/\, (\Ad \mathbf{G_1})(K).\]
  So take an element \(\chi \cdot (\Ad \mathbf{G_1})(K) \in S\) with \(\chi \in (\Aut \mathbf{G_1})(K)\). Let moreover \([a_\sigma Z(\mathbf{G_1})] \in H^1(K, \Ad \mathbf{G_1})\) where \(a_\sigma\) is a (set theoretical) lift to \(\mathbf{G_1}\) of a cocycle in \(\Ad \mathbf{G_1}\).  In the interpretation of \(\Ad \mathbf{G_1}\) as a subgroup of \(\Aut \mathbf{G_1}\), the element \(a_\sigma Z(\mathbf{G_1})\) corresponds to the well defined inner automorphism \(c_{a_\sigma} = a_\sigma (\,\cdot\,) a_\sigma^{-1}\).  Then by definition
  \[ \chi \cdot (\Ad \mathbf{G_1})(K).[a_\sigma Z(\mathbf{G_1})] = [\chi^\sigma \circ c_{a_\sigma} \circ \chi^{-1}] = [c_{\chi(a_\sigma)}] \]
  because \(\chi \in (\Aut\mathbf{G_1})(K)\) hence \(\chi^\sigma = \chi\).  Applying \(\delta_1\), we obtain
  \begin{align*}
    \delta_1([c_{\chi(a_\sigma)}])
      &= \delta_1([\chi(a_\sigma)Z(\mathbf{G_1})])
        = [\chi(a_\sigma)\, {}^\sigma\! \chi(a_\tau) \, (\chi(a_{\sigma \tau}))^{-1}] \\
    & = [\chi(a_\sigma \, {}^\sigma a_\tau \, a_{\sigma \tau}^{-1})]
      = \chi \cdot (\Ad \mathbf{G_1})(K). \delta_1([a_\sigma Z(\mathbf{G_1})]). \qedhere
  \end{align*}
\end{proof}

The upper horizontal map \(\prod_{v \nmid \infty} (\delta_1)_v\) in diagram~\eqref{diagram:boundary-map} is a factorwise isomorphism and \(\delta_1\) is surjective by theorems of Kneser~\cite{Platonov-Rapinchuk:algebraic-groups}*{Theorem~6.20 and Corollary, p.\,326}.  Both of these properties remain intact after factoring by \(S\) and \(\prod_{v \nmid \infty} S_v\), respectively.  We will analyze the diagonal map \(d_1\) by means of Poitou--Tate duality~\cite{Harari:galois}*{Theorem~17.13.(c), p.\,265}.  A first outcome is that \(d_1\), hence also the diagonal map \(f_1\), takes in fact values in the direct sum so that after factoring by \(S\) and \(\prod_{v \nmid \infty} S_v\), the diagram~\ref{diagram:boundary-map} has the form
\begin{equation} \label{eq:ad-center-square}
  \begin{tikzcd}
    \bigoplus\limits_{v \nmid \infty} H^1(k_v, \Ad \mathbf{G_1}) / S_v \ar[r, "\cong"]
      & \bigoplus\limits_{v \nmid \infty} H^2(k_v, Z(\mathbf{G_1})) / S_v\\
    H^1(k, \Ad \mathbf{G_1}) / S \ar[r, ->>, "\widetilde{\delta_1}"] \ar[u, "\widetilde{f_1}"]
      & H^2(k, Z(\mathbf{G_1})) / S. \ar[u, "\widetilde{d_1}"]
  \end{tikzcd} \end{equation}
  
  Since \(Z \coloneqq Z(\mathbf{G_1})\) is a finite abelian \(\operatorname{Gal}(k)\)-module, global Poitou--Tate duality~\cite{Harari:galois}*{Theorem~17.13.(c), p.\,265} provides a short exact sequence
\begin{equation} \label{eq:poitou-tate} 0 \longrightarrow H^2(k, Z) \xrightarrow{\ b \ } \bigoplus_v H^2(k_v, Z) \xrightarrow{\ c \ } H^ 0(k, Z')^* \longrightarrow 0. \end{equation}
Here, \(A^* = \operatorname{Hom}_c(A, \Q/\Z)\) denotes the \emph{dual} of a topological group~\(A\), consisting of continuous homomorphisms.  It is the usual Pontryagin dual if \(A\) is discrete torsion, which is true in our case.  The Galois module \(Z' = \operatorname{Hom}(Z, \mathbf{GL_1})\) is the \emph{Cartier dual} of \(Z\) with the \(\operatorname{Gal}(k)\)-action given by \((\sigma. f)(z) = \sigma(f(\sigma^{-1}(z)))\) for \(\sigma \in \operatorname{Gal}(k)\), \(f \in Z'\) and \(z \in Z\).  The homomorphism \(b\) is the diagonal to all (finite and infinite) localizations.  It turns out that the previously introduced map \(d\) is in fact the restriction of \(b\) to the finite places of \(k\).  It is proven in~\cite{Prasad-Rapinchuk:prescribed}*{p.\,658} that \(b\) is indeed injective for all our occurring Galois modules \(Z = Z(\mathbf{G_1})\).  The homomorphism \(c\) can be described as follows:  Let \(a\) be the diagonal map
  \[ a \colon H^0(k, Z') \longrightarrow \prod_v \widetilde{H}^0(k_v, Z') \]
  where we use the convention that \(\widetilde{H}^0(k_v, Z')\) denotes regular group cohomology \(H^0(k_v, Z')\) whenever \(v\) is finite and the Tate cohomology \(\widehat{H}^0(k_v, Z')\) whenever \(v\) is real or complex.  Then \(c\) is equal to \(a^*\) precomposed with the isomorphism \(H^2(k_v,Z) \cong \widetilde{H}^0(k_v, Z')^*\) by local Tate duality.

\begin{theorem} \label{thm:tate-map}
The map \(c\) in \eqref{eq:poitou-tate} is given by the maps in Table~\ref{tate-table} for finite places \(v\) and real places, respectively.
\end{theorem}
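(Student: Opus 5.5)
We prove the statement by computing, center by center, the three ingredients that make up \(c\): the global group \(H^0(k, Z')\), the local groups \(\widetilde{H}^0(k_v, Z')\), and the restriction maps \(a_v \colon H^0(k,Z') \to \widetilde{H}^0(k_v, Z')\). We then dualize using the identification \(H^2(k_v, Z) \cong \widetilde{H}^0(k_v,Z')^*\).

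First we list the possible centers \(Z = Z(\mathbf{G_1})\) of quasi-split groups outside type \(D_4\). In inner types, or when the quadratic extension \(l/k\) splits, \(Z\) is \(\mu_n\) (types \(A_{n-1}\), \(E_6\), \(E_7\), \(B_n\), \(C_n\)), \(\mu_4\) (type \(D_{2n+1}\)), or \(\mu_2 \times \mu_2\) (type \(D_{2n}\)). In the outer types \({}^2A_{n-1}\) and \({}^2E_6\), \(Z\) is \(\mathbf{R}^{(1)}_{l/k}(\mu_n)\). In type \({}^2D_{2n+1}\), \(Z\) is \(\mu_4\) twisted by the character of \(l/k\) acting by inversion. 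In type \({}^2D_{2n}\), \(Z\) is \(\mathbf{R}_{l/k}(\mu_2)\).

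Next we compute the Cartier duals. One has \(\mu_n' = \Z/n\) with trivial action. For a norm-one torus \(\mathbf{R}^{(1)}_{l/k}(\mu_n)\), the dual is \(\Z/n\) with the nontrivial element of \(\Gal(l/k)\) acting by \(-1\). For \(\mathbf{R}_{l/k}(\mu_2)\), the dual is \(\Z/2\times\Z/2\) with \(\Gal(l/k)\) swapping the factors, by Shapiro. Taking invariants then gives the needed groups.

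Globally, \(H^0(k,\Z/n)=\Z/n\). In the twisted case, \(H^0(k, Z')\) is the \(2\)-torsion \(\{x : 2x=0\}\) of \(\Z/n\), so it is \(\Z/2\) or \(0\) according to the parity of \(n\).

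At a finite place \(v\), the same computation applies with \(l/k\) replaced by \(l\otimes_k k_v\). If \(v\) splits in \(l\), the twist disappears and the local group is all of \(\Z/n\); otherwise it is again the \(2\)-torsion. In either case the restriction \(a_v\) is the inclusion of invariants.

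At a real place we must use Tate cohomology \(\widehat{H}^0(\R, Z') = Z'^{\Gal(\C/\R)}/N Z'\), where complex conjugation acts through its image in \(\Gal(l/k)\) twisted by the cyclotomic action. For trivial action on \(\Z/n\), the norm is multiplication by \(2\), giving \(\Z/n/2\). For the \(-1\) action, the norm is zero, giving the \(2\)-torsion. At complex places the group vanishes.

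The step needing care is the interaction of the twist with the cyclotomic action in Cartier duality. For example, \(\mathbf{R}^{(1)}_{l/k}(\mu_n)'\) over \(\R\) has conjugation acting by \(+1\) or \(-1\) depending on whether the real place ramifies in \(l\). We handle this by writing the Galois action on \(Z'\) explicitly via \((\sigma.f)(z)=\sigma f(\sigma^{-1}z)\).

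Finally, \(c\) is obtained by dualizing each \(a_v\). The dual of an inclusion is a surjection (restriction of characters), and the dual of the quotient map \(\Z/n \to \Z/n/2\) is the corresponding inclusion. Taking the sum over \(v\) yields the maps recorded in Table~\ref{tate-table}. As a consistency check, we verify that the resulting kernel matches the injectivity of \(b\) from~\cite{Prasad-Rapinchuk:prescribed}.
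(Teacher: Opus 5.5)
Your route is the paper's: compute $H^0(k,Z')$, the local groups $\widetilde{H}^0(k_v,Z')$ and the localization maps $a_v$, then dualize via local Tate duality. Your Cartier duals, and your treatment of finite places, of real places for $Z=\mu_r$, and of real places ramified in $l$, agree with the paper. The gap is the closing claim that dualizing ``yields the maps recorded in Table~\ref{tate-table}''.

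Take $Z=\mathbf{R}^{(1)}_{l/k}(\mu_r)$ with $r$ even and a real place $v$ that \emph{splits} in $l$. Then $a_v$ is neither of your two cases (an inclusion, or the quotient $\Z/r\to(\Z/r)/2$). It is their composite
\[ (\Z/r)[2]\hookrightarrow \Z/r \twoheadrightarrow (\Z/r)/2(\Z/r), \qquad r/2\longmapsto r/2 \bmod 2, \]
so $c_v\colon\Z/2\to\Z/2$ is multiplication by $r/2$. Equivalently, $c_v$ is the composite of the table's own arrows $\cdot\, r/2$ and $\mathrm{pr}$. If $4\mid r$ this map is zero. For $Z'=\Z/4$ with trivial action, the invariant character $z\mapsto z^2$ is the norm $f\cdot{}^\sigma\! f$ of $f(z)=z$; it pairs every $x\in H^2(\R,\mu_4)$ to $2x=0$ in $\Br(\R)$. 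So in type ${}^2A_{2n+1}$ with $n$ odd, and in type ${}^2D_{2n+1}$ (where $r=4$), your method, once carried out, gives $c_v=0$ at real places split in $l$. It does not give the arrow labelled id.

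This is not just a gap in your write-up. The paper's proof asserts at exactly this point that the character $\zeta\mapsto -1$ maps to the nontrivial element of $\widehat{H}^0(k_v,Z')$. That holds only when $r/2$ is odd, so these table entries are wrong and cannot be proved.

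A global check confirms this. Let $k=\Q$, $l=\Q(\sqrt d)$ with $d>0$ a non-square, and $q_0=\langle 1,-1,1,-1,1,-d\rangle$, so that $\Spin(q_0)$ is the quasi-split group of type ${}^2A_3={}^2D_3$ attached to $l$. By the classification of rational quadratic forms there is a $6$-dimensional form $q$ with $\det q=\det q_0$, signature $(5,1)$, and Hasse invariant differing from that of $q_0$ exactly at $\infty$ and at one prime $p_0$ inert in $l$. Since $\epsilon(\lambda q_0)=\epsilon(q_0)(\lambda,d)$, the form $q$ is similar to $q_0$ at every prime. Hence $\Spin(q)$ is quasi-split at every finite place, but it is $\SL(2,\mathbb{H})$ over $\R$. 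Its class in $H^2(\Q,Z)$ therefore vanishes at all finite places and is nonzero at $\infty$ (as $H^1(\R,\SL_4)=1$). Exactness of \eqref{eq:poitou-tate} then forces $c_\infty=0$. This also exhibits $\Spin(q_0)$ as a quasi-split $\Q$-group, split at the real place, that is not congruence rigid, contrary to Corollary~\ref{cor:cong-rigid-split}.

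You should carry out the split-real case explicitly and record the corrected entry. A smaller omission: your real-place computation covers only the trivial and $-1$ actions on a cyclic group, not the swap action on $\Z/2\times\Z/2$ for $\mathbf{R}_{l/k}(\mu_2)$. There the table is correct (the Tate group vanishes at real places ramified in $l$, and $c_v$ is the sum at split ones), but it has to be checked.
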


{\small
\begin{table}[tb]
    \renewcommand{\arraystretch}{2.5}
  \[\begin{NiceArray}{|c|c|ccc|}
    \hline
    \text{Type} & Z(\mathbf{G_1}) & H^2(k_v, Z(\mathbf{G_1}))
      & H^0(k, Z(\mathbf{G_1})')^* & H^2(\R, Z(\mathbf{G_1})) \\
    \hline\hline
    {}^1 A_{2n} & \mu_{2n+1}
      & \Z/(2n+1) & \Z/(2n+1) & 0  \\
    {}^2 A_{2n} & \mathbf{R}^{(1)}_{l/k}(\mu_{2n+1})
      & 0 & 0 & 0  \\
    \hline
    {}^1 A_{2n+1} & \mu_{2n+2}
      & \Z/(2n+2) & \Z/(2n+2) & \Z/2  \\
    {}^2 A_{2n+1} & \mathbf{R}^{(1)}_{l/k}(\mu_{2n+2})
      & \Z/2 & \Z/2 & \Z/2  \\
    \hline
    B_n & \mu_2 & \Z/2 & \Z/2 & \Z/2  \\
    \hline
    C_n & \mu_2 & \Z/2 & \Z/2 & \Z/2  \\
    \hline
    {}^1 D_{2n} & \mu_2 \times \mu_2
      & \Z/2 \times \Z/2 & \Z/2 \times \Z/2 & \Z/2 \times \Z/2  \\
    {}^2 D_{2n} & R_{l/k}(\mu_2)
      & \Z/2 & \Z/2 & 0  \\
    \hline
    {}^1 D_{2n+1} & \mu_4
      & \Z/4 & \Z/4 & \Z/2  \\
    {}^2 D_{2n+1} & \mathbf{R}^{(1)}_{l/k}(\mu_4)
      & \Z/2 & \Z/2 & \Z/2  \\
    \hline
    {}^1 E_6 & \mu_3
      & \Z/3 & \Z/3 & 0  \\
    {}^2 E_6 & \mathbf{R}^{(1)}_{l/k}(\mu_3)
      & 0 & 0 & 0  \\
    \hline
    E_7 & \mu_2 & \Z/2 & \Z/2 & \Z/2  \\
    \hline
    
    \CodeAfter
    \begin{tikzpicture}
    \begin{scope}[shorten <=2pt, every node/.style={above, font=\scriptsize}]
    \draw[->] (2-3) -- node{id} (2-4);
    \draw[->] (2-5) -- (2-4);
    \draw[->, shorten >=10pt] (2-3) -- (3-4);
    \draw[->, shorten >=10pt] (2-5) -- (3-4);
    \draw[->, shorten >=10pt] (3-3) -- (3-4);
    \draw[->, shorten >=10pt] (3-5) -- (3-4);
    \draw[->] (4-3) -- node{id} (4-4);
    \draw[->] (4-5) -- node{$\cdot(n+1)$} (4-4);
    \draw[->, shorten >=10pt] (4-3) -- node[sloped, anchor=center, above, font=\scriptsize]{pr} (5-4);
    \draw[->, shorten >=10pt] (4-5) -- node[sloped, anchor=center, above, font=\scriptsize]{id} (5-4);
    \draw[->, shorten >=10pt] (5-3) -- node{id} (5-4);
    \draw[->, shorten >=10pt] (5-5) -- node{id} (5-4);
    \draw[->, shorten >=10pt] (6-3) -- node{id} (6-4);
    \draw[->, shorten >=10pt] (6-5) -- node{id} (6-4);
    \draw[->, shorten >=10pt] (7-3) -- node{id} (7-4);
    \draw[->, shorten >=10pt] (7-5) -- node{id} (7-4);
    \draw[->] (8-3) -- node{id} (8-4);
    \draw[->] (8-5) -- node{id} (8-4);
    \draw[->, shorten >=10pt] (8-3) -- node[anchor=center, above, font=\scriptsize]{$+$} (9-4);
    \draw[->, shorten >=10pt] (8-5) -- node[anchor=center, above, font=\scriptsize]{$+$} (9-4);
    \draw[->, shorten >=10pt] (9-3) -- node{id} (9-4);
    \draw[->, shorten >=10pt] (9-5) -- (9-4);
    \draw[->] (10-3) -- node{id} (10-4);
    \draw[->] (10-5) -- node{$\cdot 2$} (10-4);
    \draw[->, shorten >=10pt] (10-3) -- node[sloped, anchor=center, above, font=\scriptsize]{pr} (11-4);
    \draw[->, shorten >=10pt] (10-5) -- node[sloped, anchor=center, above, font=\scriptsize]{id} (11-4);
    \draw[->, shorten >=10pt] (11-3) -- node{id} (11-4);
    \draw[->, shorten >=10pt] (11-5) -- node{id} (11-4);
    \draw[->] (12-3) -- node{id} (12-4);
    \draw[->] (12-5) --  (12-4);
    \draw[->, shorten >=10pt] (12-3) -- (13-4);
    \draw[->, shorten >=10pt] (12-5) -- (13-4);
    \draw[->, shorten >=10pt] (13-3) -- (13-4);
    \draw[->, shorten >=10pt] (13-5) -- (13-4);
    \draw[->] (14-3) -- node{id} (14-4);
    \draw[->] (14-5) -- node{id} (14-4);
    \end{scope}
    \end{tikzpicture}
  \end{NiceArray}\]
  \caption{Local Poitou--Tate homomorphisms.}
  \label{tate-table}
\end{table}
}

\begin{proof}
  We use local Tate duality
  \[H^2(k_v, Z) \cong \widetilde{H}^0(k_v, Z')^*\]
  to understand the involved local cohomology groups.  An explicit description of \(\mathbf{R}^{(1)}_{l/k}(\mu_r)\) as Galois module can be found in \cite{Kammeyer-Serafini:sign}*{Proposition~10} (where in the context there, it is stated for even \(r\) but it holds true for general \(r\) with the same proof).   It is given by the abelian group \(\mu_r(\overline{k})\) where \(\sigma \in \Gal(k)\) acts as usual and in addition by inversion whenever \(\sigma \notin \Gal(l)\).
  
  Recall now that the map \(c_v\) is
  \[H^2(k_v, Z) \xrightarrow{\sim} \widetilde{H}^0(k_v, Z')^* \xrightarrow{a_v^*} H^0(k, Z')^*\]
  where the first map is again the isomorphism given by local Tate duality.  We thus see that \(c_v\) is injective if \(a_v\) is surjective and as \(\Q/\Z\) is an injective \(\Z\)-module, \(c_v\) is surjective if \(a_v\) is injective.  The group \(\widetilde{H}^0(k, Z')\) consists of \(\Gal(k)\)-equivariant homomorphisms \(f \colon Z \rightarrow \mathbf{GL_1}\) and \(\widetilde{H}^0(k_v, Z')\) is given by \(\Gal(k_v)\)-equivariant homomorphisms \(f \colon Z \rightarrow \mathbf{GL_1}\).  If \(v\) is a real place, \(\widetilde{H}\) becomes Tate cohomology and we furthermore have to form the quotient modulo norms, meaning maps of the form \(f^\sigma \!\cdot\! f\) where \(\sigma \in \Gal(k_v)\) is the nontrivial element.  For finite places, \(\Gal(k_v) \subseteq \Gal(k)\) implies that \(H^0(k, Z') \subseteq H^0(k_v, Z') = \widetilde{H}^0(k_v, Z')\), so \(a_v\) is injective and \(c_v\) thus surjective regardless of whether or not \(l \subseteq k_v\), i.e.\,regardless of whether or not \(\mathbf{G_1}\) splits at \(v\).

  If \(Z = \mu_r\), every homomorphism \(f \colon Z \rightarrow \mathbf{GL_1}\) is of the form \(f(\zeta) = \zeta^m\), so Galois equivariance is an empty condition.  Thus \(H^0(k, Z') = H^0(k_v, Z')\) is a cyclic group of order \(r\) and \(a_v\) is bijective in a canonical manner.  Therefore, we can fix an identification \(H^0(k, Z') \cong \Z/r\) and let the isomorphisms \(c_v\) induce identifications \(H^2(k_v, Z) \cong \Z/r\) to make it the identity as claimed.  In the case of real places on the other hand, the group \(\widehat{H}^0(k_v, Z')\) has order one or two depending on whether \(r\) is odd or even, so \(a_v\) is merely surjective here.  Thus \(c_v\) is injective and the \(H^2(k_v, Z)\) are then embedded in the only possible way.

  If \(Z = \mathbf{R}^{(1)}_{l/k} (\mu_r)\) for a quadratic extension \(l/k\), the above description shows that \(H^0(k, Z')\) has order one or two depending on whether \(r\) is odd or even.  As \(c_v\) was surjective for finite places, one can check based on the involved groups that \(c_v\) has to be either the identity or the unique projection, depending on whether \(\mathbf{G_1}\) remains of outer type or splits at \(v\).  Let now \(v\) be a real place.  If \(v\) becomes complex in \(l\), then the nontrivial element of \(\Gal(k_v)\) restricts to an element in \(\Gal(k)\) which does not fix \(l\) pointwise, so \(H^0(k, Z') = H^0(k_v, Z') = \widehat{H}^0(k_v, Z')\).  If on the other hand \(v\) splits in \(l\), meaning \(l \subset k_v\), then \(Z'\) is a trivial \(\Gal(k_v)\)-module, hence \(H^0(k_v, Z')\) has order~\(r\), so \(\widehat{H}^0(k_v, Z')\) has order one or two depending on whether \(r\) is odd or even.  Moreover, the nontrivial element of \(H^0(k, Z')\) for even \(r\), mapping a primitive \(r\)-th root of unity to \(-1\), maps to the nontrivial element of \(\widehat{H}^0(k_v, Z')\).  So \(a_v\) is again surjective.
  
  If \(Z = \mu_2 \times \mu_2\), we see that \(H^0(k, Z') = H^0(k_v, Z') = \widehat{H}^0(k_v, Z) \cong \{ \pm 1 \} \times \{ \pm 1\}\).  Thus \(c_v\) is bijective for both finite and real places in this case.
  
  Let finally \(Z = \mathbf{R}_{l/k}(\mu_2)\) for a quadratic extension \(l/k\). Then
  \begin{align*}
    H^0(k, \mathbf{R}_{l/k}(\mu_2)') &= \operatorname{Hom}_{\operatorname{Gal}(k)}(\mu_2(l \otimes_k \overline{k}), \overline{k}^*) \\
    &= \operatorname{Hom}_{\operatorname{Gal}(k)}(\mu_2(\overline{k}) \times \mu_2(\overline{k}), \mu_2(\overline{k})) \cong \{ \pm 1 \}
  \end{align*}
  where the last equality holds because the action of \(\operatorname{Gal}(k)\) on \(\mu_2(\overline{k}) \times \mu_2(\overline{k})\) factors through \(\operatorname{Gal}(l/k)\) which acts by swapping the two factors.  So the only nontrivial equivariant homomorphism sends both \((-1,1)\) and \((1,-1)\) to \(-1\).  Thus also \(H^0(k_v, \mathbf{R}_{l/k}(\mu_2)') \cong \{ \pm 1 \}\) if \(v\) is a real place that becomes complex in \(l\).  But the above homomorphism is a norm so that \(\widehat{H}^0(k_v, \mathbf{R}_{l/k}(\mu_2)')\) is trivial.  If on the other hand a real place \(v\) splits into two real places in \(l\), then \(\mathbf{R}_{l/k}(\mu_2)'(\overline{k_v})\) is a trivial \(\Gal(k_v)\)-module, so that
  \[ H^0(k_v, \mathbf{R}_{l/k}(\mu_2)') \cong \{ \pm 1 \} \times \{ \pm 1 \} \]
  and also \(\widehat{H}^0(k_v, \mathbf{R}_{l/k}(\mu_2)') \cong \{ \pm 1 \} \times \{ \pm 1 \}\).  The map \(a_v\) is in this case the diagonal embedding, so \(c_v\) is given by summing both coordinates.  By the same argument, this also holds for finite places at which \(\mathbf{G_1}\) splits.
\end{proof}

\begin{remark} \label{rmk:dynkin-actions}
  In the types \(A_{n \geq 2}\), \(D_{2n+1}\) and \(E_6\), the group \(S_v \cong \Z/2\) acts by inversion on \(H^2(k_v, Z(\mathbf{G_1}))\).  In type \(D_{2n}\) where \(n > 2\), \(S_v \cong \Z/2\) acts by swapping the two factors under the canonical isomorphism \(H^2(k_v, Z(\mathbf{G_1})) \cong \Z/2 \times \Z/2\) as discussed in \cite{Kneser:galois}*{Section~5}.  In all other included types, \(\Sym \Delta\) is trivial.
\end{remark}

\section{Adelic automorphisms acting on Galois cohomology}
\label{section:adelic-automorphisms}

In this section, we give an explicit description of the action of the automorphism group \(\Aut(\A^f_k)\) on \(\prod_{v \nmid \infty} H^1(k_v, \Aut \mathbf{G_0})\) and we show that a \(k\)-group \(\mathbf{G}\) of outer type (but not of type \(D_4\)) is never congruence rigid if its quasi-split form has nontrivial fiber under \(\overline{h}\).  This separates the question of congruence rigidity into the number theoretic problem of determining the singleton fibers of \(\overline{h}\) and the Galois cohomological problem of determining the singleton fibers of \(\overline{f_1}\), both to be addressed in the next sections.

Let us first describe the structure of the automorphism group \(\Aut(\A^f_k)\) of a number field \(k\) in general.  As we mentioned in the introduction, restricting any \(\Phi \in \Aut(\A^f_k)\) to each factor of the restricted product \(\A^f_k = \prod'_{v \nmid \infty} k_v\) decomposes \(\Phi\) into a family of isomorphisms \(\Phi_v \colon k_v \rightarrow k_{v'}\) for a permutation \(v \mapsto v'\) of the set of finite places of \(k\)~\cite{Klingen:similarities}*{Theorem VI.2.3}.  Clearly, \(k_v\) and \(k_{v'}\) can only be isomorphic if \(v\) and \(v'\) lie over the same rational prime \(p\).  Thus \(\Phi\) restricts to an automorphism on each {\'e}tale algebra \(\prod_{v \mid p} k_v \cong k \otimes_\Q \Q_p\).  Therefore we have the decomposition
\[ \operatorname{Aut} (\A^f_k/\A^f_\Q) = \prod_{p \nmid \infty}\,  \prod_{[k_v] \colon v \mid p}\, \operatorname{Aut}(k_v/\Q_p) \wr \operatorname{Sym}_{\# [k_v]} \]
as a product of wreath products where the second product runs over all isomorphism classes of completions \(k_v/\Q_p\).

Let from now on \(\mathbf{G_1}\) and \(\mathbf{G_2}\) be two \(k\)-quasi-split forms of \(\mathbf{G_0}\) and assume that they are not of type \(D_4\).  Assume furthermore that there is a \(\Phi \in \Aut(\A_k^f)\) such that for all finite places \(v\), the isomorphism \(\Phi_v\colon k_v \to k_{v'}\) induces a bijection
\[ (\Phi_v)_*\colon H^1(k_v, \Aut \mathbf{G_0})\to H^1(k_{v'}, \Aut \mathbf{G_0}) \] of pointed sets that maps the class of \(\mathbf{G_1}\) to the class of \(\mathbf{G_2}\).

\begin{proposition} \label{prop:both-split}
 The \(k\)-group \(\mathbf{G_1}\) is \(k\)-split if and only if \(\mathbf{G_2}\) is \(k\)-split.
\end{proposition}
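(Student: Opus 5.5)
By symmetry it suffices to show that if \(\mathbf{G_1}\) is \(k\)-split, then so is \(\mathbf{G_2}\). A quasi-split \(k\)-form of \(\mathbf{G_0}\) is determined by its image in \(H^1(k, \Sym \Delta) = \operatorname{Hom}_c(\Gal(k), \Sym \Delta)/\text{conj}\). Outside type \(D_4\), \(\Sym \Delta\) is trivial or \(\Z/2\), so this set is \(\operatorname{Hom}_c(\Gal(k), \Z/2) \cong k^\times/(k^\times)^2\). Under this identification, \(\mathbf{G_i}\) corresponds to \(1\) or to a quadratic extension \(l_i/k\), and \(\mathbf{G_i}\) is \(k\)-split exactly when this class is trivial. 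If \(\Sym \Delta\) is trivial, every quasi-split form is split and there is nothing to prove. So assume \(\Sym \Delta \cong \Z/2\), suppose \(\mathbf{G_1}\) is split, and suppose \(\mathbf{G_2}\) corresponds to a genuine quadratic extension \(l_2/k\); we aim for a contradiction.

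\emph{Step 1: reduce to the \(\Sym \Delta\)-level.} The map \(p\) is natural with respect to field isomorphisms, since \(\Sym \Delta\) is a constant \(\Q\)-group scheme and the sequence~\eqref{eq:ad-aut} is defined over \(\Q\). Hence \((\Phi_v)_*\) commutes with \(p\). Applying \(p\) to the hypothesis, \(\Phi_v\) carries the local class of \(\mathbf{G_1}\) in \(H^1(k_v, \Sym \Delta)\) to the local class of \(\mathbf{G_2}\) in \(H^1(k_{v'}, \Sym \Delta)\). Since \(\mathbf{G_1}\) is split, its local class is trivial at every \(v\). The pushforward of the trivial class under an isomorphism is trivial, and \(v \mapsto v'\) is a bijection of the finite places. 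Therefore the image of \(l_2\) in \(k_w^\times/(k_w^\times)^2\) is trivial for every finite place \(w\). In other words, \(l_2 \otimes_k k_w\) splits at all finite places.

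\emph{Step 2: Chebotarev.} A quadratic extension \(l_2/k\) in which every finite place splits completely cannot exist. By Chebotarev, the places inert in \(l_2\) have density \(1/2\); equivalently, a global square class that is a local square at all but finitely many places is a global square. This contradicts \(l_2 \neq k\), so \(\mathbf{G_2}\) is split. (This is the triviality of \(\ker h\) mentioned in the introduction.)

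The only step needing care is Step 1: checking that the functorial action of \(\Phi_v\) on \(H^1(k_v, \Aut \mathbf{G_0})\) is compatible with \(p\), and that "trivial class" is preserved. Both follow because \(\Phi_v\) transports cocycles via the induced isomorphism \(\Gal(k_{v'}) \to \Gal(k_v)\), and the split exact sequence~\eqref{eq:ad-aut} is defined over \(\Q\), so it is stable under this transport.
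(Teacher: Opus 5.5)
Your proof is correct and takes the same route as the paper. Both push the local hypothesis down to \(H^1(k_v, \Sym \Delta)\) using the functoriality of \(p\), then use that \(h\) has trivial kernel, and conclude that a quasi-split inner form of the split group is split. The only difference is that you prove \(\ker h\) trivial directly via Chebotarev (a nontrivial quadratic extension has a positive density of inert places), whereas the paper cites \cite{Kammeyer-Spitler:chevalley}*{Proposition~4}, which also covers type \(D_4\).
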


\begin{proof}
  If \(\mathbf{G_1}\) splits globally, then it splits at all local (and in particular all finite) places.  This implies that all \((\Phi_v)_*(\mathbf{G_1})\) also split, as  \((\Phi_v)_*\) preserves the base point.  But if \(\mathbf{G_2}\) splits at all finite places, then its class \([\mathbf{G_2}] \in H^1(k, \Aut \mathbf{G_0})\) is mapped to the base point under \(h \circ p\) in the following diagram
  \[ \begin{tikzcd}
    \prod\limits_{v \nmid \infty} H^1(k_v, \Aut \mathbf{G_0}) \ar[r, "P"]
      & \prod\limits_{v \nmid \infty} H^1(k_v, \Sym \Delta) \\
    H^1(k, \Aut \mathbf{G_0}) \ar[u, "g"] \ar[r, "p"]
      & H^1(k, \Sym \Delta). \ar[u, "h"]
  \end{tikzcd} \]
  Since \(h\) has trivial kernel~\cite{Kammeyer-Spitler:chevalley}*{Proposition~4} (even in type \(D_4\)), we see that \(p\) already maps \([\mathbf{G_2}]\) to the base point, so \(\mathbf{G_2}\) is an inner twist of \(\mathbf{G_0}\).  But since \(\mathbf{G_2}\) was also assumed to be \(k\)-quasi-split, it already has to be split.  By symmetry, the converse holds true as well.
\end{proof}

Next, we will identify the precise nature of the action of \(\Aut(\A_k)\) on \(\prod H^1(k_v, \Aut \mathbf{G_0})\) with infinite places included.  The essential step is understanding the individual action of \(\Aut(k_v)\) on a single factor \(H^1(k_v, \Aut \mathbf{G_0})\).

\begin{remark} \label{rmk:H^1-decomposition}
  As \(\Aut (\R)\) and \(H^1(\C, \Aut \mathbf{G_0})\) are both trivial, we do not need to consider the infinite places of \(k\) seperately with respect to their \(\Aut (k_v)\)-action.
\end{remark}

\begin{theorem} \label{thm:H^1-decomposition}
  Let \(v\) be a finite place of \(k\) and assume that \(\mathbf{G_0}\) is not of type \(D_4\).  Then \(H^1(k_v, \Aut \mathbf{G_0})\) is in bijection with the disjoint union
  \[H^2(k_v, Z(\mathbf{G_0}))/\Sym \Delta  \ \ \sqcup \ \ M \times \left(k_v^\times / (k_v^\times)^2 \setminus \{1\}\right)\]
  where \(\Sym \Delta\) acts as discussed in \Cref{rmk:dynkin-actions} and
  \begin{align*}M =
    \begin{cases}
      \Z/2 & \text{if } \mathbf{G_0} \text{ has type }
        A_{2n+1} \text{ for } n \ge 1 \text{ or } D_n \text{ for } n \ge 5, \\
      \{0\} & \text{if } \mathbf{G_0} \text{ has type }
        A_{2n} \text{ or } E_6, \\
      \varnothing & \text{otherwise.}
    \end{cases}
  \end{align*}
  The bijection is equivariant with respect to the natural action of \(\Aut (k_v)\) on \(H^1(k_v, \Aut \mathbf{G_0})\), the trivial action both on \(H^2(k_v, Z(\mathbf{G_0}))/\Sym \Delta\) and the factor \(M\), and the natural action on \(k_v^\times / (k_v^\times)^2 \setminus \{1\}\).
\end{theorem}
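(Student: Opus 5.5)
We fibre \(H^1(k_v, \Aut \mathbf{G_0})\) over \(H^1(k_v, \Sym \Delta)\) via \(p\). Outside type \(D_4\), \(\Sym \Delta\) is trivial or \(\Z/2\) with trivial Galois action, so \(H^1(k_v, \Sym \Delta) = \operatorname{Hom}(\Gal(k_v), \Z/2) \cong k_v^\times/(k_v^\times)^2\) by Kummer theory. This identification is natural, hence \(\Aut(k_v)\)-equivariant for the natural action on square classes. Since \(p\) is \(\Aut(k_v)\)-equivariant, it suffices to describe each fibre \(p^{-1}(\gamma)\) together with the way \(\Aut(k_v)\) permutes the fibres.

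\emph{Trivial fibre.} By exactness of the sequence from~\eqref{eq:ad-aut} and \cite{Serre:galois-cohomology}*{I.5.5~Cor.\,2}, the fibre over the trivial class is \(H^1(k_v, \Ad \mathbf{G_0})/S_v\), where \(S_v = \Sym\Delta(k_v)\). Kneser's theorem makes \(\delta\) a bijection \(H^1(k_v, \Ad \mathbf{G_0}) \to H^2(k_v, Z(\mathbf{G_0}))\), and Proposition~\ref{prop:s-equivariant} says it is \(S_v\)-equivariant. So the fibre is \(H^2(k_v,Z(\mathbf{G_0}))/\Sym\Delta\), with the action from Remark~\ref{rmk:dynkin-actions}.

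For triviality of the \(\Aut(k_v)\)-action on this piece, we use that \(\delta\) is natural under field automorphisms. By Table~\ref{tate-table}, \(H^2(k_v,\mu_r)\cong \Z/r\) via the local invariant. For \(Z=\mu_2\times\mu_2\) we get two copies. Via the Brauer group, \(\Aut(k_v)\) preserves invariants, because \(\operatorname{inv}\) is canonical: a field isomorphism carries a Frobenius to a Frobenius and the valuation to the valuation. Hence the action on this piece is trivial.

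\emph{Nontrivial fibres.} For \(\gamma \ne 1\), corresponding to a ramified or unramified quadratic \(l/k_v\), we twist by the quasi-split cocycle \(b_1 = s\circ\gamma\) as in Section~\ref{section:exact-sequences}. This identifies the fibre with \(H^1(k_v, \Ad \mathbf{G_1})/S_v\), which by Kneser equals \(H^2(k_v, Z(\mathbf{G_1}))/S_v\). Theorem~\ref{thm:tate-map} computes \(H^2(k_v, \mathbf{R}^{(1)}_{l/k_v}(\mu_r))\): it is \(0\) for odd \(r\) (types \(A_{2n}\), \(E_6\)), giving \(M=\{0\}\), and \(\Z/2\) for \(A_{2n+1}\) and \({}^2D_{2n+1}\). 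For \(D_{2n}\) with \(Z = \mathbf{R}_{l/k_v}(\mu_2)\) it is also \(\Z/2\). The action of \(S_v\) (inversion, or swap) is trivial on \(\Z/2\). This gives \(M=\Z/2\). In the remaining types \(\Sym\Delta=1\), so there are no nontrivial classes and \(M=\varnothing\).

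\emph{Main obstacle: the \(\Aut(k_v)\)-equivariance on \(M\).} We must show that \(\phi\in\Aut(k_v)\) carries the fibre over \(\gamma\) to the fibre over \(\phi(\gamma)\) compatibly with the identification \(M\cong\Z/2\), i.e.\ that it fixes the label. The base point of each fibre is the quasi-split form, and \(\phi\) maps quasi-split forms to quasi-split forms, so \(0\mapsto 0\). Because \(M\) has only two elements, the nonzero element is then also preserved. To identify the fibres canonically, we choose twists of \(\mathbf{G_0}\) so that \(\phi\) maps the cocycle \(s\circ\gamma\) to \(s\circ\phi(\gamma)\), using Galois-equivariance of \(s\). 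The twisting bijections \(\tau\) then commute with \(\phi_*\). This makes the whole bijection equivariant.
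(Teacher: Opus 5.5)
Your proposal is correct and follows essentially the paper's route. You fibre \(H^1(k_v, \Aut \mathbf{G_0})\) over \(H^1(k_v, \Sym \Delta) \cong k_v^\times/(k_v^\times)^2\), identify the trivial fibre with \(H^2(k_v, Z(\mathbf{G_0}))/\Sym \Delta\) via Kneser, Proposition~\ref{prop:s-equivariant} and the trivial \(\Aut(k_v)\)-action on \(\Br(k_v)\) (the paper cites Janusz where you argue via Frobenius and valuation), and handle the nontrivial fibres by twisting to the quasi-split forms and reading off \(H^2(k_v, Z(\mathbf{G_\gamma})) \in \{0, \Z/2\}\). The one difference is cosmetic: you get equivariance on \(M\) because \(\Aut(k_v)\) sends the unique quasi-split point of a two-element fibre to the quasi-split point of the image fibre, while the paper uses naturality of \(\delta\) and the uniqueness of an isomorphism between groups that are both \(0\) or both \(\Z/2\), which is the same observation.
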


\begin{proof}[Proof of Theorem~\ref{thm:H^1-decomposition}]
  Due to the sequence in~\eqref{eq:ad-aut} and \cite{Serre:galois-cohomology}*{I.5.5~Cor.\,2}, we obtain a  short exact sequence of cohomology sets
  \begin{align*}
  1 \rightarrow H^1(k_v, \Ad \mathbf{G_0}) / H^0(k_v, \Sym \Delta)
    & \xrightarrow{i} H^1(k_v, \Aut \mathbf{G_0}) \\
    & \xrightarrow{p} H^1(k_v, \Sym \Delta) \rightarrow 1
  \end{align*}
  where the first map is even injective.  By functoriality, the map \(p\) is equivariant with respect to the \(\Aut (k_v)\)-action on \(H^1(k_v, \Sym \Delta)\).  In the case \(\Sym \Delta \cong \Z/2\),  we have
  \[ H^1(k_v, \Sym \Delta) \cong k_v^\times / (k_v^\times)^2 \]
  and under this isomorphism, the action of \(\Aut (k_v)\) on \(k_v^\times / (k_v^\times)^2\) is the natural one.  Since \(i\) is injective, the sequence shows that \(\ker p\) is in bijection with \(H^1(k_v, \Ad \mathbf{G_0}) / H^0(k_v, \Sym \Delta)\) which, in turn, can be identified with \(H^2(k_v, Z(\mathbf{G_0})) / \Sym \Delta\) as we saw in \eqref{eq:ad-center-square}.  Furthermore, the bijection is also \(\Aut (k_v)\)-equivariant by \Cref{prop:s-equivariant}. If \(\mathbf{G_0}\) is not of type \(D_{2n}\), then \(Z(\mathbf{G_0}) = \mu_m\) for some \(m\).  In that case, we have a natural isomorphism \(H^2(k_v, Z(\mathbf{G_0})) \cong \Br(k_v)[m]\) to the subgroup of the Brauer group of \(k_v\) consisting of those elements whose order divides \(m\).   So the action is trivial because for every \(\mathfrak{p}\)-adic field \(k_v\), the \(\Aut (k_v)\)-action on \(\Br(k_v)\) is trivial by~\cite{Janusz:automorphisms}*{Lemma, p.\,385}.  If \(\mathbf{G_0}\) has type \(D_{2n}\) and hence \(Z(\mathbf{G_0}) \cong \mu_2 \times \mu_2\), we have a natural isomorphism \(H^2(k_v, Z(\mathbf{G_0})) \cong \operatorname{Br}(k_v)[2] \times \operatorname{Br}(k_v)[2]\) so the action is trivial, too.  If \(\mathbf{G_0}\) is not of type \(A_{n \geq 2}\), \(D_{n \geq 5}\) or \(E_6\), the map \(i\) is actually a bijection and the statement follows.

  If \(\mathbf{G_0}\) does have type \(A_{n \geq 2}\), \(D_{n \geq 5}\) or \(E_6\), then each nontrivial \(\gamma \in H^1(k_v, \Sym \Delta) \cong k_v^\times / (k_v^\times)^2\) determines a \(k_v\)-quasi-split form \(\mathbf{G_\gamma}\) corresponding to \(s(\gamma) \in H^1(k_v, \Aut \mathbf{G_0})\).  By twisting the entire Sequence~\eqref{eq:ad-aut} as discussed previously, we obtain the sequence
  \[
  1 \rightarrow H^1(k_v, \Ad \mathbf{G_\gamma}) / S_v
    \rightarrow H^1(k_v, \Aut \mathbf{G_\gamma})
    \rightarrow H^1(k_v, \Sym \Delta) \rightarrow 1.
  \]
  We observe that \(S_v\) must act trivially on \(H^1(k_v, \Ad \mathbf{G_1}) \le \Z/2\) by Table~\ref{tate-table} so that the quotient may be ommitted.  The kernel of this sequence is in bijection with the fiber \(p^{-1}(\gamma)\).  Therefore, we obtain a disjoint union decomposition of the complement of \(\ker p\) in \(H^1(k_v, \Aut \mathbf{G_0})\) by means of the bijection
  \[H^1(k_v, \Aut \mathbf{G_0}) \setminus \ker p \ \longrightarrow \!\!\!\! \bigsqcup_{
  \gamma \in (k_v^\times / (k_v^\times)^2) \setminus \{1\}} \!\! H^1(k_v, \Ad \mathbf{G_{\gamma}}).\]
 By functoriality, we have an \(\Aut (k_v)\)-action on the disjoint union such that \(\Phi_v \in \Aut (k_v)\) maps \(H^1(k_v, \Ad \mathbf{G_{\gamma}})\) bijectively and base point preserving to \(H^1(k_v, \Ad \mathbf{G}_{\Phi_v(\gamma)})\).  Since \(p\) is \(\Aut (k_v)\)-equivariant, so is the above bijection.  Furthermore, the isomorphism in the first row of Diagram~\ref{diagram:boundary-map} is similarly \(\Aut (k_v)\)-equivariant in the following sense.  The automorphism \(\Phi_v\) induces a commutative square
  \begin{equation} \label{diagram:phi-h1-h2-compability} \begin{tikzcd}
    H^1(k_v, \Ad \mathbf{G_{\gamma}}) \ar[r, "\cong"] \ar[d, "(\Phi_v)_*"]
      & H^2(k_v, Z(\mathbf{G_{\gamma}})) \ar[d, "(\Phi_v)_*"] \\
    H^1(k_v, \Ad \mathbf{G}_{\Phi_v(\gamma)}) \ar[r, "\cong"]
      & H^2(k_v, Z(\mathbf{G}_{\Phi_v(\gamma)}))
  \end{tikzcd} \end{equation}
by naturality.  But we have in fact that both the groups \(H^2(k_v, Z(\mathbf{G_{\gamma}}))\) and \(H^2(k_v, Z(\mathbf{G}_{\Phi_v(\gamma)}))\) are either trivial if \(\mathbf{G_{\gamma}}\) is of type \(A_{2n}\) or \(E_6\) or isomorphic to \(\Z/2\) if \(\mathbf{G_{\gamma}}\) is of type \(A_{2n+1}\) or \(D_{n \geq 5}\) by \Cref{tate-table}.  Hence, \((\Phi_v)_*\) is always the same unique isomorphism.  Setting \(M = H^2(k_v, Z(\mathbf{G_\gamma}))\) for any fixed nontrivial \(\gamma \in k_v^\times / (k_v^\times)^2\), we thus obtain the \(\Aut(k_v)\)-equivariant bijection
  \[\bigsqcup_{\gamma \in (k_v^\times / (k_v^\times)^2) \setminus \{1\}} \!\! H^2(k_v, Z(\mathbf{G_{\gamma}})) \ \ \longrightarrow \ \ M \times \left(k_v^\times / (k_v^\times)^2 \setminus \{1\}\right)\]
for the types \(A_{n \geq 2}\), \(D_{n \geq 5}\) or \(E_6\).
\end{proof}

Recall that for every finite place \(v\) of \(k\) and any \(k_v\)-quasi-split group \(\mathbf{G_1}\), we have the regular local Tate duality pairing
  \[ \langle \ , \, \rangle_v \colon H^0(k_v, Z(\mathbf{G_1})') \otimes H^2(k_v, Z(\mathbf{G_1})) \longrightarrow \Q/\Z \]
and a similar \(\Z/2\)-valued pairing for Tate cohomology if \(v\) is real.  We will now work towards understanding how this duality interacts with \(\Aut(\A_k^f)\).  As before, we assume that none of the involved \(k\)-groups are of type \(D_4\).

\begin{proposition} \label{prop:aut-kv-equivariance}
  Let \(\Phi_v \colon k_v \xrightarrow{\cong} k_w\) be an isomorphism where \(v\) and \(w\) are places of \(k\).  Then for every \(\eta \in H^0(k_v, Z(\mathbf{G_1})')\) and every \(\theta \in H^2(k_v, Z(\mathbf{G_1}))\), we have
  \[ \langle (\Phi_v)_* (\eta), (\Phi_v)_* (\theta) \rangle_w = \langle \eta, \theta \rangle_v. \]
\end{proposition}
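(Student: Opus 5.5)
The plan is to reduce the claim to the naturality of the local Tate pairing under field isomorphisms. The pairing \(\langle\ ,\,\rangle_v\) is the composite of the cup product
\[ H^0(k_v, Z') \otimes H^2(k_v, Z) \longrightarrow H^2(k_v, \mathbf{GL_1}) = \Br(k_v), \]
induced by the evaluation pairing \(Z' \otimes Z \to \mathbf{GL_1}\), with the invariant map \(\operatorname{inv}_v \colon \Br(k_v) \to \Q/\Z\). For real \(v\) one uses Tate cohomology, and \(\Br(\R) \cong \tfrac12\Z/\Z\). The isomorphism \(\Phi_v \colon k_v \to k_w\) extends to an isomorphism \(\overline{\Phi_v} \colon \overline{k_v} \to \overline{k_w}\). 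This gives an isomorphism of Galois groups \(\Gal(k_v) \to \Gal(k_w)\), \(\sigma \mapsto \overline{\Phi_v}\sigma\overline{\Phi_v}^{-1}\), together with compatible bijections of the \(\overline{k}\)-points of \(Z\), \(Z'\) and \(\mathbf{GL_1}\). Here \(Z\) is regarded as the base change of the fixed \(k\)-group scheme to \(k_v\) and \(k_w\) respectively. I will define \((\Phi_v)_*\) on each cohomology group as the map induced by this compatible pair, and I expect it to agree with the functorial action used in Theorem~\ref{thm:H^1-decomposition}. Choosing a different extension \(\overline{\Phi_v}\) changes the map by an inner automorphism, which acts trivially on cohomology.

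The key steps, in order, are as follows.

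\begin{enumerate}[label=(\arabic*)]
\item Check that the evaluation pairing is equivariant for these compatible maps, i.e.\ \(\overline{\Phi_v}(f(z)) = (\Phi_v f)(\Phi_v z)\). This holds by construction, since \(\overline{\Phi_v}\) transports the Galois action and the module structures simultaneously.
\item Conclude, from the cochain formula for cup products, that
\[ (\Phi_v)_*(\eta \cup \theta) = (\Phi_v)_*\eta \cup (\Phi_v)_*\theta \quad \text{in } \Br(k_w). \]
\item Show that \(\operatorname{inv}_w \circ (\Phi_v)_* = \operatorname{inv}_v\) on \(\Br(k_v)\).
\end{enumerate}

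Combining these steps gives the asserted equality of pairings.

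For the archimedean case of step (3), \(\Phi_v\) is the identity of \(\R\) or an automorphism of \(\C\) with \(\Br(\C) = 0\), so there is nothing to prove. For finite \(v\), note that \(k_v \cong k_w\) as \(\Q_p\)-algebras, since an isomorphism of \(p\)-adic fields is automatically continuous and \(\Q_p\)-linear. We may therefore identify \(k_w\) with \(k_v\) via \(\Phi_v\), and the claim becomes the statement that an automorphism of \(k_v\) preserves \(\operatorname{inv}_v\). This is exactly the triviality of the \(\Aut(k_v)\)-action on \(\Br(k_v)\) from~\cite{Janusz:automorphisms}, as already used in the proof of Theorem~\ref{thm:H^1-decomposition}. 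If a direct argument is preferred, one can argue as follows: the invariant is computed on the unramified part via the Frobenius and the valuation, and \(\Phi_v\) preserves the valuation, maps \(k_v^{\mathrm{ur}}\) to \(k_w^{\mathrm{ur}}\), and conjugates Frobenius to Frobenius.

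I expect the main obstacle to be bookkeeping rather than depth. One has to be precise about what \((\Phi_v)_*\) means on \(H^0(k_v, Z(\mathbf{G_1})')\) and \(H^2(k_v, Z(\mathbf{G_1}))\) when \(Z(\mathbf{G_1})\) is a twisted module such as \(\mathbf{R}^{(1)}_{l/k}(\mu_r)\) or \(\mathbf{R}_{l/k}(\mu_2)\). In that case \(\overline{\Phi_v}\) acts on \(\mu_r(\overline{k_v})\) and must intertwine the inversion twist coming from \(l\). This works because \(\mathbf{G_1}\), hence \(Z\) and \(Z'\), are defined over \(k\), and \(\Phi_v\) restricts on \(k\) to a place-compatible map. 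I would handle this uniformly with the explicit description of the Galois module from~\cite{Kammeyer-Serafini:sign}*{Proposition~10}, verifying the compatibility of step (1) there. I would also note that any sign ambiguity in the identification is harmless, since it would affect both arguments of the pairing simultaneously.
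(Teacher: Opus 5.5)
Your proposal is correct and follows essentially the same route as the paper. Both arguments use equivariance of the evaluation pairing under an extension \(\overline{\Phi_v}\) of \(\Phi_v\) to algebraic closures, then naturality of the cup product, then compatibility of the invariant maps via Janusz's lemma, with the archimedean case trivial.

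One justification should be corrected: \(\Phi_v\) need not restrict to a map of \(k\), since it comes from an arbitrary adelic automorphism (e.g.\ one permuting the places over a split prime of a non-Galois field). So the module at \(w\) must be the transported one, which the paper later calls \(Z(\mathbf{G_2})\); with that reading, your step~(1) holds by construction exactly as in the paper.
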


\begin{proof}
  The statement is trivial for complex places, so let \(v\) be finite or real.  The pairing \(\langle \ , \, \rangle_v\) is given by cup product, followed by the map induced from \(\mathrm{ev} \colon Z(\mathbf{G_1})' \otimes Z(\mathbf{G_1}) \rightarrow \mathbf{GL_1}\) and the natural isomorphism \(H^2(k_v, \mathbf{GL_1}) \cong \Br{k_v} \cong \Q/\Z \text{ or } \Z / 2\).  The induced map \((\Phi_v)_*\) on the second cohomologies as seen in Diagram~\ref{diagram:phi-h1-h2-compability} has the 2-cocycle description \(a_{\sigma, \tau} \mapsto {}^\varphi a_{\varphi \sigma \varphi^{-1}, \varphi \tau \varphi^{-1}}\) where \(\varphi\) is an extension of \(\Phi_v\) to an isomorphism of the algebraic closures.  For \(f \in Z(\mathbf{G_1})'\) and \(z \in Z(\mathbf{G_1})\), we have
    \[\mathrm{ev}({}^\varphi f \otimes {}^\varphi z) = \varphi \circ f \circ \varphi^{-1} (\varphi (z)) = \varphi (f(z)) = {}^\varphi(\mathrm{ev}(f \otimes z)). \]
    As the induced map \(\mathrm{ev}_*\) composes a 2-cocycle with \(\mathrm{ev}\), we see that \((\Phi_v)_* \otimes (\Phi_v)_*\) and \(\mathrm{ev}_*\) commute.  Naturality of the cup product thus gives
  \[ \mathrm{ev}_*((\Phi_v)_* (\eta) \smile (\Phi_v)_* (\theta)) = (\Phi_v)_* (\mathrm{ev}_*(\eta \smile \theta)) \]
  for every \(\eta \in H^0(k_v, Z(\mathbf{G_1})')\) and \(\theta \in H^2(k_v, Z(\mathbf{G_1}))\).  Finally, it follows again from \cite{Janusz:automorphisms}*{Lemma, p.\,385}, that the triangle
  \[ \begin{tikzcd}
      H^2(k_v, \mathbf{GL_1}) \arrow[rr, "(\Phi_v)_*", "\cong"'] \arrow[dr, "\cong"'] & & H^2(k_w, \mathbf{GL_1}) \arrow[dl, "\cong"] \\
      & \Q / \Z &
    \end{tikzcd}
  \]
  commutes if \(v\) is finite.  If \(v\) is real, it is trivial that the same triangle commutes with \(\Z / 2\) in place of \(\Q / \Z\).  This completes the proof.
\end{proof}

\begin{corollary} \label{cor:poitou-tate-sum-equivariance}
  For every \(\Phi \in \Aut (\A_k)\), all \((\eta_v) \in \prod_v H^0(k_v, Z(\mathbf{G_1})')\) and all \((\theta_v) \in \bigoplus_v H^2(k_v, Z(\mathbf{G_1}))\), we have
  \[ \langle \Phi_*(\eta_v) , \Phi_*(\theta_v) \rangle = \langle (\eta_v), (\theta_v) \rangle. \]
\end{corollary}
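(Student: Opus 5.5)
The plan is to reduce the global statement to the local one, Proposition~\ref{prop:aut-kv-equivariance}, by unwinding the definition of the global pairing as a sum of local pairings. By construction, the pairing between \(\prod_v \widetilde{H}^0(k_v, Z(\mathbf{G_1})')\) and \(\bigoplus_v H^2(k_v, Z(\mathbf{G_1}))\) used in Poitou--Tate duality is
\[ \langle (\eta_v), (\theta_v) \rangle = \sum_v \langle \eta_v, \theta_v \rangle_v \in \Q/\Z. \]
Here the real places contribute through the \(\Z/2 \hookrightarrow \Q/\Z\) valued Tate pairing. The sum is finite because \(\theta_v = 0\) for almost all \(v\). So the first step is to record this description and to check that the sum is well defined on the restricted product and direct sum.

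Next I will describe \(\Phi_*\) explicitly. An element \(\Phi \in \Aut(\A_k)\) decomposes, exactly as recalled for \(\Aut(\A^f_k)\) at the start of this section (together with Remark~\ref{rmk:H^1-decomposition} for the infinite places), into a permutation \(v \mapsto v'\) of the places and isomorphisms \(\Phi_v \colon k_v \to k_{v'}\). The induced map acts componentwise:
\[ \Phi_*(\eta_v)_{v'} = (\Phi_v)_*(\eta_v), \qquad \Phi_*(\theta_v)_{v'} = (\Phi_v)_*(\theta_v). \]
Because \(\Phi_*\) is a bijection on indices, it preserves the direct sum.

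With this description, the computation is a reindexing of a finite sum:
\[ \langle \Phi_*(\eta_v), \Phi_*(\theta_v) \rangle = \sum_{v} \langle (\Phi_v)_*\eta_v, (\Phi_v)_*\theta_v \rangle_{v'} = \sum_v \langle \eta_v, \theta_v \rangle_v. \]
The first equality substitutes \(w = v'\) in the sum over \(w\). The second equality applies Proposition~\ref{prop:aut-kv-equivariance} termwise.

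The only point that needs care is making sure that the groups \(\widetilde{H}^0\) and \(H^2\) of \(Z(\mathbf{G_1})\) at \(v\) and at \(v'\) are the ones matched by \((\Phi_v)_*\). This holds because \(\Phi\) is understood to transport the twisting data: the local quasi-split forms at \(v\) and \(v'\) correspond under \(\Phi_v\), so that \(Z(\mathbf{G_1})\) over \(k_v\) maps to \(Z(\mathbf{G_1})\) over \(k_{v'}\). At real places there is a further check, namely that \((\Phi_v)_*\) respects norms, so that it descends to Tate cohomology. This is immediate, since \(\Phi_v\) is the identity of \(\R\) at real places and \(\Aut(\R)\) is trivial. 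I expect no real obstacle beyond this bookkeeping.
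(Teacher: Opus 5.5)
Your proposal is correct and follows the paper's argument. The paper likewise writes the global Poitou--Tate pairing as the sum of the local Tate pairings and applies Proposition~\ref{prop:aut-kv-equivariance} term by term. The extra bookkeeping you supply is what the paper's one-line proof leaves implicit: the finiteness of the sum, the reindexing along the permutation of places, the transport of the local quasi-split forms, and the descent to Tate cohomology at real places.
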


\begin{proof}
  Since the global Poitou--Tate duality pairing is given by summing up the local Tate duality parings, the assertion is immediate from the previous proposition.
\end{proof}

To summarize the previous results, \Cref{thm:H^1-decomposition} and \Cref{rmk:H^1-decomposition} tell us the following: Fix \(\mathbf{G_1}\) and \(\mathbf{G_2}\) as before and take an arbitrary \(\Phi_v\colon k_v\to k_w\) (where \(v\) and \(w\) may now also be infinite places) such that \(\Phi_v\) maps the class of \([\mathbf{G_1}] \in H^1(k_v, \Aut \mathbf{G_0})\) to the class of \([\mathbf{G_2}] \in H^1(k_w, \Aut \mathbf{G_0})\).  Then the induced isomorphism
\[  (\Phi_v)_* \colon H^2(k_v, Z(\mathbf{G_1})) \xrightarrow{\cong} H^2(k_w, Z(\mathbf{G_2}))\]
does not depend on the exact choice of \(\Phi_v\).  Furthermore, \Cref{prop:aut-kv-equivariance} tells us that the same holds (in a way that is compatible with the Poitou--Tate duality pairing) for the induced isomorphism
\[  (\Phi_v)_* \colon H^0(k_v, Z(\mathbf{G_1})') \xrightarrow{\cong} H^0(k_w, Z(\mathbf{G_2})').\]
Let us denote by
\begin{align*}
  j_v \colon & H^0(k, Z(\mathbf{G_1})') \rightarrow H^0(k_v, Z(\mathbf{G_1})') \text{ and} \\
  j_w \colon & H^0(k, Z(\mathbf{G_2})') \rightarrow H^0(k_w, Z(\mathbf{G_2})')
\end{align*}
the maps induced by the completion maps \(k \rightarrow k_v\) and \(k \rightarrow k_w\).  If \(v\) and \(w\) are finite places, then \(j_v\) and \(j_w\) are inclusions.  

\begin{proposition} \label{prop:local-global-compability}
  There is an isomorphism
  \[\psi\colon H^0(k, Z(\mathbf{G_1})')\to H^0(k, Z(\mathbf{G_2})')\]
  such that for all \(\Phi_v\colon k_v\to k_w\) for which \((\Phi_v)_*\) maps the class of \([\mathbf{G_1}] \in H^1(k_v, \Aut \mathbf{G_0})\) to the class of \([\mathbf{G_2}] \in H^1(k_w, \Aut \mathbf{G_0})\),
  \begin{equation} \begin{tikzcd}
    H^0(k, Z(\mathbf{G_1})') \ar[r, "j_v"] \ar[d, "\psi"]
      & H^0(k_v, Z(\mathbf{G_1})') \ar[d, "(\Phi_v)_*"] \\
    H^0(k, Z(\mathbf{G_2})') \ar[r, "j_w"] 
      & H^0(k_w, Z(\mathbf{G_2})')
  \end{tikzcd} \end{equation}
  commutes.
\end{proposition}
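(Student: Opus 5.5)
We will build \(\psi\) explicitly from the description of the global groups \(H^0(k, Z(\mathbf{G_i})')\) obtained in the proof of Theorem~\ref{thm:tate-map}, and then check the square case by case along the list of possible centers in Table~\ref{tate-table}. First, \(\mathbf{G_1}\) and \(\mathbf{G_2}\) have the same Cartan--Killing type, being forms of \(\mathbf{G_0}\). By Proposition~\ref{prop:both-split} they are either both split or both of outer type. So \(Z(\mathbf{G_1})\) and \(Z(\mathbf{G_2})\) are of the same kind: both \(\mu_r\), both \(\mu_2 \times \mu_2\), both \(\mathbf{R}^{(1)}_{l_i/k}(\mu_r)\), or both \(\mathbf{R}_{l_i/k}(\mu_2)\), possibly with different quadratic extensions \(l_1, l_2\).

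In each case we use the description of \(Z(\mathbf{G_i})'\) as Galois module as in the proof of Theorem~\ref{thm:tate-map}, namely the abelian group of homomorphisms \(Z \to \mathbf{GL_1}\) with twisted action, to identify \(H^0(k, Z(\mathbf{G_i})')\) canonically with one of the following:
\begin{itemize}
\item \(\Z/r\) (exponents \(\zeta \mapsto \zeta^m\)) in the inner case;
\item \(\{0\}\) or the unique element of order two (\(\zeta \mapsto \zeta^{r/2}\)) in the \(\mathbf{R}^{(1)}\) case;
\item the characters of \(\mu_2 \times \mu_2\) in the \({}^1 D_{2n}\) case;
\item the single nontrivial character \((x,y)\mapsto xy\) in the \(\mathbf{R}_{l/k}(\mu_2)\) case.
\end{itemize}
In each case these descriptions are in terms of \(\mu_r(\overline{k})\) alone, independent of \(l_i\). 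We define \(\psi\) as the identity under these identifications; for \(\mu_2\times\mu_2\) we fix the identification through the ordering of the two spin-type half factors coming from the Dynkin diagram. The local groups \(H^0(k_v, Z(\mathbf{G_1})')\) admit the same kind of description by exponents or characters on \(\mu_r(\overline{k_v})\), and \(j_v\) is then the evident inclusion.

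The key step is to compute \((\Phi_v)_*\) on \(H^0\) in these terms. Pick an extension \(\varphi\colon \overline{k_v}\to\overline{k_w}\) of \(\Phi_v\). The action is \(f\mapsto \varphi\circ f\circ\varphi^{-1}\), as in the proof of Proposition~\ref{prop:aut-kv-equivariance}. For \(f(\zeta)=\zeta^m\) we get \(\varphi(\varphi^{-1}(\zeta)^m)=\zeta^m\), so \((\Phi_v)_*\) preserves exponents. The hypothesis that \((\Phi_v)_*[\mathbf{G_1}]=[\mathbf{G_2}]\) enters in two ways. It guarantees that \(\varphi\) intertwines the twisted module structures, i.e.\ it carries \(l_1\otimes k_v\) to \(l_2\otimes k_w\). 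It also guarantees that the identification of \(Z(\mathbf{G_1})_{k_v}\) with \(Z(\mathbf{G_2})_{k_w}\) is the one induced from \(\mathbf{G_0}\) via the quasi-split structure. For \(\mathbf{R}_{l/k}(\mu_2)\) and \(\mu_2\times\mu_2\), we then check that the character \(xy\), respectively the fixed ordered basis of characters, is preserved.

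With this, both paths around the square send an exponent or character to the same exponent or character, so the square commutes. Since \(\psi\) was defined without reference to \(\Phi_v\), it works for all such \(\Phi_v\) simultaneously.

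The main obstacle I expect is the \({}^1 D_{2n}\) case. There the induced map could a priori swap the two factors of \(\mu_2\times\mu_2\), and the choice of \(\Phi_v\) is constrained only up to the \(S_v\)-action of Remark~\ref{rmk:dynkin-actions}. I would handle this by noting that \(\Phi_v\) acts on \(Z(\mathbf{G_0})\), which is defined over \(\Q\), by a Galois-type map that preserves each factor. The possible swap then only arises from the choice of global isomorphisms to \(\mathbf{G_0}\), which we fixed once and for all in defining \(\psi\). Real places, where \(j_v\) is not injective, cause no additional difficulty because \(\Aut(\R)\) is trivial, so \((\Phi_v)_*\) is the canonical identification.
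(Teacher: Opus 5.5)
Your proof is correct. It uses the same case split by the type of \(Z(\mathbf{G_i})\) as the paper, but it establishes commutativity by a different mechanism.

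\textbf{The paper's route.} In the outer case the paper never computes \((\Phi_v)_*\) on \(H^0\). It uses that \(H^0(k, Z(\mathbf{G_i})')\) has order at most two, so \(\psi\) is forced. Commutativity then follows from one of three facts: \(j_v, j_w\) are injective at finite places; \(j_v\) is the unique injection of \(\Z/2\) into a cyclic group; or \(j_v\) is the diagonal in type \({}^2D_{2n}\). These arguments hold for any isomorphism \((\Phi_v)_*\). So they are insensitive to the fact that \((\Phi_v)_*\) on \(H^0\) is a priori only determined up to the \(S_w\)-action, i.e.\ up to the choice of \(k_w\)-isomorphism. Only in the globally split case does the paper assert, without computation, that \((\Phi_v)_*\) induces the identity.

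\textbf{Your route.} You compute \((\Phi_v)_*\) uniformly as \(f\mapsto\varphi\circ f\circ\varphi^{-1}\). You show it preserves exponents, the character \(xy\), and the ordered characters of \(\mu_2\times\mu_2\). This supplies exactly the justification the paper omits in the split case. It also shows that the natural identification through \(\mathbf{G_0}\) over \(\Q\) is genuinely needed there: an \(S_w\)-twist would invert exponents or swap the two factors, and then \(\psi=\mathrm{id}\) would fail. In the outer cases this ambiguity is harmless, because \(\zeta\mapsto\zeta^{r/2}\) and \(xy\) are \(S_w\)-fixed.

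\textbf{Real places.} Your approach handles these without the paper's claim that \(H^0(k,Z')\to\widehat{H}^0(k_v,Z')\) is surjective. That claim fails for \(\mathbf{R}^{(1)}_{l/k}(\mu_r)\) with \(4\mid r\) at real places split in \(l\). There \(Z'\) has trivial action, and \(\zeta\mapsto\zeta^{r/2}\) is the norm of \(\zeta\mapsto\zeta^{r/4}\), so \(j_v=j_w=0\). The square still commutes, and your exponent description shows this directly.
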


\begin{proof}
  If \(\mathbf{G_1}\) is \(k\)-split, then so is \(\mathbf{G_2}\) by \Cref{prop:both-split} and in particular even \(\mathbf{G_1} \cong \mathbf{G_2}\) over \(k\).  Hence \(Z(\mathbf{G_1})' \cong Z(\mathbf{G_2})'\) is a constant group scheme, so the assertion is clear by choosing \(\psi\) to be the identity and noting that \((\Phi_v)_*\colon H^0(k_v, Z(\mathbf{G_1})')\to H^0(k_w, Z(\mathbf{G_2})')\) and likewise \(j_v\) and \(j_w\) induce the identity.  Note furthermore that there is nothing to prove if \(v\) is complex.

  Now let \(\mathbf{G_1}\) (and hence \(\mathbf{G_2}\)) be non-split globally.  If \(v\) is finite and \(\mathbf{G_1}\) does not split at \(v\), then neither does \(\mathbf{G_2}\) by  \((\Phi_v)_*(\mathbf{G_1}) = \mathbf{G_2}\).  Local Tate duality gives \(H^0(k_v, Z(\mathbf{G_1})') \cong H^2(k_v, Z(\mathbf{G_1}))^*\) and \(H^0(k_v, Z(\mathbf{G_2})') \cong H^2(k_v, Z(\mathbf{G_2}))^*\).  So from what we observed in the proof of \Cref{thm:H^1-decomposition}, we get that \(H^0(k_v,Z(\mathbf{G_1})') \cong H^0(k_w,Z(\mathbf{G_2})')\) are either trivial or \(\Z/2\).  Additionally, \(H^0(k,Z(\mathbf{G_1})')\) and \(H^0(k,Z(\mathbf{G_2})')\) are both also isomorphic to \(H^0(k_v,Z(\mathbf{G_1})')\).  As we obtain four times either the trivial group or four times \(\Z/2\), we choose the only possible isomorphism \(\psi\).  As \(j_w\) and \(j_v\) are both injective, it is automatic that \(j_w \circ \psi = (\Phi_v)_* \circ j_v\).

  If on the other hand \(\mathbf{G_1}\) splits at a finite place \(v\) and is not of type \({}^2 D_{2n}\) over \(k\), then \(\mathbf{G_2}\) splits at \(w\).  Thus the groups \(H^0(k_v, Z(\mathbf{G_1})') \cong H^0(k_w, Z(\mathbf{G_2})')\) are cyclic and \(j_v\) and \(j_w\) must be the unique injective homomorphism, respectively.  If \(\mathbf{G_1}\) and \(\mathbf{G_2}\) have type \({}^2 D_{2n}\), then \(Z(\mathbf{G_1}) = \mathbf{R}_{l_1/k}(\mu_2)\) and \(Z(\mathbf{G_2}) = \mathbf{R}_{l_2/k}(\mu_2)\) are restriction of scalars of \(\mu_2\) for quadratic extensions \(l_1/k\) and \(l_2/k\).  We obtain \(H^0(k,Z(\mathbf{G_1})') \cong \Z/2 \cong H^0(k,Z(\mathbf{G_2})')\) and furthermore \(H^0(k_v, Z(\mathbf{G_1})') \cong \Z/2 \times \Z/2\) as well as \(H^0(k_w, Z(\mathbf{G_2})') \cong \Z/2 \times \Z/2\).  Choosing the latter isomorphisms to be compatible with the one induced by \(\Phi_v\), one easily checks that under a correct identification (which is up to switching the factors of \(\Z/2 \times \Z/2\)), the maps \(H^0(k, Z(\mathbf{G_1})') \rightarrow H^0(k_v, Z(\mathbf{G_1})')\) and \(H^0(k, Z(\mathbf{G_2})') \rightarrow H^0(k_w, Z(\mathbf{G_2})')\) are the diagonal map \(\Z/2 \rightarrow \Z/2 \times \Z/2\).  Again we choose as \(\psi\) the only possible isomorphism \(H^0(k,Z(\mathbf{G_1})') \cong \Z/2 \cong H^0(k,Z(\mathbf{G_2})')\).  So the identity \(j_w \circ \psi = (\Phi_v)_*\circ j_v\) follows.

  If \(v\) is real, it is again straightforward to see from local Tate duality that the Tate cohomology group \(H^0(k_v, Z(\mathbf{G_1})')\) is trivial or \(\Z/2\) unless \(\mathbf{G_1}\) has type \({}^2 D_{2n}\).  Since the localization map \(H^0(k,Z(\mathbf{G_1})') \rightarrow H^0(k_v, Z(\mathbf{G_1})')\) is surjective and \(H^0(k, Z(\mathbf{G_1})')\) is cyclic and all of this also holds for \(\mathbf{G_2}\), these maps are then unique and \(j_w \circ \psi = (\Phi_v)_*\circ j_v\) follows.  Finally, let \(v\) be real and \(\mathbf{G_1}\) be of type \({}^2 D_{2n}\).  If \(\mathbf{G_1}\) remains non-split at \(v\) (and hence \(\mathbf{G_2}\) likewise at \(w\)), then \(H^0(k_v, Z(\mathbf{G_1})') \cong \Z / 2\), so \(j_w \circ \psi = (\Phi_v)_*\circ j_v\) is clear.  If \(\mathbf{G_1}\) splits at \(v\), then \(Z(\mathbf{G_1})\) becomes \(\mu_2 \times \mu_2\) at \(v\) and similarly for \(\mathbf{G_2}\), from which one easily sees that the norm map is trivial.  Thus zeroth Tate cohomology coincides with ordinary cohomology and the arguments from the case for finite places apply here as well.  Hence also in this case, \(j_w \circ \psi = (\Phi_v)_*\circ j_v\).
\end{proof}

The previous calculations lead to the observation that the kernel of \(c\) is invariant under the action of \(\Aut(\A_k)\) in the following sense.  

\begin{corollary} \label{cor:poitou-compatability}
  Let \(\Phi \in \Aut(\A_k)\) and let \(V \subseteq V(k)\) be a subset of places of \(k\) for which \(\Phi_v\colon k_v\to k_w\) maps the class of \([\mathbf{G_1}] \in H^1(k_v, \Aut \mathbf{G_0})\) to the class of \([\mathbf{G_2}] \in H^1(k_w, \Aut \mathbf{G_0})\).  Then the diagram
  \begin{equation} \begin{tikzcd}
    \bigoplus\limits_{v \in V} H^2(k_v, Z(\mathbf{G_1})) \ar[r, "c'"] \ar[d, "\Phi_*"]
      & H^0(k, Z(\mathbf{G_1})') \ar[d, "\psi^*"] \\
    \bigoplus\limits_{v \in V} H^2(k_w, Z(\mathbf{G_2})) \ar[r, "c'"] 
      & H^0(k, Z(\mathbf{G_2})')
  \end{tikzcd} \end{equation}
  commutes, where \(c'\) is the restriction of the map \(c\) from \Cref{eq:poitou-tate} to the direct sum over the places in \(V\).
\end{corollary}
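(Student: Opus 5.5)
The plan is to reduce the commutativity to a statement about single places, using the duality description of \(c\) together with Proposition~\ref{prop:aut-kv-equivariance} and Proposition~\ref{prop:local-global-compability}. I read \(\psi^*\) as the map between the duals \(H^0(k, Z(\mathbf{G_1})')^* \to H^0(k, Z(\mathbf{G_2})')^*\) given by precomposition with \(\psi^{-1}\), which is the natural direction for the diagram. Since both \(c'\) and \(\Phi_*\) are additive and the source is a direct sum, it suffices to check commutativity on an element \(\theta \in H^2(k_v, Z(\mathbf{G_1}))\) supported at a single place \(v \in V\).

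Recall from the discussion after~\eqref{eq:poitou-tate} that \(c\) is the composite of local Tate duality with \(a^*\). Concretely, \(c'(\theta)\) is the character
\[ \eta \longmapsto \langle j_v(\eta), \theta \rangle_v \qquad (\eta \in H^0(k, Z(\mathbf{G_1})')), \]
and \(c'((\Phi_v)_*\theta)\) is the character \(\eta' \mapsto \langle j_w(\eta'), (\Phi_v)_*\theta \rangle_w\) on \(H^0(k, Z(\mathbf{G_2})')\).

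Fix \(\eta' \in H^0(k, Z(\mathbf{G_2})')\) and set \(\eta = \psi^{-1}(\eta')\). By Proposition~\ref{prop:local-global-compability}, which applies because \(v \in V\) means that \(\Phi_v\) carries \([\mathbf{G_1}]\) to \([\mathbf{G_2}]\), we have \(j_w(\eta') = (\Phi_v)_* j_v(\eta)\). Proposition~\ref{prop:aut-kv-equivariance} then gives
\[ \langle j_w(\eta'), (\Phi_v)_*\theta\rangle_w = \langle (\Phi_v)_* j_v(\eta), (\Phi_v)_*\theta\rangle_w = \langle j_v(\eta), \theta\rangle_v = c'(\theta)(\psi^{-1}\eta'). \]
The right-hand side is exactly \(\psi^*(c'(\theta))(\eta')\), which proves commutativity on the summand at \(v\), and hence on the whole direct sum.

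Two small points need care.

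\emph{Coordinates of \(\Phi_*\).} The map \(\Phi_*\) on the direct sum sends the \(v\)-summand to the \(w\)-summand with \(w = v'\). This is a bijection on the index set, so the relevant index set on the bottom row is the image \(\{v' : v \in V\}\). I take this to be the intended reading of the statement.

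\emph{Archimedean places.} At real places the pairing uses Tate cohomology \(\widehat{H}^0\), and \(j_v\) is the localization map into \(\widehat{H}^0(k_v, Z(\mathbf{G_1})')\). Both cited propositions were stated to cover this case, so no extra argument is needed. Complex places contribute \(H^2 = 0\) and can be ignored.

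The main obstacle is really only bookkeeping. One must make sure that \(\psi\) is independent of \(v\), so that a single \(\psi^*\) works for all summands simultaneously. This is exactly what Proposition~\ref{prop:local-global-compability} provides: it yields one \(\psi\) that works for all such \(\Phi_v\) at once.
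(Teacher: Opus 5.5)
Your proof is correct and follows the paper's argument. The paper also derives commutativity from the description of \(c_v\) as \(j_v^*\) composed with local duality, combined with \Cref{prop:local-global-compability} and the invariance of the pairing. The only difference is that it cites the summed version, \Cref{cor:poitou-tate-sum-equivariance}, where you work summand by summand with \Cref{prop:aut-kv-equivariance}; and reading \(\psi^*\) as \((\psi^{-1})^*\) between the Pontryagin duals, with the bottom row indexed by the image places \(w = v'\), is the sensible way to make the printed diagram meaningful.
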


\begin{proof}
  This follows from \Cref{cor:poitou-tate-sum-equivariance} and \Cref{prop:local-global-compability}, as \(c_v\) is equal to \(j_v^*\) composed with the duality map from the Poitou--Tate pairing. 
\end{proof}

This corollary now allows us to show that whenever \(\overline{h}\) is not injective, then neither is \(\overline{g}\):

\begin{theorem} \label{thm:inner-twists-of-nontrivial-h-bar}
Let \(\beta_1 \in H^1(k, \Aut \mathbf{G_0})\) be an arbitrary \(k\)-form of a split \(k\)-group \(\mathbf{G_0}\) not of type \(D_4\).  Let \(\gamma_1 \in H^1(k, \Sym \Delta)\) be its image under the map \(p\colon H^1(k, \Aut \mathbf{G_0}) \rightarrow H^1(k, \Sym \Delta)\).  If the class \([\gamma_1] \in H^1(k, \Sym \Delta)/\Aut(k)\) has nontrivial fiber under
\[\overline{h}\colon H^1(k, \Sym \Delta) / \Aut(k) \longrightarrow \prod_{v \nmid \infty} H^1(k_v, \Sym \Delta)\ \big/ \Aut(\A_k^f),\]
then \([\beta_1] \in H^1(k, \Aut \mathbf{G_0})/\Aut(k)\) has nontrivial fiber under
\[\overline{g}\colon H^1(k, \Aut \mathbf{G_0}) / \Aut(k) \longrightarrow \prod_{v \nmid \infty} H^1(k_v, \Aut \mathbf{G_0})\ \big/ \Aut(\A_k^f).\]
\end{theorem}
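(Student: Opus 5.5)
The plan is to lift the non-injectivity of \(\overline{h}\) to \(\overline{g}\) by transporting the inner twisting data of \(\beta_1\) along the adelic automorphism with Poitou--Tate duality. By assumption there is \(\gamma_2 \in H^1(k, \Sym \Delta)\) with \([\gamma_2] \neq [\gamma_1]\) in \(H^1(k,\Sym\Delta)/\Aut(k)\), together with some \(\Phi \in \Aut(\A^f_k)\) satisfying \(\Phi_*(h(\gamma_1)) = h(\gamma_2)\). Let \(\mathbf{G_1}\) and \(\mathbf{G_2}\) be the \(k\)-quasi-split forms \(s(\gamma_1)\) and \(s(\gamma_2)\). Since \(s\) is equivariant, \((\Phi_v)_*\) carries \([\mathbf{G_1}]\) to \([\mathbf{G_2}]\) at every finite \(v\). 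This is exactly the situation covered by Propositions~\ref{prop:both-split} and~\ref{prop:local-global-compability} and Corollary~\ref{cor:poitou-compatability}, applied with \(V\) the set of finite places.

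The goal is a class \(\beta_2\) with \(p(\beta_2)=\gamma_2\) and \(\Phi_*(g(\beta_1)) = g(\beta_2)\).
\begin{enumerate}
\item Twist \(\beta_1\) by \(b_1\) as in Section~\ref{section:exact-sequences}. This produces \(x_1 \in H^1(k,\Ad\mathbf{G_1})\) and \(\theta = \delta_1(x_1) \in H^2(k, Z(\mathbf{G_1}))\), with local components \(\theta_v\).
\item Define finite local data for \(\mathbf{G_2}\) by \(\theta'_{w} = (\Phi_v)_*(\theta_v)\) for \(w = v'\). These vanish for almost all \(w\), and by Theorem~\ref{thm:H^1-decomposition} and diagram~\eqref{diagram:phi-h1-h2-compability} they do not depend on the choice of \(\Phi_v\).
\item By Corollary~\ref{cor:poitou-compatability}, \(c'((\theta'_w)_{w\nmid\infty}) = \psi^*\, c'((\theta_v)_{v\nmid\infty})\). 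Poitou--Tate gives \(c'((\theta_v)_{v \nmid \infty}) = -c_\infty(\theta_\infty)\).
\item Choose real components \(\theta'_u \in H^2(k_u, Z(\mathbf{G_2}))\) whose \(c\)-image is \(-\psi^*c'((\theta_v)_{v\nmid\infty})\). Then the total \(c\)-image vanishes. By exactness of~\eqref{eq:poitou-tate} there is a global \(\theta' \in H^2(k, Z(\mathbf{G_2}))\) with these localizations, and by the injectivity of \(b\) it is unique.
\item Since \(\delta_1\) is surjective by Kneser, lift \(\theta'\) to \(x_2 \in H^1(k,\Ad \mathbf{G_2})\). Let \(\beta_2\) be the image of \(x_2\) in \(H^1(k,\Aut\mathbf{G_0})\) after untwisting.
\end{enumerate}

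It remains to verify that \(\beta_2\) has the required properties.
\begin{itemize}
\item \(\beta_2\) lies in the fibre over \(\gamma_2\) by construction.
\item At each finite \(w\), the local boundary maps are bijective, even modulo \(S_w\). By Proposition~\ref{prop:s-equivariant} and the equivariance of diagram~\eqref{diagram:phi-h1-h2-compability}, the local class of \(\beta_2\) at \(w\) is \((\Phi_v)_*\) of the local class of \(\beta_1\) at \(v\), so \(\overline{g}([\beta_1]) = \overline{g}([\beta_2])\).
\item \([\beta_2] \neq [\beta_1]\) modulo \(\Aut(k)\). Otherwise, applying the \(\Aut(k)\)-equivariant map \(p\) would give \([\gamma_2]=[\gamma_1]\) modulo \(\Aut(k)\), a contradiction.
\end{itemize}

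The main obstacle is step (4): the freedom at real places must be enough to hit the required element of \(H^0(k,Z(\mathbf{G_2})')^*\), and \(\Phi\) gives no control there. I would go through Table~\ref{tate-table} case by case.
\begin{itemize}
\item If \(k\) has no real places, then \(c'((\theta_v)_{v\nmid\infty})=0\) and there is nothing to do.
\item In types \({}^2A_{2n}\) and \({}^2E_6\), all the groups vanish.
\item In types \({}^2A_{2n+1}\) and \({}^2D_{2n+1}\), each real component maps onto \(H^0(k,Z')^* \cong \Z/2\), so any real place suffices.
\item Inner and split cases reduce to the split situation of Proposition~\ref{prop:both-split}, where \(\mathbf{G_1}\cong\mathbf{G_2}\) and one can copy the real components.
\end{itemize}
The delicate case is \({}^2D_{2n}\) when every real place of \(k\) becomes complex in the quadratic field \(l_2\) of \(\gamma_2\), because then the real contributions for \(\mathbf{G_2}\) vanish. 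There I would first try to show \(c'((\theta_v)_{v\nmid\infty})=0\) directly. Failing that, I would exploit the freedom in choosing \(\gamma_2\) within the fibre, or adjust one finite component by a class fixed by \(\Phi_*\) up to the \(S_w\)-action (swapping the two \(\Z/2\) factors), so as to change the sum without changing the \(\Aut(\A^f_k)\)-orbit.
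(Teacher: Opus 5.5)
Your construction is the paper's: transport the finite data of \(\beta_1\) along \(\Phi\), control the finite part of the Poitou--Tate obstruction with Corollary~\ref{cor:poitou-compatability}, choose real components to kill it, and lift through the surjective \(\delta\). The gap is step~(4), which you leave unproved in one case: type \({}^2D_{2n}\), where \(k\) has real places but none of them splits in \(l_2\). There \(H^2(k_u,Z(\mathbf{G_2}))=0\) at every real \(u\), so the construction stands or falls with \(c'((\theta_v)_{v\nmid\infty})=0\), and you give no argument for this. Your fallbacks do not work:
\begin{itemize}
\item At a finite place where \(\mathbf{G_2}\) becomes inner, swapping the two \(\Z/2\) factors preserves the sum map \(c_w\), so it cannot change \(c'\).
\item At the other finite places, \(S_w\) acts trivially.
\item Re-choosing \(\gamma_2\) in the fibre cannot escape the bad case, as the next paragraph shows.
\end{itemize}

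The missing idea is the one the paper uses to treat real places uniformly, with no case analysis. From \(\Phi_*(h(\gamma_1))=h(\gamma_2)\) you get an isomorphism \(\A^f_{l_1}\cong\A^f_{l_2}\) for \(l_i=k(\sqrt{\gamma_i})\). Hence \(l_1\) and \(l_2\) are arithmetically equivalent and have the same number of real places. So equally many real places of \(k\) split in \(l_1\) and in \(l_2\), and there is a bijection \(\rho\) of the real places of \(k\) such that \(\mathbf{G_1}\) at \(u\) and \(\mathbf{G_2}\) at \(\rho(u)\) are the same quasi-split real form. Set \(\theta'_{\rho(u)}\) to be the image of \(\theta_u\) under the induced isomorphism \(H^2(k_u,Z(\mathbf{G_1}))\cong H^2(k_{\rho(u)},Z(\mathbf{G_2}))\). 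Then Corollary~\ref{cor:poitou-compatability}, applied to all places at once, gives \(c(\theta')=\psi^*c(b(\theta))=0\) in every type. In your bad case this says that no real place splits in \(l_1\) either, so \(\theta_\infty=0\) and hence \(c'((\theta_v)_{v\nmid\infty})=0\). It also shows that the bad case occurs for one element of the fibre if and only if it occurs for all of them.
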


Note that the theorem is only of interest in case \(\Sym \Delta \cong \Z / 2\) where we have \(H^1(k, \Sym \Delta) \cong k^\times / (k^\times)^2\) so that \(\gamma_1\) corresponds to a quadratic extension of \(k\).

\begin{proof}
  Let \(\gamma_1'\) be the image of \(\gamma_1\) under \(h\).  If \(\gamma_1 = 1\), then \(\gamma_1' = 1\) at all finite places, so it is fixed by \(\Aut (\A^f_k)\) as the action is base point preserving.  Since \(h\) is injective, the fiber of \([\gamma_1]\) under \(\overline{h}\) is trivial, so the condition is not met.  Hence we can assume \(\gamma_1 \not\equiv 1 \mod (k^\times)^2\).  Injectivity of \(h\) and a nontrivial fiber of \(\overline{h}\) now imply the existence of some \(\gamma_2 \in H^1(k, \Sym \Delta)\) and some \(\Phi \in \Aut(\A^f_k)\) such that \(\Phi(\gamma_1') = \gamma_2'\) where \(\gamma_2' = h(\gamma_2)\) and such that \(\gamma_2\) does not lie in the \(\Aut(k)\)-orbit of \(\gamma_1\).  Let now \(\beta_1'\) be the image of \(\beta_1\) under \(g\) and let \(\beta_2' \coloneqq \Phi(\beta_1')\).
  
  All the various coordinates of \(\beta_2' \in \prod_{v \nmid \infty} H^1(k_v, \Aut \mathbf{G_0})\) correspond to inner twists of the \(k_v\)-quasi-split forms given by the images of the local coordinates of \(\gamma_2'\) under the respective sections of the surjective maps \(H^1(k_v, \Aut \mathbf{G_0}) \rightarrow H^1(k_v, \Sym \Delta)\).  As \(\gamma_2'\) is the image of \(\gamma_2\), all those local \(k_v\)-quasi-split forms can be realized as the same global \(k\)-quasi-split form which we denote by \(\mathbf{G_2}\).  By Proposition~\ref{prop:both-split}, the group \(\mathbf{G_2}\) does not split over \(k\).  Let as before \(\mathbf{G_1}\) be the \(k\)-quasi-split form of \(\beta_1\).  By twisting the diagram
  \[ \begin{tikzcd}
    \prod\limits_{v \nmid \infty} H^1(k_v, \Ad \mathbf{G_0}) \ar[r, "I"]
      & \prod\limits_{v \nmid \infty} H^1(k_v, \Aut \mathbf{G_0})\\
    H^1(k, \Ad \mathbf{G_0}) \ar[u, "f"] \ar[r, "i"]
      & H^1(k, \Aut \mathbf{G_0}) \ar[u, "g"]
  \end{tikzcd} \]
  to \(\mathbf{G_1}\) as in \eqref{diagram:f-1-g-1-square}, one obtains a (not necessarily unique) preimage \(\alpha_1 \in H^1(k, \Ad \mathbf{G_1})\) of \(\beta_1\) under \(i_1\) and \(\alpha_1' \coloneqq f_1(\alpha_1)\).  As in the proof of \Cref{thm:H^1-decomposition}, the automorphism \(\Phi\) induces an isomorphism of pointed sets \(\Phi_v\colon H^1(k_v, \Ad \mathbf{G_1}) \rightarrow H^1(k_w, \Ad \mathbf{G_2})\) for each finite place \(v\) of \(k\) where \(k_w\) is the codomain of \(\Phi_v\).  Let \(\alpha_2' \coloneqq \Phi(\alpha_1')\).
  
  As \(\Phi(\gamma_1') = \gamma_2'\), we know that the corresponding quadratic extensions \(k(\sqrt{\smash[b]{\gamma_1}})\) and \(k(\sqrt{\smash[b]{\gamma_2}})\) have isomorphic finite adele rings, hence are arithmetically equivalent.  By \cite{Klingen:similarities}*{Theorem~III.1.4.g)} those number fields have the same number of real places.  This means that the number of real places \(w\) at which the image of \(\gamma_i\) under
  \[H^1(k, \Sym \Delta)\to H^1(k_w, \Sym \Delta) \cong \pm 1\]
  is 1 is the same for \(\gamma_1\) and \(\gamma_2\).  In turn, this means that there is a bijection \(\psi\colon V_{\infty}(k)\to V_{\infty}(k)\) of the infinite places of \(k\) such that \(\mathbf{G_1}\) splits at \(w \in V_{\infty}(k)\) if and only if \(\mathbf{G_2}\) splits at \(\psi(w)\).  This induces isomorphisms compatible with the map \(c\) from the Poitou--Tate sequence~\eqref{eq:poitou-tate}
  \[\psi_w\colon H^2(k_w, Z(\mathbf{G_1}))\to
    H^2(k_{\psi(w)}, Z(\mathbf{G_2}))\]
  according to \Cref{cor:poitou-compatability}.
  
  We now construct \(\theta \in \bigoplus_{v \in V(k)} H^2(k_v, Z(\mathbf{G_2}))\) by
  \begin{align*}
    \theta_v
      & = \delta_v(\Phi(\alpha_1')_v) = \delta_v((\alpha_2')_v) \ \text{ if } v \nmid \infty \\
    \theta_w
      & = \psi_w(\delta(\alpha_1)_w) \ \text{ if } w \mid \infty.
  \end{align*}
  By construction and \Cref{cor:poitou-compatability}, we obtain \(c(\theta) = 0\), so \(\theta\) has a preimage under \(b\) and therefore \(\theta|_{V_f(k)} = (\prod_{v\nmid \infty} \delta_v)(\alpha_2')\) has a preimage under \(d\).  But as \(\delta\) is surjective and \((\prod_{v\nmid \infty} \delta_v)\) is an isomorphism, this implies that \(\alpha_2'\) has a preimage under
  \[ f_2 \colon H^1(k, \Ad \mathbf{G_2}) \longrightarrow \prod_{v \nmid \infty} H^1(k_v, \Ad \mathbf{G_2}). \]
  
  Denote this preimage by \(\alpha_2\) and let \(\beta_2\coloneqq i_2(\alpha_2) \in H^1(k, \Aut \mathbf{G_2})\).  Furthermore, by functoriality, the diagram
  \[ \begin{tikzcd}
    \prod\limits_{v \nmid \infty} H^1(k_v, \Ad \mathbf{G_1}) \ar[d, "\Phi"] \ar[r, "I_1"]
      & \prod\limits_{v \nmid \infty} H^1(k_v, \Aut \mathbf{G_1}) \ar[d, "\Phi"]\\
    \prod\limits_{w \nmid \infty} H^1(k_w, \Ad \mathbf{G_2}) \ar[r, "I_2"]
      & \prod\limits_{w \nmid \infty} H^1(k_w, \Aut \mathbf{G_2})
  \end{tikzcd} \]
  commutes, so \(I_2(\alpha_2') = \beta_2'\).  Hence
  \[g_2(\beta_2) = g_2(i_2(\alpha_2)) = f_2(I_2(\alpha_2)) = f_2(\alpha_2') = \beta_2',\]
  so after twisting back to \(\mathbf{G_0}\), one obtains a preimage of the element \(\beta_2' \in \prod_{v \nmid \infty} H^1(k_v, \Aut \mathbf{G_0})\) under \(g\).  This preimage \(\beta_2 \in H^1(k, \Aut \mathbf{G_0})\) is mapped to \(\gamma_2 \in H^1(k, \Sym \Delta)\) under the \(\Aut(k)\)-equivariant map \(p\) and \(\gamma_2\) was in an \(\Aut(k)\)-orbit different from \(\gamma_1\).  Thus, \(\beta_2\) and \(\beta_1\) lie in different \(\Aut(k)\)-orbits and the fiber of \(\overline{g}\) at \([\beta_1]\) is nontrivial.
\end{proof}

\section{Quadratic extensions and adelic automorphisms}
\label{section:quadratic}

In this section, we show that the map \(\overline{h}\) is not injective if \(\Sym \Delta \cong \Z / 2\) in general and that injectivity can actually fail in two different ways.  In particular, we obtain many quasi-split groups, even over locally determined fields, that are not congruence rigid.  On the other hand we show that \(\overline{h}\) is injective if \(k/\Q\) is Galois.

Recall that the map \(\overline{h}\) is given by
\[\overline{h}\colon \ H^1(k, \Sym \Delta) \,/\, \Aut(k) \longrightarrow \prod_{v \nmid \infty} H^1(k_v, \Sym \Delta)\ \big/ \Aut(\A_k^f). \]
When \(\Sym \Delta = \Z / 2\), corresponding to the types \(A_n\) for \(n \ge 2\), \(D_n\) for \(n \ge 5\) and \(E_6\), decoding the cocycle definition of Galois cohomology gives that the concrete description
\[ \overline{h} \colon \ k^\times/(k^\times)^2 \,/\, \Aut(k) \longrightarrow \prod_{v \nmid \infty} k_v^\times / (k_v^\times)^2 \,/\, \Aut(\A_k^f),  \]
where the action of the automorphism groups are the apparent ones.  In view of \Cref{thm:inner-twists-of-nontrivial-h-bar} and in preparation for \Cref{thm:f-g-equivalence}, our main concern is to identify the singleton fibers of this map.  To do so, let us first give an algebraic interpretation of the situation.  Elements of the domain of \(\overline{h}\) correspond to quadratic extensions \(l/k\) up to the equivalence \(l_1 \sim l_2\) if and only if there exists an isomorphism \(\phi \colon l_1 \rightarrow l_2\) over \(\Q\) that restricts to an automorphism of \(k\).  Two quadratic extensions \(l_1/k\) and \(l_2 / k\) have the same image under \(\overline{h}\) if and only if there exists an isomorphism \(\Phi \colon \A_{l_1}^f \rightarrow \A_{l_2}^f\) over \(\A_\Q^f\) that restricts to an automorphism of \(\A_k^f\).

This description puts us on the right track to find examples of nontrivial fibers.  Number fields \(l_1\) and \(l_2\) with \(\A_{l_1}^f \cong_{\A_\Q^f} \A_{l_2}^f\) are called \emph{locally isomorphic}.  There certainly exist non-isomorphic but locally isomorphic number fields that are quadratic extensions of a common subfield.  An example is provided by K.\,Komatsu~\cite{Komatsu:adele-rings}: we can take \(k = \Q(\sqrt[4]{7})\), \(l_1 = \Q(\sqrt[8]{7})\), and \(l_2 = \Q(\sqrt[8]{112})\).  So the question is whether the local isomorphism can be chosen to restrict to an automorphism on \(\A^f_k\).  Equivalently, for each prime \(p\), we have an isomorphism \(l_1 \otimes_\Q \Q_p \cong l_2 \otimes_\Q \Q_p\) of {\'e}tale algebras and the task is to find such an isomorphism that restricts to an automorphism of the {\'e}tale subalgebra \(k \otimes_\Q \Q_p \subset l_i \otimes_\Q \Q_p\) for \(i = 1, 2\).  Outside ramified primes, this can always be achieved.

\begin{proposition} \label{prop:unramified}
  Let \(k\) be a number field, let \(l_1\) and \(l_2\) be quadratic extensions of \(k\), and let \(p\) be a rational prime which is unramified both in \(l_1\) and \(l_2\).  Suppose that there exists an isomorphism of {\'e}tale algebras
  \[ l_1 \otimes_\Q \Q_p \cong l_2 \otimes_\Q \Q_p. \]
  Then there exists such an isomorphism which restricts to an automorphism of the {\'e}tale subalgebra \(k \otimes_\Q \Q_p \subset l_i \otimes_\Q \Q_p\) for \(i = 1, 2\).
\end{proposition}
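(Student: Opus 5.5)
The plan is to reduce the statement to a combinatorial matching of places of \(k\) above \(p\), using that unramified extensions of \(\Q_p\) are classified by their degree. Write
\[ k \otimes_\Q \Q_p \cong \prod_{v \mid p} k_v, \qquad l_i \otimes_\Q \Q_p \cong \prod_{v \mid p} \left(l_i \otimes_k k_v\right), \]
so that the subalgebra \(k \otimes_\Q \Q_p\) sits diagonally in each factor. Since \(p\) is unramified in \(l_i\), it is unramified in \(k\). Hence each \(k_v\) is the unramified extension of \(\Q_p\) of degree \(f_v\), and each \(l_i \otimes_k k_v\) is one of two things:
\begin{itemize}
\item \(k_v \times k_v\), if \(v\) splits in \(l_i\);
\item the unique unramified quadratic extension of \(k_v\), which is the unramified extension of \(\Q_p\) of degree \(2f_v\), if \(v\) is inert in \(l_i\).
\end{itemize}
Since unramified extensions of \(\Q_p\) are determined up to isomorphism by their degree, an étale algebra over \(\Q_p\) with unramified factors is determined by the multiset of degrees of its factors.

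For each \(f \ge 1\), let \(a_f\) be the number of places \(v \mid p\) of \(k\) with \(f_v = f\). Let \(s_i(f)\) and \(n_i(f)\) be the numbers of such places that split, respectively stay inert, in \(l_i\), so \(s_i(f) + n_i(f) = a_f\). The number of factors of degree \(f\) in \(l_i \otimes_\Q \Q_p\) is \(2 s_i(f) + n_i(f/2)\), where \(n_i(f/2) = 0\) if \(f\) is odd. The hypothesis \(l_1 \otimes_\Q \Q_p \cong l_2 \otimes_\Q \Q_p\) therefore gives, for all \(f\),
\[ 2 s_1(f) + n_1(f/2) = 2 s_2(f) + n_2(f/2). \]
I will show \(s_1(f) = s_2(f)\), and hence \(n_1(f) = n_2(f)\), by strong induction on \(f\).
\begin{itemize}
\item For \(f\) odd, the \(n\)-terms vanish, so \(s_1(f) = s_2(f)\) directly.
\item For \(f = 2g\), the induction hypothesis at \(g < f\) gives \(n_1(g) = n_2(g)\). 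The identity then again yields \(s_1(f) = s_2(f)\).
\end{itemize}
This bookkeeping step is the only real content and the place I expect care is needed.

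With these equalities, choose a permutation \(\sigma\) of the places of \(k\) above \(p\) that preserves residue degree and sends places split in \(l_1\) to places split in \(l_2\), and inert places to inert places. For each \(v\), fix a \(\Q_p\)-isomorphism \(\Phi_v \colon k_v \to k_{\sigma(v)}\), which exists because both are unramified of the same degree. Then extend \(\Phi_v\) to an isomorphism \(l_1 \otimes_k k_v \to l_2 \otimes_k k_{\sigma(v)}\):
\begin{itemize}
\item in the split case, apply \(\Phi_v\) on each of the two factors;
\item in the inert case, \(\Phi_v\) extends to the unique unramified quadratic extensions, since the unramified quadratic extension of \(k_{\sigma(v)}\) is unique and therefore equals the base change along \(\Phi_v\) of that of \(k_v\).
\end{itemize}
The product of these local isomorphisms is an isomorphism \(l_1 \otimes_\Q \Q_p \cong l_2 \otimes_\Q \Q_p\). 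By construction it restricts to the automorphism \(\prod_v \Phi_v\) of \(k \otimes_\Q \Q_p\), as required.
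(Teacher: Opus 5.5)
Your proof is correct and follows essentially the same route as the paper: both match the places of \(k\) above \(p\) by residue degree and by split/inert behaviour, with the matching established inductively over the inertia degrees. Your explicit count \(2s_i(f)+n_i(f/2)\) with strong induction on \(f\) is a cleaner version of the paper's ``delete and recount'' step. Taking \((\Phi_v,\Phi_v)\) on split factors, and extending \(\Phi_v\) in the inert case, also makes explicit why the resulting isomorphism preserves the diagonal copy of \(k\otimes_\Q\Q_p\); the paper only asserts this ``by construction''.
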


\begin{proof}
  Recall that we have \(k \otimes_\Q \Q_p \cong \prod_{v \mid p} k_v\).  Sorting the inertia degrees \(f_v = \dim_{\Q_p} k_v\) in a non-decreasing tuple
  \[ (f_1, \ldots, f_1, f_2, \ldots, f_2, \ldots, f_r, \ldots, f_r) \]
gives the \emph{decomposition type} of \(p\) in \(k\).  By the assumption \(l_1 \otimes_\Q \Q_p \cong l_2 \otimes_\Q \Q_p\), we know that the inertia degree \(f_1\) reoccurs as many times in \(l_1 / \Q\) as it does in \(l_2 / \Q\).  Hence the same number of places \(v \mid p\) of \(k\) with inertia degree \(f_1\) split and are inert in \(l_1\) and \(l_2\).  So we can match the split and inert places up to obtain an isomorphism
  \[ \prod_{v \colon f_v = f_1} l_1 \otimes_k k_v \cong \prod_{v \colon f_v = f_1} l_2 \otimes_k k_v. \]
Now delete all inertia degrees from the decomposition types of \(p\) in \(l_1\) and \(l_2\) that occur over all \(v\) of \(k\) with \(f_v = f_1\).  Counting the remaining occurrences of \(f_2\) in these decomposition types, we now see that the same number of places \(v\) of \(k\) with \(f_v = f_2\) split and are inert in \(l_1\) and \(l_2\), resulting in an isomorphism
  \[ \prod_{v \colon f_v = f_2} l_1 \otimes_k k_v \cong \prod_{v \colon f_v = f_2} l_2 \otimes_k k_v. \]
Continuing inductively, we obtain 
  \[ \prod_{i=1}^r \ \prod_{v \colon f_v = f_i} l_1 \otimes_k k_v \,\cong\, \prod_{i=1}^r \ \prod_{v \colon f_v = f_i} l_2 \otimes_k k_v \]
  and this defines an isomorphism \(l_1 \otimes_\Q \Q_p \cong l_2 \otimes_\Q \Q_p\).  By construction, the isomorphism permutes the factors of the outer product of
  \[ l_i \otimes_\Q \Q_p \cong \prod_{v \mid p} \,\prod_{w \mid v} {l_i}_w, \]
    hence it restricts to an isomorphism on the diagonal embedding of \(k \otimes_\Q \Q_p \cong \prod_{v \mid p} k_v\).
\end{proof}

For given locally isomorphic quadratic extensions \(l_1/k\) and \(l_2/k\), it thus only remains to check the condition at the finitely many ramified primes.  Note that locally isomorphic number fields have equal discriminant~\cite{Klingen:similarities}*{Theorem~III.1.4.e), p.\,79}, so the same rational primes ramify in \(l_1\) and \(l_2\).

\begin{example}
  Consider Komatsu's example \(k = \Q(\sqrt[4]{7})\), \(l_1 = \Q(\sqrt[8]{7})\), and \(l_2 = \Q(\sqrt[8]{112})\) from above.  Then \(2\) and \(7\) are the ramified primes and both are totally ramified in \(k\), \(l_1\), and \(l_2\).  Since \(\sqrt{-7} \in \Q_2\), we know that \(\Q_2(\sqrt[4]{7})\) contains \(\pm \frac{\sqrt{-7}}{\sqrt[4]{7}} = \pm i\sqrt[4]{7}\).  Thus \(\Q_2(\sqrt[4]{7})\) is the splitting field of \(x^4-7\) whence Galois over \(\Q_2\).  So the condition on {\'e}tale algebras is automatic.  The local field \(\Q_7(\sqrt[4]{7})\), in turn, is not Galois over \(\Q_7\).  However, we have \(\sqrt{2} \in \Q_7\), hence the required isomorphism can be constructed explicitly as in
  \[
    \begin{tikzcd}[row sep=8mm, column sep=-5mm]
      \Q_7[Y]/(Y^8-7) \arrow[rr, "Y \mapsto Z/\sqrt{2}"] & & \Q_7[Z]/(Z^8-112) \\
      & \Q_7[X]/(X^4-7) \arrow[lu, "X \mapsto Y^2"] \arrow[ru, "X \mapsto Z^2 / 2"']. &
\end{tikzcd}
\]
\end{example}

As the upshot of the discussion thus far, we have the following result.

\begin{proposition}
  For \(k = \Q(\sqrt[4]{7})\), we have that \(\overline{h}([\sqrt[4]{7}]) = \overline{h}([\sqrt[4]{112}])\).  In particular, the map \(\overline{h}\) is not injective.
\end{proposition}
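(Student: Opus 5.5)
The plan is to use the algebraic reformulation from the beginning of this section. The classes \([\sqrt[4]{7}]\) and \([\sqrt[4]{112}]\) in \(k^\times/(k^\times)^2\) correspond to the quadratic extensions \(l_1 = k(\sqrt{\sqrt[4]{7}}) = \Q(\sqrt[8]{7})\) and \(l_2 = k(\sqrt{\sqrt[4]{112}}) = \Q(\sqrt[8]{112})\) of \(k = \Q(\sqrt[4]{7})\); note \(\sqrt[4]{112} = 2\sqrt[4]{7}\), so both lie in \(k\). By that reformulation, \(\overline{h}([\sqrt[4]{7}]) = \overline{h}([\sqrt[4]{112}])\) holds exactly when there is an \(\A^f_\Q\)-isomorphism \(\A^f_{l_1} \cong \A^f_{l_2}\) restricting to an automorphism of \(\A^f_k\). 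Such an isomorphism is a product over rational primes \(p\) of isomorphisms \(l_1 \otimes_\Q \Q_p \cong l_2 \otimes_\Q \Q_p\), each preserving \(k \otimes_\Q \Q_p\). At almost all \(p\) these must also respect the integral structures, and this is automatic: at unramified \(p\) any algebra isomorphism of étale algebras maps maximal orders to maximal orders.

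\textbf{Step 1.} Komatsu's theorem gives \(\A^f_{l_1} \cong \A^f_{l_2}\), so an isomorphism \(l_1 \otimes \Q_p \cong l_2 \otimes \Q_p\) exists for every \(p\).

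\textbf{Step 2.} For \(p \ne 2, 7\), the prime \(p\) is unramified in both fields, since the discriminant of \(x^8 - 7\) and \(x^8 - 112\) involves only \(2\) and \(7\). Proposition~\ref{prop:unramified} then upgrades the given isomorphism to one preserving \(k \otimes \Q_p\).

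\textbf{Step 3.} For \(p \in \{2, 7\}\), use the preceding example. At \(p = 7\), the explicit map \(Y \mapsto Z/\sqrt{2}\) works because \(\sqrt{2} \in \Q_7\): it sends \(Y^2 \mapsto Z^2/2\), so it is compatible with the two embeddings of \(\Q_7[X]/(X^4 - 7)\), and \(Y^8 - 7 \mapsto Z^8/16 - 7\) is a unit multiple of \(Z^8 - 112\). At \(p = 2\), everything is totally ramified and \(k_2 = \Q_2(\sqrt[4]{7})\) is Galois over \(\Q_2\). Hence any isomorphism \(l_{1,2} \to l_{2,2}\) of local fields carries the unique copy of \(k_2\) onto a subfield isomorphic to \(k_2\). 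It must equal \(k_2\) itself, because the image is determined as the image of a normal extension. So the restriction is an automorphism of \(k_2\).

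\textbf{Step 4.} Assemble the local isomorphisms into \(\Phi\). Non-injectivity then follows once we know \([\sqrt[4]{7}]\) and \([\sqrt[4]{112}]\) lie in different \(\Aut(k)\)-orbits. This holds because \(l_1 \not\cong l_2\) (Komatsu), whereas an \(\Aut(k)\)-relation would yield a \(\Q\)-isomorphism \(l_1 \cong l_2\).

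The main obstacle is the check at \(p = 2\) in Step 3, which rests on the Galois claim \(\pm i\sqrt[4]{7} \in \Q_2(\sqrt[4]{7})\) from \(\sqrt{-7} \in \Q_2\). That this argument suffices, with the étale algebras being single fields at \(2\), is what the example already asserts.
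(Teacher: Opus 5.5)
Your proposal is correct and follows the same route as the paper. The paper's proof just cites ``the discussion above'': the algebraic reformulation of \(\overline{h}\), Komatsu's local isomorphism, Proposition~\ref{prop:unramified} away from \(2\) and \(7\), the explicit treatment at \(2\) (where \(\Q_2(\sqrt[4]{7})\) is Galois) and at \(7\) (where \(\sqrt{2}\in\Q_7\)), and \(\Q(\sqrt[8]{7})\not\cong\Q(\sqrt[8]{112})\) to get distinct \(\Aut(k)\)-orbits. Your normality argument at \(p=2\) makes explicit why the paper calls that condition ``automatic''.
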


\begin{proof}
  The first statement follows from the discussion above.  The fields \(\Q(\sqrt[8]{7})\) and \(\Q(\sqrt[8]{112})\) are moreover not even isomorphic over \(\Q\) so that they define distinct elements in the domain of \(\overline{h}\).
\end{proof}

Together with Theorem~\ref{thm:inner-twists-of-nontrivial-h-bar}, this shows that groups with the corresponding outer types are never congruence rigid.

\begin{theorem}
  Let \(k=\Q(\sqrt[4]{7})\) and let \(\mathbf{G}\) be any \(k\)-group of type \({}^2 A_n\) for \(n \ge 2\), \({}^2 D_n\) for \(n \ge 5\), or \({}^2 E_6\) whose quasi-split type is given by \(\sqrt[4]{7} \in H^1(k, \Sym \Delta)\).  Then \(\mathbf{G}\) is not congruence rigid.
\end{theorem}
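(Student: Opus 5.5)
The plan is to combine the preceding proposition with Theorem~\ref{thm:inner-twists-of-nontrivial-h-bar}. Let \(\beta_1 \in H^1(k, \Aut \mathbf{G_0})\) be the class of \(\mathbf{G}\), where \(\mathbf{G_0}\) is the split form of the relevant type. By hypothesis, its image \(\gamma_1 = p(\beta_1) \in H^1(k, \Sym \Delta) \cong k^\times/(k^\times)^2\) is the class of \(\sqrt[4]{7}\). Since the types \({}^2 A_{n \ge 2}\), \({}^2 D_{n \ge 5}\) and \({}^2 E_6\) all have \(\Sym \Delta \cong \Z/2\) and are not of type \(D_4\), the standing hypotheses of Section~\ref{section:adelic-automorphisms} hold.

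The first step is to verify that the fiber of \(\overline{h}\) over \(\overline{h}([\gamma_1])\) is not a singleton. The previous proposition gives \(\overline{h}([\sqrt[4]{7}]) = \overline{h}([\sqrt[4]{112}])\), so it remains to see that \([\sqrt[4]{7}]\) and \([\sqrt[4]{112}]\) lie in distinct \(\Aut(k)\)-orbits. If some \(\phi \in \Aut(k)\) carried one class to the other, it would extend to an isomorphism \(k(\sqrt[8]{7}) \cong k(\sqrt[8]{112})\) over \(\Q\). This contradicts the fact, recorded in the proof of that proposition, that these two fields are not isomorphic over \(\Q\). So the fiber of \([\gamma_1]\) under \(\overline{h}\) contains at least two elements.

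Theorem~\ref{thm:inner-twists-of-nontrivial-h-bar} now shows that \([\beta_1]\) has nontrivial fiber under \(\overline{g}\). Concretely, there is a \(k\)-group \(\mathbf{H}\), obtained as an inner twist of the quasi-split form attached to \(\sqrt[4]{112}\), together with some \(\Phi \in \Aut(\A^f_k)\) such that \(\mathbf{G} \cong_\Phi \mathbf{H}\). By construction, \(\mathbf{H}\) is not \(\Aut(k)\)-equivalent to \(\mathbf{G}\) because its image under \(p\) lies in a different \(\Aut(k)\)-orbit. Hence \(\mathbf{H}\) is locally but not globally isomorphic to \(\mathbf{G}\), and \(\mathbf{G}\) is not congruence rigid.

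The only point requiring care is the identification between nontrivial fibers of \(\overline{g}\) and failure of congruence rigidity. Here both groups live over the same field \(k\), so no local determinacy assumption on \(k\) is needed. A local isomorphism over some \(\Phi \in \Aut(\A^f_k)\) is exactly a local isomorphism in the sense of Definition~\ref{def:congruence-rigid}. A global isomorphism \(\mathbf{G} \cong_\phi \mathbf{H}\) would require an automorphism \(\phi \in \Aut(k)\), hence would put the two classes in the same \(\Aut(k)\)-orbit. I expect no real obstacle, since this is routine bookkeeping.
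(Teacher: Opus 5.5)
Your proposal is correct and is essentially the paper's argument. The preceding proposition, together with the fact that \(\Q(\sqrt[8]{7})\) and \(\Q(\sqrt[8]{112})\) are not isomorphic, gives a non-singleton fiber of \(\overline{h}\) at \([\sqrt[4]{7}]\); Theorem~\ref{thm:inner-twists-of-nontrivial-h-bar} then produces a \(k\)-group locally but not globally isomorphic to \(\mathbf{G}\). Your remark that this direction needs no local determinacy of \(k\) is also right, since the witness \(\mathbf{H}\) is defined over \(k\) itself.
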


The number fields \(l_1 = \Q(\sqrt[8]{7})\) and \(l_2 = \Q(\sqrt[8]{112})\) in the above examples are not isomorphic.  But in fact, we can also construct quadratic extensions \(l_1/k\) and \(l_2/k\) with \(l_1 \cong_\Q l_2\) and with \(\overline{h}(l_1) = \overline{h}(l_2)\) but such that \(l_1\) and \(l_2\) define different elements in \(k^\times/(k^\times)^2 \,/\, \Aut(k)\).

\begin{example}
  Consider the finite group \(G = A_4 \times C_2\) which can also be written as the transitive wreath product \(G = C_2^3 \rtimes C_3\).  Let
  \[ U = C_2 \times C_2 \times \{0\} \le C_2^3 \le G, \]
  let \(V_1 = C_2 \times \{0\} \times \{0\} \le U\) and let \(V_2 = \{0\} \times C_2 \times \{0\} \le U\).  Note that \(V_2 = g V_1 g^{-1}\) for some \(g \in G\) but no such \(g\) conjugates \(U\) to itself.

  Since the inverse Galois problem is known for solvable groups, we have a Galois extension \(K/\Q\) with \(\operatorname{Gal}(K/\Q) \cong G\). Let \(k\), \(l_1\), and \(l_2\) be the fixed fields of \(U\), \(V_1\), and \(V_2\), respectively.  By the above, we have \(\sigma \in \operatorname{Gal}(K/\Q)\) with \(\sigma(l_1) = l_2\).  In particular, this implies \(\A^f_{l_1} \cong_{\A_\Q} \A^f_{l_2}\) and depending on the choice of \(K\), we will see in a moment that it is possible to arrange also at ramified primes that such an isomorphism carries \(\A^f_k\) to itself.  But any isomorphism \(l_1 \cong l_2\) extends to an automorphism of \(K\) conjugating \(V_1\) to \(V_2\).  So it cannot normalize \(U\), meaning it cannot map \(k\) to itself.  Hence \(l_1 \neq l_2\) in \(k^\times/(k^\times)^2 \,/\, \Aut(k)\) but \(\overline{h}(l_1) = \overline{h}(l_2)\).

  Using the \emph{L-functions and modular forms database} (LMFDB)~\cite{LMFDB}, we found that the splitting field \(K\) of the polynomial
  \[ x^6 - x^5 + 14x^4 - 148x^3 + 1241x^2 - 3647x + 5023 \]
  has Galois group isomorphic to \(A_4 \times C_2\) and has discriminant \(163^{20}\).  The number field \(k\) that this polynomial defines is the fixed field corresponding to \(U\).  The prime \(p = 163\) is totally ramified in \(k\).  The corresponding local field extension \(k \otimes_\Q \Q_p / \Q_p\) is Galois and cyclic.  Hence regardless of whether \(l_i \otimes_\Q \Q_p\) is a field or not, there always exists an isomorphism \(l_1 \otimes_\Q \Q_p \cong l_2 \otimes_\Q \Q_p\) that restricts to an automorphism of the embedded \(k \otimes_\Q \Q_p\).  Together with Proposition~\ref{prop:unramified}, we thus obtain an isomorphism \(\A^f_{l_1} \cong \A^f_{l_2}\) that restricts to an automorphism of \(\A^f_k\).
\end{example}

In contrast, we have the following.

\begin{theorem} \label{thm:h-bar-injective}
  Let \(k/\Q\) be Galois. Then \(\overline{h}\) is injective.
\end{theorem}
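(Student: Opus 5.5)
We reformulate the injectivity of \(\overline{h}\) group theoretically and reduce it to a character identity on \(\operatorname{Gal}(K/k)\), which Mackey's formula makes tractable. Let \(l_1/k\) and \(l_2/k\) be quadratic extensions with \(\overline{h}(l_1) = \overline{h}(l_2)\). By the interpretation given at the beginning of this section, there is then an isomorphism \(\Phi \colon \A^f_{l_1} \to \A^f_{l_2}\) over \(\A^f_\Q\) that restricts to an automorphism of \(\A^f_k\). We must produce \(\sigma \in \Aut(k)\) with \(\sigma(l_1) = l_2\).

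Let \(K\) be the Galois closure of the compositum \(l_1 l_2\) over \(\Q\), and set \(G = \operatorname{Gal}(K/\Q)\). Since \(k/\Q\) is Galois, \(N = \operatorname{Gal}(K/k)\) is normal in \(G\) and \(G/N = \Aut(k)\). Let \(H_i \le N\) be the index two subgroup fixing \(l_i\), and let \(\varepsilon_i \colon N \to \{\pm 1\}\) be the linear character with kernel \(H_i\). It suffices to find \(x \in G\) with \(x H_1 x^{-1} = H_2\). Then \(x\) maps \(l_1\) onto \(l_2\) (or its inverse does, depending on conventions), and it normalizes \(N\), so it restricts to some \(\sigma \in \operatorname{Gal}(k/\Q) = \Aut(k)\) carrying the class of \(l_1\) to that of \(l_2\).

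The key step is to extract a counting identity from \(\Phi\) at primes \(p\) that are unramified in \(K\) and split completely in \(k\). For such \(p\) we have \(k \otimes_\Q \Q_p \cong \Q_p^{[k:\Q]}\). Because \(\Phi\) restricts to an automorphism of \(\A^f_k\), its \(p\)-component is a family of isomorphisms \(l_1 \otimes_k k_v \cong l_2 \otimes_k k_{v'}\) over a permutation \(v \mapsto v'\) of the places over \(p\). Hence the number of places \(v \mid p\) of \(k\) that split in \(l_1\) equals the number that split in \(l_2\).

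On the Galois side, fix a prime \(\mathfrak{P}\) of \(K\) over \(p\) with Frobenius \(n \in N\). The places of \(k\) over \(p\) are then indexed by the cosets \(xN \in G/N\), via the primes \(x\mathfrak{P}\). The place indexed by \(xN\) splits in \(l_i\) if and only if \(x n x^{-1} \in H_i\), that is, \(\varepsilon_i(x n x^{-1}) = 1\). By Chebotarev, every \(n \in N\) occurs as such a Frobenius. Using \(1_{H_i} = (1 + \varepsilon_i)/2\), we obtain the equality of class functions on \(N\)
\[ \sum_{x \in G/N} \varepsilon_1^x \;=\; \sum_{x \in G/N} \varepsilon_2^x, \qquad \varepsilon_i^x(n) \coloneqq \varepsilon_i(x n x^{-1}). \]
By Mackey's restriction formula, both sides are \(\operatorname{Res}^G_N \operatorname{Ind}_N^G \varepsilon_i\). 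Each side is therefore a nonnegative integer combination of linear characters of \(N\). Distinct irreducible characters are linearly independent, so \(\varepsilon_2\) must occur on the left, i.e.\ \(\varepsilon_2 = \varepsilon_1^x\) for some \(x \in G\). Taking kernels gives \(H_2 = x^{-1} H_1 x\), which is what we need (up to replacing \(x\) by \(x^{-1}\)).

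The main obstacle is the bookkeeping in this last step: matching left versus right cosets and conjugation conventions consistently, and making sure the local statement is exactly \emph{equal numbers of split places}, which is where the assumption that \(\Phi\) preserves \(\A^f_k\) is used. A symptom of a slip here would be that ordinary arithmetic equivalence of \(l_1\) and \(l_2\) seemed to suffice on its own. That cannot be right: that weaker hypothesis would only give equality of the permutation characters \(\operatorname{Ind}_{H_i}^G 1\), which is exactly the situation of the \(A_4 \times C_2\) example above. The essential extra input is that all places of \(k\) over a completely split prime are interchangeable, which holds because \(k/\Q\) is Galois. This is what lets us work with characters of the normal subgroup \(N\) rather than permutation characters of \(G\).
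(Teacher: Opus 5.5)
Your proof is correct, and its second half is the paper's Proposition~\ref{prop:almost-conjugate} in different clothing. Both arguments restrict to the normal subgroup \(N = \operatorname{Gal}(K/k)\), reach the identity \(\sum_{x \in G/N} \varepsilon_1^x = \sum_{x \in G/N} \varepsilon_2^x\) of sums of \(G\)-conjugates of the index-two linear characters, and conclude by linear independence of irreducible characters of \(N\).

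The difference is in how that identity is obtained. The paper goes through several steps:
\begin{enumerate}
\item it passes from \(\A^f_{l_1} \cong \A^f_{l_2}\) to arithmetic equivalence;
\item it cites Klingen for a common Galois closure and for the equality \(\operatorname{Ind}^G_{U_1} 1_{U_1} = \operatorname{Ind}^G_{U_2} 1_{U_2}\) of permutation characters;
\item it applies Mackey to \(\operatorname{Res}^G_N \operatorname{Ind}^G_{U_i} 1_{U_i} = [G:N]\, 1_N + \sum \chi_{gU_ig^{-1}}\).
\end{enumerate}
You take \(K\) to be the Galois closure of \(l_1 l_2\), which sidesteps the common-closure theorem. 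You then produce the identity directly by Chebotarev, counting split places of \(k\) over primes that split completely in \(k\). Your route is more self-contained and makes explicit that only Frobenius elements lying in \(N\) carry the relevant information. The paper's route isolates a reusable group-theoretic criterion for almost conjugate subgroups, and it cleanly separates the number theory (arithmetic equivalence \(\Leftrightarrow\) almost conjugacy) from the representation theory.

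Your final paragraph, however, misdiagnoses where the hypotheses enter. None of this affects the validity of your proof.
\begin{itemize}
\item Arithmetic equivalence alone does suffice when \(k/\Q\) is Galois, and this is exactly what the paper uses. Restricting \(\operatorname{Ind}^G_{H_1} 1 = \operatorname{Ind}^G_{H_2} 1\) to \(N\) via Mackey gives \([G:N]\,1_N + \sum_x \varepsilon_1^x = [G:N]\,1_N + \sum_x \varepsilon_2^x\), which is your identity.
\item Even inside your argument, the assumption that \(\Phi\) preserves \(\A^f_k\) is not needed. Take a prime \(p\) unramified in \(K\) and completely split in \(k\). The number of degree-one primes of \(l_i\) above \(p\) is twice the number of places of \(k\) above \(p\) that split in \(l_i\). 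So equal splitting types already give your count.
\item The \(A_4 \times C_2\) example is not an instance of arithmetic equivalence falling short, since there \(l_1\) and \(l_2\) are even conjugate. What fails is that \(U\) is not normal, so no conjugating element preserves \(k\).
\end{itemize}
The Galois hypothesis enters only through normality of \(N\). That makes every conjugating element restrict to \(\Aut(k)\) and collapses Mackey's formula to a sum over \(G/N\).
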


We prepare the proof with a group theoretical consideration.  Recall that two subgroups \(U_1\) and \(U_2\) of a finite group \(G\) are called \emph{almost conjugate} if \(U_1\) intersects each conjugacy class of \(G\) in the same number of elements as \(U_2\) does.  Clearly, conjugate subgroups are almost conjugate.  The stabilizer \(U_1\) of a point and the stabilizer \(U_2\) of a line for the action of \(G = \PSL(3,2)\) on the Fano plane \(\mathbb{P}^2(\mathbb{F}_2)\) provide an example of almost conjugate subgroups that are not conjugate.  We give a criterion to exclude the occurrence of non-conjugate almost conjugate subgroups.

\begin{proposition} \label{prop:almost-conjugate}
  Let \(G\) be a finite group and let \(U_1, U_2 \le G\) be almost conjugate.  Suppose there exists a normal subgroup \(N \trianglelefteq G\) containing both \(U_1\) and \(U_2\) as subgroups of index two.  Then \(U_1\) is conjugate to \(U_2\).
\end{proposition}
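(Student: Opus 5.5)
The plan is to translate almost conjugacy into an equality of induced characters, descend to $N$, and use Mackey's formula to compare the two index-two subgroups via their associated linear characters of $N$.

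\textbf{Step 1: permutation characters.} First I would note that almost conjugacy of $U_1$ and $U_2$ is equivalent to equality of the permutation characters $\operatorname{Ind}_{U_1}^G 1 = \operatorname{Ind}_{U_2}^G 1$. This is the standard formula
\[ \operatorname{Ind}_{U}^G 1\,(x) = \frac{|C_G(x)|\,|x^G \cap U|}{|U|}. \]
Summing the hypothesis $|x^G\cap U_1| = |x^G\cap U_2|$ over all conjugacy classes gives $|U_1| = |U_2|$. Here this is also clear directly, since both subgroups have index two in $N$.

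\textbf{Step 2: descend to $N$.} Since $U_i$ has index two in $N$, it is the kernel of a unique nontrivial linear character $\chi_i \colon N \to \{\pm 1\}$. This gives
\[ \operatorname{Ind}_{U_i}^N 1 = 1_N + \chi_i . \]
By transitivity of induction,
\[ \operatorname{Ind}_{U_i}^G 1 = \operatorname{Ind}_N^G 1_N + \operatorname{Ind}_N^G \chi_i . \]
Cancelling the common summand in the equality from Step 1 yields $\operatorname{Ind}_N^G \chi_1 = \operatorname{Ind}_N^G \chi_2$.

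\textbf{Step 3: Mackey.} Since $N \trianglelefteq G$, the Mackey restriction formula gives
\[ \operatorname{Res}^G_N \operatorname{Ind}_N^G \chi_1 = \sum_{gN \in G/N} \chi_1^g, \qquad \chi_1^g(n) = \chi_1(g^{-1} n g). \]
Frobenius reciprocity then yields
\[ 0 < \langle \operatorname{Ind}_N^G \chi_2, \operatorname{Ind}_N^G \chi_2\rangle_G = \langle \operatorname{Ind}_N^G \chi_1, \operatorname{Ind}_N^G \chi_2\rangle_G = \sum_{gN} \langle \chi_1^g, \chi_2\rangle_N. \]
Each $\chi_1^g$ and $\chi_2$ is an irreducible linear character, so some summand is nonzero, and hence $\chi_2 = \chi_1^g$ for some $g \in G$. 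Taking kernels gives $U_2 = \ker \chi_2 = g\,\ker(\chi_1)\,g^{-1} = g U_1 g^{-1}$, which is the claim.

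\textbf{Main obstacle.} The only genuinely non-formal point is Step 2. One has to recognize that an index-two subgroup is recorded faithfully by a single linear character of $N$, so that the permutation character splits as $1_N + \chi_i$ after induction to $N$. Combined with the normality of $N$, this is what makes Mackey's formula collapse to a sum of $G$-conjugates of $\chi_1$. Everything else is routine character theory. I would double-check only that normality of $N$ is used exactly there: the double cosets $N\backslash G/N$ reduce to cosets $G/N$, and each conjugate $gNg^{-1}$ equals $N$.
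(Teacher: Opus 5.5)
Your proof is correct and follows essentially the same route as the paper. Both arguments pass to equal permutation characters, split $\operatorname{Ind}^N_{U_i} 1 = 1_N + \chi_i$ using the index-two hypothesis, apply Mackey's formula for the normal subgroup $N$, use orthogonality of linear characters of $N$ to find $g$ with $\chi_1^g = \chi_2$, and finish by comparing kernels. The difference is cosmetic: you cancel $\operatorname{Ind}^G_N 1_N$ at the level of $G$ via transitivity of induction and detect the common constituent by Frobenius reciprocity. The paper instead applies Mackey directly to $\operatorname{Res}^G_N \operatorname{Ind}^G_{U_i} 1$ and invokes linear independence of irreducible characters of $N$.
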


\begin{proof}

  For \(i=1,2\), let \(1_{U_i}\) be the trivial character on \(U_i\).  Since \(N\) is normal in \(G\) and \(U_i \le N\), we have \(N \backslash G / U_i \cong N \backslash G\) and \(gU_ig^{-1} \cap N = g U_i g^{-1}\), so the Mackey restriction formula~\cite{Serre:representations}*{Proposition~22, p.\,58} shows that
  \[ \operatorname{Res}^G_N \operatorname{Ind}^G_{U_i} \,1_{U_i} = \sum_{Ng \in N \backslash G} \operatorname{Ind}^N_{gU_ig^{-1}} \,1_{gU_ig^{-1}}. \]
  Since \([N : gU_ig^{-1}] = 2\), we have
  \[ \operatorname{Ind}^N_{gU_ig^{-1}} \,1_{gU_ig^{-1}} = 1_N + \chi_{g U_i g^{-1}} \]
  where \(\chi_{g U_i g^{-1}} \colon N \longrightarrow \{ \pm 1 \}\) is the unique linear character with kernel \(g U_i g^{-1}\).  Thus
  \[ \operatorname{Res}^G_N \operatorname{Ind}^G_{U_i} \,1_{U_i} = [G : N] 1_N + \sum_{Ng \in N \backslash G} \chi_{gU_ig^{-1}}. \]
    By~\cite{Klingen:similarities}*{Theorem~III.1.3, p.\,77}, the almost conjugacy of \(U_1\) and \(U_2\) has the equivalent characterization that we have \(\operatorname{Ind}^G_{U_1} 1_{U_1} = \operatorname{Ind}^G_{U_2} 1_{U_2}\).  Restricting both sides to \(N\), we thus obtain
    \[ \sum_{Ng \in N \backslash G} \chi_{gU_1g^{-1}} = \sum_{Ng \in N \backslash G} \chi_{gU_2g^{-1}}. \]
    But the irreducible characters of \(N\) form a basis of the vector space of all class functions on \(N\) and each \(\chi_{gU_ig^{-1}}\), being linear, is irreducible.  As \(\chi_{U_2}\) occurs in the right sum, it thus must occur in the left sum as well.  So there exists \(g \in G\) such that \(\chi_{gU_1g^{-1}} = \chi_{U_2}\).  But equal characters have equal kernels, so we conclude \(gU_1g^{-1} = U_2\).
\end{proof}

\begin{proof}[Proof of Theorem~\ref{thm:h-bar-injective}.]
  Let \(l_1/k\) and \(l_2/k\) be two quadratic extensions such that \(\overline{h}(l_1) = \overline{h}(l_2)\).  As we discussed, we then have \(\A^f_{l_1} \cong_{\A^f_\Q} \A^f_{l_2}\) so that \(l_1\) and \(l_2\) are in particular arithmetically equivalent.  Hence they have the same Galois closure \cite{Klingen:similarities}*{Theorem~III.1.4.b), p.\,79} which we denote by \(K\).  Set \(G = \operatorname{Gal}(K/\Q)\), let \(U_i \le G\) be the subgroup fixing \(l_i \subset K\), and let \(N \trianglelefteq G\) be the normal subgroup fixing the Galois extension \(k/\Q\).  Since \(l_i/k\) is a quadratic extension, we have \([N : U_i] = 2\) and since \(l_1\) is arithmetically equivalent to \(l_2\), we have that \(U_1\) is almost conjugate to \(U_2\) by~\cite{Klingen:similarities}*{Theorem~III.1.3, p.\,77}.  We have thus established the situation of Proposition~\ref{prop:almost-conjugate}.  We conclude that \(U_2 = \sigma U_1 \sigma^{-1}\) for some \(\sigma \in G\), so \(\sigma(l_1) = l_2\).  Since \(k/\Q\) is Galois, \(\sigma\) restricts to an automorphism of \(k\).
\end{proof}

\section{Reduction to inner forms}
\label{section:inner-forms}

In this section, and as a complement to \Cref{thm:inner-twists-of-nontrivial-h-bar}, we will see that \(\overline{g}\) has trivial fiber at \(\beta_1\) if and only if both \(\overline{h}\) has trivial fiber at the image of \(\beta_1\) and the map
\begin{align*}
  \overline{f_1}\colon & H^0(k,\Sym \Delta)
    \setminus H^1(k, \Ad \mathbf{G_1}) / \Aut(k)_{\gamma_1} \\
  & \longrightarrow \prod_{v \nmid \infty} \left(H^0(k_v, \Sym \Delta)
    \setminus H^1(k_v, \Ad \mathbf{G_1})\right)\
    \big/ \Aut(\A_k^f)_{\gamma_1'}
\end{align*}
has trivial fiber at a preimage of \(\beta_1\).  The precise statemant is given in the following theorem.

\begin{theorem} \label{thm:f-g-equivalence}
  Let \(\beta_1 \in H^1(k, \Aut \mathbf{G_0})\) be an arbitrary \(k\)-form of a split \(k\)-group \(\mathbf{G_0}\) not of type \(D_4\).  Denote the \(k\)-quasi-split form of \(\beta_1\) by \(\mathbf{G_1}\).  Let \(\gamma_1 \in H^1(k, \Sym \Delta)\) be the image of \(\beta_1\) under
  \[ p\colon H^1(k, \Aut \mathbf{G_0}) \rightarrow H^1(k, \Sym \Delta) \]
  and assume that \([\gamma_1] \in H^1(k, \Sym \Delta)/\Aut(k)\) has trivial fiber under
\[\overline{h}\colon H^1(k, \Sym \Delta) / \Aut(k) \longrightarrow \prod_{v \nmid \infty} H^1(k_v, \Sym \Delta)\ \big/ \Aut(\A_k^f).\]
Let further \(\gamma_1' \coloneqq h(\gamma_1)\) and denote by \(\Aut(k)_{\gamma_1}\) and \(\Aut(\A_k^f)_{\gamma_1'}\) the respective stabilizers.  Then \([\beta_1] \in H^1(k, \Aut \mathbf{G_0})/\Aut(k)\) has trivial fiber under
\[\overline{g}\colon H^1(k, \Aut \mathbf{G_0}) / \Aut(k) \longrightarrow \prod_{v \nmid \infty} H^1(k_v, \Aut \mathbf{G_0})\ \big/ \Aut(\A_k^f)\]
if and only if any preimage \(\alpha_1\) of (the translate under twisting of) \(\beta_1\) under \(i_1\colon H^1(k, \Ad \mathbf{G_1})\to H^1(k, \Aut \mathbf{G_1})\) has trivial fiber under
\begin{align*}
  \overline{f_1}\colon & H^0(k,\Sym \Delta)
    \setminus H^1(k, \Ad \mathbf{G_1}) / \Aut(k)_{\gamma_1} \\
  & \longrightarrow \prod_{v \nmid \infty} \left(H^0(k_v, \Sym \Delta)
    \setminus H^1(k_v, \Ad \mathbf{G_1})\right)\
    \big/ \Aut(\A_k^f)_{\gamma_1'}.
\end{align*}
\end{theorem}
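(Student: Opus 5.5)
We compare the two fibers by passing through the quasi-split twist \(\mathbf{G_1}\). The link is the fiber structure of \(p\) and \(i_1\). Because \(\overline{h}\) has trivial fiber at \([\gamma_1]\), every class \(\beta_2\) with \(\overline{g}([\beta_2]) = \overline{g}([\beta_1])\) can be moved by some element of \(\Aut(k)\) so that \(p(\beta_2) = \gamma_1\). Indeed, \(p\) is \(\Aut(k)\)-equivariant and the map \(P\) is \(\Aut(\A^f_k)\)-equivariant, so \(\overline{h}([p(\beta_2)]) = \overline{h}([\gamma_1])\). After this normalization, any \(\Phi \in \Aut(\A^f_k)\) with \(\Phi(g(\beta_1)) = g(\beta_2)\) satisfies \(\Phi(\gamma_1') = \gamma_1'\), that is, \(\Phi \in \Aut(\A^f_k)_{\gamma_1'}\). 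Likewise, any \(\phi \in \Aut(k)\) with \(\phi(\beta_1) = \beta_2\) lies in \(\Aut(k)_{\gamma_1}\). So the fiber of \(\overline{g}\) at \([\beta_1]\) is in bijection with the fiber of the restricted map
\[ p^{-1}(\gamma_1)/\Aut(k)_{\gamma_1} \longrightarrow \prod_{v\nmid\infty} P_v^{-1}(\gamma_{1,v}') \big/ \Aut(\A^f_k)_{\gamma_1'}. \]

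Next we twist by \(b_1\) and apply \(\tau_{b_1}\). This identifies \(p^{-1}(\gamma_1)\) with \(\ker p_1 = \operatorname{im} i_1\). By \cite{Serre:galois-cohomology}*{I.5.5~Cor.\,2}, the latter is in bijection with \(H^0(k,\Sym\Delta)\setminus H^1(k,\Ad\mathbf{G_1})\), and the same holds locally with \(S_v\). Two compatibility checks are needed here. First, twisting by \(b_1\) must intertwine the \(\Aut(k)_{\gamma_1}\)-action on \(p^{-1}(\gamma_1)\) with a functorial action on \(H^1(k,\Aut\mathbf{G_1})\). Second, the stabilizer \(\Aut(\A^f_k)_{\gamma_1'}\) must act on \(\prod_v H^1(k_v,\Ad\mathbf{G_1})/S_v\), compatibly with the \(S\)- and \(S_v\)-quotients, so that \(I_1\) and \(i_1\) become equivariant bijections onto the relevant kernels. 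For the first check, an element \(\phi\) fixing the class \(\gamma_1\) sends the cocycle \(p\circ b_1\) to a cohomologous one, and therefore induces an automorphism of the twisted module \(\Aut\mathbf{G_1}\) up to an inner modification, which is absorbed by the quotient by \(S\). The local analogue is the content of the proof of Theorem~\ref{thm:H^1-decomposition}, where \(\Phi_v\) maps \(H^1(k_v,\Ad\mathbf{G_\gamma})\) to \(H^1(k_v,\Ad\mathbf{G}_{\Phi_v(\gamma)})\); for \(\Phi_v\) fixing \(\gamma\) this is an automorphism. With these identifications, diagram \eqref{diagram:f-1-g-1-square} becomes a commutative square of orbit sets whose horizontal maps are bijections. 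The fiber of the restricted \(\overline{g}\) therefore corresponds exactly to the fiber of \(\overline{f_1}\) at the class of \(\alpha_1\), and this gives both directions of the equivalence.

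Finally, the statement says ``any preimage \(\alpha_1\)''. Two preimages of \(\beta_1\) under \(i_1\) differ by the \(S\)-action, so they define the same class in the double quotient, and the condition does not depend on the choice. We expect the main obstacle to be the equivariance bookkeeping in the second step. The action of \(\phi\in\Aut(k)_{\gamma_1}\) on \(H^1(k,\Aut\mathbf{G_0})\) is canonical, but its transport through \(\tau_{b_1}\) depends on a choice of cocycle relating \(\phi(b_1)\) to \(b_1\). We must show that this choice only changes the result by the action of \(S\), respectively \(S_v\), so that the actions on the double quotients are well defined. We would handle this with the \(\Gal\)-equivariant splitting \(s\). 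Writing \(\phi(s\circ p\circ b_1) = s(c)\) for a cocycle \(c\) cohomologous to \(p\circ b_1\) in the finite constant group \(\Sym\Delta\), the coboundary relating \(c\) and \(p\circ b_1\) is an element of \(\Sym\Delta\) that is fixed by the twisted Galois action, hence lies in \(S\). This is exactly where quotienting by \(S\) enters.
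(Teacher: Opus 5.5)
Your proposal is correct and follows the paper's argument. You use the trivial fiber of \(\overline{h}\) exactly as the paper does, replacing \(\beta_2\) by \(\psi^{-1}(\beta_2)\) and \(\Phi\) by \(\Psi^{-1}\circ\Phi\), to force everything into the stabilizers \(\Aut(k)_{\gamma_1}\) and \(\Aut(\A_k^f)_{\gamma_1'}\). The description of the fibers of \(i_1\) and \(I_1\) as \(S\)- resp.\ \(S_v\)-orbits then transfers the fiber question to \(\overline{f_1}\).

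The difference is only in presentation. The paper chases elements separately in each direction and leaves the well-definedness of the actions to \Cref{prop:two-sided-quotient}. You instead package the argument as a bijection of fibers and address explicitly why the transported \(\Aut(k)_{\gamma_1}\)-action is only defined up to \(S\).
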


Firstly, we would like to shed some light on the nature of the two-sided quotient appearing in the theorem.

\begin{proposition} \label{prop:two-sided-quotient}
  \begin{enumerate}[label=(\roman*)]
    \item \label{item:well-defined} The action of \(\Aut(k)_{\gamma_1}\) on \(H^1(k, \Ad \mathbf{G_1})\) and the action of \(\Aut(\A_k^f)_{\gamma_1'}\) on \(H^1(k_v, \Ad \mathbf{G_1})\) each given by the usual functorial operation are both well-defined.
    \item \label{item:compatible} For any \(\Phi \in \Aut(\A_k^f)_{\gamma_1'}\) and \(\Sigma \in \prod_{v \nmid \infty} H^0(k_v, \Sym \Delta)\), there is a \(\Sigma' \in \prod_{v \nmid \infty} H^0(k_v, \Sym \Delta)\), such that
      \[(\Sigma . \omega) . \Phi = \Sigma' . (\omega . \Phi)\]    
    holds for all \(\omega \in \prod_{v \nmid \infty} H^1(k_v, \Ad \mathbf{G_1})\).  Furthermore, the actions of \(H^0(k,\Sym \Delta)\) and \(\Aut(k)_{\gamma_1}\) commute.
  \end{enumerate}
\end{proposition}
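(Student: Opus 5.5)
For \ref{item:well-defined}, the plan is to realize \(\Ad \mathbf{G_1}\) as the twist \({}_{b_1}(\Ad \mathbf{G_0})\) of the split adjoint group by the cocycle \(b_1 = s \circ \gamma_1\), which takes values in \(s(\Sym \Delta)\). An automorphism \(\phi \in \Aut(k)\), extended to \(\varphi\) on \(\overline{k}\), acts on cocycles of \(\Aut \mathbf{G_0}\) by \(a_\sigma \mapsto {}^\varphi a_{\varphi^{-1}\sigma\varphi}\). This is legitimate because \(\mathbf{G_0}\) and the splitting \(s\) are defined over \(\Q\). It sends the twisted module \({}_{b_1}(\Ad\mathbf{G_0})\) to \({}_{\phi_*b_1}(\Ad\mathbf{G_0})\).

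If \(\phi \in \Aut(k)_{\gamma_1}\), then \(\phi_* b_1\) is cohomologous to \(b_1\) in \(\Sym \Delta\). Since \(\Sym\Delta\) is abelian here (\(\Z/2\) or trivial), the two cocycles are actually equal, so \(\phi_*\) maps \(Z^1(k,\Ad\mathbf{G_1})\) to itself. Next I check that coboundary relations are preserved, which is a routine verification \(c_\sigma \mapsto {}^\varphi c_{\varphi^{-1}\sigma\varphi}\). Independence of the extension \(\varphi\) follows by the standard argument: changing \(\varphi\) by an element of \(\Gal(k)\) changes the image by a coboundary twist. The local case is identical, applied factorwise to each \(\Phi_v \colon k_v \to k_{v'}\). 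Here the stabilizer condition \(\Phi(\gamma_1')=\gamma_1'\) gives \((\Phi_v)_*(\gamma_1)_v = (\gamma_1)_{v'}\), and so a well-defined isomorphism \(H^1(k_v,\Ad\mathbf{G_1}) \to H^1(k_{v'},\Ad\mathbf{G_1})\).

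For \ref{item:compatible}, recall from the proof of Proposition~\ref{prop:s-equivariant} that \(S_v = (\Aut\mathbf{G_1})(k_v)/(\Ad\mathbf{G_1})(k_v)\). An element \(\chi_v\) acts by \([c_{a_\sigma}] \mapsto [c_{\chi_v(a_\sigma)}]\). Transporting by \(\Phi_v\), I compute
\[(\chi_v . \omega_v).\Phi_v = ((\Phi_v)_*\chi_v) . ((\Phi_v)_*\omega_v),\]
where \((\Phi_v)_*\chi_v = \varphi\chi_v\varphi^{-1}\) lies in \((\Aut \mathbf{G_1})(k_{v'})\). Its image in \(S_{v'}\) is the element needed for \(\Sigma'\). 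In fact, since \(S_v \cong \Z/2\) has no nontrivial automorphism, this image corresponds to the same element, so \(\Sigma'_{v'} = \Sigma_v\). Defining \(\Sigma'\) coordinatewise via the permutation \(v\mapsto v'\) gives the first claim.

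Commutation of the global actions of \(H^0(k,\Sym\Delta)\) and \(\Aut(k)_{\gamma_1}\) follows from the same computation. Here \(\phi_*\chi\) and \(\chi\) have the same image in \(S \cong \Z/2\) (or \(S\) is trivial), so \(\phi_*(\chi.\alpha) = \chi.\phi_*(\alpha)\).

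The main obstacle is the cocycle bookkeeping in part \ref{item:well-defined}. One has to check carefully that the stabilizer condition gives equality of the twisting cocycles \(\phi_*b_1=b_1\), not just cohomology; this uses that \(\Sym\Delta\) has trivial Galois action and is abelian. One also has to make sure the conjugation by \(\varphi\) is compatible with the twisted Galois action. Everything else is naturality.
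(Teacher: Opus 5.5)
Your part (ii) is correct, but part (i) contains a step that fails for outer types. The step is the claim that replacing the extension \(\varphi\) by \(\tau\varphi\), with \(\tau\in\Gal(k)\), changes \(\phi_*(a)\) only by a coboundary.

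That standard argument needs the semilinear map you transport with to restrict, on \(\Gal(k)\), to the Galois action of the group in question. You transport with the \(\Q\)-structure action \({}^\varphi\) of \(\Ad\mathbf{G_0}\). The group \(\Ad\mathbf{G_1}={}_{b_1}(\Ad\mathbf{G_0})\), however, carries the twisted action \(\tau * x = b_{1,\tau}\,{}^\tau x\, b_{1,\tau}^{-1}\). The cocycle identity then gives
\[ {}^\tau a_{\tau^{-1}\sigma\tau} = b_{1,\tau}^{-1}\bigl(a_\tau^{-1}\, a_\sigma\, (\sigma * a_\tau)\bigr)\, b_{1,\tau}. \]
Hence \((\tau\varphi)_* = \tau_* \circ \varphi_*\), where \(\tau_*\) is the action of the \(k\)-automorphism \(\operatorname{int}(s(\gamma_1(\tau)))\) of \(\Ad\mathbf{G_1}\), i.e. of \(\gamma_1(\tau)\in S\). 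If \(l/k\) is the quadratic extension cut out by \(\gamma_1\) and \(\tau\notin\Gal(l)\), this is the nontrivial element of \(S\).

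That element generally acts nontrivially. Take the quasi-split group of type \({}^2A_2\) over \(\Q\) split by \(\Q(i)\). Since \(H^0(\Q,Z')^*=0\), there is a class in \(H^2(\Q,Z(\mathbf{G_1}))\) with local invariant \(1\in\Z/3\cong H^2(\Q_5,Z(\mathbf{G_1}))\) and \(0\) elsewhere. \(S\) sends it to the class with invariant \(2\) at \(5\). By \(S\)-equivariance and surjectivity of \(\delta_1\), the same nontriviality occurs on \(H^1(\Q,\Ad\mathbf{G_1})\). So already \(\phi=\mathrm{id}_k\) gives two different maps under your recipe, and \(\phi\mapsto\phi_*\) is only defined, and only multiplicative, up to \(S\).

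The repair is easy and matches what the paper's sketch implicitly does. The ambiguity is exactly the \(S\)-action, and \(S\) commutes with every \(\phi_*\) by your own computation in (ii). You therefore get a well-defined action of \(\Aut(k)_{\gamma_1}\) on \(S\backslash H^1(k,\Ad\mathbf{G_1})\). Equivalently, this is the action on the fiber \(p^{-1}(\gamma_1)\subseteq H^1(k,\Aut\mathbf{G_0})\), which is canonical because \(\Aut\mathbf{G_0}\) is defined over \(\Q\). This is all that the double quotient in Theorem~\ref{thm:f-g-equivalence} uses.

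Locally your conclusion is true, but not for the reason you give. If \(v\) splits in \(l\), then \(\gamma_1(\tau)=1\) for all \(\tau\in\Gal(k_v)\). Otherwise \(S_v\) acts trivially on \(H^1(k_v,\Ad\mathbf{G_1})\), which has at most two elements by Table~\ref{tate-table}, as observed in the proof of Theorem~\ref{thm:H^1-decomposition}.

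For (ii), your route differs from the paper's. The paper checks compatibility on \(H^2(k_v,Z(\mathbf{G_1}))\) using the explicit inversion/swap description of Remark~\ref{rmk:dynkin-actions}. You transport \(\chi_v\mapsto\varphi\chi_v\varphi^{-1}\) and use that \(p\) is defined over \(\Q\) and \(\Sym\Delta\) is constant. This gives \(\Sigma'_{v'}=\Sigma_v\) with no case distinction by type, which is the cleaner argument.
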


\begin{proof}
  The second part of statement~\ref{item:well-defined} follows from the fact that the functorial action of \(\Phi_v\colon k_v\to k_w\) maps \(H^1(k_v, \Ad \mathbf{G_{(\gamma_1')_v}})\) to
  \[H^1(k_w, \Ad \mathbf{G_{\Phi_v((\gamma_1')_v)}})
    = H^1(k_w, \Ad \mathbf{G_{(\gamma_1')_w}})\]
  as seen in the proof of \Cref{thm:H^1-decomposition}.  The first part follows from an analogous argument for number fields.
  
  The action of \(\Sigma\) is coordinatewise given by some involution that is equivariant with respect to the local isomorphisms \(\Phi_v\colon k_v\to k_w\).  This can be seen by noting that the action of sigma is either the inversion or the interchanging of two (with respect to \(\Phi_v\) canonically identified) factors.  The first part of statement~\ref{item:compatible} therefore follows by choosing \(\Sigma'\) to be the corresponding (trivial or nontrivial) action of \(H^0(k_w, \Sym \Delta)\) at the respective codomain places \(w\) of \(\Phi_v\).  The second part of statement~\ref{item:compatible} follows by observing the action of \(H^0(k, \Sym \Delta)\) on the codomain of \(f_1\) by diagonally embedding its nontrivial element and the utilising the first part of ~\ref{item:compatible}.
\end{proof}

In particular \Cref{prop:two-sided-quotient} means that the two-sided quotient in \Cref{thm:f-g-equivalence} is well-defined.

\begin{proof}[Proof of \Cref{thm:f-g-equivalence}.]
  As all preimages of \(\beta_1\) under \(i_1\) lie in the same orbit under the action of \(H^0(k,\Sym \Delta)\) on \(H^1(k, \Ad \mathbf{G_1})\), the condition of the theorem on \(\overline{f_1}\) holding for one preimage is equivalent to the condition holding for all preimages.  First assume \(\overline{g}\) to have trivial fiber at \([\beta_1]\).  Let \(\alpha_1\) be a preimage of \(\beta_1\) and let \(\alpha_2 \in H^1(k, \Ad \mathbf{G_1})\), \(\Sigma \in \prod_{v \nmid \infty} H^0(k_v, \Sym \Delta)\) and \(\Phi \in \Aut(\A_k^f)_{\gamma_1'}\) such that \(\Sigma(\Phi(f(\alpha_1))) = f(\alpha_2)\).  As previously, let \(\alpha_1' \coloneqq f_1(\alpha_1)\).  Because \(I_1 \circ \Sigma = I_1\) holds, we obtain
  \begin{align*}
  \Phi(g(\beta_1))
    & = \Phi(g(i_1(\alpha_1))) = \Phi(I_1(f_1(\alpha_1)))
      = I_1(\Phi(f_1(\alpha_1))) \\
    & = I_1(\Sigma(\Phi(f_1(\alpha_1))) = I_1(f_1(\alpha_2)) = g(i_1(\alpha_2))
      = g(\beta_2),
  \end{align*}
  so \([\beta_2]\) is in the same fiber of \(\overline{g}\) as \([\beta_1]\).  As we assumed \(\overline{g}\) to have trivial fiber at  \([\beta_1]\), this implies \([\beta_1] = [\beta_2]\), so there has to be some \(\phi \in \Aut(k)\) such that \(\phi(\beta_1) = \beta_2\).  As both \(\beta_1\) and \(\beta_2\) are in the image of \(i_1\colon H^1(k, \Ad \mathbf{G_1})\to H^1(k, \Aut \mathbf{G_1})\), they must be mapped to the same image \(\gamma_1\) under \(p\), so
  \[\phi(\gamma_1) = \phi(p(\beta_1)) = p(\phi(\beta_1))
    = p(\beta_2) = \gamma_1.\]
  Thus \(\phi \in \Aut(k)_{\gamma_1}\) and
  \[i_1(\phi(\alpha_1)) = \phi(i_1(\alpha_1)) = \phi(\beta_1) = \beta_2
    = i_1(\alpha_2),\]
  so there is a \(\sigma \in H^0(k, \Sym \Delta)\) such that \(\sigma(\phi(\alpha_1)) = \alpha_2\), so the fiber of \(\alpha_1\) under \(\overline{f_1}\) is trivial.
  
  Assume now that a preimage \(\alpha_1\) of \(\beta_1\) has trivial fiber under \(\overline{f_1}\).  Let \(\Phi \in \Aut(\A_k^f)\) and \(\beta_2 \in H^1(k, \Aut \mathbf{G_0})\) such that \(\Phi(g(\beta_1)) = g(\beta_2)\).  Defining \(\gamma_2 \coloneqq p(\beta_2)\), we see
  \begin{align*}
    \Phi(h(\gamma_1)) & = \Phi(h(p(\beta_1))) = \Phi(P(g(\beta_1)))
    = P(\Phi(g(\beta_1))) = P(g(\beta_2)) \\
    & = h(p(\beta_2)) = h(\gamma_2),
  \end{align*}
  so there has to exist a \(\psi \in \Aut(k)\) such that \(\psi(\gamma_1) = \gamma_2\), as \(\overline{h}\) was assumed to have trivial fiber at \(\gamma_1\).  Let \(\Psi \in \Aut(\A_k^f)\) be the image of said \(\psi\) and define \(\widetilde{\Phi} \coloneqq \Psi^{-1} \circ \Phi \in \Aut(\A_k^f)_{\gamma_1'}\).  Choose a preimage \(\alpha_1\) of \(\beta_1\) and \(\alpha_2\) of \(\psi^{-1}(\beta_2)\) under \(i_1\), which is possible, as
  \[p(\psi^{-1}(\beta_2)) = \psi^{-1}(p(\beta_2)) = \psi^{-1}(\gamma_2)
    = \gamma_1 = p(\beta_1).\]
  Now
  \begin{align*}
    I_1(\widetilde{\Phi}(f_1(\alpha_1)))
      & = \widetilde{\Phi}(I_1(f_1(\alpha_1)))
        = \widetilde{\Phi}(g(i_1(\alpha_1)))
        = \Psi^{-1} \circ \Phi(g(\beta_1)) \\
      & = \Psi^{-1}(g(\beta_2))
        = g(\psi^{-1}(\beta_2))
        = g(i_1(\alpha_2))
        = I_1(f_1(\alpha_2)),
  \end{align*}
  so there exists a \(\Sigma \in \prod_{v \nmid \infty} H^0(k_v, \Sym \Delta)\) such that \(\Sigma(\widetilde{\Phi}(f_1(\alpha_1))) = f_1(\alpha_2)\).  As \(\overline{f}\) was assumed to have trivial fiber at \(\alpha_1\), there exist \(\sigma \in H^0(k, \Sym \Delta)\) and \(\phi \in \Aut(k)_{\gamma_1}\) such that \(\sigma(\phi(\alpha_1)) = \alpha_2\).  Thus,
  \[\phi(\beta_1) = \phi(i_1(\alpha_1)) = i_1(\phi(\alpha_1))
    = i_1(\sigma(\phi(\alpha_1))) = i_1(\alpha_2) = \psi^{-1}(\beta_2),\]
    so \(\beta_2 = \psi \circ \phi(\beta_1)\) and the fiber of \(\overline{g}\) at \(\beta_1\) is trivial.
  \end{proof}

  \section{Real Galois cohomology}
  \label{section:real-galois}

  In this section, we demonstrate how triviality of the fiber of \(\overline{f_1}\) again decomposes into two separate statements: the triviality of the corresponding fiber of the map \(\overline{d_1}\) and the non-existence of certain inner twists at real places.

To give an exact description of when \(\overline{f_1}\) has trivial fiber at the class of some element \(\alpha \in H^1(k, \Ad 
\mathbf{G_1})\), we have to inspect the diagram
\begin{equation} \label{eq:second-funamental-extended} \begin{tikzcd}
    \bigoplus\limits_v H^1(k_v, \mathbf{G_1}) \ar[r, "Q_1"]
      & \bigoplus\limits_v H^1(k_v, \Ad \mathbf{G_1}) \ar[r, "\bigoplus (\delta_1)_v"]
      & \bigoplus\limits_v H^2(k_v, Z(\mathbf{G_1})) \\
    H^1(k, \mathbf{G_1}) \ar[u, "\underline{e_1}"] \ar[r, "q_1"]
      & H^1(k, \Ad \mathbf{G_1}) \ar[u, "\underline{f_1}"] \ar[r, "\delta_1"]
      & H^2(k, Z(\mathbf{G_1})) \ar[u, "\underline{d_1}"],
  \end{tikzcd} \end{equation}
that extends Diagram~\eqref{eq:ad-center-square} by mapping to all finite and infinite places and ommitting the quotients by \(S\) and \(\prod S_v\).  If we are now interested in the fiber of some \(\alpha \in H^1(k, \Ad \mathbf{G_1})\)  under \(\delta_1\), we can again twist the first two columns by a cocycle in the class \(\alpha\).  If \(i_1(\alpha) \in H^1(k, \Aut \mathbf{G_1})\) originally corresponded to a \(k\)-form \(\mathbf{G}\), we obtain
\[
  \begin{tikzcd}
    \bigoplus\limits_{v} H^1(k_v, \mathbf{G}) \arrow[r, "Q"]
      & \bigoplus\limits_{v} H^1(k_v, \Ad \mathbf{G})\\
    H^1(k, \mathbf{G}) \arrow[r, "q"] \arrow[u, "\underline{e_2}"]
      & H^1(k, \Ad \mathbf{G}). \arrow[u, "\underline{f_2}"]
\end{tikzcd}\]

Here \(\underline{e_2}\) is bijective by~\cite{Platonov-Rapinchuk:algebraic-groups}*{Theorem~6.6} and \(\underline{f_2}\) is injective due to~\cite{Platonov-Rapinchuk:algebraic-groups}*{Theorem~6.22} combined with a twisting argument.  Furthermore, we know that \(H^1(k_v, \mathbf{G_1})\) vanishes for finite \(v\), so only the \(Q_v\) for real \(v\) can have a nontrivial image.

Adams and Ta{\"\i}bi provide a detailed description of the map
\[Q_{\R}: H^1(\R, \mathbf{G}) \rightarrow H^1(\R, \Ad(\mathbf{G}))\]
depending on the real form of \(\mathbf{G}\) in~\cite{Adams-Taibi:galois-cohomology}*{Corollary~9.3}: The order of the kernel of the map can be computed by
\[|H^1(\R, Z(\mathbf{G}))| / |\pi_0(\Ad(\mathbf{G})(\R))|.\]
As they give a complete list of \(|\pi_0(\Ad(\mathbf{G})(\R))|\) as well as for \(|H^1(\R, \mathbf{G})|\) for all simply connected simple \(\mathbf{G}\), we can conclude nontrivial image of \(Q_w\) for real places \(w\) by computation of \(|H^1(\R, Z(\mathbf{G}))|\).  As a first step, we list in Table~\ref{table:real-groups} the simply connected forms, adjoint forms, and centers of \(\R\)-groups.

\begin{table}[htb]
\begin{center}
  \begin{tabular}{|c|c|c|c|} 
    \hline
    Type & \(\mathbf{G}\) & \(\Ad(\mathbf{G})\) & \(Z(\mathbf{G})\) \\
    \hline\hline
    \multirow{2}{*}{\({}^1 A_n\)} & \(\SL(n+1, \R)\) & \(\PSL(n+1, \R)\)
      & \multirow{2}{*}{\(\mu_{n+1}\)} \\
      & \(\SL(\frac{n+1}{2}, \mathbb{H})\) & \(\PSL(\frac{n+1}{2}, \mathbb{H})\) & \\
    \hline
    \multirow{2}{*}{\({}^2 A_n\)} & \(\SU(r,s)\) & \(\PSU(r,s)\) & \multirow{2}{*}{\(\mathbf{R}^{(1)}_{\mathbb{C}/\R}(\mu_{n+1})\)} \\
      & \(r+s = n+1\) & \(r+s = n+1\) & \\
    \hline
    \multirow{2}{*}{\(B_n\)} & \(\Spin(r,s)\) & \(\PSO(r,s)\) & \multirow{2}{*}{\(\mu_2\)} \\
     & \(r+s = 2n + 1\) & \(r+s = 2n + 1\) & \\
    \hline
    \multirow{3}{*}{\(C_n\)} & \(\Sp(2n, \R)\)
      & \(\PSp(2n, \R)\) & \multirow{3}{*}{\(\mu_2\)} \\
      & \(\Sp(r,s)\) & \(\PSp(r,s)\) & \\
      & \(r+s = n\) & \(r+s = n\) & \\
    \hline
    \multirow{5}{*}{\({}^1 D_n\)} & \(\Spin(r,s)\) & \(\PSO(r,s)\)
      & \((\mu_2)^2\) if \(n\) even \\
      & \(r+s=2n\) & \(r+s=2n\) & \(\mu_4\) if \(n\) odd \\
      & \(r \equiv n \mod 2\) & \(r \equiv n \mod 2\) & \\
      \cline{2-4}
      & \(\Spin^*(2n)\) & \(\PSO^*(2n)\) & \multirow{2}{*}{\((\mu_2)^2\)} \\
      & \(n\) even & \(n\) even & \\
    \hline
    \multirow{5}{*}{\({}^2 D_n\)} & \(\Spin(r,s)\) & \(\PSO(r,s)\)
      & \(R_{\C/\R}(\mu_2)\) if \(n\) even \\
      & \(r+s=2n\) & \(r+s=2n\)
        & \(\mathbf{R}^{(1)}_{\C/\R}(\mu_4)\) if \(n\) odd \\
      & \(r \not\equiv n \mod 2\) & \(r \not\equiv n \mod 2\) & \\
      \cline{2-4}
      & \(\Spin^*(2n)\) & \(\PSO^*(2n)\) & \multirow{2}{*}{\(\mathbf{R}^{(1)}_{\C/\R}(\mu_4)\)} \\
      & \(n\) odd & \(n\) odd & \\
    \hline
    \multirow{4}{*}{\(E_7\)} & split & & \(\mu_2\) \\
       & quaternionic & & \(\mu_2\) \\
       & hermitian & & \(\mu_2\) \\
       & compact & & \(\mu_2\) \\
    \hline
  \end{tabular}
\caption{Simply connected forms, adjoint forms, and centers of real groups.}
\label{table:real-groups}
\end{center}
\end{table}

The centers in this table are obtained from~\cite{Tits:classification-algebraic}*{Table II} in combination with~\cite{Platonov-Rapinchuk:algebraic-groups}*{p.\,332}. Note that triality does not occur, as \(\C/\R\) has degree two.

\begin{proposition}
  \label{prop:cohomology-orders}
  The first cohomology sets of the above centers have the following orders.
  \begin{align*}
    |H^1(\R, \mu_n)| & = 2 \textit{ if } n \textit{ even} \\
    |H^1(\R, \mu_n)| & = 1 \textit{ if } n \textit{ odd} \\
    |H^1(\R, (\mu_2)^ 2)| & = 4 \\
    |H^1(\R, R_{\C/\R}(\mu_n))| & = 1 \\
    |H^1(\R, \mathbf{R}^{(1)}_{\C/\R}(\mu_n))| & = 2 \textit{ if } n \textit{ even} \\
    |H^1(\R, \mathbf{R}^{(1)}_{\C/\R}(\mu_n))| & = 1 \textit{ if } n \textit{ odd}
  \end{align*}
\end{proposition}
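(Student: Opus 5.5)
Since \(\Gal(\C/\R) = \{1, \sigma\}\) is cyclic of order two, every finite \(\Gal(\C/\R)\)-module \(A\) satisfies \(H^1(\R, A) \cong \widehat{H}^{-1}(\Z/2, A) = \ker(N)/\operatorname{im}(\sigma - 1)\), where \(N = 1 + \sigma\) is the norm. So we only need to write down each center concretely as an abelian group with a \(\sigma\)-action, using Table~\ref{table:real-groups} and the description of \(\mathbf{R}^{(1)}_{l/k}(\mu_r)\) from the proof of Theorem~\ref{thm:tate-map}, and then compute this quotient. We write the groups additively.

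First take \(\mu_n\). Complex conjugation acts on \(\mu_n(\C) \cong \Z/n\) by \(x \mapsto -x\). Then \(N = 1 + \sigma = 0\), so \(\ker N\) is all of \(\Z/n\). The image of \(\sigma - 1\) is multiplication by \(-2\), which is \(2\Z/n\). The quotient is \(\Z/\gcd(2,n)\), so it has order \(2\) for even \(n\) and \(1\) for odd \(n\).

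Now \((\mu_2)^2\) and \(R_{\C/\R}(\mu_n)\). For \((\mu_2)^2\), conjugation acts trivially on \(\{\pm 1\}^2\), so \(H^1 = \operatorname{Hom}(\Z/2, (\Z/2)^2)\), which has order \(4\). For \(R_{\C/\R}(\mu_n)\), the module \(\mu_n(\C \otimes_\R \C) = \mu_n \times \mu_n\) is induced from the trivial subgroup. Shapiro's lemma then gives \(H^1(\R, R_{\C/\R}(\mu_n)) \cong H^1(\C, \mu_n) = 0\). Concretely, \(\sigma\) acts by \((a,b) \mapsto (\bar b, \bar a)\), and one checks directly that every cocycle is a coboundary.

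Finally \(\mathbf{R}^{(1)}_{\C/\R}(\mu_n)\). By the proof of Theorem~\ref{thm:tate-map}, this is \(\mu_n(\C)\) with \(\sigma\) acting by complex conjugation composed with inversion. Here \(\sigma \notin \Gal(\C)\), since \(\C/\R\) is itself the relevant quadratic extension. Conjugation followed by inversion is the identity on \(\mu_n(\C)\), so the action is trivial. Hence \(H^1 = \operatorname{Hom}(\Z/2, \Z/n)\), which has order \(2\) for even \(n\) and \(1\) for odd \(n\).

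The only point needing care is this last module: we must identify correctly which Galois elements act by the extra inversion in the real case. Once that is settled, the norm-one torus module becomes trivial and everything else is routine.
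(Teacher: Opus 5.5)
Your proof is correct, but it takes a somewhat different route from the paper's. The paper uses three separate tools. For \(\mu_n\) and \((\mu_2)^2\) it uses Kummer theory, \(H^1(\R,\mu_n)\cong\R^\times/(\R^\times)^n\). For \(R_{\C/\R}(\mu_n)\) it uses the induced-module (Shapiro) argument, as you do. For the norm-one module it uses the exact sequence from Platonov--Rapinchuk (6.30):
\[1 \to \mu_n(\R)/N_{\C/\R}(\mu_n(\C)) \to H^1(\R, \mathbf{R}^{(1)}_{\C/\R}(\mu_n)) \to \ker\bigl(N_{\C/\R}\colon \C^\times/(\C^\times)^n \to \R^\times/(\R^\times)^n\bigr) \to 1.\]
Its right-hand term vanishes, and its left-hand term has order \(\gcd(2,n)\) because roots of unity have norm \(1\).

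You instead use one device throughout: \(H^1(\Z/2,A)\cong\ker N/\operatorname{im}(\sigma-1)\) for the cyclic group, together with an explicit description of the \(\sigma\)-action on each module. This pays off most in the last case. You observe that \(\mathbf{R}^{(1)}_{\C/\R}(\mu_n)\) is a \emph{trivial} Galois module over \(\R\), which matches the kernel of the norm being \(\{(a,a^{-1})\}\) with \(\bar a=a^{-1}\). The count \(|\operatorname{Hom}(\Z/2,\Z/n)|=\gcd(2,n)\) then follows immediately, with no appeal to the general \(L/K\) sequence. Your computation for \(\mu_n\) also handles both parities of \(n\) in one line.

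The paper's tools are phrased to work for an arbitrary quadratic extension \(L/K\) and an arbitrary base field. Over \(\R\), your direct computation is shorter, self-contained, and makes the underlying module structure transparent.
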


\begin{proof}
  By Kummer theory we have \(H^1(\R, \mu_n) \simeq \R^\times / (\R^\times)^2\), so the first three equations follow.
  
  Next, \(H^1(\C/\R, R_{\C/\R}(\mu_n)) \simeq H^1(\C/\C, \mu_n) \simeq 1\) by the Nakayama Lemma, so the fourth equation follows.
  
  Lastly, \cite{Platonov-Rapinchuk:algebraic-groups}*{Equation~6.30} provides us with the short exact sequence
  \begin{align*}  
  1 & \rightarrow (\mu_n)_K/N_{L/K}((\mu_n)_L) \rightarrow
    H^1(K, \mathbf{R}^{(1)}_{L/K}(\mu_n)) \\
    & \rightarrow \ker(N_{L/K}: L^\times/(L^\times)^n
    \rightarrow K^\times/(K^\times)^n) \rightarrow 1.
  \end{align*}
  As \(\C^\times/(\C^\times)^n \simeq 1\), we know that the right group in this short exact sequence is trivial. Furthermore, \((\mu_n)_{\R}\) is \(\{1, -1\}\) or \(\{1\}\), depending on the parity of \(n\) and \(N_{\C/\R}(\zeta_n) = 1\) for any \(n\)-th root of unity \(\zeta_n\), so the left term in the sequence has two or one elements depending on wether \(n\) is even or odd.  It follows that the same holds for the group in the middle of the sequence, proving the last two equalities.
\end{proof}

The results of Proposition~\ref{prop:cohomology-orders} lead to the first column of Table~\ref{table:real-cohomology} which gives an overview of the relevant real cohomology sets.  The second and third column can be found in~\cite{Adams-Taibi:galois-cohomology}*{Section~10} and are included for convenient reference.  The function \(\delta\) in the table takes values depending on \(r,s \text{ mod } 4\) gathered in \Cref{table:delta}.
\begin{table}[htb]
\begin{center}
  \begin{tabular}{c|cccc}
    & 0 & 1 & 2 & 3 \\
  \hline
  0 & 3 & 2 & 2 & 2 \\
  1 & 2 & 1 & 1 & 0 \\
  2 & 2 & 1 & 0 & 0 \\
  3 & 2 & 0 & 0 & 0
  \end{tabular}
  \caption{Values of \(\delta\).}
  \label{table:delta}
\end{center}
\end{table}

\begin{table}[htb]
\begin{center}
  \begin{tabular}{|c|rl|c|c|} 
    \hline
    \(\mathbf{G}\) & \multicolumn{2}{c|}{\(|H^1(\R, Z(\mathbf{G}))|\)}
      & \(|\pi_0(\Ad(\mathbf{G})(\R))|\) & \(|H^1(\R, \mathbf{G})|\)\\
    \hline\hline
    \multirow{2}{*}{\(\SL(n, \R)\)}
      & \multicolumn{2}{c|}{2 if \(n\) even} & 2 if \(n\) even & \multirow{2}{*}{1} \\
      & \multicolumn{2}{c|}{1 if \(n\) odd} & 1 if \(n\) odd & \\
    \hline
    \(\SL(n, \mathbb{H})\) & \multicolumn{2}{c|}{2} & 1 & 2 \\
    \hline
    \multirow{2}{*}{\(\SU(r,s)\)} 
      & \multicolumn{2}{c|}{2 if \(r+s\) even} & 2 if \(r=s\)
      & \multirow{2}{*}{\(\left \lfloor{\frac{r}{2}}\right \rfloor + \left \lfloor{\frac{s}{2}}\right \rfloor + 1\)} \\
      & \multicolumn{2}{c|}{1 if \(r+s\) odd} & 1 if \(r \neq s\) & \\
    \hline
    \multirow{5}{*}{\(\Spin(r,s)\)} & 4 & if \(4 \mid r+s\), & 1 if \(rs = 0\)
      & \multirow{5}{*}{\(\left \lfloor{\frac{r+s}{4}}\right \rfloor + \delta(r,s)\)} \\
      & & \(r\) even & 1 if \(r\), \(s\) odd & \\
      & 1 & if \(4 \mid r+s\), & and \(r \neq s\) & \\
      & & \(r\) odd & 4 if \(r = s\) even & \\
      & 2 & else & 2 else & \\
    \hline
    \multirow{2}{*}{\(\Spin^*(2n)\)} & \multicolumn{2}{c|}{4 if \(n\) even} & 2 if \(n\) even & \multirow{2}{*}{2} \\
      & \multicolumn{2}{c|}{2 if \(n\) odd} & 1 if \(n\) odd & \\
    \hline
    \(\Sp(2n, \R)\) & \multicolumn{2}{c|}{2} & 2 & 1 \\
    \hline
    \(\Sp(r,s)\) & \multicolumn{2}{c|}{2} & 2 & \(r+s+1\) \\
    \hline
    \(E_7\) split & \multicolumn{2}{c|}{2} & 2 & 2 \\
    \(E_7\) quaternionic & \multicolumn{2}{c|}{2} & 1 & 4 \\
    \(E_7\) hermitian & \multicolumn{2}{c|}{2} & 2 & 2 \\
    \(E_7\) compact & \multicolumn{2}{c|}{2} & 1 & 4 \\
    \hline
  \end{tabular}
  \caption{Cohomology and components of \(\R\)-groups.}
  \label{table:real-cohomology}
\end{center}
\end{table}

Furthermore, in the cases of \(E_6\), \(E_8\), \(F_4\) and \(G_2\), the equality
\[H^1(\R, \mathbf{G}) = H^1(\R, \Ad(\mathbf{G}))\]
holds, so that \(Q_{\R}\) is bijective. As \(H^1(\R, \Ad(\mathbf{G}))\) is coincidentally nontrivial in all of those cases, we can conclude \(Q_{\R}\) having a nontrivial image in all of them.

We insert a small observation on \(\Spin(r,s)\), as the orders of the involved groups are particularly chaotic:

\begin{lemma}
  Let \(r, s \in \mathbb{N}_0\) with \(r+s \geq 12\) and \(\mathbf{G} = \Spin(r,s)\). Then the map \(Q_{\R}: H^1(\R, \mathbf{G}) \rightarrow H^1(\R, \Ad(\mathbf{G}))\) has nontrivial image.
\end{lemma}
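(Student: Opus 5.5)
Since \(Q_\R\) is a map of pointed sets sending base point to base point, it has nontrivial image exactly when it is not constant. Put differently, the kernel of \(Q_\R\) (the fiber over the base point) is strictly smaller than the whole source. By the Adams--Ta{\"\i}bi formula quoted above, \(|\ker Q_\R| = |H^1(\R, Z(\mathbf{G}))| / |\pi_0(\Ad(\mathbf{G})(\R))|\). This fiber of \(Q_\R\) over the trivial class is the image of \(H^1(\R, Z(\mathbf{G}))\), i.e.\ the orbit under the twisting action. So the plan is to verify
\[ |H^1(\R, Z(\mathbf{G}))| \,/\, |\pi_0(\Ad(\mathbf{G})(\R))| \;<\; |H^1(\R, \mathbf{G})| \]
for \(\mathbf{G} = \Spin(r,s)\), \(r+s \ge 12\), reading all three quantities off Table~\ref{table:real-cohomology}.

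The key steps are as follows. First, bound the left side from above. In every case of the table the numerator is at most \(4\) and the denominator is at least \(1\), so \(|\ker Q_\R| \le 4\). More precisely, the quotient equals \(4\) only when \(4 \mid r+s\), \(r\) is even and \(rs=0\) or \(r,s\) are odd. The latter is impossible since \(r\) is even and \(r+s\) is even, so this case forces \(rs = 0\). In all remaining cases the quotient is at most \(2\). Second, bound the right side from below. Since \(\delta \ge 0\), we have \(|H^1(\R, \mathbf{G})| = \lfloor (r+s)/4 \rfloor + \delta(r,s) \ge \lfloor 12/4 \rfloor = 3\). This already beats \(2\) in all cases except the compact one.

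The remaining case is \(rs = 0\) with \(4 \mid r+s\), say \(s = 0\) and \(r = 4m\) with \(m \ge 3\). There \(\delta(0,0) = 3\), so \(|H^1(\R,\mathbf{G})| = m + 3 \ge 6 > 4\). Strictly, Table~\ref{table:delta} is indexed by \(r,s \bmod 4\), and both residues are \(0\) here. The symmetric case \(r=0\) is identical.

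The main obstacle is bookkeeping rather than substance. I have to read the case distinctions in the \(|H^1(\R,Z)|\) and \(\pi_0\) columns correctly, for instance distinguishing "\(r,s\) odd and \(r\neq s\)" from "\(r=s\) even". I also have to make sure that the Adams--Ta{\"\i}bi count really gives the fiber over the base point, so that a strict inequality of cardinalities yields an element outside the kernel. I would organize the case analysis by the value of the numerator \(|H^1(\R,Z)| \in \{1,2,4\}\): numerator \(1\) or \(2\) is immediate from the bound \(\ge 3\), and numerator \(4\) requires only the \(\pi_0\) split just described.
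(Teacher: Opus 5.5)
Your proposal is correct and follows the paper's proof essentially step for step. The paper likewise notes \(|H^1(\R,\mathbf{G})| \ge 3\) and observes that a kernel of order \(\ge 3\) forces \(|H^1(\R,Z(\mathbf{G}))|/|\pi_0(\Ad(\mathbf{G})(\R))| = 4\), hence \(r,s\) even, \(4 \mid r+s\) and \(rs=0\); it then uses \(\delta = 3\) in that case to get \(|H^1(\R,\mathbf{G})| \ge 6 > 4 \ge |\ker Q_{\R}|\).
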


\begin{proof}
  From the table above we see immediately that \(|H^1(\R, \mathbf{G})| \geq 3\). The only way for \(Q_{\R}\) to have trivial image in that case would be for its kernel to have order \(\geq 3\). This can only occur in the case of \(|H^1(\R, Z(\mathbf{G}))|/|\pi_0(\Ad(\mathbf{G})(\R))| = 4\), which only happens for \(r, s\) even and \(r + s \equiv 0 \mod 4\) and additionally \(rs = 0\) (as the case of \(r, s\) odd and \(r \neq s\) can obviously not occur). This however means that (assuming without loss of generality \(r \geq s\)) \(r \equiv 0 \mod 4\), \(s = 0\) and therefore \(\delta(r,s) = 3\). So
  \[|H^1(\R, \mathbf{G})| \geq 6 > 4 \geq |\ker(Q_{\R})|,\]
  implying nontrivial image.
\end{proof}

All of the observations combined lead to:

\begin{theorem} \label{thm:map-real-groups}
  Let \(\mathbf{G}\) be a simply connected absolutely almost simple algebraic group over \(\R\). Then the map
  \[Q_{\R}: H^1(\R, \mathbf{G}) \rightarrow
  H^1(\R, \Ad \mathbf{G})\]
  has trivial image if and only if \(\mathbf{G}\) is of one of the following groups:
  
  \begin{itemize}
    \item \(\SL(n, \R)\) for \(n \geq 2\)
    \item \(\SL(n, \mathbb{H})\) (sometimes also denoted by \(\SU^*(2n)\)) for \(n \geq 1\)
    \item \(\SU(2,0)\), \(\SU(1,1)\), \(\SU(3,1)\)
    \item \(\Spin(3,2)\), \(\Spin(6,2)\), \(\Spin(7,3)\)
    \item \(\Spin^*(2n)\) for \(n \geq 4\)
    \item \(\Sp(2n, \R)\) for \(n \geq 2\)
  \end{itemize}
\end{theorem}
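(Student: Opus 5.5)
The plan is to reduce the statement to a comparison of cardinalities and then run through the classification of real forms. Since \(Q_{\R}\) is a map of pointed sets, it has trivial image exactly when \(H^1(\R, \mathbf{G}) = \ker Q_{\R}\). Since \(H^1(\R,\mathbf{G})\) is finite, this holds if and only if
\[ |H^1(\R, \mathbf{G})| = |\ker Q_{\R}| = |H^1(\R, Z(\mathbf{G}))| \,/\, |\pi_0(\Ad(\mathbf{G})(\R))|, \]
where the second equality is \cite{Adams-Taibi:galois-cohomology}*{Corollary~9.3}, which I will use throughout. So for every simply connected absolutely almost simple real group, I will read off the three columns of Table~\ref{table:real-cohomology} and test this equality. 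The exceptional types \(E_6\), \(E_8\), \(F_4\), \(G_2\) are already settled by the remark preceding the lemma: there \(Q_{\R}\) is a bijection onto a nontrivial set, so the image is nontrivial.

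The easy families come first.
\begin{enumerate}[label=(\alph*)]
\item For \(\SL(n,\R)\), \(|H^1(\R,\mathbf{G})| = 1\), so the image is trivial.
\item For \(\SL(n,\mathbb{H})\), the kernel has order \(2/1 = 2 = |H^1(\R,\mathbf{G})|\), so the image is trivial.
\item For \(\Spin^*(2n)\), the kernel has order \(4/2\) or \(2/1\), which is \(2 = |H^1(\R,\mathbf{G})|\), so the image is trivial.
\item For \(\Sp(2n,\R)\), \(|H^1(\R,\mathbf{G})| = 1\), so the image is trivial.
\item For \(\Sp(r,s)\), the kernel has order \(2/2 = 1\) while \(|H^1(\R,\mathbf{G})| = r+s+1 \ge 2\), so the image is nontrivial.
\item For the four real forms of \(E_7\), the kernel has order \(1\) or \(2\) while \(|H^1(\R,\mathbf{G})|\) is \(2\) or \(4\), and in no row are they equal, so the image is nontrivial.
\end{enumerate}

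For \(\SU(r,s)\), the kernel has order \(2\) precisely when \(r+s\) is even and \(r \neq s\), and order \(1\) otherwise. Equality with \(\lfloor r/2\rfloor + \lfloor s/2\rfloor + 1\) therefore means \(\lfloor r/2\rfloor + \lfloor s/2\rfloor\) must be \(0\) in the first situation and \(1\) in the second. The first forces \(r,s \le 1\), which leaves only \(\SU(1,1)\) once the torus-like \(\SU(1,0)\) is excluded. The second, with \(r+s\) even and \(r\neq s\), leaves \((r,s) \in \{(2,0),(3,1)\}\) up to swapping \(r\) and \(s\).

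The main obstacle is \(\Spin(r,s)\). By the preceding lemma only \(r+s \le 11\) needs attention, but there the three-way case split for \(|H^1(\R,Z)|\) and \(|\pi_0|\), combined with the \(\delta\)-values from Table~\ref{table:delta}, makes a finite but fiddly enumeration. I will organize it by \(r+s \bmod 4\) and the parities of \(r\) and \(s\), and for each \((r,s)\) with \(r \ge s\) compute \(\lfloor (r+s)/4\rfloor + \delta(r,s)\) against \(|H^1(\R,Z)|/|\pi_0|\).

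The survivors should be \(\Spin(3,2)\), \(\Spin(6,2)\), \(\Spin(7,3)\), together with groups that coincide with forms already listed via exceptional isomorphisms, such as \(\Spin(3,3) \cong \SL(4,\R)\) and \(\Spin(5,1) \cong \SL(2,\mathbb{H})\). This bookkeeping has to be done with care. I will restrict \(\Spin(r,s)\) to types \(B_{n\ge 2}\) and \(D_{n\ge 4}\), and identify the low-rank coincidences so that no real form is missed or double counted. One must also check \(\Spin(4,0)\), \(\Spin(2,2)\) and similar degenerate cases against the standing assumption that \(\mathbf{G}\) is absolutely almost simple.
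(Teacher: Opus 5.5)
Your proposal is correct and follows the paper's route: you compare \(|H^1(\R,\mathbf{G})|\) with \(|H^1(\R,Z(\mathbf{G}))|/|\pi_0(\Ad(\mathbf{G})(\R))|\) (Adams--Ta{\"\i}bi) row by row in Table~\ref{table:real-cohomology}, dispose of \(\Spin(r,s)\) with \(r+s\ge 12\) by the lemma, and enumerate the remaining small cases, which with the paper's tables indeed leaves exactly \(\Spin(3,2)\), \(\Spin(6,2)\), \(\Spin(7,3)\) among types \(B_{n\ge 2}\) and \(D_{n\ge 4}\). One slip: in the \(\SU(r,s)\) paragraph you state the two values the wrong way round (kernel order \(2\) requires \(\lfloor r/2\rfloor+\lfloor s/2\rfloor=1\), kernel order \(1\) requires it to be \(0\)), although your subsequent deductions use the correct pairing and arrive at the right list \(\SU(1,1)\), \(\SU(2,0)\), \(\SU(3,1)\).
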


This list technically contains duplicates, as over \(\R\) we have the accidental isomorphisms
\begin{align*}
  \SU(2,0) & \cong \SL(1, \mathbb{H}) \\
  \SU(1,1) & \cong \SL(2, \R) \\
  \Spin(3,2) & \cong \Sp(4, \R) \\
  \Spin(6,2) & \cong \Spin^*(8).
\end{align*}
Furthermore, one can manually check that the calculations for \(\Spin(r,s)\) with \(2 \leq r+s \leq 6\) agree with the results for the groups that they are isomorphic to, meaning that
\begin{align*}
  \Spin(3,0) & \cong \SL(1, \mathbb{H}) \\
  \Spin(2,1) & \cong \SL(2, \R) \\
  \Spin(5,1) & \cong \SL(2, \mathbb{H}) \\
  \Spin(3,3) & \cong \SL(4, \R)
\end{align*}
all have trivial image under \(Q_{\R}\). Additionally,
\begin{align*}
  \Spin(2,0) & \cong \SO(2, 0) \\
  \Spin(1,1) & \cong \GL(1, \R) \\
  \Spin(4,0) & \cong \SU(2, 0) \times \SU(2, 0) \\
  \Spin(3,1) & \cong \SL(2, \C) \\
  \Spin(2,2) & \cong \SL(2, \R) \times \SL(2, \R)
\end{align*}
all have trivial image as well, but are not simple over \(\R\).

We need an additional minor detail in preparation for \Cref{thm:f_1-decomposition}:

\begin{proposition} \label{prop:map-real-groups}
  Let \(\mathbf{G}\) be an \(\R\)-group not listed in \Cref{thm:map-real-groups}.  The image of
  \[H^1(\R, \mathbf{G}) \xrightarrow{Q_{\R}} H^1(\R, \mathbf{G}) \xrightarrow{\tau} H^1(\R, \mathbf{G_1}) \twoheadrightarrow H^1(\R, \mathbf{G_1}) / H^0(\R, \Sym \Delta)\]
  is then still nontrivial.
\end{proposition}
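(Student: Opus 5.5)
The statement is slightly garbled. The composite should read \(H^1(\R, \mathbf{G}) \xrightarrow{Q_\R} H^1(\R, \Ad \mathbf{G}) \xrightarrow{\tau} H^1(\R, \Ad \mathbf{G_1}) \twoheadrightarrow H^1(\R, \Ad\mathbf{G_1})/H^0(\R,\Sym\Delta)\), where \(\tau\) is the twisting bijection by a cocycle \(a\) representing the class of \(\mathbf{G}\) as an inner form of its quasi-split form \(\mathbf{G_1}\). The plan is to show that some nontrivial element of \(\im Q_\R\) is sent by \(\tau\) to a class different from \(\tau(1) = [a]\), and that this class stays outside the \(H^0(\R,\Sym\Delta)\)-orbit of \([a]\).

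\textbf{Case \(\Sym\Delta\) trivial.} Here \(H^0(\R, \Sym\Delta)\) is trivial, so the quotient is the identity. Since \(\tau\) is a bijection, any element \(x \ne 1\) in \(\im Q_\R\) gives \(\tau(x) \ne \tau(1)\). Nontriviality of the composite is then immediate from Theorem~\ref{thm:map-real-groups}, once "nontrivial" is interpreted as "not the single point \(\tau(1)\)", i.e.\ the image of the composite is not a singleton.

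\textbf{Case \(\Sym\Delta \cong \Z/2\)} (types \(A_n\), \(D_n\), \(E_6\)). By \cite{Serre:galois-cohomology}*{I.5.5~Cor.\,2}, the \(S\)-orbits in \(H^1(\R,\Ad\mathbf{G_1})\) are exactly the fibers of \(H^1(\R,\Ad\mathbf{G_1}) \to H^1(\R,\Aut\mathbf{G_1})\). So we must show that \(\im Q_\R\) contains a class whose twist defines a real form \(\mathbf{G}'\) that is an inner twist of \(\mathbf{G}\) but not \(\R\)-isomorphic to \(\mathbf{G}\). For this we use the standard fact that \(\im(H^1(\R,\mathbf{G}) \to H^1(\R,\Ad\mathbf{G}))\) parametrizes the "strong inner forms" of \(\mathbf{G}\).
\begin{itemize}
\item For \(E_6\), \(Q_\R\) is a bijection, and the other \(E_6\) real forms in the same inner class are pairwise non-isomorphic. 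Hence any class in \(\im Q_\R\) other than the base point gives a different group.
\item For \(\SU(r,s)\), the image of \(H^1(\R,\SU(r,s))\) consists of the hermitian forms of the same discriminant, namely \(\SU(r',s')\) with \(r'\equiv r \bmod 2\). Outside the excluded list there is such \((r',s')\) with \(\{r',s'\}\ne\{r,s\}\).
\item For \(\Spin(r,s)\) and \(\Spin^*(2n)\), the same argument applies: the image is given by the forms \(\Spin(r',s')\) with \(r'\equiv r \bmod 4\) (or the corresponding analogue).
\end{itemize}
The main obstacle is the bookkeeping: one has to check, against Table~\ref{table:real-cohomology} and the counting formula \(|\ker Q_\R| = |H^1(\R,Z)|/|\pi_0|\), that \(|\im Q_\R|\) exceeds the number of classes that give groups isomorphic to \(\mathbf{G}\) itself. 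There are at most two such classes, coming from the swap \((r,s)\leftrightarrow(s,r)\), and precisely this coincidence produces the small exceptions in the list of Theorem~\ref{thm:map-real-groups}. I would first handle the cases where \(|\im Q_\R| \ge 3\) by counting, and then check the finitely many remaining small cases by hand.
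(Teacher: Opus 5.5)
Your proposal is correct, but it follows a different route from the paper. Your reformulation is the right one. The \(H^0(\R,\Sym\Delta)\)-orbits in \(H^1(\R,\Ad\mathbf{G_1})\) are the fibers of the map to \(H^1(\R,\Aut\mathbf{G_1})\). So the claim says exactly that some \(x\in\im Q_\R\) twists \(\mathbf{G}\) into a group that is not \(\R\)-isomorphic to \(\mathbf{G}\).

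The paper does not check this form by form. It takes a second point of \(\im Q_\R\) from Theorem~\ref{thm:map-real-groups}. It then quotes Adams--Ta\"{\i}bi (Remark~8.2): the diagram symmetries act trivially on \(H^1(\R,\Ad\mathbf{G_1})\), except that they swap the two classes of \(\PSO^*(4n)\) (and move the class of \(\Spin(6,2)\) in type \(D_4\)). These groups are on the list, so the orbit of \([\mathbf{G}]\) is a singleton and the second point survives in the quotient.

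You instead exhibit the needed class directly from the classical descriptions: hermitian forms with \(r'\equiv r \bmod 2\), and quadratic forms with \(r'\equiv r \bmod 4\). The casework you left open does close. An admissible \(r'\notin\{r,s\}\) fails to exist only for \(\SU(3,1)\), \(\Spin(3,2)\), \(\Spin(6,2)\), \(\Spin(7,3)\), and for low-dimensional groups isomorphic to listed ones (\(\Spin(3,3)\), \(\Spin(5,1)\), \(\Spin(2,1)\), \(\Spin(3,0)\)). Your route avoids Remark~8.2 and in effect re-derives the list of Theorem~\ref{thm:map-real-groups} in types \(A\) and \(D\). The paper's argument is two lines, but it depends on both citations.

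A few small points in your bookkeeping:
\begin{itemize}
\item When both are admissible, the signatures \((r',s')\) and \((s',r')\) give the \emph{same} class in \(H^1(\R,\Ad\mathbf{G})\), because scaling by \(-1\) is a similitude. So the bad values \(r'\in\{r,s\}\) should be counted in \(H^1(\R,\SO)\) resp.\ \(H^1(\R,\mathrm{U})\), not in \(H^1(\R,\Ad\mathbf{G})\).
\item In type \(D_4\) the relevant orbits can have three elements, so ``at most two'' is wrong there, though your direct argument is unaffected.
\item The kernel size \(|H^1(\R,Z)|/|\pi_0|\) does not determine \(|\im Q_\R|\), since the other fibers of \(Q_\R\) have different sizes. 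The explicit descriptions make this counting unnecessary anyway.
\item \(\Spin^*(2n)\) for \(n\ge 4\) is on the list, so nothing needs to be shown for it.
\end{itemize}
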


\begin{proof}
  By \cite{Adams-Taibi:galois-cohomology}*{Remark~8.2}, the only case in which the Dynkin diagram symmetries act nontrivially on \(H^1(\R, \Ad \mathbf{G_1})\) is in type \(D_{2n}\), where they map the two classes corresponding to \(\PSO^*(4n)\) onto each other and additionally the class of \(\Spin(6,2)\) in the case \(D_4\).  But \(\Spin^*(4n)\) and \(\Spin(6,2)\) are already contained in the list of \Cref{thm:map-real-groups} and thus the statement follows.
\end{proof}

With the previous observations in mind, let us formally decompose the map \(\overline{f_1}\) from \Cref{thm:f-g-equivalence}.  Recall that we can transform the map \(d_1\) in \eqref{diagram:boundary-map} into
\begin{align*}
  \qquad \overline{d_1}\colon & S
    \setminus H^2(k, Z(\mathbf{G_1})) / \Aut(k)_{\gamma_1} \\
  & \longrightarrow \bigoplus_{v \nmid \infty} \left(S_v
    \setminus H^2(k_v, Z(\mathbf{G_1}))\right)\
    \big/ \Aut(\A_k^f)_{\gamma_1'}.
\end{align*}
by taking the quotients by \(S\), \(S_v\), \(\Aut(k)_{\gamma}\) and \(\Aut(\A_k^f)_{\gamma'}\) where
\begin{align*}
  \gamma & = p([\mathbf{G}]) \in H^1(k, \Sym \Delta)\\
  \gamma' & = h(p([\mathbf{G}]))
    \in \prod_{v \nmid \infty} H^1(k_v, \Sym \Delta).
\end{align*}

\begin{theorem} \label{thm:f_1-decomposition}
  The map \(\overline{f_1}\) has trivial fiber at some class
  \[ [\alpha] \in S \setminus H^1(k, \Ad \mathbf{G_1}) / \Aut(k)_{\gamma_1} \]
  if and only if both of the following hold:
  \begin{itemize}
    \item The map \(\overline{d_1}\) has trivial fiber at \([\delta_1(\alpha)]\).
    \item At all real places \(w\), \(\mathbf{G}\) is isomorphic over \(\R\) to one of the groups listed in \Cref{thm:map-real-groups}.
  \end{itemize}
\end{theorem}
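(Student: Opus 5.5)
We compare the fiber of \(\overline{f_1}\) at \([\alpha]\) with the fiber of \(\overline{d_1}\) at \([\delta_1(\alpha)]\), using the diagram~\eqref{eq:second-funamental-extended} and its quotient version~\eqref{eq:ad-center-square}. Two facts drive the argument. The upper map \(\bigoplus_{v\nmid\infty}(\delta_1)_v\) is a factorwise bijection, equivariant for the \(S_v\)- and \(\Aut(\A^f_k)_{\gamma_1'}\)-actions by Proposition~\ref{prop:s-equivariant} and diagram~\eqref{diagram:phi-h1-h2-compability}. The global map \(\delta_1\) is surjective and equivariant for \(S\) and \(\Aut(k)_{\gamma_1}\). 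Consequently \(\delta_1\) maps the fiber of \(\overline{f_1}\) at \([\alpha]\) onto the fiber of \(\overline{d_1}\) at \([\delta_1(\alpha)]\). The remaining question is when this induced surjection between fibers is injective, i.e.\ how large the fibers of \(\delta_1\) are. Twisting by a cocycle in \(\alpha\), the fiber \(\delta_1^{-1}(\delta_1(\alpha))\) is the image of \(q\colon H^1(k,\mathbf{G})\to H^1(k,\Ad\mathbf{G})\), translated back by the twisting bijection.

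\emph{Sufficiency.} Assume both conditions hold, and let \([\alpha_2]\) lie in the fiber of \(\overline{f_1}\). Then \(\delta_1(\alpha_2)\) lies in the fiber of \(\overline{d_1}\) at \([\delta_1(\alpha)]\), so after replacing \(\alpha_2\) by an \(S\)- and \(\Aut(k)_{\gamma_1}\)-translate we may assume \(\delta_1(\alpha_2)=\delta_1(\alpha)\). Hence \(\alpha_2\) corresponds under twisting to an element of \(q(H^1(k,\mathbf{G}))\). By the Hasse principle (\(\underline{e_2}\) bijective) and the vanishing of \(H^1(k_v,\mathbf{G})\) at finite places, \(H^1(k,\mathbf{G})\cong\prod_{v\mid\infty}H^1(k_v,\mathbf{G})\). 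By Theorem~\ref{thm:map-real-groups} each \(Q_w\) has trivial image at real places, and complex places contribute nothing. So \(\underline{f_2}(q(x))=Q(\underline{e_2}(x))\) is trivial, and injectivity of \(\underline{f_2}\) gives \(q(x)=1\). Thus \(\alpha_2=\alpha\), and the fiber of \(\overline{f_1}\) is trivial.

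\emph{Necessity.} First suppose the fiber of \(\overline{d_1}\) is nontrivial. Using surjectivity of \(\delta_1\), lift a witness \(\theta_2\) to some \(\alpha_2\). The finite-place bijectivity then shows that \(\alpha_2\) lies in the fiber of \(\overline{f_1}\) at \([\alpha]\). It is not in the orbit of \(\alpha\), since otherwise equivariance of \(\delta_1\) would put \(\theta_2\) in the orbit of \(\delta_1(\alpha)\). Next suppose some real \(w\) has \(\mathbf{G}_{k_w}\) outside the list of Theorem~\ref{thm:map-real-groups}. Choose \(x_w\in H^1(k_w,\mathbf{G})\) with \(Q_w(x_w)\) nontrivial, and realize it globally by the bijectivity of \(\underline{e_2}\), with \(x\) trivial at all other infinite places. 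Then \(\alpha_2:=\tau(q(x))\) has the same image as \(\alpha\) under \(f_1\) at finite places, because \(H^1(k_v,\mathbf{G})=1\) there, so it lies in the fiber of \(\overline{f_1}\).

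\emph{Main obstacle.} The difficult step is showing that this \(\alpha_2\) is not in the \(S\)-\(\Aut(k)_{\gamma_1}\)-orbit of \(\alpha\). I would argue via real places. Elements of \(\Aut(k)\) permute real places, and \(S\) acts through \(S_w\) at \(w\). Proposition~\ref{prop:map-real-groups} ensures that the class at \(w\) stays distinct from that of \(\alpha\) even modulo \(S_w\). I would then try to choose \(w\), among the real places of a given real isomorphism type, so that the multiset of real local classes (modulo \(S_w\)) changes; this multiset is an \(\Aut(k)\)-invariant and so separates the two orbits. If this invariant is not fine enough, the fallback is to modify \(x\) at all real places of that isomorphism type simultaneously, so that the count of places with a given real form changes. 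That count is also preserved by \(\Aut(k)\), which would complete the argument.
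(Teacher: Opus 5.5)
Your proposal is correct and follows the paper's proof essentially step by step. A nontrivial \(\overline{d_1}\)-fiber is lifted via surjectivity of \(\delta_1\) and the equivariant finite-place bijection. Sufficiency is obtained by twisting to \(\mathbf{G}\) and combining bijectivity of \(\underline{e_2}\), trivial image of each \(Q_w\), and injectivity of \(\underline{f_2}\). The real-forms condition is handled by altering a global class of \(H^1(k,\mathbf{G})\) at a single real place; the paper compares two such classes, while you compare one with the base point, which amounts to the same thing.

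The step you flag as the main obstacle is immediate and matches the paper's own reasoning. Choose \(x_w\) as in Proposition~\ref{prop:map-real-groups}. Then \(\alpha_2\) and \(\alpha\) agree everywhere except at \(w\), and at \(w\) their classes lie in different \(S_w\)-orbits. So the multisets of real local classes modulo \(S_w\) automatically differ, and neither a special choice of \(w\) nor your fallback is needed.
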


\begin{proof}
  Assume first that \(\overline{d_1}\) has nontrivial fiber at \([\delta_1(\alpha_1)]\).  As \(\delta_1\) is surjective, there has to be an \(\alpha_2 \in H^1(k, \Ad \mathbf{G_1})\) such that \(d_1(\delta_1(\alpha_1))\) and \(d_1(\delta_1(\alpha_2))\) are mapped to each other by some elements \(\Sigma \in \prod_{v \nmid \infty} S_v\) and \(\Phi \in \Aut (\A_k^f)_{\gamma'}\) but \(\delta_1(\alpha_1)\) and \(\delta_1(\alpha_2)\) are not mapped to each other by \(S\) and \(\Aut (k)\).  By \(S\)- and \(\Aut(k)_{\gamma}\)-equivariance of \(\delta_1\), the latter also applies to \(\alpha_1\) and \(\alpha_2\).  By \(\prod_{v \nmid \infty} S_v\)- and \(\Aut(\A_k^f)_{\gamma'}\)-equivariance of \(\bigoplus_{v \nmid \infty} (\delta_1)_v\), we meanwhile have \((\Sigma . f_1(\alpha_1)) . \Phi = f_1(\alpha_2)\), so \(\overline{f_1}\) has nontrivial fiber at \([\alpha_1]\).
  
  Assume next that there exists some real place \(w\) where \(\mathbf{G}\) is isomorphic to some \(\R\)-group not contained in the list of \Cref{thm:map-real-groups}. Then there are \(\epsilon_1', \epsilon_2' \in \bigoplus_{v \mid \infty} H^1(k_v, \mathbf{G})\) for which \((\epsilon_1')_v = (\epsilon_2')_v\) for all real places except \(w\), where instead \(Q(\epsilon_1')_w\) and \(Q(\epsilon_2')_w\) are even in different \(S_w\)-orbits by \Cref{prop:map-real-groups}.  Recalling that \(H^1(k_v, \mathbf{G_1})\) is trivial for finite \(v\) and that \(\underline{e_2}\) is bijective, we thus find preimages \(\epsilon_1, \epsilon_2 \in H^1(k, \mathbf{G})\) of \(\epsilon_1'\) and \(\epsilon_2'\) under \(\underline{e_2}\).  We know that \(S\) cannot map \(q(\epsilon_1)\) and \(q(\epsilon_2)\) to each other due to \(S\)-\(S_w\)-equivariance of \(\underline{f_1}\), which we can untilize after twisting back to \(\mathbf{G_1}\).  Furthermore we can also use this twisting process to retrace the action of \(\Aut(k)_{\gamma}\) on \(q(\epsilon_i)\) by using that \(\underline{f_1}\) is \(\Aut (k)_{\gamma_1}\)-\(\Aut (\A_k)_{\underline{\gamma_1'}}\)-equivariant.  Here \(\underline{\gamma_1'}\) is the image of \(\gamma_1\) mapped to \(\prod_v k^\times/(k^\times)^2\) for all places of \(k\) and \(\Aut (\A_k)_{\underline{\gamma_1'}}\) is its stabilizer in \(\Aut (\A_k)\).  This implies that \(q(\epsilon_1)\) and \(q(\epsilon_2)\) also lie in different \(\Aut (k)_{\gamma_1}\)-orbits:  The action by \(\Aut (\A_k)_{\underline{\gamma_1'}}\) on the real places is just given by permutation because \(\Aut (\R)\) is trivial.  As \(\underline{f_1}(q(\epsilon_1)) = Q(\underline{e_1}(\epsilon_1))\) and \(\underline{f_1}(q(\epsilon_2)) = Q(\underline{e_1}(\epsilon_2))\) differ at exactly one real coordinate from each other, \(\Aut (\A_k)_{\underline{\gamma_1'}}\) cannot map them to each other and hence neither can \(\Aut (k)_{\gamma_1}\).  Thus, \(q(\epsilon_1)\) and \(q(\epsilon_2)\) are different elements in \(S \setminus H^1(k, \Ad \mathbf{G_1}) / \Aut(k)_{\gamma_1}\).  But on the other hand, \(f_2 \circ q (\epsilon_i) = Q \circ e_2 (\epsilon_i)\) hold and as \(e_2\) is the trivial map, \(f_2\) thus maps \(q(\epsilon_i)\) to the base point of \(\bigoplus_{v \nmid \infty} H^1(k_v, \Ad \mathbf{G})\).  As \(\alpha\) is the base point of \(H^1(k, \Ad \mathbf{G})\) and \(f_2\) is a map of pointed sets, \([q(\epsilon_i)]\) are distinct elements of \(S \setminus H^1(k, \Ad \mathbf{G_1}) / \Aut (k)\) that lie in the fiber of \([\delta_1(\alpha)]\) under \(\overline{f_1}\).  Hence \(\overline{f_1}\) once more has nontrivial fiber at \([\alpha_1]\).
  
  Assume finally both conditions hold and let \(\alpha_1, \alpha_2 \in H^1(k, \Ad \mathbf{G_1})\), \(\Sigma \in \prod_{v \nmid \infty} S_v\) and \(\Phi \in \Aut (\A_k^f)_{\gamma_1'}\) such that \(f_1(\alpha_1) = (\Sigma . f_1(\alpha_2)) . \Phi\).  Then
  \begin{align*}
    d_1(\delta_1(\alpha_1))
      & = (\bigoplus_{v \nmid \infty} (\delta_1)_v) \circ f_1 (\alpha_1)
        = (\bigoplus_{v \nmid \infty} (\delta_1)_v)
          ((\Sigma . f_1 (\alpha_2)) . \Phi) \\
      & = (\Sigma . (\bigoplus_{v \nmid \infty} (\delta_1)_v)
          \circ f_1 (\alpha_2)) . \Phi
        = (\Sigma . d_1(\delta_1(\alpha_2))) . \Phi.
  \end{align*}
  By triviality of the fiber of \([\delta_1(\alpha_1)]\) under \(\overline{d_1}\), there have to exist \(\sigma \in S\) and \(\phi \in \Aut (k)_{\gamma_1}\) such that \(\delta_1(\alpha_1) = \sigma . \delta_1(\alpha_2) . \phi\).  Exactness of the second row of Diagram~\eqref{eq:second-funamental-extended} implies that after twisting to \(\mathbf{G}\), both \(\alpha_1\) and \(\sigma . \alpha_2 . \phi\) have to lie in the image of \(q\colon H^1(k, \mathbf{G})\to H^1(k, \Ad \mathbf{G})\).  We choose preimages \(\epsilon_1\) and \(\epsilon_2\) and observe that
  \[\underline{f_2}(q(\epsilon_1)) = Q(\underline{e_2}(\epsilon_1)) = Q(\underline{e_2}(\epsilon_2)) = \underline{f_2}(q(\epsilon_2)),\]
  as \(Q\) is the product over the \(Q_w\) for the real places \(w\) of \(k\), which all have trivial image.  But as \(\underline{f_2}\) was injective,
  \[\alpha_1 = q(\epsilon_1) = q(\epsilon_2) = \sigma . \alpha_2 . \phi\]
  holds, so
  \[[\alpha_1] = [\alpha_2] \in S \setminus H^1(k, \Ad \mathbf{G_1}) / \Aut(k)_{\gamma_1}\]
  and \(\overline{f_1}\) has trivial fiber at \([\alpha_1]\).
\end{proof}

\section{Congruence rigidity without diagram symmetries}
\label{section:abceefg}

In this section, we prove our first main result: the characterization of congruence rigidity in types \(A_1\), \(B_{n \ge 3}\), \(C_{n \ge 2}\), \(E_7\), \(E_8\), \(F_4\), and \(G_2\).  As the only quasi-split form is the actual split form, we can drop the index 1 from all maps for this section.
  
We introduce a helpful definition related to the map \(d\) as seen in Diagram~\eqref{eq:ad-center-square} to allow for more concise phrasing in some of the following results.  Let for this purpose as before \(\mathbf{G}\) be a simply connected absolutely almost simple algebraic \(k\)-group.  Let furthermore \(\alpha \in H^1(k, \Ad \mathbf{G_0})\) be the preimage of \(\beta\) under the isomorphism \(i\), let \(\xi \coloneqq \delta(\alpha)\) and let
  \[\omega \coloneqq d(\xi) \in \bigoplus_{v \nmid \infty} H^2(k_v, Z(\mathbf{G_0}))\]
as well as
  \[\underline{\omega} \coloneqq b(\xi) \in \bigoplus_{v \in V(k)} H^2(k_v, Z(\mathbf{G_0})).\]
It follows from \Cref{cor:poitou-compatability} that 
\[\omega . \Aut(k) \subseteq \omega . \Aut(\A_k^f) 
  \subseteq \im d\]
where \(\Aut(k)\) acts on \(\omega\) by the inclusion \(\Aut(k) \hookrightarrow \Aut(\A_k^f)\).

\begin{definition} \label{def:weakly-uniform-no-sym}
  If \(\mathbf{G}\) has no Dynkin diagram symmetries, we call \(\mathbf{G}\) \emph{weakly uniform} if
  \[\omega . \Aut(k) = \omega . \Aut(\A_k^f).\]
\end{definition}

In type \(A_1\), weakly uniform is a weakening of the property locally uniform as introduced in \cite{Bridson-et-al:absolute}*{Definition~4.10} which asserts \(\omega . \Aut(\A_k^f) = \omega\).

\begin{theorem} \label{thm:cong-rigid-no-symmetries}
  Let \(k\) be a locally determined number field and \(\mathbf{G}\) a \(k\)-group.  Assume furthermore that the Dynkin diagram of \(\mathbf{G}\) has no nontrivial symmetries.  Then \(\mathbf{G}\) is congruence rigid if and only if it is weakly uniform and furthermore satisfies one of the following conditions:
  \begin{enumerate}[label=(\roman*)]
    \item \(\mathbf{G}\) is of type \(B_n\) where \(n \geq 3\) or of type \(E_7\), \(E_8\), \(F_4\) or \(G_2\) and \(k\) is totally imaginary.
    \item \(\mathbf{G}\) is of type \(A_1\) or \(C_n\) where \(n \geq 2\) and the field \(k\) has at most one real place.  If such a real place exists and \(\mathbf{G}\) is of type \(C_n\), it furthermore needs to be isomorphic to \(\Sp(2n, \R)\) at that place.
    \item \(\mathbf{G}\) is of type \(A_1\), the field \(k\) has exactly two real places \(w_1\) and \(w_2\), \(\mathbf{G}\) is isomorphic to \(\SL(2, \R)\) and to \(\SL(1, \mathbb{H}) \cong \SU(2)\) once each at the two real places, \(\Aut (k)\) permutes \(w_1\) and \(w_2\) and \(\omega.\Aut(k)_{w_1} = \omega. \Aut(\A_k^f)\) holds.
  \end{enumerate}
\end{theorem}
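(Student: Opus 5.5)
Since \(\Sym \Delta\) is trivial, \(\mathbf{G_1} = \mathbf{G_0}\), \(S\) and all \(S_v\) are trivial, \(\overline{h}\) is the map between one-point sets, and \(i\) is a bijection. By Theorem~\ref{thm:f-g-equivalence} and Theorem~\ref{thm:f_1-decomposition}, \(\mathbf{G}\) is congruence rigid (with \(k\) locally determined, so only \(k\)-groups \(\mathbf{H}\) and \(\Phi \in \Aut(\A^f_k)\) matter) if and only if two conditions hold. First, the map \(\overline{d}\colon H^2(k,Z)/\Aut(k) \to \bigoplus_{v\nmid\infty} H^2(k_v,Z)/\Aut(\A^f_k)\) has trivial fiber at \([\xi]\). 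Second, at every real place \(w\), \(\mathbf{G}_{k_w}\) appears in the list of Theorem~\ref{thm:map-real-groups}. The plan is to translate each condition into the explicit statement of the theorem.

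\emph{Real places.} For types \(B_{n\ge3}\), \(E_7\), \(E_8\), \(F_4\), \(G_2\), Theorem~\ref{thm:map-real-groups} lists no real form. (The \(\Spin(r,s)\) entries there are of type \(B\) only for \(\Spin(3,2)\), which has type \(C_2\); \(\Spin(7,3)\) and \(\Spin(6,2)\) have type \(D\).) So the second condition forces \(k\) to be totally imaginary. For \(C_n\), the only admissible real form is \(\Sp(2n,\R)\). For \(A_1\), both \(\SL(2,\R)\) and \(\SL(1,\mathbb{H})\) are admissible. This yields the real-form constraints in (i)--(iii). The bound on the number of real places must come from \(\overline{d}\) instead.

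\emph{Analysis of \(\overline{d}\).} By the Poitou--Tate sequence~\eqref{eq:poitou-tate}, Table~\ref{tate-table} and injectivity of \(b\), the element \(\xi\) is determined by \(\underline{\omega}=(\omega,\omega_\infty)\). Here the real components lie in \(H^2(\R,\mu_2)=\Z/2\), and their sum equals \(c(\omega)\), computed via the identity maps. The fiber of \(\overline{d}\) at \([\xi]\) consists of the classes \(\xi'\) with \(d(\xi') \in \omega.\Aut(\A^f_k)\). For each \(\omega' \in \omega.\Aut(\A^f_k)\), Corollary~\ref{cor:poitou-compatability} gives \(c(\omega')=c(\omega)\). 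The lifts \(\xi'\) of \(\omega'\) therefore correspond to real vectors \(\omega'_\infty \in (\Z/2)^{r_1}\) with the same sum as \(\omega_\infty\).

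\emph{Sufficiency.} Weak uniformity lets us assume \(\omega'=\omega.\phi\) for some \(\phi\in\Aut(k)\), after applying \(\phi\) to \(\xi'\). It then remains to show that the real vectors with prescribed sum form a single \(\Aut(k)_\omega\)-orbit of \(\omega_\infty\). This is trivial if \(r_1\le1\), since the vector is then determined, and it covers (i) and (ii). In case (iii), there are two vectors \((1,0)\) and \((0,1)\), forced by the \(\SL_2(\R)\)/\(\SU(2)\) configuration. We need some \(\phi\) that swaps \(w_1,w_2\) and fixes \(\omega\), and the hypothesis \(\omega.\Aut(k)_{w_1}=\omega.\Aut(\A^f_k)\) together with the permutation assumption should give it. I expect the careful bookkeeping here to be the main obstacle: one must match \(\Aut(k)\)-elements fixing versus swapping \(w_1\) with the two real lifts.

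\emph{Necessity.} If \(\mathbf{G}\) is not weakly uniform, take \(\omega' \in \omega.\Aut(\A^f_k)\setminus\omega.\Aut(k)\) and lift it with any real vector of the right sum. This gives a nontrivial fiber. If \(r_1\ge2\) in types \(A_1\) and \(C_n\), the constraint \(\Sp(2n,\R)\) resp.\ the \(A_1\) real forms must still be met. One then exhibits a second real vector of equal sum that is not in the \(\Aut(k)\)-orbit. For \(r_1\ge3\), or for \(r_1=2\) with equal components, toggle two coordinates. This changes the number of real places with invariant \(1\) whenever possible, or else produces a pattern not reachable because \(\Aut(k)\) stabilises \(\omega\) only via permutations of real places. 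For \(r_1=2\) with components \((1,0)\), failure of the permutation or of the \(\Aut(k)_{w_1}\) condition directly produces the other lift outside the orbit. For \(C_n\) the all-\(\Sp(2n,\R)\) requirement forces all real invariants to be trivial, whereas toggling two of them gives a distinct global form, so \(r_1\ge2\) fails.
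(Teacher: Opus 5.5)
Your proposal is correct and follows essentially the same route as the paper. You reduce via Theorems~\ref{thm:f-g-equivalence} and~\ref{thm:f_1-decomposition} to the real-form list plus the fiber of \(\overline{d}\), parametrize the lifts of \(\omega\) by real \(\Z/2\)-vectors of fixed sum via Poitou--Tate, and produce extra fiber elements by toggling two equal real coordinates or swapping \((1,0)\leftrightarrow(0,1)\); the step you flagged in case (iii) closes at once, since for \(\psi\in\Aut(k)\) swapping \(w_1,w_2\) the hypothesis gives \(\chi\in\Aut(k)_{w_1}\) with \(\omega.\psi=\omega.\chi\), and the composite of \(\psi\) with \(\chi^{-1}\) then stabilizes \(\omega\) and swaps the two real places.
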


In the last case \(\Aut(k)_{w_1}\) denotes the stabilizer of one (and therefore both) real places in \(\Aut (k)\).  The condition \(\omega.\Aut(k)_{w_1} = \omega. \Aut(\A_k^f)\) is therefore a slight strengthening of weak uniformity.

\begin{proof}
  As there are no nontrivial Dynkin diagram symmetries, the equality \(\Ad(\mathbf{G_0}) \cong \Aut(\mathbf{G_0})\) holds and \(\overline{g}\) has trivial fiber at the class of \(\mathbf{G}\) if and only if \(\overline{f}\) does.  Hence we need to check if both conditions of \Cref{thm:f_1-decomposition} are satisfied.
  
  As \Cref{thm:map-real-groups} does not list any real groups of type \(B_{n \geq 3}\), \(E_7\), \(E_8\), \(F_4\) or \(G_2\), we conclude that \(k\) cannot have any real places in those cases if we want \(\mathbf{G}\) to be congruence rigid.  In the case of \(C_{n \geq 2}\), we instead conclude that \(\mathbf{G}\) has to be isomorphic to \(\Sp(2n, \R)\) at all real places, if we want to meet the second condition of \Cref{thm:f_1-decomposition}.
  
  The first condition of \Cref{thm:f_1-decomposition} is equivalent to
  \[\xi . \Aut(k)
    = d^{-1}((\omega . \Aut (\A_k^f)) \cap \im(d)).\]
  If \(k\) has no real places, \(d\) is injective and we can equivalently verify if
  \[\omega . \Aut (k) = (\omega . \Aut (\A_k^f)) \cap \im(d)\]
  holds.  But as we have remarked previously, \Cref{cor:poitou-compatability} implies that \((\omega . \Aut (\A_k^f)) \subseteq \im(d)\), so we in turn have to check if
  \[\omega . \Aut (k) = \omega . \Aut (\A_k^f)\]
  is true, which is exactly the definition of weak uniformity of \(\mathbf{G}\).  This completes the proof for all types except \(C_n\) and \(A_1\).
  
  Assume first that \(\mathbf{G}\) is of type \(C_{n \geq 2}\), \(k\) has at least two real places and \(\mathbf{G}\) is isomorphic to \(\Sp(2n, \R)\) at all of them.  Then \(\overline{d}\) no longer has trivial fiber at \([\xi]\):  As \(\Sp(2n, \R)\) is the \(\R\)-split form of type \(C_n\), the real coordinates of \(\underline{\omega}\) are zero for all real places \(w\).  Choose two real places \(w_1\) and \(w_2\) and construct the element \(\theta\) by
  \begin{align*}
    \theta_v & = \underline{\omega}_v \text{ if } v \neq w_i \\
    \theta_v & = \underline{\omega}_v+1 \text{ if } v = w_i.
  \end{align*}
  By \Cref{thm:tate-map}, \(\theta\) has a preimage \(\eta\) under \(b\) and on the other hand \(d\) maps \(\xi\) and \(\theta\) to the same element.  By construction, \(\xi\) and \(\eta\) are not mapped onto each other by \(\Aut (k)\) and \(S\) is trivial, hence \([\xi]\) has nontrivial fiber under \(\overline{d}\).  If \(\mathbf{G}\) is of type \(C_{n \geq 2}\), \(k\) has exactly one real place and \(\mathbf{G}\) is isomorphic to \(\Sp(2n, \R)\) at that place.  Then \Cref{thm:tate-map} tells us that \(d\) is still injective.  As in the case of no real places \([\xi]\) has therefore trivial fiber under \(\overline{d}\) if and only if \(\mathbf{G}\) is weakly uniform.
  
  Assume now that \(\mathbf{G}\) is of type \(A_1\) and \(k\) has exactly one real place.  Then \(\mathbf{G}\) is isomorphic to either \(\SL(2, \R)\) or \(\SL(1, \mathbb{H})\) at the real place, so the second condition of \Cref{thm:f_1-decomposition} is satisfied either way.  As in the case of \(C_n\), the map \(d\) is still injective and the equivalence to weak uniformity follows in the same way as when \(k\) was totally imaginary.  If \(k\) has two real places \(w_1\), \(w_2\) and \(\mathbf{G}_{w_i} \cong \SL(2, \R)\) or \(\mathbf{G}_{w_i} \cong \SL(1, \mathbb{H})\) holds, we can again define
  \begin{align*}
    \theta_v & = \underline{\omega}_v \text{ if } v \neq w_i \\
    \theta_v & = \underline{\omega}_v+1 \text{ if } v = w_i.
  \end{align*}
  \(\Aut (k)\) cannot map \(\xi\) to the preimage of \(\theta\), so the fiber of \([\xi]\) under \(\overline{d}\) is nontrivial.  Similarly, if \(k\) has at least three real places, we can find two of them at which \(\underline{\omega}_{w_i}\) agree.  By replacing those \(\underline{\omega}_{w_i}\) with \(\underline{\omega}_{w_i}+1\), we again obtain elements whose real coordinates can not be mapped to eachother by permutation and the fiber of \(\overline{d}\) is nontrivial.
  
  Assume finally that \(k\) has two real places \(w_1\), \(w_2\) and without loss of generality let \(\mathbf{G}_{w_1} \cong \SL(2, \R)\) and \(\mathbf{G}_{w_2} \cong \SL(1, \mathbb{H})\).  If \(\Aut (k)\) does not interchange the two real places, we can construct \(\theta\) again by selecting
  \begin{align*}
    \theta_v & = \underline{\omega}_v \text{ if } v \neq w_i \\
    \theta_v & = 1 \text{ if } v = w_1 \\
    \theta_v & = 0 \text{ if } v = w_2
  \end{align*}
  and see that its preimage is not in the \(\Aut (k)\)-orbit of \(\xi\).  If \(\Aut (k)\) does interchange \(w_1\) and \(w_2\), we need to ensure that
  \[\underline{\omega} . \Aut (k) = \omega . \Aut (\A_k^f)
    \times \{(0_{w_1}, 1_{w_2}), (1_{w_1}, 0_{w_2})\},\]
  as the latter is (via \(b\)) in bijection with the preimage of \(\Aut (\A_k^f) . \omega\) under \(d\).  This is precisely equivalent to
  \[\omega . \Aut(k)_{w_1} = \omega . \Aut(\A_k^f),\]
  where \(\Aut(k)_{w_1}\) is the stabilizer of one real place.
\end{proof}

If \(\mathbf{G}\) is a \(k\)-split algebraic group, then \(\omega\) is trivial at all coordinates and therefore \(\mathbf{G}\) is locally uniform. If \(\mathbf{G}\) is of type \(C_n\), it is furthermore isomorphic to \(\Sp(2n, \R)\) at all real places and if it is of type \(A_1\), it is isomorphic to \(\SL(2, \R)\) at all real places.  We therefore obtain the statement on split groups not of type \(A_{n \geq 2}\), \(D_n\) or \(E_6\) as given in \cite{Kammeyer-Spitler:chevalley}*{Theorem 2} as a consequence of \Cref{thm:cong-rigid-no-symmetries}.

\section{Weak uniformity}
\label{section:weak-uniformity}

In this section, we generalize the property of being \emph{weakly uniform} to the case that \(\mathbf{G}\) has nontrivial Dynkin diagram symmetries.  In particular, we will make the condition explicit in the various types and give a convenient criterion to achieve it in concrete cases.  The reason to introduce the term will be made clear in the next section.  There, we will see that in most cases, the condition ensures that the map \(\overline{d_1}\) has trivial fiber, similarly to \Cref{thm:cong-rigid-no-symmetries}.

From now on, we assume that \(\mathbf{G}\) has Dynkin diagram symmetry group \(\Z/2\).  Recall that we denote by \(\beta \in H^1(k, \Aut \mathbf{G_0})\) the class of \(\mathbf{G}\), \(p(\beta) \eqqcolon \gamma \in H^1(k, \Sym \Delta)\) and \(\gamma' \coloneqq h(\gamma)\). Furthermore we pick a preimage \(\alpha \in H^1(k, \Ad \mathbf{G_1})\) under \(i_1\), let \(\xi \coloneqq \delta_1(\alpha)\) and let as before
  \[\omega \coloneqq d_1(\xi) \in \bigoplus_{v \nmid \infty} H^2(k_v, Z(\mathbf{G_1}))\]
as well as
  \[\underline{\omega} \coloneqq b(\xi) \in \bigoplus_{v \in V(k)} H^2(k_v, Z(\mathbf{G_1})).\]
Finally we define
  \[S^{\omega} \coloneqq \left\{\Sigma \in \prod_{v \nmid \infty} S_v \, \middle| \, c(\Sigma . \omega) = c(\omega)\right\},\]
the set of all tuples of local involutions that preserve the value of \(\omega\) under the Poitou--Tate map \(c\) in~\eqref{eq:poitou-tate} restricted to all finite places.  Note that though \(S^\omega\) and \(\Aut(\A_k^f)_{\gamma'}\) are uncountable in general, the orbits \(S^\omega.\omega\) and \(S^\omega.\omega. \Aut(\A_k^f)_{\gamma'}\) are finite.

As in the case of no Dynkin diagram symmetries, we have the inclusion
\[S^\omega . \omega . \Aut(\A_k^f)_{\gamma'} \subseteq \im d_1.\]
The inclusion 
\[S . \omega . \Aut(k)_{\gamma}
  \subseteq S^\omega . \omega . \Aut(\A_k^f)_{\gamma'}\]
also holds, except if \(\mathbf{G}\) is of type \(D_{2n}\), \(k\) has real places and some specific further conditions are met.  The precise treatment of that case can be found in the proof of \Cref{thm:cong-rigid-D-n}.

\begin{definition} \label{def:weakly-uniform-sym}
  If \(\mathbf{G}\) has Dynkin diagram symmetry group \(\Z/2\), we call \(\mathbf{G}\) \emph{weakly uniform} if
  \[S . \omega . \Aut(k)_{\gamma} = (S^{\omega} . \omega) . \Aut(\A_k^f)_{\gamma'}.\]
\end{definition}

Recall that \(S\) and \(\Aut(k)_{\gamma}\) commute by \Cref{prop:two-sided-quotient}~\ref{item:compatible}, so there are no parentheses required on the left side of the equation.  Note furthermore that this reduces to the previous definition if \(\Sym \Delta\) is trivial.  Again, the definition says intuitively that \(\mathbf{G}\) has only a few local variations and all of these can be realized globally.  We remark that the set \(S . \omega\) is the image of \(S . \xi\) under \(d_1\), which G.\,Harder calls the Brauer--Witt invariant of \(\mathbf{G}\) in \cite{Harder:Bericht}*{p.\,208}.

\begin{definition} \label{def:inner-twin}
  We say that \(\mathbf{G}\) has an \emph{inner twin} at a finite prime \(v\) if one of the following two equivalent conditions holds true.
  \begin{enumerate}[label=(\roman*)]
    \item The coordinate \(\omega_v\) is not fixed by the action of \(S_v\).
    \item The fiber of \([\mathbf{G}] \in H^1(k_v, \Aut \mathbf{G_1})\) under the map
      \[i_1\colon H^1(k_v, \Ad \mathbf{G_1})\to
        H^1(k_v, \Aut \mathbf{G_1})\]
      is not a singleton.
  \end{enumerate}
\end{definition}

The equivalence of the two conditions follows from \cite{Serre:galois-cohomology}*{I.5.5~Cor.\,2} as described in \Cref{section:exact-sequences}.  With this definition, the orbit \(S^\omega.\omega\) can be descirbed more efficiently as follows.

\begin{proposition} \label{prop:S-omega-characterization}
  Denote by \(\sigma \in S \cong \Z/2\) and \(\sigma_v \in S_v \cong \Z/2\) the nontrivial element in the respective groups.  Let \(V\) be the set of all finite places of \(k\) at which \(\mathbf{G}\) has an inner twin.  Then \(S^{\omega} . \omega\) is in bijection with the set
  \[\left\{U \subseteq V \,\middle|\, \sigma . \left(\sum_{v \in U} c_v(\omega_v)\right)
    = \sum_{v \in U} c_v(\omega_v)\right\}.\]
\end{proposition}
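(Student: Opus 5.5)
The plan is to make $S^\omega.\omega$ explicit by recording, for each tuple $\Sigma$, the set of places where $\Sigma$ actually changes $\omega$, and then to rewrite the condition $c(\Sigma.\omega)=c(\omega)$ as the displayed condition on that set.

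\emph{Step 1: parametrize $(\prod_v S_v).\omega$ by finite subsets of $V$.} The set $V$ is finite. Indeed, $\omega\in\bigoplus_{v\nmid\infty}H^2(k_v,Z(\mathbf{G_1}))$ has finite support, and $\omega_v=0$ is fixed by $S_v$ because $S_v$ acts by group automorphisms (Remark~\ref{rmk:dynkin-actions}). For $\Sigma=(\Sigma_v)\in\prod_v S_v$, put $U(\Sigma)=\{v\in V\mid \Sigma_v=\sigma_v\}$. At places outside $V$ the involution fixes $\omega_v$ by Definition~\ref{def:inner-twin}. Hence $\Sigma.\omega$ depends only on $U(\Sigma)$: it is $\omega$ with the coordinates in $U$ replaced by $\sigma_v.\omega_v$. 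Conversely, different subsets $U\subseteq V$ give different elements, because $\sigma_v.\omega_v\neq\omega_v$ for every $v\in V$. So $U\mapsto\Sigma_U.\omega$, with $\Sigma_U$ nontrivial exactly on $U$, is a bijection from subsets of $V$ onto $(\prod_v S_v).\omega$. It remains to see which $U$ come from $\Sigma\in S^\omega$, i.e.\ satisfy $c(\Sigma_U.\omega)=c(\omega)$.

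\emph{Step 2: equivariance of the local Poitou--Tate maps.} The key input is
\[ c_v(\sigma_v.x)=\sigma.c_v(x) \quad \text{for } x\in H^2(k_v,Z(\mathbf{G_1})), \]
where $\sigma$ acts on $H^0(k,Z')^*$ through its action on $Z$. This in turn induces an action on $Z'$ by $f\mapsto f\circ\sigma^{-1}$. To prove it, I will use that $c_v=a_v^*\circ(\text{local Tate duality})$, exactly as in the proof of Theorem~\ref{thm:tate-map}. Two facts are needed:
\begin{enumerate}[label=(\alph*)]
\item the localization $a_v$ commutes with the symmetry action on $Z'$ globally and locally, which is clear by functoriality;
\item the local pairing satisfies $\langle \sigma.f,\sigma.x\rangle_v=\langle f,x\rangle_v$.
\end{enumerate}
For (b) I will argue as in Proposition~\ref{prop:aut-kv-equivariance}: the evaluation map $Z'\otimes Z\to\mathbf{GL_1}$ is invariant under $(f,z)\mapsto(f\circ\sigma^{-1},\sigma z)$, and the cup product is natural.

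Alternatively, one can simply check the formula case by case in Table~\ref{tate-table} together with Remark~\ref{rmk:dynkin-actions}:
\begin{itemize}
\item in types $A$, $D_{2n+1}$ and $E_6$, inversion on $H^2(k_v,Z)$ corresponds under the identity or projection maps to inversion on $H^0(k,Z')^*$;
\item in type ${}^1D_{2n}$, the swap of factors is carried by $\mathrm{id}$ to the swap;
\item in type ${}^2D_{2n}$, $H^0(k,Z')^*\cong\Z/2$ is fixed by $\sigma$, and the summation map is swap-invariant.
\end{itemize}
I expect this compatibility to be the main point needing care, especially in type $D_{2n}$ where the identifications are only canonical up to the swap. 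The case check against the table is the fallback if the abstract argument gets murky.

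\emph{Step 3: translate the condition.} By additivity of $c=\sum_v c_v$ and Step~2,
\[ c(\Sigma_U.\omega)-c(\omega)=\sum_{v\in U}\bigl(c_v(\sigma_v.\omega_v)-c_v(\omega_v)\bigr)=\sigma.\Bigl(\sum_{v\in U}c_v(\omega_v)\Bigr)-\sum_{v\in U}c_v(\omega_v). \]
Hence $\Sigma_U\in S^\omega$ if and only if $U$ lies in the displayed set. Moreover, any $\Sigma\in S^\omega$ gives the same element $\Sigma.\omega=\Sigma_{U(\Sigma)}.\omega$ as $\Sigma_{U(\Sigma)}$, since the two agree on $V$. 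Combined with Step~1, this yields the bijection asserted in Proposition~\ref{prop:S-omega-characterization}.
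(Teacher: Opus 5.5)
Your proof is correct and is essentially the paper's argument: both use the $S_v$--$S$-equivariance of the local maps $c_v$ to compute $c(\Sigma.\omega)=\sum_v \Sigma_v.c_v(\omega_v)$, then read off the stated condition on the set of places where $\Sigma_v=\sigma_v$, intersected with $V$. You also supply details the paper only asserts: a proof of the equivariance of $c_v$, injectivity of $U\mapsto\Sigma_U.\omega$, and the reason places in $W\setminus V$ can be discarded.
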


\begin{proof}
  The action of \(\prod_{v \nmid \infty} S_v\) on \(\omega\) is nontrivial exactly on the factors \(v \in V\).  As the \(c_v\) are \(S_v\)-\(S\)-equivariant, for \(\Sigma \in S^{\omega}\) we furthermore have
  \begin{align*}
  c(\Sigma . \omega) & = \sum_{v \nmid \infty} c_v(\Sigma_v . \omega_v) \\
    & = \sum_{v \nmid \infty} \Sigma_v . c_v(\omega_v) \\
    & = \sigma . \left(\sum_{v \in W} c_v(\omega_v)\right)
      + \left(\sum_{v \not\in W} c_v(\omega_v)\right),
  \end{align*}
  where \(W \subseteq V_f(k)\) is the set of places at which \(\Sigma_v = \sigma_v\) holds.  We can deduce
  \[\sigma . \left(\sum_{v \in W} c_v(\omega_v)\right) = \sum_{v \in W} c_v(\omega_v)\]
  and conclude that every element in \(S^{\omega} . \omega\) can be uniquely described by \(U \coloneqq W \cap V\).
\end{proof}

\begin{corollary} \label{cor:S-omega-characterization}
  The set \(V\) introduced in \Cref{prop:S-omega-characterization} contains exactly the finite places \(v\) where \(\mathbf{G}\) becomes inner and \(\omega_v\) is
  \begin{itemize}
    \item nontrivial in type \(A_{2n}\) or \(E_6\),
    \item not equal to 0 or \(n+1\) in type \(A_{2n+1}\),
    \item equal to \((1,0)\) or \((0,1)\) in type \(D_{2n}\),
    \item equal to 1 or 3 in type \(D_{2n+1}\).
  \end{itemize}
  The characterization of \(S^{\omega} . \omega\) in \Cref{prop:S-omega-characterization} can furthermore be simplified as follows, depending on the type of \(\mathbf{G}\):
  \begin{itemize}
    \item Type \({}^1 A_{2n}\) or \(E_6\): \(\{U \subseteq V \,|\, \sum_{v \in U} c_v(\omega_v) = 0\}\)
    \item Type \({}^1 A_{2n+1}\): \(\{U \subseteq V \,|\, \sum_{v \in U} c_v(\omega_v) = 0, n+1\}\)
    \item Type \({}^1 D_{n \geq 5}\): \(\{U \subseteq V \,|\, |U| \text{ even}\}\)
    \item Outer type: \(\{U \subseteq V\}\)
  \end{itemize}
\end{corollary}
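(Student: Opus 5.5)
This is a case check. We combine the description of the \(S_v\)-action in Remark~\ref{rmk:dynkin-actions} with the local groups and Poitou--Tate maps in Table~\ref{tate-table}, and then specialize Proposition~\ref{prop:S-omega-characterization}.

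\emph{The set \(V\).} At a finite place \(v\) where \(\mathbf{G}\) stays of outer type, \(H^2(k_v, Z(\mathbf{G_1}))\) is \(0\) or \(\Z/2\) by Table~\ref{tate-table}. The only automorphism of such a group is trivial, so \(S_v\) acts trivially and \(v \notin V\). At a place where \(\mathbf{G}\) becomes inner, the local group is \(\Z/(2n+1)\), \(\Z/(2n+2)\), \(\Z/3\), \(\Z/4\) or \(\Z/2\times\Z/2\).
\begin{itemize}
\item In types \(A\), \(D_{2n+1}\) and \(E_6\), \(\sigma_v\) acts by inversion. Hence \(\omega_v\) is moved exactly when \(\omega_v \neq -\omega_v\), that is, when \(2\omega_v \neq 0\).
\item In \(\Z/(2n+1)\) and \(\Z/3\) this means \(\omega_v \neq 0\).
\item In \(\Z/(2n+2)\) it means \(\omega_v \notin \{0, n+1\}\).
\item In \(\Z/4\) it means \(\omega_v \in \{1,3\}\).
\item In type \(D_{2n}\), \(\sigma_v\) swaps the two factors, so the non-fixed elements are exactly \((1,0)\) and \((0,1)\).
\end{itemize}
This gives the first list.

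\emph{The simplified description of \(S^\omega.\omega\).} We read off the action of the global \(\sigma\) on \(H^0(k, Z')^*\) and the maps \(c_v\) from Table~\ref{tate-table}, and then rewrite the condition \(\sigma.x = x\) for \(x = \sum_{v\in U} c_v(\omega_v)\).
\begin{itemize}
\item \emph{Type \({}^1A_{2n}\) or \(E_6\).} Each \(c_v\) is the identity into \(\Z/(2n+1)\) or \(\Z/3\), and \(\sigma\) acts by inversion. So \(\sigma.x = x\) means \(2x = 0\), i.e.\ \(x = 0\).
\item \emph{Type \({}^1A_{2n+1}\).} Here the group is \(\Z/(2n+2)\), so \(2x = 0\) means \(x \in \{0, n+1\}\).
\item \emph{Type \({}^1D_{2n+1}\).} The group is \(\Z/4\) and each \(\omega_v\) with \(v \in V\) is odd, so \(2x = 0\) holds iff \(|U|\) is even.
\item \emph{Type \({}^1D_{2n}\).} Each \(c_v\) is the identity on \(\Z/2\times\Z/2\) and each \(\omega_v\) is \((1,0)\) or \((0,1)\). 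Then \(x = (a,b)\) with \(a+b \equiv |U| \pmod 2\), and \(\sigma\) swaps coordinates, so \(\sigma.x = x\) iff \(a = b\) iff \(|U|\) is even.
\item \emph{Outer type.} The target \(H^0(k,Z')^*\) is \(0\) or \(\Z/2\), on which \(\sigma\) acts trivially, so every subset \(U\) qualifies.
\end{itemize}

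The main point needing care is the \({}^1D_{2n}\) case. There one must confirm that the global \(\sigma\) acts on \(H^0(k,Z')^* \cong \Z/2\times\Z/2\) by swapping the factors, compatibly with the \(S_v\)-\(S\)-equivariance of the \(c_v\) used in Proposition~\ref{prop:S-omega-characterization}. I would get this from that equivariance together with the fact that the \(c_v\) are the identity, via Theorem~\ref{thm:tate-map}. I would also use that identity to check that \(a+b\) equals \(|U|\) modulo \(2\).
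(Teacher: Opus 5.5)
Your proposal is correct and follows the same route as the paper. The first list is read off from the \(S_v\)-actions in \Cref{rmk:dynkin-actions}, and the simplified descriptions come from inspecting the condition \(\sigma.x = x\) type by type using Table~\ref{tate-table}. Your version is more complete than the paper's brief inspection: your parity argument in type \({}^1 D_n\) proves both directions of the criterion ``\(|U|\) even'', whereas the paper only remarks that sums of two non-fixed coordinates are \(S\)-invariant.
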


\begin{proof}
  The first part follows immediately from \Cref{rmk:dynkin-actions}.  The second part is then just careful inspection of the individual cases: In the types \({}^1 A_n\) and \({}^1 E_6\) we cannot simplify the statement significantly.  Meanwhile any sum of two \(\omega_v\) not fixed under \(S_v\) is mapped to an element invariant under \(S\) in both \({}^1 D_{2n}\) and \({}^1 D_{2n+1}\).  Finally, any singular \(\omega_v\) not fixed under \(S_v\) already satisfies this condition in all of the outer types.
\end{proof}

Weak uniformity can be checked by investigating the actions of \(S^{\omega}\) and \(\Aut(\A_k^f)_{\gamma'}\) separately, turning them into more manageable combinatorial problems.

\begin{proposition}
  If \(\omega . \Aut(k)_{\gamma} = \omega . \Aut(\A_k^f)_{\gamma'}\) and \(S . \omega = S^{\omega} . \omega\) hold, then \(\mathbf{G}\) is weakly uniform.
\end{proposition}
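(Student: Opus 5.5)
We must show \(S.\omega.\Aut(k)_\gamma = (S^\omega.\omega).\Aut(\A_k^f)_{\gamma'}\), given \(\omega.\Aut(k)_\gamma = \omega.\Aut(\A_k^f)_{\gamma'}\) and \(S.\omega = S^\omega.\omega\). The plan is to prove the two inclusions separately, using the commutation rules of Proposition~\ref{prop:two-sided-quotient}~\ref{item:compatible} to move \(S\)-type actions past automorphism actions. Here \(S\) acts on \(\bigoplus_v H^2(k_v, Z(\mathbf{G_1}))\) through its diagonal image in \(\prod_v S_v\).

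\textbf{Inclusion \(\subseteq\).} Start with an element \(\sigma.\omega.\phi\) with \(\sigma \in S\) and \(\phi \in \Aut(k)_\gamma\).
\begin{enumerate}[label=(\arabic*)]
\item Since \(S\) and \(\Aut(k)_\gamma\) commute, rewrite it as \(\sigma.(\omega.\phi)\).
\item By the first hypothesis, \(\omega.\phi = \omega.\Phi\) for some \(\Phi \in \Aut(\A^f_k)_{\gamma'}\). Indeed, \(\Aut(k)_\gamma\) embeds into \(\Aut(\A_k^f)_{\gamma'}\), since \(h\) is equivariant.
\item So the element equals \(\sigma.(\omega.\Phi)\), where \(\sigma\) is read as the diagonal element of \(\prod_v S_v\). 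The diagonal \(\sigma\) commutes with \(\Phi\), because each local involution is the canonical inversion or factor swap and so is compatible with every \(\Phi_v\), as in the proof of Proposition~\ref{prop:two-sided-quotient}. Hence the element is \((\sigma.\omega).\Phi\).
\item By the second hypothesis, \(\sigma.\omega \in S^\omega.\omega\).
\end{enumerate}
This gives the inclusion. Alternatively, one may simply invoke the general inclusion \(S.\omega.\Aut(k)_\gamma \subseteq S^\omega.\omega.\Aut(\A_k^f)_{\gamma'}\) noted before Definition~\ref{def:weakly-uniform-sym}, but the argument above avoids the \(D_{2n}\) caveat attached to it.

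\textbf{Inclusion \(\supseteq\).} Take \((\Sigma.\omega).\Phi\) with \(\Sigma \in S^\omega\) and \(\Phi \in \Aut(\A_k^f)_{\gamma'}\).
\begin{enumerate}[label=(\arabic*)]
\item By the second hypothesis, \(\Sigma.\omega = \sigma.\omega\) for some \(\sigma \in S\), again acting diagonally.
\item Commuting as before, \((\sigma.\omega).\Phi = \sigma.(\omega.\Phi)\).
\item By the first hypothesis, \(\omega.\Phi = \omega.\phi\) for some \(\phi \in \Aut(k)_\gamma\).
\end{enumerate}
Thus the element equals \(\sigma.\omega.\phi \in S.\omega.\Aut(k)_\gamma\).

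\textbf{Main obstacle.} The only delicate point is the commutation \((\sigma.x).\Phi = \sigma.(x.\Phi)\) for the diagonal \(\sigma\). Proposition~\ref{prop:two-sided-quotient} only asserts this up to replacing \(\Sigma\) by some \(\Sigma'\). I would check that for a diagonal tuple, \(\Sigma'\) is again diagonal: \(\Sigma'\) is defined placewise as the corresponding involution at the image place \(w\), and the nontrivial involution at \(v\) is sent to the nontrivial one at \(w\). In type \(D_{2n}\), where the action is a factor swap, this requires that \(\Phi_v\) respects the canonical identification \(H^2 \cong \Z/2\times\Z/2\). That compatibility should follow from Theorem~\ref{thm:H^1-decomposition}, which shows \(\Aut(k_v)\) acts trivially there.
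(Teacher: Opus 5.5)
Your proof is correct and follows the paper's argument: the paper proves exactly your \(\supseteq\) inclusion by picking \(\sigma\in S\) with \(\sigma.\omega=\Sigma.\omega\) and \(\phi\in\Aut(k)_\gamma\) with \(\omega.\phi=\omega.\Phi\), then commuting the diagonal \(S\)-action past \(\Phi\) via Proposition~\ref{prop:two-sided-quotient}~\ref{item:compatible}. The paper does not prove the \(\subseteq\) inclusion in this proof; it relies on the general remark before Definition~\ref{def:weakly-uniform-sym}, which carries a \(D_{2n}\) caveat, so your explicit check via \(S.\omega=S^\omega.\omega\) is a small but genuine improvement (there \((\sigma.\omega).\phi\) already lies in the right-hand side without any commuting).
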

  
\begin{proof}
  Let \((\Sigma . \omega) . \Phi \in S^{\omega} . \omega . \Aut(\A_k^f)_{\gamma'}\) with \(\Sigma \in S^{\omega}\), \(\Phi \in \Aut(\A_k^f)_{\gamma'}\).  Then there is a \(\sigma \in S\) such that \(\sigma . \omega = \Sigma . \omega\) and a \(\phi \in \Aut(k)_{\gamma}\) such that \(\omega . \phi = \omega . \Phi\).  But then
  \[\sigma . \omega . \phi = \sigma . (\omega . \Phi)
    = (\sigma . \omega) . \Phi = (\Sigma . \omega) . \Phi,\]
  as the diagonal action of \(S\) commutes with the action of \(\Aut(\A_k^f)_{\gamma'}\) by \Cref{prop:two-sided-quotient}~\ref{item:compatible}.
\end{proof}

However, neither condition in the proposition is necessary for weak uniformity.  To show this, we construct suitable conterexamples.

First, let \(k = \Q(i)\) and choose \(\mathbf{G_1}\) to be the \(k\)-split group of type \(A_4\).  We construct an inner twist \(\mathbf{G}\) of \(\mathbf{G_1}\) by choosing \(\omega \in \bigoplus H^2(k_v, Z(\mathbf{G_1}))\), where all \(H^2(k_v, Z(\mathbf{G_1})) \cong \Z/5\), as no real places exist.  Pick coordinates \((2,3)\) at the two finite places over the split prime 5, coordinate 1 at the non-split prime 3 and coordinate 4 at the non-split prime 7.  Choosing all other coordinates to be trivial, this \(\omega\) clearly satisfies the Tate condition from Sequence~\eqref{eq:poitou-tate} and therefore lies in the image of \(b = d_1\).  Note that \(\Aut(\A_k^f)_{\gamma'} = \Aut(\A_k^f)\), since \([\mathbf{G_1}] = [1] \in H^1(k, \Sym \Delta)\).

\begin{table}[htb]
  \begin{tabular}{r|lcl}
    & 3 & 5 & 7 \\ \hline
  \(\omega\) & 1 & \((2,3)\) & 4 \\
  \(\sigma . \omega\) & 1 & \((3,2)\) & 4 \\
  \(\omega . \phi\) & 4 & \((2,3)\) & 1 \\
  \(\sigma . \omega . \phi\) & 4 & \((3,2)\) & 1 \\
  \end{tabular}
  \caption{\(S . \omega . \Aut(k)_{\gamma} = (S^{\omega} . \omega) . \Aut(\A_k^f)_{\gamma'} \)}
  \label{table:counterexample-1}
\end{table}

A short computation shows that \(S . \omega . \Aut(k)_{\gamma} = (S^{\omega} . \omega) . \Aut(\A_k^f)_{\gamma'} \) consists of the list given in Table~\ref{table:counterexample-1}, where \(\sigma\) is the nontrivial element of \(S\) and \(\phi\) the compex conjugation in \(\Aut(k)\).

But on the other hand
\[S . \omega = \{(1_3, (2,3)_5, 4_7), (4_3, (3,2)_5, 1_7)\}, \]
while \(S^{\omega} . \omega\) is already the full set displayed in Table~\ref{table:counterexample-1}, so the two are not equal.

Now, let again \(k = \Q(i)\) and choose \(\mathbf{G_1}\) to be the \(k\)-split group of type \(A_5\) this time.  We construct \(\mathbf{G}\) by picking coordinates \((2,4)\) at the two finite places over the split prime 5, coordinates \((0,3)\) at the two finite places over 11 and finally 3 at the non-split prime 3.  Again choosing all other coordinates to be trivial, this \(\omega\) also satisfies the Tate condition from Sequence~\eqref{eq:poitou-tate} and therefore lies in the image of \(b = d_1\).

\begin{table}[htb]
  \begin{tabular}{r|lcc}
    & 3 & 5 & 11 \\ \hline
    \(\omega\) & 3 & \((2,4)\) & \((0,3)\) \\
    \(\sigma . \omega\) & 3 & \((4,2)\) & \((0,3)\) \\
    \(\omega . \phi\) & 3 & \((2,4)\) & \((3,0)\) \\
    \(\sigma . \omega . \phi\) & 3 & \((4,2)\) & \((3,0)\) \\
  \end{tabular}
  \caption{\(S . \omega . \Aut(k)_{\gamma} = (S^{\omega} . \omega) . \Aut(\A_k^f)_{\gamma'} \)}
  \label{table:counterexample-2}
\end{table}

A short computation shows that \(\overline{d_1}\) once more has trivial fiber at \(\omega\), as can be checked in Table~\ref{table:counterexample-2}.  But on the other hand
\[\omega . \Aut(k)_{\gamma} = \{(3_3, (2,4)_5, (0,3)_{11}), (3_3, (4,2)_5, (3,0)_{11})\} \]
while \(\omega . \Aut(\A_k^f)_{\gamma'}\) is already the full set displayed in Table~\ref{table:counterexample-2}, so the two are not equal.

In groups of outer type, however, many of the difficulties described above do not arise.  Instead, they allow for a clearer description of weak uniformity.

\begin{theorem} \label{thm:outer-types-weakly-uniform}
  Let \(\mathbf{G}\) be a \(k\)-group of type \({}^2 A_{n \geq 2}\), \({}^2 D_{n \geq 5}\) or \({}^2 E_6\).  Then \(\mathbf{G}\) is weakly uniform if and only if there is at most one finite place of \(k\) where \(\mathbf{G}\) has an inner twin and furthermore
  \[\omega . \Aut(k)_{\gamma} = \omega . \Aut(\A_k^f)_{\gamma'}\]
  holds.
\end{theorem}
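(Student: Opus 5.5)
In outer type the symmetry group acts very simply on the local invariants, and the plan is to exploit that. By Corollary~\ref{cor:S-omega-characterization}, in outer type the orbit \(S^\omega.\omega\) is in bijection with all subsets \(U \subseteq V\), where \(V\) is the set of finite places at which \(\mathbf{G}\) has an inner twin. Hence \(|S^\omega.\omega| = 2^{|V|}\). On the other hand, \(S \cong \Z/2\) acts diagonally, so \(|S.\omega| \le 2\). The proof will compare the sizes of the two sides of the defining equation of weak uniformity after splitting off the \(\Aut\)-actions.

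For the ``if'' direction, suppose \(|V| \le 1\) and \(\omega.\Aut(k)_\gamma = \omega.\Aut(\A_k^f)_{\gamma'}\). If \(V = \varnothing\), then \(S^\omega.\omega = \{\omega\} = S.\omega\). If \(V = \{v\}\), then \(S^\omega.\omega\) consists of \(\omega\) and \(\omega\) with the coordinate at \(v\) flipped. The diagonal \(\sigma\) acts nontrivially only at coordinates where \(\omega_w\) is not \(S_w\)-fixed, that is, only at \(v\). So \(\sigma.\omega\) is exactly the flipped element, and \(S.\omega = S^\omega.\omega\). The preceding proposition then gives weak uniformity.

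For the ``only if'' direction, assume weak uniformity. I first show \(|V| \le 1\) by counting. The right-hand side \(S^\omega.\omega.\Aut(\A_k^f)_{\gamma'}\) contains \(S^\omega.\omega\), which has \(2^{|V|}\) elements. These elements must all lie in \(S.\omega.\Aut(k)_\gamma\), which seems hard to bound directly since \(\Aut(k)\) may be large. A better invariant is needed, and this is the main obstacle. My first attempt: the element \(\omega' \in S^\omega.\omega\) obtained by flipping exactly one place \(v \in V\) has the same multiset of coordinates up to adelic permutation. Any \(\sigma.\omega.\phi\) with \(\phi \in \Aut(k)_\gamma\) either flips all coordinates in \(V\) or none, and then permutes places. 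Now apply the equivariant invariant ``number of coordinates in \(V\) that are flipped relative to a fixed orbit representative'': choose a canonical representative \(\omega_w^0\) in each two-element \(S_w\)-orbit, compatibly with the isomorphisms \(\Phi_v\), which is possible by the uniqueness statement in Theorem~\ref{thm:H^1-decomposition} and Proposition~\ref{prop:two-sided-quotient}. The number of flipped places is then preserved by \(\Aut(\A_k^f)_{\gamma'}\). Elements of \(S.\omega.\Aut(k)_\gamma\) have flip count \(m\) or \(|V|-m\), where \(m\) is the count for \(\omega\). Elements of \(S^\omega.\omega\) realize every count \(0,\dots,|V|\). If \(|V| \ge 2\), this yields at least three distinct values, a contradiction. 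I expect the delicate point to be verifying that the canonical choice of representatives is genuinely \(\Aut(\A_k^f)_{\gamma'}\)-compatible. Because \(S_v\) acts by inversion in outer types, where the groups are \(\Z/2\) or trivial, each orbit has two elements only in the relevant cases, and I would check compatibility with the orbit structure case by case using Table~\ref{tate-table}.

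With \(|V| \le 1\) established, the earlier argument gives \(S.\omega = S^\omega.\omega\), and this set is stable under \(\Aut(\A_k^f)_{\gamma'}\) up to the commuting \(S\)-action. So weak uniformity reads \(S.(\omega.\Aut(k)_\gamma) = S.(\omega.\Aut(\A_k^f)_{\gamma'})\). It remains to remove \(S\). If \(V = \varnothing\), \(S\) acts trivially and we are done. If \(|V| = 1\), the flip count is \(0\) or \(1\) and is preserved by both automorphism actions. Intersecting both sides with the flip-count class of \(\omega\) therefore yields \(\omega.\Aut(k)_\gamma = \omega.\Aut(\A_k^f)_{\gamma'}\).
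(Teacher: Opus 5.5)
You follow the paper's route. The paper's invariant is the number of places where a coordinate of $\omega$ equals a fixed value $x_i=\omega_{v_i}$. Yours is the number of coordinates flipped relative to fixed representatives of the two-element $S_v$-orbits, which is the same statistic. For $|V|\le 1$ both arguments rest on $S.\omega=S^\omega.\omega$, and your flip-count way of removing $S$ is more careful than the paper's ``equivalently''.

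The whole argument hinges on the step you flagged: that $\Aut(k)_\gamma$ moves coordinates at inner-twin places only by permuting places, so that $S.\omega.\Aut(k)_\gamma$ has just two flip counts. The paper asserts this without proof. If $\Aut(k)_\gamma$ acts on $\omega$ through $\Aut(k)_\gamma\hookrightarrow\Aut(\A_k^f)_{\gamma'}$, it needs no case check. Inner-twin places are places where $\mathbf{G_1}$ splits, so there $H^2(k_v,Z(\mathbf{G_1}))$ is $H^2(k_v,Z(\mathbf{G_0}))$, i.e.\ $\Br(k_v)[r]$ or $\Br(k_v)[2]^2$. Local field isomorphisms preserve Hasse invariants, so representatives chosen by a fixed rule are compatible.

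This invariance fails, however, for the actual action of $\Aut(k)_\gamma$ on $H^2(k,Z(\mathbf{G_1}))$ followed by $d_1$, which is the action that governs $\overline{d_1}$ in Theorem~\ref{thm:sym-rigidity-no-reals}. At a place $v$ split in $l$, fixing a coordinate on $H^2(k_v,Z(\mathbf{G_1}))$ amounts to choosing one of the two places of $l$ above $v$. Whether an extension $\tilde\phi$ of $\phi$ to $l$ swaps the two places above a $\phi$-fixed $v$ is intrinsic and can vary with $v$.

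Here is an example. Take $k=\Q(i)$, $l=\Q(\zeta_8)$, and $\phi$ complex conjugation, whose extensions to $l$ are $\sigma_3$ and $\sigma_7$. The primes $3$ and $7$ are inert in $k$ and split in $l/k$. $\sigma_7$ fixes the two places of $l$ over $7$ and swaps those over $3$, while $\sigma_3$ does the opposite. Let $D$ be the cubic central simple $l$-algebra with invariants $\tfrac13,-\tfrac13$ at the two places over $3$, likewise over $7$, and $0$ elsewhere. Since $\mathrm{cor}_{l/k}[D]=0$, $D$ carries a unitary involution $\tau$, and $\mathbf{G}=\mathbf{SU}(D,\tau)$ of type ${}^2A_2$ has inner twins exactly at the places of $k$ over $3$ and $7$.

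Now $D$, $D^{\mathrm{op}}$, ${}^{\sigma_7}D$ and ${}^{\sigma_3}D$ realize all four sign patterns. Hence $d_1(S.\xi.\Aut(k)_\gamma)=(S^\omega.\omega).\Aut(\A_k^f)_{\gamma'}$, and the fibre of $\overline{d_1}$ at $[\xi]$ is a singleton although $|V|=2$. In fact $\mathbf{G}$ is congruence rigid, since $k$ is Galois and totally imaginary; this contradicts Theorem~\ref{thm:outer-types-cong-rigid}.

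So the flip count is not an invariant of the genuine orbit. With the genuine action, the ``only if'' direction fails. With the formal adelic action, your counting argument and the paper's prove a true statement, but one that no longer characterizes trivial fibres of $\overline{d_1}$. What you flagged is therefore not a routine compatibility check to be done case by case via Table~\ref{tate-table}. It is a justification that cannot be supplied in general, and the paper's proof has the same gap.
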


\begin{proof}
  As in the proof of \Cref{cor:S-omega-characterization}, we observe that for any \(\Sigma \in \prod_{v \nmid \infty} S_v\) the equality
  \[\sum_{v \nmid \infty} c_v(\Sigma_v . \omega_v)
    = \sum_{v \nmid \infty} c_v(\omega_v)\]
  holds.  If we again let \(V\) be the set of finite places at which \(\mathbf{G}\) has an inner twin, we see that any subset of \(V\) provides us with another element in \(S^{\omega} . \omega . \Aut(\A_k^f)_{\gamma'}\) by \Cref{cor:S-omega-characterization}.  The action of \(\Aut(\A_k^f)_{\gamma'}\) and therefore also the induced action of \(\Aut(k)_{\gamma}\) leaves the number of places \(v\) at which \(\omega_v\) takes a fixed value \(x\) invariant.
  
  Assume that we have at least two finite places \(v_1\), \(v_2\) at which \(\mathbf{G}\) has an inner twin and let \(x_i \coloneqq \omega_{v_i}\).  Let again \(\sigma \in S\) and \(\sigma_v \in S_v\) be the nontrivial elements respectively.  The elements \(\theta_i \in S^{\omega} . \omega\) given by the four different combinations of applying or not applying \(\sigma_{v_i}\) to \(\omega_{v_i}\) then have at least three different numbers of occurrences of \(x_i\) in \(\omega\) (four if \(x_1 \neq x_2, \sigma_{v_2}.x_2\)).  Hence
  \[S. \omega . \Aut(k)_{\gamma} = \omega . \Aut(k)_{\gamma} \,
    \sqcup \, \sigma . \omega . \Aut(k)_{\gamma}\]
  cannot be equal to \(S^{\omega} . \omega . \Aut(\A_k^f)_{\gamma'}\), as the number of occurrences of the \(x_i\) agree for all elements of \(\omega . \Aut(k)_{\gamma}\) and \(\sigma . \omega . \Aut(k)_{\gamma}\) respectively.  Hence not all of the \(\theta_i\) can lie in \(S. \omega . \Aut(k)_{\gamma}\) and \(\mathbf{G}\) is not weakly uniform.
  
  Assume conversely that there is at most one finite place \(v\) at which \(\mathbf{G}\) has an inner twin.  Then \(S . \omega = S^{\omega} . \omega\), so
  \[S. \omega . \Aut(k)_{\gamma}
    = (S^{\omega} . \omega) . \Aut(\A_k^f)_{\gamma'}\]
  is equivalent to
  \[S. \omega . \Aut(k)_{\gamma}
    = S . \omega . \Aut(\A_k^f)_{\gamma'}\]
  or equivalently
  \[\omega . \Aut(k)_{\gamma}
    = \omega . \Aut(\A_k^f)_{\gamma'}. \qedhere\]
\end{proof}

\section{Congruence rigidity with diagram symmetries}
\label{section:ade}

In this section, we present the characterization of congruence rigidity in types in type \(A_{n \ge 2}\), \(D_{n \ge 5}\), and \(E_6\).  We first make a general statement on congruence rigidity for \(A_{n \ge 2}\), \(D_{n \ge 5}\) and \(E_6\) in the case that \(k\) has no real places.   Afterwards we will address the presence of real places on a type by type basis. 

\begin{theorem} \label{thm:sym-rigidity-no-reals}
  Let \(k\) be a locally determined number field and let \(\mathbf{G}\) be a \(k\)-group.  Assume that the Dynkin diagram of \(\mathbf{G}\) has symmetry group isomorphic to \(\Z/2\) and that \(k\) is totally imaginary.  Then \(\mathbf{G}\) is congruence rigid if and only if both
  \begin{itemize}
    \item the fiber of \([\gamma]\) along \(\overline{h}\) is trivial and
    \item \(\mathbf{G}\) is weakly uniform.
  \end{itemize}
\end{theorem}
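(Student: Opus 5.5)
The argument chains together the reductions already established. Since \(k\) is locally determined, congruence rigidity of \(\mathbf{G}\) is equivalent to the fiber of \([\beta]\) under \(\overline{g}\) being a singleton. The proof then splits according to whether the fiber of \([\gamma]\) under \(\overline{h}\) is trivial.

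If the fiber of \([\gamma]\) under \(\overline{h}\) is nontrivial, Theorem~\ref{thm:inner-twists-of-nontrivial-h-bar} shows directly that \([\beta]\) has nontrivial fiber under \(\overline{g}\). So the first bullet is necessary.

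Now assume the fiber of \([\gamma]\) under \(\overline{h}\) is trivial.

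\emph{Step 1: reduce to \(\overline{f_1}\).} By Theorem~\ref{thm:f-g-equivalence}, triviality of the \(\overline{g}\)-fiber at \([\beta]\) is equivalent to triviality of the \(\overline{f_1}\)-fiber at the class of a preimage \(\alpha\).

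\emph{Step 2: reduce to \(\overline{d_1}\).} Since \(k\) is totally imaginary, the real-place condition in Theorem~\ref{thm:f_1-decomposition} is vacuous. That theorem therefore reduces the question to triviality of the \(\overline{d_1}\)-fiber at \([\xi]\), where \(\xi=\delta_1(\alpha)\).

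\emph{Step 3: pass from \(\xi\) to \(\omega\).} With no real places, \(b\) coincides with \(d_1\), and \(b\) is injective by the Poitou--Tate sequence~\eqref{eq:poitou-tate}. So \(d_1\) identifies \(H^2(k,Z(\mathbf{G_1}))\) with \(\ker c\). This identification is equivariant for \(S\) acting diagonally and for \(\Aut(k)_\gamma\) embedded in \(\Aut(\A^f_k)_{\gamma'}\). Hence the \(\overline{d_1}\)-fiber at \([\xi]\) is trivial if and only if
\[ S.\omega.\Aut(k)_\gamma = \Bigl(\bigl(\textstyle\prod_{v} S_v\bigr).\omega.\Aut(\A_k^f)_{\gamma'}\Bigr)\cap \ker c . \]

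\emph{Step 4: identify the right-hand side with \((S^\omega.\omega).\Aut(\A^f_k)_{\gamma'}\).} This is the main technical point. Take \(\Sigma\in\prod_v S_v\) and \(\Phi\in\Aut(\A^f_k)_{\gamma'}\). I would use Corollary~\ref{cor:poitou-compatability}, with \(\mathbf{G_1}=\mathbf{G_2}\) and \(\psi\) the identity or forced, to get \(c((\Sigma.\omega).\Phi)=\psi^*c(\Sigma.\omega)\).

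Because \(c(\omega)=0\) and \(\psi^*\) is an isomorphism, the element \((\Sigma.\omega).\Phi\) lies in \(\ker c\) exactly when \(c(\Sigma.\omega)=0=c(\omega)\), that is, exactly when \(\Sigma\in S^\omega\).

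Proposition~\ref{prop:two-sided-quotient}~\ref{item:compatible} allows \(\Sigma\) and \(\Phi\) to be reordered, so the intersection equals \((S^\omega.\omega).\Aut(\A^f_k)_{\gamma'}\). This also uses that \(\psi\) fixes \(0\), and that under \(\Phi\in\Aut(\A^f_k)_{\gamma'}\) the two quasi-split forms coincide, so Corollary~\ref{cor:poitou-compatability} applies to all finite places.

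\emph{Step 5: conclude.} With this identification, the equality in Step~3 is exactly the weak uniformity condition of Definition~\ref{def:weakly-uniform-sym}. The inclusion \(\subseteq\) is the one already noted before that definition, and it is unproblematic here since there are no real places. This proves the equivalence, and hence the theorem.
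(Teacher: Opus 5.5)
Your proposal is correct and follows the paper's proof essentially step for step. Like the paper, you chain Theorems~\ref{thm:inner-twists-of-nontrivial-h-bar}, \ref{thm:f-g-equivalence} and \ref{thm:f_1-decomposition}, use that \(b = d_1\) is injective when \(k\) is totally imaginary, and identify \(\im d_1 \cap ((\prod_v S_v).\omega.\Aut(\A_k^f)_{\gamma'})\) with \((S^\omega.\omega).\Aut(\A_k^f)_{\gamma'}\); your Step~4 just spells out, via Corollary~\ref{cor:poitou-compatability}, the invariance of \(\ker c\) under \(\Aut(\A_k^f)_{\gamma'}\), which the paper compresses into a single line.
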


\begin{proof}
  Theorems~\ref{thm:inner-twists-of-nontrivial-h-bar}, \ref{thm:f-g-equivalence} and \ref{thm:f_1-decomposition} show that congruence rigidity is equivalent to \([\gamma]\) having trivial fiber along \(\overline{h}\), \(\mathbf{G}\) being isomorphic to one of the groups listed in \Cref{thm:map-real-groups} and \([\xi]\) having trivial fiber along \(\overline{d_1}\).  As \(k\) is totally imaginary, the condition imposed by \Cref{thm:map-real-groups} is empty.  Furthermore \(d_1\) is equal to \(b\) and the latter is injective as seen in \eqref{eq:poitou-tate}.  Thus \([\xi]\) having trivial fiber along \(\overline{d_1}\) is equivalent to
  \[d_1(S . \xi . \Aut(k)_{\gamma}) = \im d_1 \cap
    (\prod_{v \nmid \infty} S_v . \omega . \Aut(\A_k^f)_{\gamma'}).\]
  But \(\Aut(\A_k^f)_{\gamma'}\) acts in a well-defined manner on the image of \(d_1\) by \Cref{cor:poitou-tate-sum-equivariance} and
  \[\im d_1 \cap (\prod_{v \nmid \infty} S_v . \omega) = S^{\omega} . \omega\]
  follows from Sequence~\eqref{eq:poitou-tate}.  So in turn we get that \([\xi]\) having trivial fiber along \(\overline{d_1}\) is equivalent to the statement
  \[S. \omega . \Aut(k)_{\gamma}
    = S . d_1(\xi) . \Aut(k)_{\gamma}
    = (S^{\omega} . \omega) . \Aut(\A_k^f)_{\gamma'}. \qedhere\]
\end{proof}

Recall that \Cref{thm:outer-types-weakly-uniform} provides a very clear description of weak uniformity for groups of outer type.  For groups of inner type \Cref{thm:many-inner-twins} gives a necessary condition for this property.  In most types, being weakly uniform is necessary for congruence rigidity even if real places exist, as the following proposition shows.

\begin{proposition} \label{prop:weakly-uniform-necessary}
  If \(\mathbf{G}\) is of type \(A_{n \ge 2}\), \(D_{2n+1}\) or \(E_6\) and not weakly uniform, then \([\xi]\) has nontrivial fiber along \(\overline{d_1}\) regardless of the number of real places of \(k\).
\end{proposition}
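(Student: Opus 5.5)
Not being weakly uniform means the inclusion \(S.\omega.\Aut(k)_\gamma \subseteq (S^\omega.\omega).\Aut(\A_k^f)_{\gamma'}\) is strict. (In these types the inclusion holds, as remarked before Definition~\ref{def:weakly-uniform-sym}; the exception there concerns only type \(D_{2n}\).) So we may pick \(\Sigma \in S^\omega\) and \(\Phi \in \Aut(\A_k^f)_{\gamma'}\) such that \(\theta' \coloneqq (\Sigma.\omega).\Phi\) does not lie in \(S.\omega.\Aut(k)_\gamma\). The plan is to lift \(\theta'\) to a global class \(\eta \in H^2(k, Z(\mathbf{G_1}))\) with \(d_1(\eta) = \theta'\). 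Then \([\eta]\) and \([\xi]\) lie in the same fiber of \(\overline{d_1}\) by construction. Moreover \([\eta] \neq [\xi]\) in \(S\setminus H^2(k,Z(\mathbf{G_1}))/\Aut(k)_\gamma\): otherwise \(\eta = \sigma.\xi.\phi\), and applying the equivariant map \(d_1\) would give \(\theta' \in S.\omega.\Aut(k)_\gamma\), a contradiction.

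When \(k\) is totally imaginary, \(d_1 = b\), and \(\theta'\) lies in \(\im d_1\) by the inclusion \(S^\omega.\omega.\Aut(\A_k^f)_{\gamma'} \subseteq \im d_1\). So the real content is the presence of real places, and this is where the type assumption enters. To lift \(\theta'\) through the Poitou--Tate sequence~\eqref{eq:poitou-tate}, we must complete it by real coordinates \(\theta_w \in H^2(k_w, Z(\mathbf{G_1}))\) so that \(c\) of the full tuple vanishes. By Table~\ref{tate-table}, the real groups \(H^2(\R, Z(\mathbf{G_1}))\) are either \(0\) or \(\Z/2\) in types \(A_{n\ge2}\), \(D_{2n+1}\), \(E_6\). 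Each \(c_w\) is a fixed homomorphism into the cyclic group \(H^0(k,Z')^*\), and the identity in the outer cases, the map \(\cdot(n+1)\), and the map \(\cdot 2\) are the relevant options.

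The key step is therefore to show that the finite part satisfies \(c(\theta') = c(\omega)\) up to the appropriate identifications. Here I would use Corollary~\ref{cor:poitou-compatability} to see that \(c\) commutes with the action of \(\Phi\) via \(\psi^*\); since \(H^0(k,Z')\) is cyclic (or of order at most two in the outer case), \(\psi\) is forced. The definition of \(S^\omega\) gives \(c(\Sigma.\omega) = c(\omega)\). Granting this, I would keep the real coordinates \(\underline{\omega}_w\) of \(b(\xi)\), transported along the bijection of real places induced by \(\Phi\) as in the proof of Theorem~\ref{thm:inner-twists-of-nontrivial-h-bar}, or simply leave them unchanged when the invariant is preserved. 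Then \(c\) of the completed tuple equals \(c(\underline{\omega}) = 0\), so a preimage \(\eta\) under \(b\) exists.

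The main obstacle is the compatibility of \(c\) with \(\Phi\) in the presence of real places. In the twisted case \(\Phi\) changes which global forms occur, and the finite-place sum \(c(\omega|_{\text{fin}})\) must be matched by the real contributions. Since the real targets are \(\Z/2\) or zero and the maps \(c_w\) are determined up to the single possible identification (as in Proposition~\ref{prop:local-global-compability}), I expect \(c(\theta') = \psi^*(c(\omega|_{\text{fin}}))\) with \(\psi^*\) trivial or inversion. Inversion does not matter because the finite sum equals minus the real sum, which lies in a subgroup of order at most two. This is exactly what fails for \(D_{2n}\), where \(H^2(\R,Z) \cong \Z/2\times\Z/2\) carries a nontrivial swap, and it explains the restriction in the statement.
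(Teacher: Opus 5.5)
Your proposal is correct and is essentially the paper's argument. You pick an element of \((S^\omega.\omega).\Aut(\A_k^f)_{\gamma'}\) outside \(S.\omega.\Aut(k)_\gamma\), lift it through \(d_1\), and use equivariance of \(d_1\) to separate the classes; the only difference is that you make explicit the inclusion \((S^\omega.\omega).\Aut(\A_k^f)_{\gamma'}\subseteq \im d_1\), which the paper simply invokes, by keeping the real coordinates of \(\underline{\omega}\). One correction to your commentary: that lifting step works in every type, including \(D_{2n}\). The type hypothesis is really used, as in the paper's proof, to get \(S.\omega\subseteq S^\omega.\omega\): the real groups \(H^2(k_w,Z(\mathbf{G_1}))\) are \(0\) or \(\Z/2\), so \(S_w\) acts trivially on them. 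It is this inclusion that fails in \(D_{2n}\), because of the swap on \(\Z/2\times\Z/2\).
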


\begin{proof}
  Even if \(d_1\) is not injective,
  \[d_1(S . \xi . \Aut(k)_{\gamma}) = \im d_1 \cap
    (\prod_{v \nmid \infty} S_v . \omega . \Aut(\A_k^f)_{\gamma'})\]
  is still a necessary condition for triviality of the fiber of \([\xi]\) along \(\overline{d_1}\).  We still have
  \[S^{\omega} . \omega \subseteq \im d_1
    \cap (\prod_{v \nmid \infty} S_v . \omega).\]
  The action of \(S_w\) on \(H^2(k_w, Z(\mathbf{G_1}))\) is trivial for real places \(w\) in the specified types and hence \(S . \omega \subseteq S^{\omega} . \omega\).  As \(\mathbf{G}\) was required to not be weakly uniform, we obtain
  \[S . \omega . \Aut(k)_{\gamma} \subsetneq
    (S^{\omega} . \omega) . \Aut(\A_k^f)_{\gamma'} \subseteq
    \im d_1 \cap ((\prod_{v \nmid \infty} S_v . \omega) .
      \Aut(\A_k^f)_{\gamma'}),\]
  as the action of \(\Aut(\A_k^f)_{\gamma'}\) on the image of \(d_1\) is well-defined.  Thus
  \[d_1(S . \xi . \Aut(k)_{\gamma}) = S . \omega . \Aut(k)_{\gamma}
    \subsetneq \im d_1 \cap ((\prod_{v \nmid \infty} S_v . \omega) .
      \Aut(\A_k^f)_{\gamma'})\]
  holds and the fiber of \([\xi]\) along \(\overline{d_1}\) is nontrivial.
\end{proof}

The reason we have to exclude type \(D_{2n}\) here is that \(S_w\) acts nontrivially on \(H^2(k_w, Z(\mathbf{G_1}))\) for real places \(w\).  This leads to the fact that there in general \(S . \omega \not\subseteq S^{\omega} . \omega\), making the proof slightly more complicated there, as we will see in \Cref{thm:cong-rigid-D-n}.  In preparation for the case that \(k\) has real places we make precise statements on condition~\ref{item:real-places} from the introduction for the types \(A_n\) and \(D_n\) in the following two propositions.

\begin{proposition} \label{prop:type-2n+1-3-real-places}
  Let \(\mathbf{G}\) be a \(k\)-group of type \(A_{2n+1}\) or \(D_{2n+1}\).  Assume furthermore that \(k\) has at least three real places.  Then the fiber of \([\xi]\) along \(\overline{d_1}\) is never a singleton.
\end{proposition}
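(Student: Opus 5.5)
We construct a second global class in the fiber by mimicking the type \(C_n\) and \(A_1\) arguments in the proof of Theorem~\ref{thm:cong-rigid-no-symmetries}. By Table~\ref{tate-table}, in all four relevant cases (\({}^1A_{2n+1}\), \({}^2A_{2n+1}\), \({}^1D_{2n+1}\), \({}^2D_{2n+1}\)) every real place \(w\) has \(H^2(k_w, Z(\mathbf{G_1})) \cong \Z/2\). The map \(c_w\) into \(H^0(k,Z')^*\) is the same for all real places of a given splitting behaviour. For inner type it is \(\cdot(n+1)\) into \(\Z/(2n+2)\), respectively \(\cdot 2\) into \(\Z/4\). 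For outer type it is the identity onto \(\Z/2\), whether \(w\) splits in \(l\) or not, by the proof of Theorem~\ref{thm:tate-map}.

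Consequently, for any two real places \(w_1,w_2\), the element \(\epsilon\) with entry \(1\) at \(w_1,w_2\) and \(0\) elsewhere satisfies \(c(\epsilon)=0\). By the Poitou--Tate sequence~\eqref{eq:poitou-tate}, \(\underline{\omega}+\epsilon = b(\eta)\) for a unique \(\eta \in H^2(k,Z(\mathbf{G_1}))\). Since \(\epsilon\) vanishes at finite places, \(d_1(\eta)=d_1(\xi)=\omega\). Thus \([\eta]\) lies in the fiber of \([\xi]\) under \(\overline{d_1}\), and it remains to choose \(w_1,w_2\) so that \([\eta]\ne[\xi]\).

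\textbf{Choice of places.} With three or more real places, the real coordinates of \(\underline{\omega}\) take values in \(\Z/2\), so by pigeonhole two real places \(w_1,w_2\) have equal coordinates \(x\). Flipping both changes the number of real places with value \(x\) by \(2\) and the number with value \(1-x\) by \(2\).

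\textbf{Invariants of the orbit.} Next we check that this count is constant on \(S.\xi.\Aut(k)_{\gamma}\). An element \(\phi\in\Aut(k)_\gamma\) acts on \(b(\xi)\) by permuting the real places, since \(\Aut(\R)\) is trivial, and it carries the real coordinates along by the canonical identifications. This uses the argument of Proposition~\ref{prop:local-global-compability} and Corollary~\ref{cor:poitou-compatability}, applied to \(\Aut(\A_k)\) rather than \(\Aut(\A^f_k)\). Moreover \(S\) acts on \(H^2(k_w,Z(\mathbf{G_1}))\cong\Z/2\) trivially: by Remark~\ref{rmk:dynkin-actions} it acts by inversion, which is trivial on \(\Z/2\). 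Since \(b\) is injective and \(S\)- and \(\Aut(k)\)-equivariant, every element of \(S.\xi.\Aut(k)_\gamma\) has real coordinate multiset equal to that of \(\underline{\omega}\). That multiset differs from the one for \(b(\eta)\), hence \([\eta]\neq[\xi]\).

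\textbf{Main obstacle.} The delicate step is verifying the compatibility claims at real places. First, the real coordinates of \(\phi_*\underline{\omega}\) must be exactly permutations of those of \(\underline{\omega}\). Second, the identifications \(H^2(k_w,Z)\cong\Z/2\) must be canonical in the outer case, where \(w\) may or may not split in \(l\) and \(\phi\) preserves \(\gamma\). I would handle this by noting that any isomorphism of groups of order two is unique, so no ambiguity arises. Finally, one must make sure \(c_w\) is the same nonzero map for all real \(w\), so that \(c(\epsilon)=0\); this is read off directly from Table~\ref{tate-table}.
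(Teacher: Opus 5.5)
Your proposal is correct and follows essentially the same route as the paper. You pick two real places where \(\underline{\omega}\) has equal coordinates, flip both to obtain an element of \(\ker c = \im b\) that agrees with \(\omega\) at all finite places, and then distinguish the two classes by the number of real coordinates equal to \(0\), which is invariant under \(S\) and \(\Aut(k)_\gamma\). Two steps the paper only states briefly are made precise in your version: the explicit check via Table~\ref{tate-table} that \(c(\epsilon)=0\) in all four cases, and the use of \(\Aut(\A_k)\) rather than \(\Aut(\A_k^f)\) to track the real coordinates.
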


\begin{proof}
  In the listed types that \(\mathbf{G}\) can take on, \(H^2(k_w, Z(\mathbf{G_1})) \cong \Z/2\) holds for all real places \(w\).  As there have to be at least two places \(w_1\), \(w_2\) with \(\underline{\omega}_{w_1} = \underline{\omega}_{w_2}\), we can construct \(\theta \in \bigoplus_{v \in V(k)} H^2(k_v, Z(\mathbf{G_1}))\) by
  \begin{align*}
    \theta_v & = \underline{\omega}_v \text{ if } v \neq w_1, w_2 \\
    \theta_{w_i} & = \underline{\omega}_{w_i} + 1.
  \end{align*}
  The restriction of \(\theta\) to the finite places then is equal to \(\omega\) and \(\theta\) has a preimage under \(b\), as \(c(\theta) = c(\underline{\omega}) = 0\), which we denote by \(\eta\).  Hence \([\xi]\) and \([\eta]\) lie in the preimage of \([\omega]\) under \(\overline{d_1}\).  The action of \(\Aut(k)_{\gamma}\) on \(\xi\) can be traced by looking at the induced action on \(\underline{\omega}\) after embedding \(\Aut(k)_{\gamma}\) into \(\Aut(\A_k^f)_{\gamma'}\).  There we see that the action leaves the number of real coordinates \(w\) where \(\underline{\omega}_w = 0\) holds invariant, as it is merely a permutation.  This number is different from the one for \(\theta = b(\eta)\), hence \(\Aut(k)_{\gamma}\) cannot map \(\xi\) to \(\eta\).  As the same invariance holds for the action of \(S\), we obtain that \([\xi] \neq [\eta] \in S \setminus H^2(k, Z(\mathbf{G_1})) / \Aut(k)_{\gamma}\).  Hence the preimage of \([\omega] = \overline{d_1}([\xi])\) under \(\overline{d_1}\) contains at least two elements.
\end{proof}

\begin{proposition} \label{prop:2-real-places-D-n}
  Let \(\mathbf{G}\) be a \(k\)-group of type \(D_{2n}\) where \(n \geq 3\).  Assume furthermore that there are at least two real places at which \(\mathbf{G}\) is of inner type.  Then the fiber of \([\xi]\) along \(\overline{d_1}\) is never a singleton.
\end{proposition}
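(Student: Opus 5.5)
The approach mirrors the proof of Proposition~\ref{prop:type-2n+1-3-real-places}: from \(\xi\) we build a second global class \(\eta \in H^2(k, Z(\mathbf{G_1}))\) with the same finite localization \(\omega\), and then show that \(\eta\) lies in a different \(S\)-\(\Aut(k)_{\gamma}\)-double orbit.

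Let \(w_1, w_2\) be two real places at which \(\mathbf{G}\) is of inner type. Then \(Z(\mathbf{G_1})\) becomes \(\mu_2 \times \mu_2\) over \(k_{w_i}\). Table~\ref{tate-table} gives
\[ H^2(k_{w_i}, Z(\mathbf{G_1})) \cong \Z/2 \times \Z/2, \]
and \(c_{w_i}\) is the identity in the inner case. In the outer case \(Z(\mathbf{G_1}) = \mathbf{R}_{l/k}(\mu_2)\) and \(\mathbf{G}\) splits at \(w_i\), so \(c_{w_i}\) is the sum map \(\Z/2\times\Z/2 \to \Z/2\). Fix \(x = (1,1)\); in both cases \(c_{w_1}(x) + c_{w_2}(x) = 0\). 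Define \(\theta\) by \(\theta_v = \underline{\omega}_v\) for \(v \neq w_1, w_2\) and \(\theta_{w_i} = \underline{\omega}_{w_i} + x\). Then \(c(\theta) = c(\underline{\omega}) = 0\), so exactness of \eqref{eq:poitou-tate} gives \(\eta\) with \(b(\eta) = \theta\). Since \(\theta\) agrees with \(\underline{\omega}\) at all finite places, \(d_1(\eta) = \omega = d_1(\xi)\), and so \([\eta]\) lies in the fiber of \(\overline{d_1}\) at \([\omega]\).

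It remains to show that \(\eta \notin S . \xi . \Aut(k)_{\gamma}\). As in Proposition~\ref{prop:type-2n+1-3-real-places}, we track the action on real coordinates through \(b\). Any \(\phi \in \Aut(k)_{\gamma}\) only permutes the real places, and it preserves the set of real places where \(\mathbf{G}\) is inner. By Remark~\ref{rmk:dynkin-actions}, at inner real places \(\sigma\) swaps the two factors of \(\Z/2 \times \Z/2\). Adding \((1,1)\) commutes with this swap. So we use the invariant
\[ N(\vartheta) = \#\{\, w \text{ inner real} : \vartheta_w \in \{(0,0),(1,1)\} \,\}. \]
This count is preserved both by permutations and by the swap, because the swap fixes \((0,0)\) and \((1,1)\). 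Adding \((1,1)\) maps \(\{(0,0),(1,1)\}\) to itself, so \(N(\theta) = N(\underline{\omega})\), and this invariant does not separate the two classes.

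This is the main obstacle. We therefore refine the invariant to the multiset of values at inner real places, taken up to swap. The classes under swap are \(\{(0,0)\}\), \(\{(1,1)\}\) and \(\{(1,0),(0,1)\}\). Adding \((1,1)\) exchanges \((0,0)\) with \((1,1)\) and fixes the third class. The counts \(a = \#\{(0,0)\}\) and \(b' = \#\{(1,1)\}\) are invariant under \(S\) and \(\Aut(k)_\gamma\), since \(\sigma\) acts simultaneously at all places and permutations preserve multisets.

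We now choose \(w_1, w_2\) so that the counts change. If both \(\underline{\omega}_{w_i}\) lie in \(\{(0,0)\}\), then \(a\) drops by two. If both lie in \(\{(1,1)\}\), then \(a\) rises by two. If both lie in the mixed class, we instead use \(x = (1,0)\) at \(w_1\) and \(x = (0,1)\) at \(w_2\), checking that \(c\) still vanishes; this moves both coordinates into \(\{(0,0),(1,1)\}\) and raises \(a + b'\). In the remaining configurations, where one coordinate is \((0,0)\) and the other \((1,1)\), or one is in the mixed class, a suitable choice among \((1,1)\), \((1,0)\), \((0,1)\) again changes \((a, b')\) while keeping \(c(\theta) = 0\). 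This is a finite case check.

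Once some invariant of \(b(\eta)\) differs from that of \(\underline{\omega}\), we get \([\eta] \neq [\xi]\), and the fiber is not a singleton.
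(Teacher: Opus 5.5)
Your strategy is the same as the paper's. You alter \(\underline{\omega}\) only at two real places \(w_1, w_2\) where \(\mathbf{G}\) is inner, keep \(c(\theta)=0\), and separate \([\eta]\) from \([\xi]\) by the numbers of real coordinates equal to \((0,0)\) and to \((1,1)\). These numbers are invariant under the swap by \(S\) and under the permutations induced by \(\Aut(k)_{\gamma}\). The invariance argument is fine, and so are the cases where both coordinates equal \((0,0)\) or both equal \((1,1)\).

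However, the step for two coordinates in the mixed class \(\{(1,0),(0,1)\}\) fails as written. If \(\mathbf{G}\) has type \({}^1 D_{2n}\), then \(c_{w_1}\) and \(c_{w_2}\) are the identity. Adding \((1,0)\) at \(w_1\) and \((0,1)\) at \(w_2\) therefore changes \(c\) by \((1,0)+(0,1)=(1,1)\neq 0\). Hence \(\theta\notin\ker c=\im b\), and no \(\eta\) exists. Your choice satisfies the Tate condition only in type \({}^2 D_{2n}\), where \(c_{w_i}\) is the sum map. The \emph{finite case check} for the remaining configurations is likewise only asserted, not carried out.

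The repair is to note that in the inner case the increments at \(w_1\) and \(w_2\) must coincide. Equivalently, \(\theta_{w_1}+\theta_{w_2}=\underline{\omega}_{w_1}+\underline{\omega}_{w_2}\), and such sum-preserving changes keep \(c\) fixed whether \(c_{w_i}\) is the identity or the sum map. Under this constraint every configuration works, and in each case \((a,b')\) changes:
\begin{itemize}
\item Two mixed coordinates: add \((1,0)\) at both places. Both land in \(\{(0,0),(1,1)\}\), so \(a+b'\) rises by two.
\item The pair \(\{(0,0),(1,1)\}\): add \((1,0)\) at both, which makes both coordinates mixed.
\item One coordinate in \(\{(0,0),(1,1)\}\) and one mixed: add \((1,1)\) at both, which moves one count between \(a\) and \(b'\).
\end{itemize}

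The paper packages this more economically. It replaces the pair by \((e,\underline{\omega}_{w_1}+\underline{\omega}_{w_2}-e)\) for some \(e\in\{(0,0),(1,1)\}\) not occurring in the pair. This preserves the sum automatically and increases the count of \(e\). The only leftover pair \(\{(0,0),(1,1)\}\) is replaced by \((0,1),(1,0)\).
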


\begin{proof}
  We again construct an element \(\theta\) with a preimage which cannot be mapped to \(\xi\) by \(S\) or \(\Aut(k)_{\gamma}\).  For this purpose we pick two real places \(w_1\) and \(w_2\) where \(\mathbf{G}\) is of inner type and construct \(\theta\) by replacing the coordinates of \(\underline{\omega}\) in \(H^2(k_{w_i}, Z(\mathbf{G_1})) \cong \Z/2 \times \Z/2\) in the follwing manner:
  
  If neither \(\underline{\omega}_{w_1}\) nor \(\underline{\omega}_{w_1}\) are equal to \((0,0) \in H^2(k_{w_i}, Z(\mathbf{G_1}))\), we let
  \begin{align*}
    \theta_v & = \underline{\omega}_v \text{ for } v \neq w_1, w_2 \\
    \theta_{w_1} & = (0,0) \\
    \theta_{w_1} & = \underline{\omega}_{w_1} + \underline{\omega}_{w_2}.
  \end{align*}
  The number of real places at which \(\underline{\omega}\) has coordinate \((0,0)\) is invariant under the actions of \(\Aut(k)_{\gamma}\) and \(S\) on \(\underline{\omega}\).  Hence this \(\theta\) has a preimage under \(b\) that has a different class than \(\xi\) in \(S \setminus H^2(k, Z(\mathbf{G_1})) / \Aut(k)_{\gamma}\) but is mapped to \([\omega]\) as well under \(\overline{d_1}\).
  
  If neither \(\underline{\omega}_{w_1}\) nor \(\underline{\omega}_{w_1}\) are equal to \((1,1) \in H^2(k_{w_i}, Z(\mathbf{G_1}))\), we let similarly
  \begin{align*}
    \theta_v & = \underline{\omega}_v \text{ for } v \neq w_1, w_2 \\
    \theta_{w_1} & = (1,1) \\
    \theta_{w_1} & = \underline{\omega}_{w_1} + \underline{\omega}_{w_2} - (1,1)
  \end{align*}
  and remark that the number of \((1,1)\)-coordinates is also invariant under both \(\Aut(k)_{\gamma}\) and \(S\).
  
  If finally \(\underline{\omega}_{w_1}\) and \(\underline{\omega}_{w_1}\) are once each equal to \((0,0)\) and \((1,1)\), we let
  \begin{align*}
    \theta_v & = \underline{\omega}_v \text{ for } v \neq w_1, w_2 \\
    \theta_{w_1} & = (0,1) \\
    \theta_{w_1} & = (1,0). \qedhere
  \end{align*}
\end{proof}

We can finally give an exact characterization of congruence rigid groups for locally determined number fields with at least one real place in the types where \(\Sym \Delta \cong \Z/2\) holds.  We split those cases up into the three subsequent theorems.

\begin{theorem} \label{thm:cong-rigid-A-n}
  Let \(k\) be a locally determined number field and \(\mathbf{G}\) a \(k\)-group.  Assume furthermore that \(\mathbf{G}\) is of type \(A_{n \geq 2}\) and that \(k\) has at least one real place.  Then \(\mathbf{G}\) is congruence rigid if and only if it is weakly uniform, \([\gamma]\) has trivial fiber under \(\overline{h}\) and one of the following conditions is satisfied:
  \begin{enumerate}[label=(\roman*)]
    \item \(\mathbf{G}\) is of type \({}^1 A_{2n}\).
    \item \(\mathbf{G}\) is of type \({}^2 A_{2n}\) and splits at all real places.
    \item \label{item:1-A-2n+1-i} \(\mathbf{G}\) is of type \({}^1 A_{2n+1}\), \(k\) has one real place and furthermore there is no subset of coordinates of \(\omega\) adding to \(\pm \frac{n+1}{2}\) in \(\Z/(2n+2) \cong H^0(k,Z')^*\).
    \item \label{item:1-A-2n+1-ii} \(\mathbf{G}\) is of type \({}^1 A_{2n+1}\), \(k\) has two real places \(w_1\) and \(w_2\), \(\mathbf{G}\) is isomorphic once each to \(\SL(2n+2, \R)\) and to \(\SL(n+1, \mathbb{H})\) at those real places, \(\Aut(k)_{\gamma}\) permutes the two real places of \(k\), \(S . \omega . \Aut(k)_{\gamma, w_1} = (S^{\omega} . \omega) . \Aut(\A_k^f)_{\gamma'}\) and furthermore there is no subset of coordinates of \(\omega\) adding to \(\pm \frac{n+1}{2}\) in \(\Z/(2n+2) \cong H^0(k,Z')^*\).
    \item \(\mathbf{G}\) is of type \({}^2 A_{2n+1}\), \(k\) has one real place and there \(\mathbf{G}\) either becomes inner type or is (in the case of \(n=1\)) isomorphic to \(\SU(3,1)\).
    \item \(\mathbf{G}\) is of type \({}^2 A_{2n+1}\), \(k\) has two real places \(w_1\) and \(w_2\), \(\mathbf{G}\) is isomorphic once each to \(\SL(2n+2, \R)\) and to \(\SL(n+1, \mathbb{H})\) at those real places, \(\Aut(k)_{\gamma}\) permutes the two real places of \(k\) and \(S . \omega . \Aut(k)_{\gamma, w_1} = (S^{\omega} . \omega) . \Aut(\A_k^f)_{\gamma'}\).
  \end{enumerate}
\end{theorem}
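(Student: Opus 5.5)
By Theorems~\ref{thm:inner-twists-of-nontrivial-h-bar}, \ref{thm:f-g-equivalence} and \ref{thm:f_1-decomposition}, congruence rigidity of \(\mathbf{G}\) is equivalent to the conjunction of three conditions: \([\gamma]\) has trivial fiber under \(\overline{h}\); \(\mathbf{G}\) is one of the real groups of Theorem~\ref{thm:map-real-groups} at every real place; and \([\xi]\) has trivial fiber under \(\overline{d_1}\). By Proposition~\ref{prop:weakly-uniform-necessary}, weak uniformity is necessary in type \(A_{n\ge2}\). The plan is to fix these necessary conditions and split by the parity of \(n\) and by inner versus outer type. In each case we first read off the admissible real forms from Theorem~\ref{thm:map-real-groups}, and then decide when the real coordinates of \(\underline{\omega}\) create extra global classes over the same finite data.

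\emph{Type \(A_{2n}\).} In inner type, the real forms \(\SL(2n+1,\R)\) are all admissible. Moreover \(H^2(\R,\mu_{2n+1})=0\) by Table~\ref{tate-table}, so \(d_1=b\) is injective. The argument of Theorem~\ref{thm:sym-rigidity-no-reals} then applies verbatim: trivial fiber of \(\overline{d_1}\) is equivalent to weak uniformity. In outer type, the real forms are \(\SU(r,s)\) with \(r+s\) odd, none of which is admissible. Hence a real place must be one where \(l/k\) splits and \(\mathbf{G}\) becomes \(\SL(2n+1,\R)\). Again \(H^2\) vanishes at real places, so weak uniformity suffices.

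\emph{Type \(A_{2n+1}\).} Proposition~\ref{prop:type-2n+1-3-real-places} excludes three or more real places. The admissible real forms are \(\SL(2n+2,\R)\), \(\SL(n+1,\mathbb{H})\), and \(\SU(3,1)\) when \(n=1\); this gives the real-form conditions in (iii)--(vi). In the outer case with a non-split real place the form must be \(\SU(3,1)\), and we need to check that this is compatible with \(H^2(\R,Z)=\Z/2\). Table~\ref{tate-table} shows that the real coordinate maps to \(H^0(k,Z')^*\) via \(\cdot(n+1)\) in inner type and via the identity onto \(\Z/2\) in outer type.

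\emph{One real place \(w\).} Here \(b\) is injective, but \(d_1\) is not. A second global class with the same finite image exists exactly when \(\underline{\omega}_w\) can be flipped, compensated by changing finite coordinates within their \(S^\omega\)-orbit. Equivalently, the question is whether some \(\Sigma\in\prod S_v\), acting by inversion on a subset \(U\), changes \(c\) by \(n+1\) rather than \(0\). Since inverting \(x\) changes the sum by \(-2x\), this happens exactly when a subset of coordinates of \(\omega\) sums to \(\pm\frac{n+1}{2}\). This yields the extra condition in (iii). In outer type all finite contributions already lie in \(\Z/2\) and no such obstruction occurs, which gives (v).

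\emph{Two real places.} We copy the \(A_1\) argument from Theorem~\ref{thm:cong-rigid-no-symmetries}. If both real coordinates agree, flipping both gives a new class. If they differ, the only candidate is the swapped pattern, which is realized globally iff some element of \(\Aut(k)_\gamma\) exchanges \(w_1,w_2\). Matching orbits then forces \(S.\omega.\Aut(k)_{\gamma,w_1}=(S^\omega.\omega).\Aut(\A^f_k)_{\gamma'}\). In inner type the subset-sum obstruction from the one-place case persists, which gives (iv); in outer type it is absent, giving (vi).

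\emph{Main obstacle.} The hardest step is the subset-sum analysis in type \({}^1A_{2n+1}\). It requires combining the Poitou--Tate condition with the \(S_v\)-actions and with the \(\Aut(\A^f_k)_{\gamma'}\)-orbit, and showing that the resulting extra preimages genuinely lie in a different \(S\times\Aut(k)_\gamma\)-orbit. My first attempt would track the count of real coordinates equal to zero as an invariant, as in the proof of Proposition~\ref{prop:type-2n+1-3-real-places}.
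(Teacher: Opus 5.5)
Your outline is the paper's proof. It uses the same reduction via Theorems~\ref{thm:inner-twists-of-nontrivial-h-bar}, \ref{thm:f-g-equivalence}, \ref{thm:f_1-decomposition} and Proposition~\ref{prop:weakly-uniform-necessary}, and injectivity of \(d_1\) in type \(A_{2n}\). It also uses Proposition~\ref{prop:type-2n+1-3-real-places} to exclude three or more real places. The subset-sum condition comes, as in the paper, from flipping a real coordinate and compensating with some \(\Sigma \in \prod_{v \nmid \infty} S_v\). The two-place analysis is the \(A_1\)-style one, with the number of zero real coordinates as the orbit invariant.

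\textbf{A slip in the one-real-place paragraph.} There \(d_1\) \emph{is} injective, contrary to what you write. The reason is that \(c_w\) is injective at the real place: by Table~\ref{tate-table} it is \(\cdot(n+1)\colon \Z/2 \to \Z/(2n+2)\), resp.\ the identity onto \(\Z/2\). So the competing class \(\eta\) never has the same finite image as \(\xi\). Instead, \(d_1(\eta)\) lies in \((\prod_{v\nmid\infty} S_v.\omega).\Aut(\A^f_k)_{\gamma'}\) via some \(\Sigma \notin S^\omega\).

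Your phrase ``within their \(S^\omega\)-orbit'' is backwards. \(S^\omega\) consists exactly of those \(\Sigma\) that leave \(c\) unchanged, so it can never compensate a real flip. Your next sentence, about some \(\Sigma\) changing \(c\) by \(n+1\), states the correct mechanism.

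\textbf{Two points to state explicitly.} To close the converse, you also need that \(\Aut(\A^f_k)_{\gamma'}\) does not change \(c\) (Corollary~\ref{cor:poitou-compatability}). In the outer two-place case, say that \(\Aut(k)_\gamma\) cannot swap a real place split in \(l\) with a non-split one. That is what excludes \(\SU(3,1)\) at one real place paired with an inner form at the other.
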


Note that \([\gamma]\) always has trivial fiber under \(\overline{h}\) if \(\mathbf{G}\) is of type \({}^1 A_n\), as \([\gamma]\) then corresponds to the trivial element in \(H^1(k, \Sym \Delta)\).  In this case \(\Aut(k)_{\gamma} = \Aut(k)\) and \(\Aut(\A_k^f)_{\gamma'} = \Aut(\A_k^f)\) hold.

\begin{proof}
  By Theorems~\ref{thm:inner-twists-of-nontrivial-h-bar}, \ref{thm:f-g-equivalence} and \ref{thm:f_1-decomposition} the \(k\)-group \(\mathbf{G}\) is congruence rigid if and only if \([\gamma]\) has trivial fiber under \(\overline{h}\), the map \(\overline{d_1}\) has trivial fiber at \([\xi]\) and at all real places \(\mathbf{G}\) is isomorphic over \(\R\) to one of the groups listed in \Cref{thm:map-real-groups}.  Note first that if \(\mathbf{G}\) is not weakly uniform, \(\overline{d_1}\) has nontrivial fiber at \([\xi]\) regardless of any behaviour at real places by \Cref{prop:weakly-uniform-necessary}, so this constitutes a necessary condition.
  
  \medskip
  If \(\mathbf{G}\) is of type \(A_{2n}\), \(H^2(k_w, Z(\mathbf{G_1}))\) is trivial for all real places by Table~\ref{tate-table}.  Hence \(d_1\) is injective and \(\overline{d_1}\) has trivial fiber at \([\xi]\) if and only if \(\mathbf{G}\) is weakly uniform.  The only real form of type \(A_{2n}\) listed in \Cref{thm:map-real-groups} is \(\SL(2n+1, \R)\), which also is the only real form of type \({}^1 A_{2n}\) in general.  Thus the behavioural condition on \(\mathbf{G}\) at real places is empty for \({}^1 A_{2n}\) and for type \({}^2 A_{2n}\) it demands that \(\mathbf{G}\) splits.
  
  \medskip
  If \(\mathbf{G}\) is of type \(A_{2n+1}\), we can assume by \Cref{prop:type-2n+1-3-real-places} that \(k\) has at most two real places.  The condition of \Cref{thm:map-real-groups} tells us that in order for \(\mathbf{G}\) to be congruence rigid, it has to be isomorphic at all real places to either \(\SL(2n+2, \R)\) or \(\SL(n+1, \mathbb{H})\), which are the two real forms of type \({}^1 A_{2n+1}\) or isomorphic to the outer real form \(\SU(3,1)\) in the case \({}^2 A_3\).
  
  \medskip
  First, let \(\mathbf{G}\) be of type \({}^1 A_{2n+1}\) and assume that it satisfies neither of the conditions \ref{item:1-A-2n+1-i} or \ref{item:1-A-2n+1-ii}.  Assume first that \(n+1\) is even and there is a subset \(U \in V_f(k)\) with \(\sum_{v \in U} c_v(\omega_v)\) adding to
  \[\pm \frac{n+1}{2} \in \Z/(2n+2) \cong H^0(k,Z')^*.\]
  Then we can construct \(\theta \in \bigoplus_{v \in V(k)} H^2(k_v, Z(\mathbf{G_1}))\) with \(\theta|_{V_f(k)} \in \prod_{v \nmid \infty} S_v . \omega\) and \(\theta \in \ker(c) = \im(b)\) in the following way, fixing one real place \(w\):
  \begin{align*}
    \theta_v & = -\omega_v \text{ if } v \in U \\
    \theta_w & = \underline{\omega}_w + 1 \\
    \theta_v & = \underline{\omega}_v \text{ for all other places}
  \end{align*}
  But as \(\theta\) and \(\underline{\omega}\) have a different number of zeroes and ones at the real places, they cannot be mapped onto each other by \(S\) or \(\Aut(k)_{\gamma}\), so the fiber of \([\xi]\) along \(\overline{d_1}\) is nontrivial.
  
  If \(k\) has two real places and \(\mathbf{G}\) is isomorphic to either \(\SL(2n+2, \R)\) at both or to \(\SL(n+1, \mathbb{H})\), we can apply the argument from \Cref{prop:type-2n+1-3-real-places} and see that \(\overline{d_1}\) has nontrivial fiber at \([\xi]\).  If \(\mathbf{G}\) is once each isomorphic to \(\SL(2n+2, \R)\) and \(\SL(n+1, \mathbb{H})\) and \(\Aut(k)_{\gamma}\) does not permute the two real places, then
  \begin{align*}
    \theta_{w_i} & = \underline{\omega}_{w_i} + 1 \\
    \theta_v & = \omega_v \text{ for } v \nmid \infty.
  \end{align*}
  has a preimage that neither \(S\) nor \(\Aut(k)_{\gamma}\) can map to \(\xi\), so so the fiber of \([\xi]\) along \(\overline{d_1}\) is again nontrivial.
  
  If \(\mathbf{G}\) is once each isomorphic to \(\SL(2n+2, \R)\) and \(\SL(n+1, \mathbb{H})\) and \(\Aut(k)_{\gamma}\) permutes the two real places but there is an element
  \[\omega' \in (S^{\omega} . \omega) . \Aut(\A_k^f)_{\gamma'} \setminus S . \omega . \Aut(k)_{\gamma, w_1},\]
  we construct \(\theta\) by
  \begin{align*}
    \theta_{w_i} & = \underline{\omega}_{w_i} \\
    \theta_v & = \omega'_v \text{ for } v \nmid \infty.
  \end{align*}
  Let \(\eta\) be a preimage of \(\theta\) under \(b\).  Then \(\theta|_{V_f(k)} \in (S^{\omega} . \omega) . \Aut(\A_k^f)_{\gamma'}\) and therefore \([d_1(\eta)] = [\omega]\).  If \([\eta] = [\xi]\) were true, then there would be \(\sigma \in S\) and \(\phi \in \Aut(k)_{\gamma}\) such that \(\sigma . \xi . \phi = \eta\).  But then \(\sigma . \underline{\omega} . \phi = \theta\) holds and in particular \((\underline{\omega} . \phi)_{w_i} = \theta_{w_i} = \underline{\omega}_{w_i}\), so \(\phi \in \Aut(k)_{\gamma, w_1}\).  But then
  \[\omega' = \theta|_{V_f(k)}
    = (\sigma . \underline{\omega} . \phi)|_{V_f(k)}
    = \sigma . \omega . \phi \in S . \omega . \Aut(k)_{\gamma, w_1}\]
  in contradiction to the assumption.  So \([\eta] \neq [\xi]\) and the fiber of \([\xi]\) along \(\overline{d_1}\) is not trivial.
  
  \medskip
  Next, let \(\mathbf{G}\) be of type \({}^1 A_{2n+1}\) and assume that the fiber of \([\xi]\) along \(\overline{d_1}\) is nontrivial.  Let \([\eta] \neq [\xi]\) be another element in the fiber and let \(\theta \coloneqq b(\eta)\).
  
  If the number of zeroes at the real places of \(\underline{\omega}\) and \(\theta\) differs by two (and \(k\) thus has two real places), we can conclude that one has to be isomorphic to \(\SL(2n+2, \R)\) at both and the other is isomorphic to \(\SL(n+1, \mathbb{H})\) at both.  This is in opposition to the requirement that \(\mathbf{G}\) be isomorphic to \(\SL(2n+2, \R)\) and \(\SL(n+1, \mathbb{H})\) once each at the real places.
  
  If the number of zeroes at the real places of \(\underline{\omega}\) and \(\theta\) differs by one, then we let \(\Sigma \in \prod_{v \nmid \infty} S_v\) such that \(\theta|_{V_f(k)} = \Sigma . \omega\).  If we let \(U\) be the set of finite places where \(\Sigma_v\) is equal to the nontrivial element of \(S_v\), then
  \[n+1 = c(\omega) - c(\theta)
    = 2 \cdot \sum_{v \in U} \omega_v
    \in \Z/(2n+2) \cong H^0(k,Z')^*.\]
   This implies that \(n+1\) is even and that there is a subset \(U \subseteq V_f(k)\) with \(\sum_{v \in U} c_v(\omega_v)\) adding to \(\pm \frac{n+1}{2}\).
  
  Assume now that the number of zeroes at the real places of \(\underline{\omega}\) and \(\theta\) is the same.  In this case \([\eta] \neq [\xi]\) implies \(\theta \not\in S . \underline{\omega} . \Aut(k)_{\gamma}\).  If \(k\) has on real place, this implies \(\theta|_{V_f(k)} \not\in S . \omega . \Aut(k)_{\gamma}\), hence \(\mathbf{G}\) is not weakly uniform.  If \(k\) has two real places and \(\theta_{w_i} = \underline{\omega}_{w_i}\) holds, then \(\theta|_{V_f(k)} \not\in S . \omega . \Aut(k)_{\gamma, w_1}\) follows.  Assume finally that \(k\) has two real places and that \(\theta_{w_i} = \underline{\omega}_{w_i}+1\) holds.  If \(\Aut(k)_{\gamma}\) does not permute the real places of \(k\), then \(\theta|_{V_f(k)} \not\in S . \omega . \Aut(k)_{\gamma, w_1} = S . \omega . \Aut(k)_{\gamma}\) follows immediately.  So assume that \(\Aut(k)_{\gamma}\) does permute the real places and let \(\psi \in \Aut(k)_{\gamma}\) such that \(\psi(w_1) = w_2\).  Then
  \begin{align*}
    [\eta . \psi] = [\eta] & \neq [\xi], \\
    b(\eta . \psi) & = \theta . \psi, \\
    (\theta . \psi)|_{V_f(k)} & \in (S^{\omega} . \omega) . \Aut(\A_k^f)_{\gamma'} \text{ and} \\
    (\theta . \psi)_{w_i} & = \underline{\omega}_{w_i}
  \end{align*}
  all hold.  So \((\theta . \psi)|_{V_f(k)} \not\in S . \omega . \Aut(k)_{\gamma, w_1}\) follows and we again obtain \(S . \omega . \Aut(k)_{\gamma, w_1} \neq (S^{\omega} . \omega) . \Aut(\A_k^f)_{\gamma'}\).
  
  \medskip
  Finally, let \(\mathbf{G}\) be of type \({}^2 A_{2n+1}\).  At all real places \(\mathbf{G}\) needs to be either isomorphic to \(\SL(2n+2, \R)\) or \(\SL(n+1, \mathbb{H})\) and thus become inner type or be isomorphic to \(\SU(3,1)\) in the particular case of \(A_3\).  All further arguments go roughly in the same way as with \({}^1 A_{2n+1}\), with two key differences:
  
  First, the scenario that the number of zeroes at the real places of \(\underline{\omega}\) and \(\theta\) differing by one cannot occur in general:  This is because here real places contribute a summand of 0 or 1  in \(\Z/2 \cong H^0(k, Z(\mathbf{G_1})')\) for the Tate-condition.  As a difference between those two cannot be divided by two, we do not have to account for a \(\theta\) with a number of zeroes at the real places different from \(\underline{\omega}\) to appear in \(\im d_1 \cap (\prod S_v . \omega)\).  As opposed to type \({}^1 A_{2n+1}\) we hence receive no additional condition on subsets of coordinates of \(\omega\) adding to some specific value.
  
  Second, if \(k\) has two real places, \(\underline{\omega}\) will again have to have different coordinates \(\omega_{w_i} \in H^2(k_{w_i}, Z(\mathbf{G})) \cong \Z/2\) if we want \(\mathbf{G}\) to be congruence rigid.  But if \(\mathbf{G}\) is isomorphic to \(\SU(3,1)\) at one place and becomes an inner form at the other, then \(\Aut(k)_{\gamma}\) can never switch \(w_1\) and \(w_2\).  So the only possibility is again \(\mathbf{G}\) becoming isomorphic once each to \(\SL(2n+2, \R)\) and \(\SL(n+1, \mathbb{H})\) and the additional attached conditions.
\end{proof}

\begin{theorem} \label{thm:cong-rigid-D-n}
  Let \(k\) be a locally determined number field and \(\mathbf{G}\) a \(k\)-group.  Assume furthermore that \(\mathbf{G}\) is of type \(D_{n \geq 5}\) and that \(k\) has at least one real place.  Then \(\mathbf{G}\) is congruence rigid if \(\omega . \Aut(k)_{\gamma} = \omega . \Aut(\A_k^f)_{\gamma'}\) holds, \([\gamma]\) has trivial fiber under \(\overline{h}\) and one of the following conditions is satisfied:
  \begin{enumerate}[label=(\roman*)]
    \item \(\mathbf{G}\) is of type \({}^1 D_{2n}\), \(k\) has one real place and there \(\mathbf{G}\) is isomorphic to \(\Spin^*(4n)\) and \(\mathbf{G}\) has an inner twin at exactly one finite place.
    \item \(\mathbf{G}\) is of type \({}^2 D_{2n}\), \(k\) has one real place and there \(\mathbf{G}\) is isomorphic to \(\Spin^*(4n)\) and \(\mathbf{G}\) has no inner twins at finite places and.
    \item \(\mathbf{G}\) is of type \({}^1 D_5\), \(k\) has one real place and there \(\mathbf{G}\) is isomorphic to \(\Spin(7,3)\) and \(\mathbf{G}\) has no inner twins at finite places.
    \item \(\mathbf{G}\) is of type \({}^2 D_{2n+1}\), \(k\) has one real place and there \(\mathbf{G}\) is isomorphic to \(\Spin^*(4n+2)\) or (in the case of \(n=2\)) to \(\Spin(7,3)\) and \(\mathbf{G}\) has an inner twin at at most one finite place.
  \end{enumerate}
\end{theorem}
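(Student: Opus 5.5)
The plan is to follow the proof of Theorem~\ref{thm:cong-rigid-A-n}. By Theorems~\ref{thm:inner-twists-of-nontrivial-h-bar}, \ref{thm:f-g-equivalence} and \ref{thm:f_1-decomposition}, congruence rigidity of \(\mathbf{G}\) is equivalent to three conditions:
\begin{enumerate}[label=(\alph*)]
\item \([\gamma]\) has trivial fiber under \(\overline{h}\);
\item at every real place \(\mathbf{G}\) is one of the groups in Theorem~\ref{thm:map-real-groups};
\item \([\xi]\) has trivial fiber under \(\overline{d_1}\).
\end{enumerate}
Condition~(a) is assumed. For~(b), note that the only type \(D_{n\ge5}\) entries in Theorem~\ref{thm:map-real-groups} are \(\Spin^*(2n)\) and \(\Spin(7,3)\). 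In each of cases (i)--(iv) the single real place \(w\) carries exactly such a form: \(\Spin^*(4n)\) is inner for \(D_{2n}\), \(\Spin^*(4n+2)\) is outer for \(D_{2n+1}\), and \(\Spin(7,3)\) is inner of type \(D_5\). So (b) holds, and it remains only to verify~(c) when \(k\) has exactly one real place.

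For~(c), I would show that \(d_1(S.\xi.\Aut(k)_\gamma)\) equals \(\im d_1\cap\bigl((\prod_v S_v.\omega).\Aut(\A_k^f)_{\gamma'}\bigr)\), and that \(d_1\) is injective on the relevant classes. By the Poitou--Tate sequence~\eqref{eq:poitou-tate}, an element \(\theta\) in the fiber is determined by its finite part \(\Sigma.\omega.\Phi\) together with a real coordinate \(\theta_w\), subject to \(c(\theta)=0\).

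I would read off the real contribution \(c_w\) from Table~\ref{tate-table} in each case.
\begin{itemize}
\item \({}^2D_{2n}\) with \(\mathbf{G}\) outer at \(w\) (\(\Spin^*(4n)\) there): \(H^2(k_w,Z)=0\), so \(d_1\) is injective.
\item \({}^1D_{2n}\): \(H^2(\R,Z)\cong\Z/2\times\Z/2\) and \(c_w\) is the identity. The real coordinate is forced by the finite ones, so \(d_1\) is again injective.
\item \(D_{2n+1}\): \(c_w\) is injective, for \({}^1D_{2n+1}\) via \(\cdot2\colon\Z/2\to\Z/4\). Hence \(\theta_w\) is also determined by the finite part.
\end{itemize}
Thus in all cases \(d_1\) is injective on the fiber, and the problem reduces to showing
\[
S.\omega.\Aut(k)_\gamma=\im d_1\cap\Bigl(\bigl(\textstyle\prod_v S_v.\omega\bigr).\Aut(\A_k^f)_{\gamma'}\Bigr).
\]

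To prove this equality I would first handle the \(\prod S_v\)-action. By Corollary~\ref{cor:S-omega-characterization}, \(\prod S_v\) acts nontrivially only at inner-twin places.
\begin{itemize}
\item In cases (ii) and (iii) there are none, so \(\prod S_v.\omega=\{\omega\}=S.\omega\).
\item In case (iv), with at most one inner-twin place \(v_0\), applying \(\sigma_{v_0}\) is the same as applying the global \(\sigma\), since \(S_w\) acts trivially on \(\Z/2\). So the set again equals \(S.\omega\).
\item In case (i), with exactly one inner-twin place \(v_0\), flipping \(\omega_{v_0}\) between \((1,0)\) and \((0,1)\) changes \(c\) by \((1,1)\).
\end{itemize}
Case~(i) is the main obstacle. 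Here I would check that the global \(\sigma\), which also swaps the real coordinate \(\underline\omega_w\) under \(\Spin^*(4n)\) (whose class is \((1,0)\) or \((0,1)\)), compensates exactly. Then the unique admissible \(\theta\) with flipped \(v_0\) and adjusted real coordinate is \(\sigma.\underline{\omega}\). This is why one inner twin is exactly right here, whereas none would leave \(\sigma.\underline\omega\) outside the fiber pattern; that last point matters for necessity only, and I would not need it.

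Finally, the \(\Aut(\A_k^f)_{\gamma'}\)-action commutes with these \(\Sigma\) by Proposition~\ref{prop:two-sided-quotient}\ref{item:compatible}. Combined with the hypothesis \(\omega.\Aut(k)_\gamma=\omega.\Aut(\A_k^f)_{\gamma'}\), this yields the required equality. Care is needed that \(\Aut(k)_\gamma\) fixes the unique real place, so that the real coordinate is carried along consistently, via Corollary~\ref{cor:poitou-compatability}.
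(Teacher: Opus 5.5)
Your reduction to conditions (a)--(c) follows the paper, and so do your treatments of cases (i), (iii) and (iv). In case (i), the paper's decomposition \(S^\omega.\omega \sqcup T^\omega.\omega\) is exactly your observation that the global \(\sigma\) flips both \(\omega_{v_0}\) and \(\underline\omega_w\). One side remark in (i) is off: with \(\underline\omega_w\in\{(1,0),(0,1)\}\), the condition \(c(\underline\omega)=0\) forces an odd number of finite inner twins, so "none" cannot occur. This does not affect sufficiency.

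\textbf{The gap is in case (ii).} There \(\Spin^*(4n)\) is of \emph{inner} type over \(\R\), as you note yourself in (b). So the quadratic extension \(l/k\) defining \(\gamma\) splits at \(w\), and \(Z(\mathbf{G_1})=\mathbf{R}_{l/k}(\mu_2)\) becomes \(\mu_2\times\mu_2\) over \(k_w\).
\begin{itemize}
\item \(H^2(k_w,Z(\mathbf{G_1}))\cong\Z/2\times\Z/2\), not \(0\). The entry \(0\) in Table~\ref{tate-table} applies only at real places where \(\mathbf{G_1}\) stays outer.
\item At a split place \(c_w\) is the sum map ``\(+\)'', and \(\underline\omega_w\in\{(1,0),(0,1)\}\).
\item Consequently \(\ker d_1\) has order two. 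It is generated by the class \(\eta_0\) with \(b(\eta_0)=(1,1)\) at \(w\) and \(0\) elsewhere.
\end{itemize}
So \(d_1\) is not injective in case (ii). Every element of \(\omega.\Aut(\A_k^f)_{\gamma'}\) has two preimages under \(d_1\), with real coordinates \((1,0)\) and \((0,1)\). Your equality in \(\im d_1\) therefore does not yet give a trivial fiber of \(\overline{d_1}\): you still have to show that \(\xi+\eta_0\in S.\xi.\Aut(k)_\gamma\).

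\textbf{The repair} is the same compensation you used in case (i).
\begin{itemize}
\item Since \(\mathbf{G}\) has no inner twins at finite places, the global \(\sigma\in S\) fixes every finite coordinate of \(\underline\omega\) and swaps the real one.
\item Hence \(b(\sigma.\xi)=b(\xi+\eta_0)\), so \(d_1^{-1}(\omega)=\{\xi,\xi+\eta_0\}=S.\xi\).
\item For \(\phi\in\Aut(k)_\gamma\), the element \(\eta_0.\phi\) is a nonzero element of \(\ker d_1\), hence equals \(\eta_0\). Since \(\sigma\) commutes with \(\phi\), we get \(\sigma.\xi.\phi=\xi.\phi+\eta_0\).
\item Together with the hypothesis \(\omega.\Aut(k)_\gamma=\omega.\Aut(\A_k^f)_{\gamma'}\), this gives \(d_1^{-1}(\omega.\Aut(\A_k^f)_{\gamma'})=S.\xi.\Aut(k)_\gamma\).
\end{itemize}
This is precisely the paper's computation \(b(d_1^{-1}(\prod_v S_v.\omega))=S^\omega.\omega\times\{(1,0),(0,1)\}_w\). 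With this step added, your argument is complete.
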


Note again that \([\gamma]\) always has trivial fiber under \(\overline{h}\) if \(\mathbf{G}\) is of type \({}^1 D_n\), as \([\gamma]\) then corresponds to the trivial element in \(H^1(k, \Sym \Delta)\).  The condition \(\omega . \Aut(k)_{\gamma} = \omega . \Aut(\A_k^f)_{\gamma'}\) turns out to be almost the same as the weak uniformity imposed in the types without Dynkin diagram symmetries. The only difference is that \(\Aut(k)\) and \(\Aut(\A_k^f)\) have to be restricted to stabilizers of \(\gamma\) and \(\gamma'\), as they otherwise would not act in a well-defined manner in the outer types.

\begin{proof}
  As in the proof of \Cref{thm:cong-rigid-A-n}, we have to consider when exactly \(\overline{d_1}\) has trivial fiber at \([\xi]\) under the assumption that \(\mathbf{G}\) is isomorphic to one of the groups listed in \Cref{thm:map-real-groups} at all real places.
  
  \medskip
  If \(\mathbf{G}\) is of type \(D_{2n}\), \Cref{thm:map-real-groups} then implies that \(\mathbf{G}\) has to be isomorphic at all real places to \(\Spin^*(4n)\), which is of inner type. Then \Cref{prop:2-real-places-D-n} tells us that \(\mathbf{G}\) can only be congruence rigid if it has at most one real place.
  
  Assume first that \(\mathbf{G}\) is of type \({}^1 D_{2n}\).  Then
  \[H^0(k, Z(\mathbf{G})')^* \cong \Z/2 \times \Z/2
    \cong H^2(k_v, Z(\mathbf{G})\]
  for both finite and real \(v\) by \Cref{thm:tate-map}.  Because \(\mathbf{G}\) is isomorphic to \(\Spin^*(4n)\) at the real place \(w\), we see as in \Cref{prop:map-real-groups} that \(\underline{\omega}_w\) corresponds to one of the two elements of \(\Z/2 \times \Z/2\) that is not fixed under the action of \(S_w\) as described in \Cref{rmk:dynkin-actions}.  Without loss of generality, we assume \(\underline{\omega}_w = (1,0)\) and observe that
  \[\sum_{v \nmid \infty} c_v(\omega_v) = (1,0) \in H^0(k, Z')^*.\]
  Thus for any \(\Sigma \in \prod_{v \nmid \infty} S_v\)
  \[\sum_{v \nmid \infty} c_v(\Sigma_v . \omega_v) \in \{(1,0), (0,1)\},\]
  as any application of \(\Sigma_v\) will change the sum  by \((1,-1)\) or \((-1,1)\) in total, so by \((1,1) \in \Z/2 \times \Z/2\).  Hence
  \[(\prod_{v \nmid \infty} S_v . \omega) \cap \im d_1 = S^{\omega} . \omega \sqcup T^{\omega} . \omega\]
  where
  \[T^{\omega} \coloneqq \{\Sigma \in \prod_{v \nmid \infty} S_v
    | \sum_{v \nmid \infty} c_v(\Sigma_v . \omega_v) = (0,1)\}\]
  under our previously fixed identification \(H^0(k, Z')^* \cong \Z/2 \times \Z/2\).  As \(d_1\) is injective according to \Cref{thm:tate-map}, we get the statement
  \[S . \omega . \Aut(k)_{\gamma} = (S^{\omega} . \omega \sqcup T^{\omega} . \omega) . \Aut(\A_k^f)_{\gamma'}\]
  to be equivalent to the triviality of the fiber of \([\xi]\) along \(\overline{d_1}\), similar to the proof of \Cref{thm:sym-rigidity-no-reals}.  So
  \begin{align*}
    \omega . \Aut(k)_{\gamma} & \subseteq (S^{\omega} . \omega)
      . \Aut(\A_k^f)_{\gamma'} \text{ and} \\
    \sigma . \omega \Aut(k)_{\gamma} & \subseteq (T^{\omega} . \omega)
      . \Aut(\A_k^f)_{\gamma'}
  \end{align*}
  hold, where \(\sigma\) is the nontrivial element of \(S\).  Furthermore, applying \(\sigma\) provides a bijection between both inclusions, so we get that the fiber of \([\xi]\) along \(\overline{d_1}\) is trivial if and only if \(\omega . \Aut(k)_{\gamma} = S^{\omega} . \omega . \Aut(\A_k^f)_{\gamma'}\) holds.
  
  As the number of places \(v\) where \(\omega_v\) is equal to \((1,0)\) is fixed under \(\Aut(k)_{\gamma}\), we then conclude that there can be at most one such place in a similar way as in the proof of \Cref{thm:outer-types-weakly-uniform}.  From
  \[\sum_{v \nmid \infty} c_v(\omega_v) = (1,0)\]
  we then deduce that there has to be exactly one such finite place, i.e. a place where \(\mathbf{G}\) has an inner twin.  But then \(S^{\omega} . \omega = \omega\) and the condition reduces to \(\omega . \Aut(k)_{\gamma} = \omega . \Aut(\A_k^f)_{\gamma'}\).
  
  \medskip
  Assume now that \(\mathbf{G}\) is of type \({}^2 D_{2n}\).  Then
  \[H^0(k, Z(\mathbf{G})')^* \cong \Z/2\]
  and for the real place \(w\) we still have
  \[\underline{\omega}_w = (1,0) \in \Z/2 \times \Z/2
    \cong H^2(k_w, Z')^*\]
  without loss of generality.  From \Cref{thm:tate-map} we deduce
  \[\sum_{v \nmid \infty} c_v(\omega_v) = 1
    \in H^0(k, Z(\mathbf{G})')^*\]
  and the same holds for any \(\Sigma . \omega\) with \(\Sigma \in \prod_{v \nmid \infty}\).  We can now use the injective map \(b\) to describe the preimage of \(\prod_{v \nmid \infty} S_v . \omega\) under \(d_1\) by
  \[b({d_1}^{-1}(\prod_{v \nmid \infty} S_v . \omega))
    = S^{\omega} . \omega \times S_w . \underline{\omega}_w.\]
  Thus we see that the fiber of \([\xi]\) along \(\overline{d_1}\) is trivial if and only if
  \[b(d^{-1}(\prod_{v \nmid \infty} S_v . \omega .
    \Aut(\A_k^f)_{\gamma'}))
    = b(S . \xi . \Aut(k)_{\gamma})\]
  or equivalently
  \[S^{\omega} . \omega \times \{(1,0), (0,1)\}_w
    = S . \underline{\omega} . \Aut(k)_{\gamma},\]
  where the second factor on the left side denotes the coordinate at the real place \(w\).  As in the previous case, \(\underline{\omega} . \Aut(k)_{\gamma} \subseteq S^{\omega} . \omega \times \{(1,0)\}_w\) and by the same argument we conclude that the fiber of \([\xi]\) along \(\overline{d_1}\) is trivial if and only if \(\omega . \Aut(k)_{\gamma} = S^{\omega} . \omega . \Aut(\A_k^f)_{\gamma'}\) holds.
  
  We can now once more employ the argumentation used in the proof of \Cref{thm:outer-types-weakly-uniform} to deduce that this time \(\omega\) can not have any coordinate \(\omega_v\) that is modified by the action of \(S_v\).  Hence \(\mathbf{G}\) can not have any inner twins at finite places and the condition again reduces to \(\omega . \Aut(k)_{\gamma} = \omega . \Aut(\A_k^f)_{\gamma'}\).
  
  \medskip
  If \(\mathbf{G}\) is of type \(D_{2n+1}\), then \Cref{thm:map-real-groups} implies that \(\mathbf{G}\) has to be isomorphic to either \(\Spin^*(4n+2)\) or (in the case of \(D_5\)) to \(\Spin(7,3)\) at all real places.  But only \(\Spin(7,3)\) is an inner form, so this behaviour is impossible for groups of type \({}^1 D_{2n+1}\) where \(n \geq 3\).  If \(k\) has more than two real places, \Cref{prop:type-2n+1-3-real-places} states that \(\mathbf{G}\) cannot be congruence rigid.
  
  Assume first that \(\mathbf{G}\) is of type \({}^1 D_5\).  If \(k\) has exactly two real places, it would have to be isomorphic at both to \(\Spin(7,3)\).  But then \(\underline{\omega}_{w_1} = \underline{\omega}_{w_2}\) for those real places and we can employ the same argument as in the proof of \Cref{prop:type-2n+1-3-real-places} to see that \(\mathbf{G}\) is not congruence rigid.
  
  If \(k\) has exactly one real place and we want \(\overline{d_1}\) to have trivial fiber at \([\xi]\), we see that as in type \({}^1 A_{2n+1}\) we have to avoid any subset of \(\omega_v\) adding to
  \[\pm 1 = \pm \frac{1}{2} 2 \in \Z/4 \cong H^0(k, Z')^*.\]
  Here 2 is the difference between the two possible images of the real places under \(c_v\).  This value of \(c_v(\omega_v)\) occurs exactly whenver \(\omega_v\) is 1 or 3 in \(H^2(k_v, Z) \cong \Z/4\), so \(\omega\) cannot take those values at any finite place.  But if this is true, then \(\mathbf{G}\) has no inner twins at finite places and \(\omega = (\prod_{v \nmid \infty} S_v) . \omega\) follows.  Because \(d_1\) is injective, we conclude that the first condition of \Cref{thm:f_1-decomposition}, namely
  \[S . \xi . \Aut(k)_{\gamma}
    = {d_1}^{-1}((\prod_{v \nmid \infty} S_v . \omega) . \Aut(\A_k^f)_{\gamma'} \cap \im(d)),\]
  is equivalent to
  \[\omega . \Aut(k)_{\gamma} = \omega . \Aut(\A_k^f)_{\gamma'}\]
  by \Cref{cor:poitou-compatability}.
  
  Assume now that \(\mathbf{G}\) is of type \({}^2 D_{2n+1}\).  If \(k\) has more than two real places, \Cref{prop:type-2n+1-3-real-places} again states that \(\mathbf{G}\) cannot be congruence rigid.  If \(k\) has exactly two real places and is isomorphic at both to either \(\Spin^*(4n+2)\) or \(\Spin(7,3)\), we again argue as in the proof of \Cref{prop:type-2n+1-3-real-places} and conclude that \(\mathbf{G}\) is not congruence rigid.  If it is isomorphic to \(\Spin^*(4n+2)\) and \(\Spin(7,3)\) once each, we can once more construct \(\theta \in \bigoplus_{v \in V(k)} H^2(k_v, Z(\mathbf{G_1}))\) by
  \begin{align*}
    \theta_v & = \omega_v \text{ if } v \neq w_1, w_2 \\
    \theta_{w_i} & = \underline{\omega}_{w_i} + 1.
  \end{align*}
  In this case however there is no danger of \(\Aut(k)_{\gamma}\) mapping \(\xi\) to a preimage \(\eta\) of \(\theta\) under \(b\), even if the number of zeroes in the \(H^2(k_{w_i}, Z(\mathbf{G_1})) \cong \Z/2\) is the same for \(\underline{\omega}\) and \(\theta\).  This is because \(\mathbf{G}\) is of inner type at one real place and of outer type at the other, so \(\Aut(k)_{\gamma}\) cannot permute them.
  
  If \(k\) has exactly one real place, \(d_1\) is once more injective and \(S_w\) acts trivially on \(H^2(k_w, Z(\mathbf{G_1}))\), so \(S . \omega \subseteq S^{\omega} . \omega\).  Furthermore, there is no \(\Sigma \in \prod_{v \nmid \infty} S_v\) for which
  \[c(\Sigma . \omega) - c(\omega)\]
  is equal to the differnce between two values that \(c_w\) could take at the real places, as opposed to the case \({}^1 D_{2n+1}\).  So we can argue as in the case of \({}^2 A_{2n+1}\) and obtain that \(\mathbf{G}\) is congruence rigid if and only if it is weakly uniform and isomorphic to either \(\Spin^*(4n+2)\) or \(\Spin(7,3)\) at the real place.  We then use \Cref{thm:outer-types-weakly-uniform} to translate weak uniformity, as this allows us to impose the condition \(\omega . \Aut(k)_{\gamma} = \omega . \Aut(\A_k^f)_{\gamma'}\) in all cases \(D_n\).
\end{proof}

\begin{theorem} \label{thm:cong-rigid-E-6}
  Let \(k\) be a locally determined number field and \(\mathbf{G}\) a \(k\)-group.  Assume furthermore that \(\mathbf{G}\) is of type \(E_6\) and that \(k\) has at least one real place.  Then \(\mathbf{G}\) is never congruence rigid.
\end{theorem}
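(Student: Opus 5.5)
The argument will go through the real-place criterion in Theorem~\ref{thm:f_1-decomposition}, combined with Theorems~\ref{thm:inner-twists-of-nontrivial-h-bar} and~\ref{thm:f-g-equivalence}, exactly as in the proofs of Theorems~\ref{thm:cong-rigid-A-n} and~\ref{thm:cong-rigid-D-n}.

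First I reduce to a local condition at a real place. If \([\gamma]\) has nontrivial fiber under \(\overline{h}\), then \(\mathbf{G}\) is not congruence rigid by Theorem~\ref{thm:inner-twists-of-nontrivial-h-bar}, and there is nothing more to show. So assume the fiber of \([\gamma]\) is trivial. Theorem~\ref{thm:f-g-equivalence} then says that congruence rigidity of \(\mathbf{G}\) is equivalent to the fiber of \(\overline{f_1}\) at a preimage \(\alpha\) of \(\beta\) being trivial. Theorem~\ref{thm:f_1-decomposition} makes this equivalent to two conditions: triviality of the \(\overline{d_1}\)-fiber, and \(\mathbf{G}\) being \(\R\)-isomorphic at every real place \(w\) of \(k\) to one of the groups listed in Theorem~\ref{thm:map-real-groups}. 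It therefore suffices to show that the second condition fails at some real place. By hypothesis \(k\) has at least one real place \(w\), so I fix it.

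Next I check that no \(E_6\) group passes this real test. The list in Theorem~\ref{thm:map-real-groups} contains only groups of type \(A\), \(B\), \(C\) and \(D\), and the accidental isomorphisms noted after it stay within these types. No real form of \(E_6\) is therefore on the list. This reflects the observation preceding the \(\Spin(r,s)\) lemma: for \(E_6\) one has \(H^1(\R,\mathbf{G}) = H^1(\R,\Ad \mathbf{G})\), this set is nontrivial, and so \(Q_\R\) has nontrivial image for every real form of \(E_6\). Consequently \(\mathbf{G}_{k_w}\) is not on the list, whichever real form it is.

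One point needs care. Theorem~\ref{thm:f_1-decomposition} rests on Proposition~\ref{prop:map-real-groups}: the image of \(Q_\R\) must stay nontrivial after twisting to \(\mathbf{G_1}\) and passing to \(S_w\)-orbits. That proposition uses Adams--Ta\"{\i}bi to show that the diagram symmetry acts nontrivially on \(H^1(\R,\Ad\mathbf{G_1})\) only in type \(D_{2n}\). For \(E_6\), then, \(S_w\) acts trivially, and the orbit quotient cannot collapse the nontrivial image onto the base point.

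This verification is where I expect the only real difficulty. Once it is in place, the construction in the second part of the proof of Theorem~\ref{thm:f_1-decomposition} applies directly. Take two classes \(\epsilon_1', \epsilon_2' \in \bigoplus_{v \mid \infty} H^1(k_v,\mathbf{G})\) that agree at every real place except \(w\). Lift them to global classes using bijectivity of \(\underline{e_2}\). Their images under \(q\) give two distinct elements in the fiber of \(\overline{f_1}\). Hence \(\overline{f_1}\) has a nontrivial fiber, and \(\mathbf{G}\) is not congruence rigid.
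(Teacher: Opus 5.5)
Your proposal is correct and matches the paper's proof. Both use Theorems~\ref{thm:inner-twists-of-nontrivial-h-bar}, \ref{thm:f-g-equivalence} and~\ref{thm:f_1-decomposition} to reduce congruence rigidity to the real-place condition, and then note that the list in Theorem~\ref{thm:map-real-groups} contains no group of type \(E_6\). Your extra remarks on the trivial \(S_w\)-action and the explicit two-element fiber only spell out what Proposition~\ref{prop:map-real-groups} and the proof of Theorem~\ref{thm:f_1-decomposition} already supply.
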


Note again that \([\gamma]\) always has trivial fiber under \(\overline{h}\) if \(\mathbf{G}\) is of type \({}^1 E_6\), as \([\gamma]\) then corresponds to the trivial element in \(H^1(k, \Sym \Delta)\).

\begin{proof}
  By Theorems~\ref{thm:inner-twists-of-nontrivial-h-bar}, \ref{thm:f-g-equivalence} and \ref{thm:f_1-decomposition} the \(k\)-group \(\mathbf{G}\) can only be congruence rigid if \(\mathbf{G}\) is isomorphic at all real places to one of the groups listed in \Cref{thm:map-real-groups}.  But this list does not contain any groups of type \(E_6\), so the condition can never be satisfied.
\end{proof}

Note that the three previous theorems together with \Cref{thm:cong-rigid-no-symmetries} prove \Cref{thm:3-real-places-not-rigid}.

\section{Special cases}
\label{section:special-cases}

In this final section, we deal with noteworthy special cases of our main results.  Most of them were already highlighted in the introduction but we start with a more general statement on quasi-split groups.

\begin{corollary} \label{cor:cong-rigid-split-sym-2}
  Let \(k\) be a locally determined number field and \(\mathbf{G}\) a \(k\)-group.  Assume furthermore that \(\mathbf{G}\) is of type \(A_{n \geq 2}\), \(D_{n \geq 5}\) or \(E_6\) and that \(\mathbf{G}\) is quasi-split over \(k\).  Then \(\mathbf{G}\) is congruence rigid if and only if  one of the following conditions is satisfied:
  \begin{enumerate}[label=(\roman*)]
    \item \(\mathbf{G}\) is of type \({}^1 A_{2n}\).
    \item \(\mathbf{G}\) is of type \({}^2 A_{2n}\), \(\mathbf{G}\) splits at all real places and \([\gamma]\) has trivial fiber along \(\overline{h}\).
    \item \(\mathbf{G}\) is of type \({}^1 A_{2n+1}\) and \(k\) has at most one real place.
    \item \(\mathbf{G}\) is of type \({}^2 A_{2n+1}\), \(k\) has at most one real place, there \(\mathbf{G}\) splits and \([\gamma]\) has trivial fiber along \(\overline{h}\).
    \item \(\mathbf{G}\) is of type \({}^1 D_{2n}\) and \(k\) has no real places.
    \item \(\mathbf{G}\) is of type \({}^2 D_{2n}\), \(k\) has no real places and \([\gamma]\) has trivial fiber along \(\overline{h}\).
    \item \(\mathbf{G}\) is of type \({}^1 D_{2n+1}\) and \(k\) has no real places.
    \item \(\mathbf{G}\) is of type \({}^2 D_{2n+1}\), \(k\) has no real places and \([\gamma]\) has trivial fiber along \(\overline{h}\).
    \item \(\mathbf{G}\) is of type \({}^1 E_6\) and \(k\) has no real places.
    \item \(\mathbf{G}\) is of type \({}^2 E_6\), \(k\) has no real places and \([\gamma]\) has trivial fiber along \(\overline{h}\).
  \end{enumerate}
\end{corollary}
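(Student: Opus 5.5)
The plan is to specialize the general characterizations (Theorems~\ref{thm:sym-rigidity-no-reals}, \ref{thm:cong-rigid-A-n}, \ref{thm:cong-rigid-D-n}, \ref{thm:cong-rigid-E-6}) to the quasi-split case. Quasi-splitness gives trivial inner twisting data, so all the weak-uniformity and inner-twin conditions become automatic.

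Since \(\mathbf{G} = \mathbf{G_1}\), we may take \(\alpha\) to be the base point of \(H^1(k, \Ad \mathbf{G_1})\), so \(\xi = 0\), \(\omega = 0\) and \(\underline{\omega} = 0\). We then verify each hypothesis in turn.
\begin{enumerate}[label=(\alph*)]
\item Every \(\Phi \in \Aut(\A^f_k)_{\gamma'}\) and every \(\Sigma \in \prod_v S_v\) fixes \(0\), so \(\omega.\Aut(k)_\gamma = \omega.\Aut(\A_k^f)_{\gamma'} = \{0\}\).
\item We have \(S^\omega.\omega = \{0\}\), hence \(\mathbf{G}\) is weakly uniform in the sense of Definition~\ref{def:weakly-uniform-sym}.
\item \(\mathbf{G}\) has no inner twins at finite places, since \(\omega_v = 0\) is \(S_v\)-fixed.
\item No subset of coordinates of \(\omega\) sums to \(\pm\frac{n+1}{2}\), because every such sum is \(0\). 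This needs \(n+1 \ne 0\) in \(\Z/(2n+2)\), which holds.
\item The stabilizer variant \(S.\omega.\Aut(k)_{\gamma,w_1} = (S^\omega.\omega).\Aut(\A_k^f)_{\gamma'}\) also holds trivially, both sides being \(\{0\}\).
\end{enumerate}
For inner types, \([\gamma]\) is the base point, and its fiber under \(\overline{h}\) is trivial by injectivity of \(h\) (as in the proof of Theorem~\ref{thm:inner-twists-of-nontrivial-h-bar}). This is why that condition appears only for outer types.

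If \(k\) is totally imaginary, Theorem~\ref{thm:sym-rigidity-no-reals} gives rigidity exactly when \([\gamma]\) has trivial \(\overline{h}\)-fiber, matching every item of the corollary in that case. If \(k\) has a real place, we go through the types.

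\textbf{\({}^1A_{2n}\), \({}^2A_{2n}\).} We read these directly off Theorem~\ref{thm:cong-rigid-A-n}(i),(ii).

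\textbf{\({}^1A_{2n+1}\).} A quasi-split inner group is split, hence \(\SL(2n+2,\R)\) at every real place. Item (iii) of Theorem~\ref{thm:cong-rigid-A-n} applies with one real place. Item (iv) fails with two real places, since it requires the two real forms to differ.

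\textbf{\({}^2A_{2n+1}\).} At a real place the quasi-split group is either split (inner) or \(\SU(n+1,n+1)\). The latter is allowed only if it equals \(\SU(3,1)\), which never happens for \(n \ge 1\) since \(\SU(2,2) \ne \SU(3,1)\). So item (v) applies exactly when \(\mathbf{G}\) splits at the unique real place. Item (vi) is excluded because both real forms would be the split one.

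\textbf{\(D_n\).} Quasi-split real forms are \(\Spin(n,n)\) or \(\Spin(n+1,n-1)\), never \(\Spin^*\). They equal \(\Spin(7,3)\) only in the excluded case \(n=5\) with form \(\Spin(6,4)/\Spin(5,5)\), and neither of these is \(\Spin(7,3)\). So they are not in the list of Theorem~\ref{thm:map-real-groups}, and Theorem~\ref{thm:f_1-decomposition} together with Theorems~\ref{thm:f-g-equivalence} and~\ref{thm:inner-twists-of-nontrivial-h-bar} rules out real places. (Theorem~\ref{thm:cong-rigid-D-n} is only stated as sufficient, so necessity must come from this direct argument.)

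\textbf{\(E_6\).} This is Theorem~\ref{thm:cong-rigid-E-6}.

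The main obstacle is the real-form bookkeeping: correctly identifying the quasi-split real forms in each type and checking them against Theorem~\ref{thm:map-real-groups}, especially excluding \(\SU(3,1)\) and \(\Spin(7,3)\). A second point to handle carefully is that the necessity direction for \(D_n\) has to be taken from Theorem~\ref{thm:f_1-decomposition} rather than from Theorem~\ref{thm:cong-rigid-D-n}.
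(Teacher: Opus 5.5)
Your proposal is correct and follows the paper's route. Quasi-splitness forces \(\omega = 0\), which makes weak uniformity, the absence of inner twins and the partial-sum conditions automatic. The corollary is then read off from Theorems~\ref{thm:cong-rigid-A-n}, \ref{thm:cong-rigid-D-n} and~\ref{thm:cong-rigid-E-6}, once one notes that the only quasi-split real forms in the list of Theorem~\ref{thm:map-real-groups} are the split groups \(\SL(n+1,\R)\).

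Your version is slightly more careful than the paper's one-line proof in two places. You invoke Theorem~\ref{thm:sym-rigidity-no-reals} for totally imaginary \(k\), which is needed because the three theorems the paper cites all assume a real place. You also derive necessity in type \(D_n\) directly from Theorem~\ref{thm:f_1-decomposition}, which is needed because Theorem~\ref{thm:cong-rigid-D-n} is only stated as a sufficient condition.
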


\begin{proof}
  Quasi-split groups are always weakly uniform and have no inner twins, as \(\omega_v = 0\) for all (finite and real) places holds.  Furthermore the only quasi-split groups of the considered types listed in \Cref{thm:map-real-groups} are the groups \(\SL(n+1, \R)\).  Then the statements follows immediately from applying Theorems~\ref{thm:cong-rigid-A-n}, \ref{thm:cong-rigid-D-n} and \ref{thm:cong-rigid-E-6}.
\end{proof}

One also retrieves the remaining results as given in \cite{Kammeyer-Spitler:chevalley}*{Theorem 2} from this with the exception of the unwieldy case \(D_4\).  The additional assumption that \(k/\Q\) is Galois allows us to simplify \Cref{thm:cong-rigid-no-symmetries} and \Cref{cor:cong-rigid-split-sym-2} with the help of \Cref{thm:h-bar-injective} in the following way:

\begin{corollary} \label{cor:cong-rigid-split}
  Let \(k\) be Galois over \(\Q\) and let \(\mathbf{G}\) be a \(k\)-group.  Assume that \(\mathbf{G}\) is quasi-split over \(k\) and not of type \(D_4\).  Then \(\mathbf{G}\) is congruence rigid if and only if  one of the following conditions is satisfied:
  \begin{enumerate}[label=(\roman*)]
  \item \(\mathbf{G}\) is of type \(A_{2n}\) for \(n \ge 1\) and splits at all infinite places of \(k\).
  \item \(\mathbf{G}\) is of type \(A_{2n+1}\) for \(n \ge 0\) or \(C_{n \ge 2}\), and either \(k = \Q\) and \(\mathbf{G}\) splits at the real place or \(k\) is totally imaginary.
    \item \(\mathbf{G}\) is of type \(B_{n \geq 3}\), \(D_{n \geq 5}\), \(E_6\), \(E_7\), \(E_8\), \(F_4\), or \(G_2\) and \(k\) is totally imaginary.
  \end{enumerate}
\end{corollary}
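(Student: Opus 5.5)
We reduce Corollary~\ref{cor:cong-rigid-split} to \Cref{thm:cong-rigid-no-symmetries} and \Cref{cor:cong-rigid-split-sym-2}, using two facts about Galois fields \(k/\Q\). First, \(k\) is locally determined~\cite{Klingen:similarities}*{Theorem~III.1.4.b)}, so both results apply. Second, by \Cref{thm:h-bar-injective} the map \(\overline{h}\) is injective, so every condition ``\([\gamma]\) has trivial fiber along \(\overline{h}\)'' in \Cref{cor:cong-rigid-split-sym-2} holds automatically. In addition, a Galois number field is either totally real or totally imaginary. Hence the conditions ``\(k\) has at most one real place'' and ``\(k\) has no real places'' become ``\(k=\Q\) or \(k\) totally imaginary'' and ``\(k\) totally imaginary'', respectively. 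The only totally real Galois field with exactly one real place is \(\Q\), and a totally real field with at least two real places has at least two.

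Next we treat the types without diagram symmetries, namely \(A_1\), \(B_n\), \(C_n\), \(E_7\), \(E_8\), \(F_4\) and \(G_2\). Here quasi-split means split, so \(\omega=0\) and \(\omega.\Aut(\A^f_k)=\omega=\omega.\Aut(k)\); thus \(\mathbf{G}\) is weakly uniform. At real places \(\mathbf{G}\) is the split real form, i.e.\ \(\SL(2,\R)\) or \(\Sp(2n,\R)\). We then apply \Cref{thm:cong-rigid-no-symmetries}:
\begin{enumerate}[label=(\alph*)]
\item Case (i) gives exactly the \(B_{n\ge3}\), \(E_7\), \(E_8\), \(F_4\), \(G_2\) part of item (iii).
\item Case (ii) gives \(A_1\) and \(C_n\) with at most one real place, i.e.\ \(k=\Q\) or \(k\) totally imaginary, with the split form at the real place. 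This matches item (ii) for \(n=0\) and for \(C_n\).
\item Case (iii) requires \(\mathbf{G}\) to be \(\SL(2,\R)\) at one real place and \(\SU(2)\) at the other, which is impossible for split \(\mathbf{G}\).
\end{enumerate}

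For types \(A_{n\ge2}\), \(D_{n\ge5}\) and \(E_6\) we read off \Cref{cor:cong-rigid-split-sym-2}:
\begin{enumerate}[label=(\alph*)]
\item \({}^1A_{2n}\) holds unconditionally. For \({}^2A_{2n}\) we need splitting at all real places, which for inner type is automatic. Together these give ``\(A_{2n}\) splits at all infinite places'' (including \(n\ge1\); the \(A_1\) case is already handled above).
\item For \({}^1A_{2n+1}\) and \({}^2A_{2n+1}\) we get ``\(k=\Q\) or \(k\) totally imaginary'', with splitting at the real place in the outer case; for \({}^1A_{2n+1}\) splitting at the real place is automatic, since a quasi-split inner form is split.
\item The \(D\) and \(E_6\) cases require \(k\) to be totally imaginary.
\end{enumerate}
Combining these gives items (i)–(iii).

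The only delicate point is a bookkeeping issue in item (i). In the \({}^1A_{2n}\) case the condition ``splits at all infinite places'' must hold automatically: the quasi-split inner form is globally split, hence split at every place. For \({}^2A_{2n}\) the condition is a genuine restriction, matching the corollary's requirement of splitting at real places. We would state this equivalence explicitly.
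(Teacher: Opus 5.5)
Your proposal is correct and follows the paper's own route. The paper obtains this corollary in exactly this way: a Galois field $k/\Q$ is locally determined and has injective $\overline{h}$ (Theorem~\ref{thm:h-bar-injective}), Galois fields are totally real or totally imaginary, and one then specializes Theorem~\ref{thm:cong-rigid-no-symmetries} and Corollary~\ref{cor:cong-rigid-split-sym-2}; your case-by-case bookkeeping supplies the details the paper leaves implicit, namely split real forms at real places, case (iii) of Theorem~\ref{thm:cong-rigid-no-symmetries} being impossible, and inner quasi-split forms splitting automatically.
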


Similarly, we can specialize the previous results to make clearer statements on the case \(k= \Q\).  Keep in mind that for \(\Q\) the notion of weak uniformity reduces to the statement that \(|S^{\omega} . \omega| \leq 2\), which in turn can be computed by \Cref{cor:S-omega-characterization}.

\begin{corollary} \label{cor:cong-rigid-Q}
  Let \(\mathbf{G}\) be a \(\Q\)-group not of type \(D_4\).  Then \(\mathbf{G}\) is congruence rigid if and only if  one of the following conditions is satisfied:
  \begin{enumerate}[label=(\roman*)]
  \item \(\mathbf{G}\) is of type \(A_1\).
  \item \(\mathbf{G}\) is of type \({}^1 A_{2n}\) or \({}^1 A_{4n+1}\) for \(n \ge 1\) and weakly uniform.
    \item \(\mathbf{G}\) is of type \({}^2 A_{2n}\) for \(n \ge 1\), is not quasi-split at at most one finite place and splits over \(\R\).
   \item \(\mathbf{G}\) is of type \({}^1 A_{4n+3}\) for \(n \ge 0\), weakly uniform and no subset of coordinates of \(\omega\) adds up to \(\pm (n+1)\) in \(\Z/(4n+4) \cong H^0(k,Z')^*\).
    \item \(\mathbf{G}\) is of type \({}^2 A_{2n+1}\) for \(n \ge 1\), has an inner twin at at most one finite place, and over \(\R\) the group \(\mathbf{G}\) either becomes inner type or is isomorphic to \(\SU(3,1)\) if \(n=1\).
    \item \(\mathbf{G}\) is of type \(C_n\) for \(n \geq 2\) and over \(\R\) the group \(\mathbf{G}\) becomes isomorphic to \(\Sp(2n, \R)\).
    \item \(\mathbf{G}\) is of type \({}^1 D_{2n}\) for \(n \geq 3\), has an inner twin at exactly one finite place, and over \(\R\) the group \(\mathbf{G}\) becomes isomorphic to \(\Spin^*(4n)\).
    \item \(\mathbf{G}\) is of type \({}^2 D_{2n}\) for \(n \geq 3\), has no inner twins at finite places, and over \(\R\) the group \(\mathbf{G}\) becomes isomorphic to \(\Spin^*(4n)\).
    \item \(\mathbf{G}\) is of type \({}^1 D_5\), has no inner twins at finite places, and over \(\R\) the group \(\mathbf{G}\) becomes isomorphic to \(\Spin(7,3)\).
    \item \(\mathbf{G}\) is of type \({}^2 D_{2n+1}\) for \(n \geq 2\), has an inner twin at at most one finite place, and over \(\R\) the group \(\mathbf{G}\) becomes isomorphic to \(\Spin^*(4n+2)\) or to \(\Spin(7,3)\) if \(n=2\).
  \end{enumerate}
\end{corollary}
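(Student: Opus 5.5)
The plan is to specialize the general classification theorems to \(k=\Q\).  The relevant properties of \(\Q\) are these.  It is locally determined, being Galois over itself.  It has \(\Aut(\Q)=1\) and exactly one real place.  And \(\Aut(\A^f_\Q)\) is trivial, since each \(\Q_p\) has no nontrivial automorphisms and distinct primes give nonisomorphic completions.

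These facts simplify the earlier notions.  For outer types, \(\overline{h}\) is injective by Theorem~\ref{thm:h-bar-injective}, so the \(\overline{h}\)-condition drops out everywhere.  The condition \(\omega.\Aut(k)_\gamma=\omega.\Aut(\A^f_k)_{\gamma'}\) holds trivially.  Weak uniformity becomes \(S.\omega=S^\omega.\omega\), i.e.\ \(|S^\omega.\omega|\le 2\), which can be read off from Corollary~\ref{cor:S-omega-characterization}.  Every two-real-place clause of the main theorems is vacuous.

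I would then go through the types one by one.

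\emph{Types without diagram symmetries.}  Here Theorem~\ref{thm:cong-rigid-no-symmetries} applies, with weak uniformity automatic.
\begin{itemize}
\item Type \(A_1\): case (ii) applies for any real form at the single real place, giving item (i).
\item Type \(C_n\): we need \(\Sp(2n,\R)\) at the real place, giving item (vi).
\item Types \(B_n\), \(E_7\), \(E_8\), \(F_4\), \(G_2\): these require \(k\) totally imaginary, so they are excluded.
\end{itemize}

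\emph{Type \(A_{n\ge2}\).}  Theorem~\ref{thm:cong-rigid-A-n} applies with one real place.
\begin{itemize}
\item Type \({}^1A_{2n}\): rigid exactly when weakly uniform.
\item Type \({}^1A_{2n+1}\): case (iii) adds the subset-sum condition at \(\pm\frac{n+1}{2}\).  This condition only has content when \(n+1\) is even.  Writing the odd case as \({}^1A_{4n+1}\) gives only weak uniformity, hence item (ii).  Writing the even case as \({}^1A_{4n+3}\), so that the modulus is \(4n+4\) and the target is \(\pm(n+1)\), gives item (iv).
\item Type \({}^2A_{2n}\): we need splitting at \(\R\) and weak uniformity.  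By Theorem~\ref{thm:outer-types-weakly-uniform}, weak uniformity means at most one inner twin.  To get item (iii) as stated, I must check that for \({}^2A_{2n}\) a finite place with an inner twin is the same as a finite place where \(\mathbf{G}\) is not quasi-split.  At inert places \(H^2(k_v,Z)=0\) by Table~\ref{tate-table}.  At split places \(S_v\) acts by inversion on \(\Z/(2n+1)\), which fixes only \(0\).
\item Type \({}^2A_{2n+1}\): case (v) together with Theorem~\ref{thm:outer-types-weakly-uniform} gives item (v).
\end{itemize}

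\emph{Type \(D_{n\ge5}\).}  Theorem~\ref{thm:cong-rigid-D-n} lists the cases directly, giving items (vii)--(x).  Two points need care here.  First, that theorem is stated only as an "if", while the corollary is an "iff".  For the converse I would cite its proof, which establishes necessity in each subcase: the real-form restriction from Theorem~\ref{thm:map-real-groups}, the exclusion of \({}^1D_{2n+1}\) for \(n\ge3\), and the inner-twin counts.  Second, type \({}^1D_{2n}\) with \(n\ge3\) covers \(D_6\) onward, consistent with the excluded \(D_4\).

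\emph{Type \(E_6\).}  Theorem~\ref{thm:cong-rigid-E-6} rules out every \(E_6\) group, since \(\Q\) has a real place.

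The main obstacle is bookkeeping rather than new mathematics.  The two delicate points are the converse directions in type \(D\), and the reindexing of \({}^1A_{2n+1}\) into the residues \(4n+1\) and \(4n+3\), where I need to verify that the subset-sum condition in Theorem~\ref{thm:cong-rigid-A-n}(iii) vanishes when \(n+1\) is odd.
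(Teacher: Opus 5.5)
Your proposal is correct and follows the paper's route: the corollary is obtained by specializing Theorems~\ref{thm:cong-rigid-no-symmetries}, \ref{thm:cong-rigid-A-n}, \ref{thm:cong-rigid-D-n} and~\ref{thm:cong-rigid-E-6} to \(k=\Q\). This uses that \(\Aut(\Q)\) and \(\Aut(\A^f_\Q)\) are trivial, that \(\Q\) has exactly one real place, and that \(\overline{h}\) is injective, so weak uniformity reduces to \(|S^\omega.\omega|\le 2\). The paper gives no further argument, and your explicit checks fill in exactly what it leaves implicit: that necessity in type \(D\) comes from the proof of Theorem~\ref{thm:cong-rigid-D-n} rather than its statement, that the subset-sum condition is vacuous for \({}^1A_{4n+1}\), and that the inner twins of \({}^2A_{2n}\) are exactly the non-quasi-split finite places.
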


In \Cref{thm:outer-types-weakly-uniform} we gave a very easily checked criterion on when a group of outer type is not weakly uniform.  This implies \Cref{thm:outer-types-cong-rigid}.

\begin{proof}[Proof of \Cref{thm:outer-types-cong-rigid}.]
  \Cref{thm:outer-types-weakly-uniform} implies the claim by virtue of \Cref{prop:weakly-uniform-necessary} except if \(\mathbf{G}\) is of type \({}^2 D_{2n}\) and \(k\) has real places.  But in that case we can apply \Cref{thm:cong-rigid-D-n} and observe that the statement holds true as well.
\end{proof}

We now complement this observation with some combinatorial considerations on groups of inner type.

\begin{proposition}
  Let \(\mathbf{G}\) be a \(k\)-group such that \(\Sym \Delta \cong \Z/2\).  If \(|S^{\omega} . \omega| > 2\) and \(\Aut (k)\) is trivial, then \(\mathbf{G}\) is not congruence rigid.
\end{proposition}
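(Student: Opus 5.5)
Since \(\Aut(k)\) is trivial, \(\Aut(k)_\gamma\) is trivial. Hence the left-hand side of the weak uniformity condition in Definition~\ref{def:weakly-uniform-sym} is simply \(S.\omega\), and this set has at most two elements because \(S \cong \Z/2\). The right-hand side \((S^\omega.\omega).\Aut(\A_k^f)_{\gamma'}\) contains \(S^\omega.\omega\) (take \(\Phi = \mathrm{id}\)). By hypothesis \(S^\omega.\omega\) has more than two elements, so the two sides differ and \(\mathbf{G}\) is not weakly uniform. The plan is to deduce from this that \(\overline{d_1}\) has nontrivial fiber at \([\xi]\). Theorems~\ref{thm:f-g-equivalence} and \ref{thm:f_1-decomposition} then give non-rigidity: if the fiber of \([\gamma]\) under \(\overline{h}\) is nontrivial, we conclude directly by Theorem~\ref{thm:inner-twists-of-nontrivial-h-bar}; otherwise the nontrivial fiber of \(\overline{d_1}\) forces a nontrivial fiber of \(\overline{f_1}\), hence of \(\overline{g}\).

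For types \(A_{n\ge2}\), \(D_{2n+1}\), \(E_6\), Proposition~\ref{prop:weakly-uniform-necessary} gives the nontrivial fiber of \(\overline{d_1}\) immediately, for any number of real places. Type \(D_{2n}\) (with \(n\ge3\)) needs a direct argument, since there \(S_w\) acts nontrivially at real places.

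For \(D_{2n}\) I would argue directly with three distinct elements \(\omega^{(1)},\omega^{(2)},\omega^{(3)}\in S^\omega.\omega\). Each satisfies \(c(\omega^{(i)})=c(\omega)\), so extending it by the real coordinates \(\underline{\omega}_w\) gives an element of \(\ker c=\im b\). Its preimage \(\eta_i\in H^2(k,Z(\mathbf{G_1}))\) satisfies \(d_1(\eta_i)=\omega^{(i)}\in\prod_v S_v.\omega\), so \([\eta_i]\) lies in the fiber of \(\overline{d_1}\) over \([\omega]\). Since \(\Aut(k)\) is trivial, the class of \(\xi\) in \(S\backslash H^2(k,Z(\mathbf{G_1}))\) is just \(S.\xi\), which has at most two elements. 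Now \(b\) is injective and the \(\eta_i\) have pairwise distinct finite parts, so they are pairwise distinct. Therefore at least one \(\eta_i\) lies outside \(S.\xi\), which makes the fiber nontrivial.

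This direct argument in fact works uniformly in all types, so one need not appeal to Proposition~\ref{prop:weakly-uniform-necessary} at all. The only point to check is that the real coordinates can be kept fixed. This holds because \(c\) is additive over places, so changing only finite coordinates while preserving the finite sum preserves \(c\).
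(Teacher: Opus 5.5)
Your proof is correct, and it establishes the proposition in the generality in which it is stated, which the paper's one-line proof does not fully do. The paper argues as in your first paragraph. Since \(\Aut(k)_\gamma\) is trivial, \(S.\omega.\Aut(k)_\gamma = S.\omega\) has at most two elements, so some element of \(S^\omega.\omega\) lies outside it, weak uniformity fails, and the paper concludes by Theorem~\ref{thm:sym-rigidity-no-reals}. That theorem, however, assumes \(k\) totally imaginary, while the proposition places no restriction on real places. For types \(A_{n\ge 2}\), \(D_{2n+1}\), \(E_6\) this is repaired by Proposition~\ref{prop:weakly-uniform-necessary}. For \(D_{2n}\) with real places, where \(S_w\) acts nontrivially and \(S.\omega \not\subseteq S^\omega.\omega\) in general, the paper's proof does not address the case; one would have to extract it from Theorem~\ref{thm:map-real-groups}, Proposition~\ref{prop:2-real-places-D-n} and the proof of Theorem~\ref{thm:cong-rigid-D-n}.

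Your direct construction avoids all of this, and it works uniformly in every type and for every signature. You lift three distinct elements of \(S^\omega.\omega\), extended by the unchanged real coordinates of \(\underline{\omega}\), through \(\ker c = \im b\), and compare with \(|S.\xi| \le 2\). The chain of Theorems~\ref{thm:inner-twists-of-nontrivial-h-bar}, \ref{thm:f-g-equivalence} and \ref{thm:f_1-decomposition} then yields non-rigidity without using total imaginarity or local determinacy. This is the \(|\Aut(k)_\gamma| = 1\) case of the counting in the proof of Theorem~\ref{thm:many-inner-twins}. Once you have it, the weak-uniformity detour in your first paragraph, and the appeal to Proposition~\ref{prop:weakly-uniform-necessary}, are superfluous; the paper's route buys only brevity.

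One small point: the \(\eta_i\) are pairwise distinct simply because the \(d_1(\eta_i)\) are. Injectivity of \(b\) is not needed for that; only \(\ker c = \im b\) is needed, to guarantee the \(\eta_i\) exist.
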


\begin{proof}
  The first condition implies that there is an \(\omega' \in (\prod_{v \nmid \infty} S_v . \omega) \cap \im(d)\) not contained in \(S . \omega\).  As \(\Aut(k)_{\gamma} \subseteq \Aut (k)\) is trivial, \(\omega'\) is not contained in \(S . \omega . \Aut(k)_{\gamma}\) either and thus the second condition of \Cref{thm:sym-rigidity-no-reals} is not met.
\end{proof}

\begin{proposition} \label{prop:partial-sums}
  Let \(r\) be the number of finite places at which \(\mathbf{G}\) has an inner twin.  Then \(|S^{\omega} . \omega| > 2\) follows if one of the following holds:
  \begin{itemize}
    \item \(\mathbf{G}\) is of type \({}^1 A_{2n}\) and \(r > 2n+1\).
    \item \(\mathbf{G}\) is of type \({}^1 A_{2n+1}\) and \(r > n+1\).
    \item \(\mathbf{G}\) is of type \({}^1 D_n\) and \(r > 2\).
    \item \(\mathbf{G}\) is of type \({}^1 E_6\) and \(r > 3\).
  \end{itemize}
\end{proposition}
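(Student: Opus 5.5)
We use the description of \(S^\omega.\omega\) from \Cref{prop:S-omega-characterization} and \Cref{cor:S-omega-characterization}. The set \(S^\omega.\omega\) is in bijection with the subsets \(U \subseteq V\), \(|V| = r\), whose partial sum \(\sum_{v \in U} c_v(\omega_v)\) is \(S\)-invariant. In the inner types, \(\sigma\) acts by inversion on \(H^0(k,Z')^*\), except in type \(D_{2n}\), where it swaps the factors. Since \(U = \varnothing\) and \(U\) with \(\Sigma = \Sigma_\sigma\) (the diagonal element, i.e.\ \(U = V\) when that is admissible) account for at most two elements, it suffices to exhibit one nonempty admissible \(U \ne V\), or more generally three distinct admissible subsets. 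The plan is a zero-sum (Davenport-type) argument in the finite abelian group \(A = H^0(k,Z')^*\).

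\textbf{Type \({}^1A_{2n}\) and \({}^1E_6\).} Here \(A = \Z/m\) with \(m = 2n+1\) or \(3\), and the admissible \(U\) are those with zero sum. Every sequence of length at least \(m\) in \(\Z/m\) has a nonempty zero-sum subsequence: among the prefix sums \(s_0 = 0, s_1, \dots, s_m\), two coincide. Given \(r > m\), we proceed as follows.
\begin{itemize}
\item Choose a nonempty zero-sum \(U_1\) inside the first \(m\) elements of \(V\).
\item The complement still contains at least one element, so \(U_1 \ne V\).
\item The three subsets \(\varnothing\), \(U_1\) and \(V\) (the last only if it is admissible) already give two elements. To get a third we need more.
\end{itemize}
Better: apply the pigeonhole argument twice, to two disjoint blocks of \(V\). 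This would need \(r \ge 2m\), which exceeds the hypothesis, so we argue differently. Take \(U_1\) as above, fix \(x \in V \setminus U_1\), and consider the first \(m\) elements of \(V \setminus \{x\}\), which exist since \(r - 1 \ge m\). This yields a zero-sum \(U_2 \not\ni x\). Similarly, using \(x' \in U_1\), we find a zero-sum \(U_3 \not\ni x'\). We then count: \(\varnothing\) together with the distinct nonempty sets \(U_1, U_2, \dots\) gives at least three subsets unless all of them coincide. If \(U_2 = U_1\), redo the construction avoiding a point of \(U_1\); this is possible because \(r - |U_1| \ge\) … The clean statement to aim for is that there are two distinct nonempty zero-sum subsets. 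We obtain them from zero-sum subsets of \(V \setminus \{x\}\) for \(x\) ranging over a point of \(U_1\), which has at least \(m\) elements.

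\textbf{Type \({}^1A_{2n+1}\).} Here \(A = \Z/(2n+2)\) and the admissible sums are \(0\) and \(n+1\). Reduce modulo the subgroup \(\{0, n+1\}\), so that the quotient is \(\Z/(n+1)\). Any \(n+1\) elements contain a nonempty subsequence whose sum lies in \(\{0, n+1\}\). Then \(r > n+1\) gives the same two-distinct-subsets argument as before.

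\textbf{Type \({}^1D_n\).} The admissible \(U\) are the subsets of even size, so \(r \ge 3\) gives at least \(1 + \binom{r}{2} > 2\) admissible subsets directly.

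\textbf{Main obstacle.} The delicate point is making sure the produced subsets give distinct elements of \(S^\omega.\omega\). Distinct \(U\) give distinct elements by the bijection, so it remains only to guarantee two distinct nonempty admissible subsets, as described above.
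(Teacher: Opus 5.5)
Your final argument is correct and is essentially the paper's: a pigeonhole on partial sums in \(\Z/m\) (in \(\Z/(n+1)\) after quotienting by \(\{0,n+1\}\) for type \({}^1A_{2n+1}\)), and a direct count of even-size subsets in type \({}^1D_n\). The difference is only bookkeeping. The paper applies the pigeonhole once, with prefix sums starting at \(z_1\), so the zero-sum block omits \(v_1\) and differs from both \(\varnothing\) and \(V\); here \(V\) corresponds to \(\sigma.\omega\), which lies in \(S^{\omega}.\omega\) in types \(A\) and \(E_6\). Your second application — removing \(x' \in U_1\) and finding another zero-sum subset among the remaining \(r-1 \ge m\) places — avoids needing admissibility of \(V\), and the abandoned \(U_2\)-detour can simply be deleted.
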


\begin{proof}
  Let \(\mathbf{G}\) first be of type \({}^1 A_{2n}\) and \(v_1, \dots v_{2n+2}\) be pairwise distinct finite places at which \(\omega\) does not vanish.  Let
  \[z_r := \sum_{i=1}^r c_{v_i}(\omega_{v_i})
    \in H^ 0(k, Z')^* \cong \Z/(2n+1). \]
  As we have \(2n+2\) elements in \(\Z/(2n+1)\Z\), two of the \(z_i\), say \(z_l = z_m\) with \(l < m\) have to conicide.  So
  \[ 0 = z_m - z_l = \sum_{i=l+1}^m c_{v_i}(\omega_{v_i}) \]
  provides a partial sum adding to 0 for the \(c\)-images of \(\omega\).  The only element of \(H^2(k_v, Z(\mathbf{G_1})) \cong \Z/(2n+1)\Z\) that is fixed under \(S_v\) is 0.  Hence subset corresponds to an element of \(S^{\omega} . \omega\) by \Cref{cor:S-omega-characterization} and \(\mathbf{G}\) has a local symmetry.
  
  Let \(\mathbf{G}\) next be of type \({}^1 A_{2n+1}\).  The condition of \(\omega\) having a partial sum adding to 0 or \(n+1\) modulo \(2n+2\) where all \(\omega_v\) are not fixed under the respective \(S_v\) is equivalent to having a partial sum adding to 0 modulo \(n+1\) where all summands are nontrivial.  With the same argument as before we see that having more than \(n+1\) places \(v\) where \(\omega_v\) differs from 0 and \(n+1\) guarantees us such a partial sum corresponding to \(U\) as defined in \Cref{cor:S-omega-characterization}.
  
  For the cases \({}^1 D_n\) and \({}^1 E_6\) we can directly employ \Cref{cor:S-omega-characterization} to establish the claimed lower bounds.
\end{proof}

We can generalize this approximation to fields with arbitrary automorphism group in the following way.

\begin{theorem} \label{thm:many-inner-twins}
  Let \(\mathbf{G}\) be a \(k\)-group such that \(\Sym \Delta \cong \Z/2\).  Let \(r\) be the number of finite places \(v\) where \(\mathbf{G}\) has an inner twin.  Let furthermore
  \begin{align*}m =
    \begin{cases}
      2n+1 & \text{if } \mathbf{G} \text{ has type } {}^1 A_{2n} \\
      n+1 & \text{if } \mathbf{G} \text{ has type } {}^1 A_{2n+1} \\
      2 & \text{if } \mathbf{G} \text{ has type } {}^1 D_{n \geq 5} \\
      3 & \text{if } \mathbf{G} \text{ has type } {}^1 E_6
    \end{cases}
  \end{align*}
  If
  \[|\Aut(k)_{\gamma}| < 2^{\left\lfloor\frac{r-1}{m}\right\rfloor}\]
  then \(\mathbf{G}\) is not congruence rigid.
\end{theorem}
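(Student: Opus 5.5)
The plan is a counting argument. It compares the size of \(S^{\omega}.\omega\) with the size of the set of globally realized modifications \(S.\omega.\Aut(k)_{\gamma}\), and then invokes the necessity of weak uniformity, i.e.\ Proposition~\ref{prop:weakly-uniform-necessary} for types \({}^1A_{n\ge2}\), \({}^1D_{2n+1}\), \({}^1E_6\), and the proof of Theorem~\ref{thm:cong-rigid-D-n} for \({}^1D_{2n}\). The target is
\[|S^{\omega}.\omega| \ge 2^{\lfloor (r-1)/m\rfloor+1}.\]
This gives
\[|S^{\omega}.\omega| > 2\,|\Aut(k)_{\gamma}| \ge |S.\omega.\Aut(k)_{\gamma}|,\]
so \(S.\omega.\Aut(k)_\gamma \subsetneq (S^\omega.\omega).\Aut(\A^f_k)_{\gamma'}\). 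Hence \(\mathbf{G}\) is not weakly uniform, \(\overline{d_1}\) has nontrivial fiber at \([\xi]\), and \(\mathbf{G}\) is not congruence rigid by Theorem~\ref{thm:f_1-decomposition}.

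For the lower bound, write \(V\) for the set of the \(r\) finite places with an inner twin. By Proposition~\ref{prop:S-omega-characterization} and Corollary~\ref{cor:S-omega-characterization}, \(S^\omega.\omega\) is in bijection with the subsets \(U\subseteq V\) whose \(c\)-sum is \(S\)-invariant. Such sums are \(0\) in \({}^1A_{2n}\) and \({}^1E_6\), lie in \(\{0,n+1\}\) in \({}^1A_{2n+1}\), and in \({}^1D_n\) amount to \(|U|\) even. Any such set is closed under symmetric difference of \emph{disjoint} good sets, so it suffices to find \(t=\lfloor (r-1)/m\rfloor+1\) pairwise disjoint nonempty good subsets \(U_1,\dots,U_t\). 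Their \(2^t\) unions are then pairwise distinct good subsets.

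The disjoint good subsets come from the pigeonhole argument of Proposition~\ref{prop:partial-sums}. Any \(m+1\) places of \(V\) contain a nonempty good subset of size at most \(m+1\); in \({}^1D\) with \(m=2\) a pair already suffices, and in \({}^1E_6\) the prefix-sum argument in \(\Z/3\) works. Greedily remove such a subset and repeat. If each step consumed at most \(m\) places, we could continue while at least \(m+1\) places remain, giving \(\lfloor (r-1)/m\rfloor\) steps from \(r\) places, roughly \(\lceil (r-m)/m\rceil\). The hardest step is making this count match the exponent exactly. Prefix sums among \(j\) elements give a zero-sum segment of length at most \(j\) (allowing the empty prefix), so \(m+1\) places yield a good subset of size at most \(m+1\). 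To gain one more subset, I would reach the exponent \(\lfloor (r-1)/m\rfloor+1\) by also counting the trivial subset together with the global element \(\sigma\). Concretely, I would compare \(|S^\omega.\omega|\ge 2^{s}\), where \(s\) is the number of disjoint good blocks extracted, against \(|S.\omega.\Aut(k)_\gamma|\le 2|\Aut(k)_\gamma|<2^{\lfloor (r-1)/m\rfloor+1}\). I would then tune the block-size bookkeeping (blocks of size at most \(m\) once a zero-sum can always be found among \(m\) nonzero elements of a cyclic group of order \(m\), by Erdős-type pigeonhole on prefix sums) so that \(s\ge\lfloor (r-1)/m\rfloor+1\).
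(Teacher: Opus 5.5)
Your reduction and counting scheme match the paper's. You extract disjoint nonempty good blocks by the prefix-sum pigeonhole of Proposition~\ref{prop:partial-sums}, take their $2^t$ unions inside $S^\omega.\omega$, and compare with $|S.\omega.\Aut(k)_\gamma|\le 2|\Aut(k)_\gamma|$. Routing the conclusion through Proposition~\ref{prop:weakly-uniform-necessary} and the proof of Theorem~\ref{thm:cong-rigid-D-n} is, if anything, more careful than the paper's appeal to Theorem~\ref{thm:sym-rigidity-no-reals}, which presupposes $k$ totally imaginary.

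However, the step you flag as hardest is left open, and "tuning the bookkeeping" cannot close it. Greedy extraction of blocks of size at most $m$ gives only $\lfloor (r-1)/m\rfloor$ (or $\lfloor r/m\rfloor$) blocks. No choice of block sizes reaches $\lfloor (r-1)/m\rfloor+1$ from pigeonhole alone. For instance, the values $1,1,1,1\in\Z/3$ at $r=4$ places (type ${}^1A_2$ or ${}^1E_6$, $m=3$) contain only one disjoint zero-sum block, but the target is two. Such configurations are excluded only by a global constraint that your argument never uses.

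The missing idea is that $V$ itself is good, i.e.\ the diagonal $\sigma$ lies in $S^\omega$. Since $\omega=d_1(\xi)$, the Poitou--Tate sequence gives $c(\omega)=-\sum_{w\mid\infty}c_w(\underline{\omega}_w)$. This is $0$ in types ${}^1A_{2n}$ and ${}^1E_6$, and lies in the $S$-fixed subgroup $\{0,n+1\}$ in type ${}^1A_{2n+1}$. Hence complements in $V$ of good sets are good, and the paper's count goes through:

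\begin{enumerate}
\item Extract blocks of size at most $m$ only while at least $m+1$ places remain. This gives $s\ge\lfloor (r-1)/m\rfloor$ blocks.
\item This procedure always leaves a nonempty remainder $R$, which is good as a complement and so is one more block.
\item Equivalently, applying $\sigma$, i.e.\ $U\mapsto V\setminus U$, to the $2^s$ unions yields $2^s$ new good sets, all containing $R$.
\end{enumerate}

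That is exactly where the ``$+1$'' in the exponent comes from. Your remark about ``the trivial subset together with $\sigma$'' points this way, but it only doubles the count once you have both $\sigma\in S^\omega$ and $R\neq\varnothing$.

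In type ${}^1D$ you need no blocks at all. The good sets are exactly the even subsets of $V$, so $|S^\omega.\omega|=2^{r-1}\ge 2^{\lfloor (r-1)/2\rfloor+1}$ for $r\ge 2$, and for $r\le 1$ the hypothesis is vacuous. This direct count matters: for ${}^1D_{2n}$ with a real place of type $\Spin^*(4n)$, one has $\sigma\notin S^\omega$ and $r$ odd, so pairing alone falls one block short.
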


As \(|\Aut(k)_{\gamma}|\) is in turn bounded from above by \([k:\Q]\), this Theorem provides an easily checked criterion for construction of non-rigid algebraic groups of type \({}^1 A_{n \geq 2}\), \({}^1 D_{n \geq 5}\) or \({}^1 E_6\) over an arbitrary field.

\begin{proof}
  If \(\mathbf{G}\) has an inner twin at \(v\), then \(\omega_v\) is not fixed under the action of \(S_v\).  As in the proof of \Cref{prop:partial-sums}, having \(m+1\) finite places with a coordinate not fixed under \(S_v\) results in a partial sum adding to a value \(x \in H^0(k, Z')^*\) that is fixed under \(S\).  By induction we thus obtain
  \[2^{\left\lfloor\frac{r-1}{m}\right\rfloor+1}\]
  elements in \((\prod_{v \nmid \infty} S_v . \omega) \cap \im(d)\), which bounds \(S^{\omega} . \omega . \Aut(\A_k^f)_{\gamma'}\) from below.  On the other hand, \(|S . \omega . \Aut(k)_{\gamma}|\) is bounded from above by \(2 \cdot |\Aut(k)_{\gamma}|\).  If the inequality given in the theorem holds, we therefore see that \(\overline{d_1}\) has nontrivial fiber at \([\xi]\) and the statement follows from \Cref{thm:sym-rigidity-no-reals}.
\end{proof}


\begin{bibdiv}[References]

  \begin{biblist}

    \bib{Adams-Taibi:galois-cohomology}{article}{
   author={Adams, Jeffrey},
   author={Ta\"{\i}bi, Olivier},
   title={Galois and Cartan cohomology of real groups},
   journal={Duke Math. J.},
   volume={167},
   date={2018},
   number={6},
   pages={1057--1097},
   issn={0012-7094},
   review={\MR{3786301}},
 } 

 \bib{Bridson-et-al:absolute}{article}{
   author={Bridson, M. R.},
   author={McReynolds, D. B.},
   author={Reid, A. W.},
   author={Spitler, R.},
   title={Absolute profinite rigidity and hyperbolic geometry},
   journal={Ann. of Math. (2)},
   volume={192},
   date={2020},
   number={3},
   pages={679--719},
   issn={0003-486X},
   review={\MR{4172619}},
}

 \bib{Harari:galois}{book}{
   author={Harari, David},
   title={Galois cohomology and class field theory},
   series={Universitext},
   note={Translated from the 2017 French original by Andrei Yafaev},
   publisher={Springer, Cham},
   date={[2020] \copyright2020},
   pages={xiv+338},
   isbn={978-3-030-43901-9},
   isbn={978-3-030-43900-2},
   review={\MR{4174395}},
 }

\bib{Harder:Bericht}{article}{
   author={Harder, G\"unter},
   title={Bericht \"uber neuere Resultate der Galoiskohomologie
   halbeinfacher Gruppen},
   language={German},
   journal={Jber. Deutsch. Math.-Verein.},
   volume={70},
   date={1967/68},
   pages={182--216},
   issn={0012-0456},
   review={\MR{0242838}},
}

\bib{Jaikin-Lubotzky:remarks}{article}{
   author={Jaikin-Zapirain, Andrei},
   author={Lubotzky, Alexander},
   title={Some remarks on Grothendieck pairs},
   journal={Groups Geom. Dyn.},
   volume={19},
   date={2025},
   number={2},
   pages={597--616},
   issn={1661-7207},
   review={\MR{4940654}},
}
 
  \bib{Janusz:automorphisms}{article}{
   author={Janusz, Gerald J.},
   title={Automorphism groups of simple algebras and group algebras},
   conference={
      title={Representation theory of algebras},
      address={Proc. Conf., Temple Univ., Philadelphia, Pa.},
      date={1976},
   },
   book={
      series={Lect. Notes Pure Appl. Math.},
      volume={Vol. 37},
      publisher={Dekker, New York-Basel},
   },
   isbn={0-8247-6714-4},
   date={1978},
   pages={381--388},
   review={\MR{0472928}},
 }

 \bib{Kammeyer:absolute}{article}{
   author={Kammeyer, Holger},
   title={On absolutely profinitely solitary lattices in higher rank Lie
   groups},
   journal={Proc. Amer. Math. Soc.},
   volume={151},
   date={2023},
   number={4},
   pages={1801--1809},
   issn={0002-9939},
   review={\MR{4550371}},
}

 \bib{Kammeyer:profinite-commensurability}{article}{
   author={Kammeyer, Holger},
   title={Profinite commensurability of $S$-arithmetic groups},
   journal={Acta Arith.},
   volume={197},
   date={2021},
   number={3},
   pages={311--330},
   issn={0065-1036},
   review={\MR{4194949}},
}

\bib{Kammeyer-Kionke:adelic-superrigidity}{article}{
   author={Kammeyer, Holger},
   author={Kionke, Steffen},
   title={Adelic superrigidity and profinitely solitary lattices},
   journal={Pacific J. Math.},
   volume={313},
   date={2021},
   number={1},
   pages={137--158},
   issn={0030-8730},
   review={\MR{4313430}},
 }
 
 \bib{Kammeyer-Kionke:rigidity}{article}{
   author={Kammeyer, Holger},
   author={Kionke, Steffen},
   title={On the profinite rigidity of lattices in higher rank Lie groups},
   journal={Math. Proc. Cambridge Philos. Soc.},
   volume={174},
   date={2023},
   number={2},
   pages={369--384},
   issn={0305-0041},
   review={\MR{4545210}},
 }
 
 \bib{Kammeyer-Serafini:sign}{article}{
   author={Kammeyer, Holger},
   author={Serafini, Giada},
   title={On the Euler characteristic of S-arithmetic groups},
   journal={J. Lond. Math. Soc. (2)},
   volume={113},
   date={2026},
   number={3},
   pages={Paper No. e70505},
   issn={0024-6107},
   review={\MR{5042537}},
}

\bib{Kammeyer-Spitler:chevalley}{article}{
   author={Kammeyer, Holger},
   author={Spitler, Ryan},
   title={Galois cohomology and profinitely solitary Chevalley groups},
   journal={Math. Ann.},
   volume={390},
   date={2024},
   number={2},
   pages={2497--2511},
   issn={0025-5831},
   review={\MR{4801834}},
 }
 
 \bib{Klingen:similarities}{book}{
   author={Klingen, Norbert},
   title={Arithmetical similarities},
   series={Oxford Mathematical Monographs},
   note={Prime decomposition and finite group theory;
   Oxford Science Publications},
   publisher={The Clarendon Press, Oxford University Press, New York},
   date={1998},
   pages={x+275},
   isbn={0-19-853598-8},
   review={\MR{1638821}},
}

 \bib{Kneser:galois}{article}{
   author={Kneser, Martin},
   title={Galois-Kohomologie halbeinfacher algebraischer Gruppen \"uber
   ${\germ p}$-adischen K\"orpern. II},
   language={German},
   journal={Math. Z.},
   volume={89},
   date={1965},
   pages={250--272},
   issn={0025-5874},
   review={\MR{0188219}},
}

\bib{Komatsu:adele-rings}{article}{
   author={Komatsu, Keiichi},
   title={On the adele rings and zeta-functions of algebraic number fields},
   journal={Kodai Math. J.},
   volume={1},
   date={1978},
   number={3},
   pages={394--400},
   issn={0386-5991},
   review={\MR{0517831}},
 }
  
\bib{Linowitz-et-al:locally-equivalent}{article}{
   author={Linowitz, Benjamin},
   author={McReynolds, D. B.},
   author={Miller, Nicholas},
   title={Locally equivalent correspondences},
   language={English, with English and French summaries},
   journal={Ann. Inst. Fourier (Grenoble)},
   volume={67},
   date={2017},
   number={2},
   pages={451--482},
   issn={0373-0956},
   review={\MR{3669503}},
 }

 \bib{LMFDB}{misc}{
  label={LMFDB E234446.a},
  author={The {LMFDB Collaboration}},
  title={The {L}-functions and modular forms database, Home page of the number field \texttt{6.0.115063617043.2}},
  date={2026},
  note={[Online; accessed 8 April 2026]},
  eprint={https://www.lmfdb.org/NumberField/6.0.115063617043.2}
}

\bib{Perlis:equation}{article}{
   author={Perlis, Robert},
   title={On the equation $\zeta \sb{K}(s)=\zeta \sb{K'}(s)$},
   journal={J. Number Theory},
   volume={9},
   date={1977},
   number={3},
   pages={342--360},
   issn={0022-314X},
   review={\MR{0447188}},
 }
 
\bib{Platonov-Rapinchuk:algebraic-groups}{book}{
   author={Platonov, Vladimir},
   author={Rapinchuk, Andrei},
   title={Algebraic groups and number theory},
   series={Pure and Applied Mathematics},
   volume={139},
   note={Translated from the 1991 Russian original by Rachel Rowen},
   publisher={Academic Press, Inc., Boston, MA},
   date={1994},
   pages={xii+614},
   isbn={0-12-558180-7},
   review={\MR{1278263}},
 }

 \bib{Platonov-Tavgen:grothendieck}{article}{
   author={Platonov, V. P.},
   author={Tavgen\cprime, O. I.},
   title={Grothendieck's problem on profinite completions and
   representations of groups},
   journal={$K$-Theory},
   volume={4},
   date={1990},
   number={1},
   pages={89--101},
   issn={0920-3036},
   review={\MR{1076526}},
 }
 
 \bib{Prasad-Rapinchuk:prescribed}{article}{
   author={Prasad, Gopal},
   author={Rapinchuk, Andrei S.},
   title={On the existence of isotropic forms of semi-simple algebraic
   groups over number fields with prescribed local behavior},
   journal={Adv. Math.},
   volume={207},
   date={2006},
   number={2},
   pages={646--660},
   issn={0001-8708},
   review={\MR{2271021}},
}
 
 \bib{Serre:galois-cohomology}{book}{
   author={Serre, Jean-Pierre},
   title={Galois cohomology},
   series={Springer Monographs in Mathematics},
   edition={Corrected reprint of the 1997 English edition},
   note={Translated from the French by Patrick Ion and revised by the
   author},
   publisher={Springer-Verlag, Berlin},
   date={2002},
   pages={x+210},
   isbn={3-540-42192-0},
   review={\MR{1867431}},
}

\bib{Serre:representations}{book}{
   author={Serre, Jean-Pierre},
   title={Linear representations of finite groups},
   series={Graduate Texts in Mathematics},
   volume={Vol. 42},
   edition={French edition},
   publisher={Springer-Verlag, New York-Heidelberg},
   date={1977},
   pages={x+170},
   isbn={0-387-90190-6},
   review={\MR{0450380}},
 }
 
\bib{Spitler:profinite}{thesis}{
author = {Ryan F. Spitler},
title = {Profinite Completions and Representations of Finitely Generated Groups},
year = {2019},
note = {PhD thesis},
organization = {Purdue University},
}

\bib{Sun:3-manifold-groups}{article}{
   author={Sun, Hongbin},
   title={All finitely generated 3-manifold groups are Grothendieck rigid},
   journal={Groups Geom. Dyn.},
   volume={17},
   date={2023},
   number={2},
   pages={385--402},
   issn={1661-7207},
   review={\MR{4584669}},
 }
 
\bib{Tits:classification-algebraic}{article}{
   author={Tits, J.},
   title={Classification of Algebraic Semisimple Groups},
   journal={Proceedings of Symposia in Pure Mathematics},
   volume={9},
   date={1966},
   pages={33--62},
   issn={0082-0717},
}
 
\end{biblist}
\end{bibdiv}
\end{document}